\documentclass[11pt]{amsart}
\usepackage[margin=1in]{geometry}
\usepackage{amsmath,amsthm,amssymb}
\usepackage{thmtools,thm-restate}
\usepackage{lmodern}
\usepackage{xcolor}
\usepackage{tikz,placeins,needspace}
\usetikzlibrary{arrows.meta,calc}
\numberwithin{figure}{section}

\usepackage{microtype}
\definecolor{linkteal}{HTML}{007C83}
\usepackage[hypertexnames=false,colorlinks=true,allcolors=linkteal]{hyperref}
\usepackage{xypic}
\usepackage{bbm}
\usepackage{mathtools}
\usepackage{enumitem}
\newtheorem{thm}{Theorem}[section]
\newtheorem{lemma}[thm]{Lemma}
\newtheorem{prop}[thm]{Proposition}
\newtheorem{cor}[thm]{Corollary}
\theoremstyle{definition}
\newtheorem{rem}[thm]{Remark}
\newtheorem{defn}[thm]{Definition}
\newtheorem{ex}[thm]{Example}

\newtheorem*{ques*}{Question}
\numberwithin{equation}{section}
\newcommand{\bbA}{\mathbb{A}}
\newcommand{\bbB}{\mathbb{B}}

\newcommand{\bbH}{\mathbb{H}}

\newcommand{\bbP}{\mathbb{P}}
\newcommand{\bbQ}{\mathbb{Q}}
\newcommand{\bbR}{\mathbb{R}}

\newcommand{\bfG}{\mathbf{G}}
\newcommand{\bfH}{\mathbf{H}}
\newcommand{\bfI}{\mathbf{I}}
\newcommand{\bfJ}{\mathbf{J}}

\newcommand{\bfL}{\mathbf{L}}

\newcommand{\bfN}{\mathbf{N}}

\newcommand{\bfV}{\mathbf{V}}

\newcommand{\bfZ}{\mathbf{Z}}
\newcommand{\bfa}{\mathbf{a}}
\newcommand{\bfGL}{\mathbf{GL}}

\newcommand{\calA}{\mathcal{A}}
\newcommand{\calB}{\mathcal{B}}
\newcommand{\calC}{\mathcal{C}}
\newcommand{\calD}{\mathcal{D}}
\newcommand{\calE}{\mathcal{E}}
\newcommand{\calF}{\mathcal{F}}
\newcommand{\calG}{\mathcal{G}}

\newcommand{\calI}{\mathcal{I}}
\newcommand{\calJ}{\mathcal{J}}
\newcommand{\calK}{\mathcal{K}}
\newcommand{\calL}{\mathcal{L}}

\newcommand{\calN}{\mathcal{N}}
\newcommand{\calO}{\mathcal{O}}
\newcommand{\calP}{\mathcal{P}}
\newcommand{\calQ}{\mathcal{Q}}
\newcommand{\calR}{\mathcal{R}}
\newcommand{\calS}{\mathcal{S}}
\newcommand{\calT}{\mathcal{T}}
\newcommand{\calU}{\mathcal{U}}
\newcommand{\calV}{\mathcal{V}}

\newcommand{\calZ}{\mathcal{Z}}

\newcommand{\frakD}{\mathfrak{D}}

\newcommand{\fraka}{\mathfrak{a}}

\newcommand{\frakh}{\mathfrak{h}}

\newcommand{\frakl}{\mathfrak{l}}

\newcommand{\frakso}{\mathfrak{so}}

\newcommand{\sfm}{\mathsf{m}}
\newcommand{\sfp}{\mathsf{p}}
\newcommand{\sfq}{\mathsf{q}}
\newcommand{\sfR}{\mathsf{R}}
\newcommand{\sfT}{\mathsf{T}}
\newcommand{\sfX}{\mathsf{X}}
\newcommand{\sfY}{\mathsf{Y}}

\DeclareMathOperator{\SO}{SO}

\DeclareMathOperator{\Prob}{Prob}

\DeclareMathOperator{\BR}{BR}
\DeclareMathOperator{\BMS}{BMS}

\DeclareMathOperator{\Stab}{Stab}
\DeclareMathOperator{\supp}{supp}
\DeclareMathOperator{\Hull}{Hull}
\DeclareMathOperator{\core}{core}
\DeclareMathOperator{\vol}{vol}
\DeclareMathOperator{\Span}{span}
\DeclareMathOperator{\interior}{int}
\DeclareMathOperator{\RF}{RF}
\DeclareMathOperator{\Bl}{Bl}
\DeclareMathOperator{\pr}{pr}
\DeclareMathOperator{\Gr}{Gr}
\DeclareMathOperator{\Lie}{Lie}
\DeclareMathOperator{\Comm}{Comm}

\DeclareMathOperator{\Id}{Id}

\DeclareMathOperator{\Ad}{Ad}
\DeclareMathOperator{\Tr}{Tr}
\DeclareMathOperator{\tr}{tr}

\DeclareMathOperator{\Hom}{Hom}

\DeclareMathOperator{\AR}{AlgRep}

\DeclareMathOperator{\Res}{Res}

\newcommand{\trieq}{\trianglelefteq}

\newcommand{\ssm}{\smallsetminus}

\title[Geodesic submanifolds of infinite-volume hyperbolic manifolds]{Arithmeticity, superrigidity, and totally geodesic submanifolds of infinite-volume hyperbolic manifolds}
\author{Nicholas Miller}
\address{Department of Mathematics and Statistics, Villanova University, Villanova, PA, 19085}
\email{nicholas.miller@villanova.edu}
\date{}

\begin{document}

\maketitle

\begin{abstract}
Given an infinite-volume convex cocompact hyperbolic manifold with Fuchsian ends and infinitely many maximal properly immersed totally geodesic submanifolds of dimension at least two intersecting its convex core, we show that it (infinitely) covers an arithmetic hyperbolic manifold.
This answers the general version of a question of Oh.
As a byproduct of our techniques, we also give a Ratner-type classification of $\SO_0(m,1)$-invariant Radon measures whose support intersects the frame bundle of the interior of the convex core.
\end{abstract}

\section{Introduction}

Recent work of Bader, Fisher, Stover, and the author has shown that the presence of infinitely many properly immersed maximal totally geodesic submanifolds of real dimension at least two in a finite-volume real or complex hyperbolic manifold forces arithmeticity \cite{BFMS,BFMS2}.
For closed hyperbolic $3$-manifolds, the arithmeticity conclusion was also obtained independently by Margulis and Mohammadi \cite[Thm~1.1]{MM22} and for complex hyperbolic manifolds containing infinitely many maximal complex totally geodesic submanifolds by Baldi and Ullmo \cite[Thm~1.2.1]{BaldiUllmo}.
Whereas this gives an optimal answer in the class of locally symmetric spaces, it is natural to ask how far a similar theorem extends.
There are two particularly intriguing directions in which to try to generalize such a result.

The first is to relax the hyperbolicity condition to merely negative curvature.
In this direction, Filip, Fisher, and Lowe \cite{FFL} have recently shown that a closed Riemannian manifold of dimension at least three with negative sectional curvature, a real-analytic metric, and infinitely many properly immersed totally geodesic hypersurfaces must in fact be a hyperbolic manifold, up to rescaling.
In particular, the confluence of \cite{BFMS} and \cite{FFL} implies that this manifold must therefore be arithmetic hyperbolic of simplest type.
Conjecturally, the condition of real analyticity should be able to be removed and similarly codimension one should be able to be replaced with maximality in dimension at least two, where one concludes that the manifold is an arithmetic locally symmetric space, up to rescaling (see \cite[Conj 6.9]{FilipSurvey}).

The second direction is to try to relax the restriction of finite volume.
In this direction, Oh \cite[Rem 7.5(3)]{OhSurvey} has asked the following question.

\begin{ques*}[Oh]
Given a rigid acylindrical hyperbolic $3$-manifold $M$ with infinitely many properly immersed (unbounded) geodesic planes intersecting its convex core, does $M$ necessarily cover an arithmetic hyperbolic $3$-manifold?
\end{ques*}

The purpose of this paper is to answer this question in the affirmative and prove its natural generalization to maximal geodesic submanifolds in higher dimensions.
Here a properly immersed totally geodesic submanifold is called \emph{maximal} if its image is not contained in a similar proper submanifold of strictly larger dimension.

\begin{restatable}{thm}{arithmeticity}\label{thm:arithmetic}
Let $n\ge3$.
Suppose that $Y_\infty$ is an infinite-volume convex cocompact hyperbolic $n$-manifold with Fuchsian ends and infinitely many distinct maximal properly immersed totally geodesic submanifolds of dimension at least two meeting the convex core.
Then $Y_\infty$ covers an arithmetic hyperbolic manifold $Y$.
\end{restatable}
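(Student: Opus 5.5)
We will follow the strategy of \cite{BFMS}, adapted to the infinite-volume setting. Write $Y_\infty=\Gamma\backslash\bbH^n$ with $\Gamma<G=\SO_0(n,1)$ convex cocompact with Fuchsian ends. The goal is to show that $\Gamma$ is Zariski dense and contained in an arithmetic lattice $\Lambda<G$; then $Y=\Lambda\backslash\bbH^n$ is the desired arithmetic manifold, and the cover is infinite since $\Gamma$ has infinite covolume.

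\textbf{Step 1: equidistribution.} Each maximal totally geodesic submanifold $Z_i$ of dimension $m_i\ge2$ meeting the convex core lifts to a closed $H_i$-orbit in the frame bundle $\Gamma\backslash G$, where $H_i\cong\SO_0(m_i,1)$ (up to a compact centralizer factor). Passing to a subsequence, we may assume $m_i=m$ is constant. The first step, which is the analogue of Mozes--Shah/Ratner used in \cite{BFMS}, is a Ratner-type classification of $H$-invariant ergodic Radon measures on $\Gamma\backslash G$ whose support meets the frame bundle over the interior of the convex core. We want to show that any such measure is supported on a closed orbit of an intermediate subgroup $H\le L\le G$. This should use the rigidity results available in the convex cocompact Fuchsian-ends setting (McMullen--Mohammadi--Oh, Lee--Oh, Benoist--Oh): because the ends are Fuchsian, the convex core has totally geodesic boundary, and this yields the unipotent-blowup/avoidance estimates needed for an additivity argument. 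We then apply this classification to the normalized restrictions of the orbit measures to a fixed compact neighborhood of a point of the core interior. Since the $Z_i$ are distinct and maximal, no limit can concentrate on a closed orbit of any proper intermediate $L$. Hence the limit must be the full (Burger--Roblin-type) measure, and the orbits $\Gamma\backslash\Gamma H_i$ become dense in the relevant part of $\Gamma\backslash G$. I expect this step, and in particular ruling out escape of mass and controlling the non-maximal limiting orbits, to be the main obstacle. To handle it, I would first localize to a compact piece of the interior of the core and use the fact that the limit measure is $H$-invariant and nonzero there.

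\textbf{Step 2: superrigidity.} Density of infinitely many totally geodesic subgroups forces $\Gamma$ to be Zariski dense. Its trace field $k$ is a number field by local rigidity, since $\Gamma$ is convex cocompact with Fuchsian ends and $n\ge3$ (alternatively, this follows from finitely generated trace field plus the superrigidity below). Then $\Gamma$ is contained in $\bfG(\calO_k)$ for a $k$-form $\bfG$, after conjugation. For arithmeticity we must show that, for every place $v$ other than the identity place, the representation $\rho_v\colon\Gamma\to\bfG(k_v)$ has bounded image. Following \cite{BFMS}, we construct the $\Gamma$-equivariant boundary map to the space of measures on $\bfG(k_v)/\bfP$. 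Each $H_i\cap\Gamma$ is a lattice in $H_i$ (its quotient $Z_i\cap\core$ is compact, and the ends of $Z_i$ are again Fuchsian, so by doubling along the totally geodesic boundary one obtains a lattice). The equidistribution of Step 1 then promotes $H_i$-equivariance to algebraicity of the boundary map. Here the $G$-invariant measure is replaced by Patterson--Sullivan/BMS measures on the limit set, which is where the Radon-measure classification enters.

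\textbf{Step 3: conclusion.} The algebraic boundary map extends $\rho_v$ to $G$ if $\rho_v$ is unbounded, which is impossible for a nonarchimedean $v$ or a nonidentity archimedean $v$ of a different type. Hence all other places are compact, and by the standard criterion the group $\Lambda=\bfG(\calO_k)$ (via restriction of scalars) is an arithmetic lattice in $G$ containing $\Gamma$. Therefore $Y_\infty$ covers $Y=\Lambda\backslash\bbH^n$.
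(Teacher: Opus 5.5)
Your outline has the right overall shape: measure rigidity for the limit of the submanifold measures, an equivariant map, superrigidity, then Margulis' criterion. However, each step has a genuine gap exactly where infinite volume matters.

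\textbf{Step~1.} The sentence ``since the $Z_i$ are distinct and maximal, no limit can concentrate on a closed orbit of a proper intermediate $L$'' is the whole difficulty, and you assert it without proof. In finite volume this is Mozes--Shah \cite{MozesShah}, but there is no analogue here. The limit $\mu_\infty$ is an infinite, non-ergodic Radon measure, so Ratner's theorem does not apply to it. Maximality of each $Z_i$ only gives $\overline{\eta}_i^{\calC}(\calV_F)=0$ for fixed $F$ and large $i$, and this does not pass to weak-$*$ limits. Restricting to a compact neighborhood does not help, since the restrictions are no longer invariant. Even a classification of ergodic invariant Radon measures would leave open components that are homogeneous on proper closed orbits, so an independent nonconcentration argument is required. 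The paper supplies it in three moves:
\begin{enumerate}
\item It transfers the transverse current to the \emph{compact} frame bundles over boundary lifts. There the current becomes finite and $\SO_0(m-1,1)$-invariant, so Ratner applies when $m\ge3$.
\item It compares the resulting decompositions at two distinct boundary lifts. This forces any exceptional mass onto $\calV_F$ for a properly immersed $F$.
\item It excludes mass on $\calV_F$ by a blow-up. Recurrence at two boundary lifts rules out invariant measures on the projectivized normal directions.
\end{enumerate}
For $m=2$ the boundary group is only $A$, and an entropy and leafwise-measure argument is needed to produce unipotent invariance. Also, the limit is a multiple of Haar measure on $\calF^*$, not a Burger--Roblin type measure. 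The later steps use measurability with respect to this Haar class $[\lambda]$.

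\textbf{Step~2: incorrect claims.} $\Gamma\cap H_i$ is not a lattice in $H_i$. The compact maximal submanifolds are finite in number and can be discarded, so the remaining $\Sigma_i$ have infinite volume. Doubling produces a larger group, not $\Gamma\cap H_i$. What is actually needed is only that $\rho(\Gamma\cap H_i)$ has proper Zariski closure. This gives fixed points in some $\bbP(V)(k)$, used to lift the measures to a suspension, yielding a $\Gamma$-map $\calC^*\to\Prob(\bbP(V)(k))$ that is $\lambda$-measurable by Theorem~\ref{thm:measurerigid}. Local rigidity is not available: for $n=3$ such groups have nontrivial quasiconformal deformations. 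The trace field is a number field because $\Gamma$ embeds in the fundamental group of the closed double of $\core(Y_\infty)$. Finally, $\Gamma\subset\bfG(\calO_k)$ is the conclusion, not an input.

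\textbf{Step~2: the missing mechanism.} Most seriously, ``promote equivariance to algebraicity as in \cite{BFMS}'' fails. That argument needs a $U$-gate in $\AR_U(G,[\widetilde{\lambda}])$, which Howe--Moore \cite{HoweMoore} provides in finite volume. Here the class of $g^+$ in $\Gamma\backslash\Omega_\Gamma$ is a nonconstant $U$-invariant function, so $U$ is not ergodic and this category has no initial object. The paper's workaround has three parts:
\begin{enumerate}
\item The lifted $\BMS$ and $\BR$ classes both project to $[\lambda]$ on $\calC^*$, by a projection theorem using $\delta_\Gamma>n-2$.
\item Gates exist in those classes, from BMS mixing and Maucourant--Schapira ergodicity for BR \cite{MS}. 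Algebraic representations are transferred between the two classes via $U$- and $P$-linking, and compatibility trivializes the stabilizer.
\item The resulting map extends over a Bruhat cell by a real-analytic argument that again uses $\delta_\Gamma>n-2$.
\end{enumerate}

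\textbf{Step~3.} This only yields an arithmetic lattice $\Lambda\supset\Gamma$, which may have torsion. To have $Y_\infty$ cover a \emph{manifold}, you need a torsion-free finite-index $\Lambda'\supseteq\Gamma$, and Selberg's lemma does not give one containing $\Gamma$. The paper obtains it from $\Lambda$ being of simplest type together with virtual retractions \cite{BHW} and \cite{KRS}.
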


Note that a convex cocompact hyperbolic $3$-manifold with Fuchsian ends is precisely rigid acylindrical, hence this is the natural general form of this question.
As is well known, there are infinitely many commensurability classes of manifolds satisfying the hypotheses of Theorem~\ref{thm:arithmetic} (see Example~\ref{ex:arithmeticpieces}).
The hypothesis of Fuchsian ends also cannot be omitted, see Theorem~\ref{thm:counterexamples} below.
We expect, however, that Theorem~\ref{thm:arithmetic} extends to geometrically finite manifolds with Fuchsian ends, although cusps in the boundary introduce degenerations that the recurrence and nonconcentration arguments of Sections~\ref{sec:higherdimensionabsolute} and \ref{sec:surfaceabsolute} do not address (see \cite[Thm~1.11]{ZhangElementary}).
There is also a prior obstacle to carrying out our strategy in this generality, since even the orbit closures of geodesic planes are not yet understood for general geometrically finite manifolds with Fuchsian ends; see for instance Oh's survey \cite{OhICM26} for this question in the setting of Apollonian circle packings.
Thus the orbit closure results recalled in Section~\ref{sec:topologicalrigidity} would first need to be extended to this setting before we could carry out our strategy.

We give a detailed sketch of the theorem at the end of the introduction but for now we mention some ingredients so that we can state the main results.
Throughout, we let $\Gamma=\pi_1(Y_\infty)$.
Our proof of Theorem~\ref{thm:arithmetic} rests on the following three steps.
\begin{enumerate}
\item Proving a measure rigidity statement for the limit measure of the normalized volume measures on the maximal geodesic submanifolds,
\item Lifting such a measure to an appropriate suspension space, and
\item Proving a superrigidity statement for certain natural representations of $\Gamma$.
\end{enumerate}
For the reader familiar with \cite{BFMS}, this is broadly the same outline as the proof therein.
However, the first and third steps require several new arguments in this setting, which we discuss at the end of this section.
Indeed, in \cite{BFMS} item (1) was an elaboration of work of Ratner \cite{Ratner90,Ratner2} and Mozes--Shah \cite{MozesShah} and item (3) used a new take on machinery developed by Bader and Furman \cite{BF2}.

In contrast, in this setting our measures are generally infinite so we cannot apply Ratner's theorems directly to them.
Consequently, one of the main ingredients is a measure rigidity result that replaces this.
Throughout this paper, weak-$*$ convergence of Radon measures is understood to be against compactly supported continuous test functions.
After passing to a subsequence, we fix the common dimension $m$ of the maximal geodesic submanifolds and write $G=\SO_0(n,1)$ and $W=W_m=\SO_0(m,1)$.
The space $\calC^*\subset G/W$ consists of the normally framed geodesic $m$-planes meeting the interior of the lifted convex core and $\lambda$ denotes the restriction to $\calC^*$ of Haar measure on $G/W$.
See Section~\ref{sec:background} for the detailed definitions.

\begin{restatable}{thm}{measurerigidity}\label{thm:measurerigid}
Let $2\le m<n$.
Suppose that $Y_\infty$ is an infinite-volume convex cocompact hyperbolic $n$-manifold with Fuchsian ends.
Let $\{\Sigma_i\}$ be an infinite sequence of pairwise distinct maximal properly immersed totally geodesic $m$-submanifolds meeting the convex core of $Y_\infty$.
Let $\mu_i$ be their adapted-frame measures normalized by the volumes of their convex cores and let $\mu_\infty$ be a weak-$*$ limit.
Write $\eta_\infty$ for the restriction to $\calC^*$ of the transverse current associated to $\mu_\infty$.
Then there exists a positive constant $c$ such that $\lambda=c\eta_\infty$.
\end{restatable}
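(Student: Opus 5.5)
We prove Theorem~\ref{thm:measurerigid} by transferring the problem from $\Gamma\backslash G$ to the dual picture on $G/W$. There $\Gamma$ acts on the space of framed $m$-planes, and $\eta_\infty$ is a $\Gamma$-invariant Radon measure on $\calC^*$. The goal is to show that the only such measures arising as limits are multiples of Haar measure $\lambda|_{\calC^*}$.

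\textbf{Step 1: nontriviality of the limit.} We first check that $\mu_\infty$ is nonzero and is not supported on finitely many closed $W$-orbits. Each $\mu_i$ is $W$-invariant and normalized by the volume of $\core(\Sigma_i)$. Since $Y_\infty$ is convex cocompact, the convex core is compact, so a fixed compact neighborhood of the frame bundle over it carries a definite proportion of each $\mu_i$. The constant of proportion should come from a nonconcentration/recurrence estimate for $\Sigma_i$ near the ends, since each end is Fuchsian, i.e.\ totally geodesic boundary. This is where Sections~\ref{sec:higherdimensionabsolute} and \ref{sec:surfaceabsolute} enter, the surface case $m=2$ requiring a separate argument. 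The estimate gives $\mu_\infty\neq 0$, and $\mu_\infty$ is $W$-invariant.

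\textbf{Step 2: the orbit closure of $\supp\eta_\infty$.} Next we use the topological rigidity results recalled in Section~\ref{sec:topologicalrigidity}, which classify $W$-orbit closures meeting $\core$ for convex cocompact manifolds with Fuchsian ends (McMullen--Mohammadi--Oh, Lee--Oh). Each such orbit closure is either a closed orbit of some intermediate $H\supseteq W$, or all of $\RF M$ intersected with the interior. The $\Sigma_i$ are distinct and maximal, so an equidistribution/avoidance argument rules out the limit living on a single closed $H$-orbit of proper intermediate type. Otherwise infinitely many $\Sigma_i$ would lie inside a larger totally geodesic submanifold, contradicting maximality; this argument would adapt the Mozes--Shah linearization in the compact region.

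\textbf{Step 3: invariance under a larger group.} We then upgrade Step 2 to show that, on $\calC^*$, the measure $\eta_\infty$ is invariant under translations by the unipotent directions transverse to $W$. The plan is a Ratner-style polynomial-divergence argument, run locally inside the compact core region. Take two $\mu_\infty$-generic points differing by a small transverse displacement and flow them by unipotents in $W$. Convex cocompactness keeps a positive proportion of times inside a compact set, and the Fuchsian-ends hypothesis guarantees that returning $W$-orbits meet the core in large sets. The resulting limiting displacements give extra invariance, which accumulates to give the invariance of $\eta_\infty$ by a group acting transitively on $\calC^*$. Since $\calC^*$ is an open $G$-invariant-ish region (a union of open cells) on which $G$ acts with Haar as the unique invariant Radon measure up to scalar, we obtain $\eta_\infty=c'\lambda$ on each connected piece. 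Connectedness of $\calC^*$, or $\Gamma$-ergodicity of $\lambda|_{\calC^*}$ via Step 2's density, then gives a single constant.

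The main obstacle is Step 3. The measure is infinite and the usual maximal ergodic and genericity inputs fail. To handle this we would restrict to return times of the unipotent flow to the compact core piece, treating $\mu_\infty$ conditioned on that compact set as a finite measure for the induced transformation, and run the divergence argument there.
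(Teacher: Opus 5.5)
\textbf{There is a genuine gap, and it is in Step 3, which carries essentially all the content of the theorem.}

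You propose to run Ratner's polynomial-divergence argument directly for the infinite measure $\mu_\infty$ on $\Gamma\backslash G$. This relies on generic unipotent orbits spending a positive proportion of time in a compact set, and then inducing on that set. That cannot work, because it already fails for the measure you are trying to reach.

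\begin{itemize}
\item Since $\delta_\Gamma<n-1$, the limit set $\Lambda_\Gamma$ is Lebesgue-null.
\item Fix a one-parameter unipotent subgroup $\{u_t\}\le W$ and write $u_t=wn_tw^{-1}$ with $n_t\in U$.
\item For Haar-almost every $x\in\calF^*$, the endpoint $(xw)^+$ lies in $\partial\bbH^n\ssm\Lambda_\Gamma$.
\item So $xu_t=(xw)n_tw^{-1}$ leaves every compact set as $t\to\pm\infty$, and meets a given compact set only for a bounded set of times.
\end{itemize}

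The unipotent flows are therefore totally dissipative for $m_{\mathrm{Haar}}\vert_{\calF^*}$, and there is nothing to induce on. The thick-return results of Lee--Oh hold only on $\calR$, which is Haar-null. Note also that a Ratner-type classification of $W$-invariant Radon measures on $\calF^*$ is an output of the paper (Theorem~\ref{thm:generalmeasurerigidity}), not an available input.

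The missing idea is to avoid unipotent dynamics on $\Gamma\backslash G$ altogether and slice the current along the compact geodesic boundary. Planes crossing a boundary lift $Z$ are parametrized by $H_Z/J\times\bbB^{n-m}$ with $J=\SO_0(m-1,1)$. The current then becomes a \emph{finite} $J$-invariant measure on the compact space $(\Delta_Z\backslash H_Z)\times I$ (Lemma~\ref{lem:banana}), where the classical Ratner theorem applies for $m\ge3$. Once the exceptional parts are removed, Haar invariance at two distinct lifts gives local $G$-invariance, since $\frakso(V_Z)$ and $\frakso(V_{Z'})$ generate $\frakso(n,1)$.

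For $m=2$ this slicing only gives $J=A$. That is why the paper needs an entropy and leafwise-measure argument: first positive entropy, by excluding an equivariant probability field on $\bbP T\overline{\calC}$; then nontrivial projective stabilizers, from comparing entropies at two boundary lifts. You place the $m=2$ difficulty in Step~1, where there is none. The bounds there are elementary collar estimates, uniform in $m$.

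\textbf{Step 2 also does not deliver what you need.} Topological orbit-closure theorems do not control where the normalized measures concentrate. There is no Mozes--Shah input here either: the $\mu_i$ are infinite measures, and the boundary intersections of the $\Sigma_i$ need not be maximal.

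\begin{itemize}
\item The limit can charge the family $\calV_F$ of planes in a proper closed geodesic submanifold $F$ without any $\Sigma_i$ lying in $F$, so maximality by itself says nothing about $\mu_\infty$.
\item The paper transfers maximality to the limit by blowing up $\calV_F$. Recurrence of $J$ at two boundary lifts then forces the limiting normal direction $\Theta\in\Hom(V_C,V_F^\perp)$ to vanish on two distinct hyperplanes of $V_C$, so $\Theta=0$.
\item The compact case of the same blow-up is needed just to know $\eta_\infty\ne0$ on $\calC^*$. Nonvanishing of $\mu_\infty$ alone still allows all mass to sit on planes inside boundary hypersurfaces.
\item To reduce an arbitrary exceptional boundary component to some $\calV_F$ with $F$ properly immersed, the paper needs the minimal-algebraic-carrier argument, followed by local finiteness and Lee--Oh. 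An equidistribution heuristic does not supply any of this.
\end{itemize}
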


Here, an ambient oriented orthonormal frame is adapted to $\Sigma$ if its last $m$ vectors are tangent to $\Sigma$.
We obtain its adapted-frame measure by lifting hyperbolic volume on $\Sigma$ to the bundle of adapted frames, giving each fiber Haar probability measure, and pushing the resulting measure to $\Gamma\backslash G$ (see Section~\ref{sec:limitingmeasure}).

The conclusion of Theorem~\ref{thm:measurerigid} can also be stated directly on the frame bundle.
Let $\pi_0:\Gamma\backslash G\to Y_\infty=\Gamma\backslash\bbH^n$ be the natural projection and put
$$\calF^*=\{x\in\Gamma\backslash G\mid \pi_0(xW)\cap\interior(\core(Y_\infty))\ne\emptyset\},$$
so that $\calF^*$ consists of the frames whose associated geodesic $m$-plane meets the interior of the convex core.

\begin{cor}\label{cor:limitinghaar}
Under the hypotheses of Theorem~\ref{thm:measurerigid}, there exists $c>0$ such that
$$\mu_\infty\vert_{\calF^*}=c\,m_{\mathrm{Haar}}\vert_{\calF^*},$$
where $m_{\mathrm{Haar}}$ is Haar measure on $\Gamma\backslash G$.
\end{cor}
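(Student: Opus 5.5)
The plan is to translate the statement of Theorem~\ref{thm:measurerigid} from $G/W$ back to $\Gamma\backslash G$ through the standard correspondence between $W$-invariant Radon measures and $\Gamma$-invariant transverse measures. First I record that $\mu_\infty$ is $W$-invariant on $\Gamma\backslash G$. Each adapted-frame measure $\mu_i$ is right $W$-invariant, since it is hyperbolic volume on $\Sigma_i$ lifted with Haar measure in the fibers, i.e.\ it is the pushforward of Haar measure on a closed orbit $x_iW\cdot\Stab$. Weak-$*$ limits of $W$-invariant Radon measures against compactly supported test functions remain $W$-invariant, so $\mu_\infty$ is too.

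Next I disintegrate. Lift $\mu_\infty$ to a $\Gamma$-invariant, right $W$-invariant Radon measure $\tilde\mu_\infty$ on $G$. Fix Haar measure $m_W$ on $W$. The transverse current $\eta_\infty$ is by definition the unique $\Gamma$-invariant Radon measure on $G/W$ with
$$\tilde\mu_\infty=\int_{G/W}(\text{Haar on } gW)\,d\eta_\infty(gW).$$
This uses the fibration $G\to G/W$ with fibers $W$-torsors, which holds by uniqueness of Haar measure on each fiber. The same construction applied to $\Gamma$-lifted Haar measure $m_G$ on $G$ yields $G$-invariant measure on $G/W$, normalized compatibly. By Weil's formula, its restriction to $\calC^*$ is $\lambda$ up to a fixed positive constant.

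Now restrict. By definition, $\calF^*$ is the image in $\Gamma\backslash G$ of the preimage in $G$ of $\calC^*\subset G/W$. Indeed $x=\Gamma g\in\calF^*$ exactly when the plane $gW$ projects to an $m$-plane meeting $\interior(\core(Y_\infty))$, which happens exactly when some $\Gamma$-translate of $gW$ meets the interior of the lifted convex core, i.e.\ $gW\in\calC^*$. This uses that $\calC^*$ is $\Gamma$-invariant, which is clear since the lifted convex core is. Hence the preimage of $\calF^*$ in $G$ is the full saturated set $p^{-1}(\calC^*)$, where $p:G\to G/W$. Restricting the disintegration to this set gives
$$\tilde\mu_\infty\vert_{p^{-1}(\calC^*)}=\int_{\calC^*}(\text{Haar on } gW)\,d\eta_\infty,$$
and analogously for $m_G$ with $\lambda$. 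Theorem~\ref{thm:measurerigid} gives $\lambda=c\eta_\infty$, so $\tilde\mu_\infty$ and $m_G$ agree on $p^{-1}(\calC^*)$ up to the constant $c^{-1}$ times the Weil normalization. Descending by $\Gamma$, which acts freely and properly on $G$, yields $\mu_\infty\vert_{\calF^*}=c'm_{\mathrm{Haar}}\vert_{\calF^*}$ with $c'>0$.

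The only delicate point is the bookkeeping of the correspondence between $W$-invariant Radon measures on $\Gamma\backslash G$ and $\Gamma$-invariant Radon measures on $G/W$. Two things need checking. First, the transverse current is genuinely Radon and is obtained by the disintegration above, matching the paper's definition in Section~\ref{sec:background}. Second, $\calF^*$ is open and saturated, so that restriction commutes with disintegration. Openness follows because meeting an open set is an open condition on planes. I expect no genuine obstacle beyond matching normalizations.
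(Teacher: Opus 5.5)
Your proposal is correct and matches the paper's argument. You recover the frame measure from its transverse current by adjoining Haar measure along the $W$-fibers, observe that the same construction applied to $\lambda$ returns $m_{\mathrm{Haar}}\vert_{\calF^*}$ because $\calF^*=\Gamma\backslash\pi_W^{-1}(\calC^*)$ is $W$-saturated, and then apply Theorem~\ref{thm:measurerigid}, which gives the proportionality with constant $c^{-1}$ after renaming. The openness of $\calF^*$ that you invoke is not needed: saturation alone is what makes restriction commute with the disintegration.
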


As described in Section~\ref{sec:limitingmeasure}, we recover the frame measure from its transverse current by adjoining Haar measure along the $W$-fibers.
The same construction applied to $\lambda$ gives $m_{\mathrm{Haar}}\vert_{\calF^*}$, so the result follows from Theorem~\ref{thm:measurerigid}.

The same boundary and carrier arguments we will use to prove Theorem~\ref{thm:measurerigid} will also lead to a general classification of $W$-invariant measures.

\begin{restatable}{thm}{generalmeasurerigidity}\label{thm:generalmeasurerigidity}
Let $2\le m<n$. 
Suppose that $Y_\infty=\Gamma\backslash\bbH^n$ is an infinite-volume convex cocompact hyperbolic $n$-manifold with Fuchsian ends.
Then every nonzero $W_m$-invariant, $W_m$-ergodic Radon measure on $\calF^*$ has the form
\begin{equation}\label{eqn:Ratnerstyle}
\mu=c\,m_{yH}\vert_{\calF^*},
\end{equation}
where $c>0$, $H$ is a closed subgroup containing $W_m$, $yH$ is closed in $\Gamma\backslash G$, and $m_{yH}$ is Haar measure on the full orbit.
Moreover, there exists $r$ with $m\le r\le n$ such that
$$W_r\le k_0Hk_0^{-1}\le N_G(W_r),$$
for some $k_0\in C_G(W_m)$.
\end{restatable}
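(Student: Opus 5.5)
The plan is to reduce the classification to the topological rigidity of $W_m$-orbit closures recalled in Section~\ref{sec:topologicalrigidity}, and then to a unique-ergodicity statement along each closed orbit. Let $\mu$ be a nonzero $W_m$-invariant, $W_m$-ergodic Radon measure on $\calF^*$. First, pass to the transverse current $\eta$ on $\calC^*\subset G/W_m$ as in Section~\ref{sec:limitingmeasure}. This is a $\Gamma$-invariant Radon measure on the space of normally framed $m$-planes meeting the interior of the lifted convex core. Ergodicity of $\mu$ under $W_m$ becomes ergodicity of $\eta$ under $\Gamma$.

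The key input from the Fuchsian-ends hypothesis is that every $W_m$-orbit in $\calF^*$ has closure of the form $yH\cap\calF^*$. Here $yH$ is a closed orbit with $W_r\le k_0Hk_0^{-1}\le N_G(W_r)$ for some $k_0\in C_G(W_m)$. This is the orbit-closure theorem of Lee--Oh type for convex cocompact manifolds with Fuchsian ends. The closure records that the $m$-plane is contained in a properly immersed totally geodesic $r$-submanifold; the $N_G(W_r)$ factor accounts for the compact centralizer acting on normal frames.

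Next I would use the boundary and carrier arguments from the proof of Theorem~\ref{thm:measurerigid} to show that $\mu$ is carried by a single such $yH\cap\calF^*$. Ergodicity forces $\mu$-almost every point to have the same orbit closure type $(r,H)$. Among these, there are only countably many closed $H$-orbits meeting the compact set $\calF^*\cap\pi_0^{-1}(\core)$, for each of the finitely many conjugacy types. Hence some single $yH$ has positive measure, and then full measure by ergodicity.

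It remains to show that $\mu$ restricted to $yH\cap\calF^*$ is a multiple of $m_{yH}$. Here I would reduce to the $r$-dimensional manifold $\Sigma=\pi_0(yH)$. This is itself convex cocompact with Fuchsian ends, or closed when $r=n$ or when $\Sigma$ lies in the core. The $W_m$-action on $yH/(H\cap \text{compact})$ is then the $\SO_0(m,1)$-action on the frame bundle of $\Sigma$.

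I would run a measure-rigidity argument in this reduced setting. Take a generic point whose $W_m$-orbit is dense in $yH\cap\calF^*$; after the reduction, no intermediate closed orbit can carry $\mu$. Then use unipotent invariance inside $W_m$ together with the horospherical mixing and the recurrence and nonconcentration arguments of Sections~\ref{sec:higherdimensionabsolute} and~\ref{sec:surfaceabsolute}. The goal is to show that $\mu$ is invariant under the additional unipotent directions in $H$, via the Ratner/Margulis--Tomanov polynomial-divergence drift. This would give $H$-invariance, and hence $\mu=c\,m_{yH}\vert_{\calF^*}$.

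The main obstacle is this last step. Because $\mu$ is infinite, Ratner's theorem cannot be applied directly. The drift argument needs recurrence of unipotent trajectories to a compact part of $\calF^*$ with positive proportion. I would supply this from convex cocompactness via the Fuchsian-end nonconcentration estimates. The case $m=2$ is delicate because $W_2$ has a one-dimensional unipotent subgroup, and there one must use the surface arguments of Section~\ref{sec:surfaceabsolute}.
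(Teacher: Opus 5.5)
Your proposal has a genuine gap. The topological reduction is harmless, but it does no work in the main case. The step carrying all the content is your final one, and the mechanism you propose there does not work.

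\textbf{The topological reduction.} The counting step is wrong as stated. Closed orbits of a given type are not countable: right translation by $C_G(W_m)$, which rotates the normal frame, moves a closed orbit $yH$ through a continuous family $yHk$. You should instead apply ergodicity to a $W_m$-invariant Borel map. One option is $x\mapsto\overline{xW_m}$. The paper instead uses the map recording the normal directions of the carrier in $\Gr_{r-m}(\bbR^{n-m})$, followed by the coset map $\Gamma gn\mapsto L_Fn$.

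More importantly, in the principal case $r=n$ you get $yH=\Gamma\backslash G$. The topological input then says only that $\mu$-almost every orbit is dense, which tells you nothing about $\mu$.

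\textbf{Why the last step fails.} A Ratner/Margulis--Tomanov drift argument needs $U$-trajectories of generic points to return to a compact set with positive frequency. For the very measure you are trying to reach, $m_{\Gamma\backslash G}\vert_{\calF^*}$, the group $U=N\cap W_m$ is totally dissipative:
\begin{itemize}
\item since $\delta_\Gamma<n-1$, $\Lambda_\Gamma$ is Lebesgue-null, so for Haar-almost every $x$ the endpoint $x^+$ lies in $\Omega_\Gamma$;
\item then $xU$ lies on a horosphere based in the domain of discontinuity, which is properly embedded and leaves every compact set.
\end{itemize}
This is exactly the source of the $U$-invariant functions in Proposition~\ref{prop:noinitial}. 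The nonconcentration results of Section~\ref{sec:incidencenonconcentration} concern limiting currents giving no mass to $\calV_F$. They supply no recurrence of this kind.

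\textbf{The missing idea.} You need to produce finite measures before invoking unipotent dynamics. The paper does this as follows.
\begin{itemize}
\item Pass to the current $\varrho$. If almost no plane crosses two boundary lifts, $\mu$ lives over a compact geodesic submanifold and is finite, so Ratner's theorem applies directly.
\item Otherwise, use the crossing coordinates of Equation~\eqref{eqn:equivariant} to transfer $\varrho$ to finite $J$-invariant measures on the compact boundary frame bundles $\Delta_Z\backslash H_Z$ (Lemma~\ref{lem:banana}).
\item Get homogeneous decompositions there: Ratner for $m\ge3$, and the entropy and projective-period argument of Section~\ref{sec:surfaceabsolute} for $m=2$.
\item Compare these decompositions at two distinct boundary lifts, using minimal algebraic carriers and $\langle\frakso(V_B),\frakso(V_{B'})\rangle=\frakso(V_B+V_{B'})$. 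This yields $\varrho=b\sum_\gamma\lambda_{\gamma F}$ with $F$ properly immersed (Lemma~\ref{lem:algebraiccarrier}, Proposition~\ref{prop:closedcarrier}).
\end{itemize}
This only controls the underlying planes. Unipotent invariance could in any case give at most $W_r$-invariance, not invariance under the compact normal-holonomy part of $H$. The frame measure is recovered separately by averaging over the compact group $T_L$. One then uses $W_m$-ergodicity of $m_{xL}\vert_{\calF^*}$ (Lemma~\ref{lem:selectedorbitergodicity}) in an extreme-ray argument.
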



There is an extensive history of measure classification for horospherical actions on infinite-volume hyperbolic manifolds.
In the geometrically finite setting, this includes the work of Burger \cite{Bur90}, Roblin \cite{Roblin03}, and Winter \cite{Winter}, with alternative proofs given by Schapira \cite{Sch16} and Mohammadi--Oh \cite[Thm~5.7]{MO16}.
On abelian covers of compact hyperbolic surfaces, Sarig \cite{Sar04} showed that the measures constructed by Babillot and Ledrappier \cite{BL98} exhaust the horocycle-invariant ergodic Radon measures.
Oh and Pan \cite{OP19} established the corresponding classification for maximal horospherical actions on frame bundles of abelian covers of compact hyperbolic $n$-manifolds.
Classifications for horocycle flows and horospherical foliations on more general regular covers and weakly tame hyperbolic surfaces are also obtained in \cite{LS07,Led08,Sar10}.
For broader classes of geometrically infinite quotients, Landesberg--Lindenstrauss \cite{LL22} and Landesberg \cite{Lan21} established normalizer quasi-invariance, while the recent work of Choi and Kim \cite{CK26} gives a classification on the geodesically recurrent locus.
Theorem~\ref{thm:generalmeasurerigidity} complements these horospherical results by giving a homogeneous classification of ergodic Radon measures for the semisimple $W_m$-action on $\calF^*$ for convex cocompact manifolds with Fuchsian ends.

To carry out the second step listed above we follow the suspension construction of \cite{BFMS}, as described in Section~\ref{sec:suspension}.
For the representations arising in the arithmeticity criterion, this gives a $W$-invariant lift of $\mu_\infty$ to a bundle with projective fiber.
Disintegration and Theorem~\ref{thm:measurerigid} then give a $\Gamma$-equivariant, $\lambda$-measurable map to probability measures on that fiber.
This provides the input to the superrigidity theorem below.

For the final step, we recall the definition of compatibility from \cite{BFMS}.
Let $k$ be a local field of characteristic $0$ and let $\bfH$ be a connected adjoint semisimple $k$-algebraic group.
Write $P=MAN<G$ for the minimal parabolic subgroup and let $U=N\cap W$, with the choices made in Section~\ref{sec:background}.
We say that the triple $(G,\bfH,k)$ is compatible if, for every nontrivial $k$-algebraic subgroup $\bfL\le\bfH$ and every continuous homomorphism $\tau:P\to(\bfN_{\bfH}(\bfL)/\bfL)(k)$, the Zariski closures of $\tau(U)$ and $\tau(N)$ agree.

Under this hypothesis, which holds for the target groups of the natural representations arising in the arithmeticity criterion, we then have the following.

\begin{restatable}{thm}{superrigidity}\label{thm:superrigid}
Let $2\le m<n$.
Suppose that $Y_\infty$ is an infinite-volume convex cocompact hyperbolic $n$-manifold with Fuchsian ends and put $W=W_m$ with corresponding $\calC^*$.
Let $k$ be a local field of characteristic $0$ and let $\bfH$ be a connected adjoint semisimple $k$-algebraic group such that $(G,\bfH,k)$ is compatible.
Suppose that $\rho:\Gamma\to\bfH(k)$ has unbounded, Zariski dense image and that $V$ is a faithful irreducible $k$-rational representation of $\bfH$ of dimension at least two.
If there exists a $\Gamma$-equivariant, $\lambda$-measurable map
$$\psi:\calC^*\longrightarrow\Prob(\bbP(V)(k)),$$
where $\Gamma$ acts on $\bbP(V)(k)$ through $\rho$, then there is a continuous homomorphism
$$\widetilde{\rho}:G\longrightarrow\bfH(k),$$
whose restriction to $\Gamma$ agrees with $\rho$.
In particular, $\rho$ is superrigid.
\end{restatable}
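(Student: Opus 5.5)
The plan is to follow the Bader--Furman gate/algebraic-representation strategy as adapted in \cite{BFMS}, replacing the finite-volume ergodicity inputs by the ergodic-theoretic structure of $\lambda$ on $\calC^*$. The guiding principle is that $\calC^*$ plays the role of $G/W$ in \cite{BFMS}. We regard $\psi$ as a $\Gamma$-equivariant map on $\calC^*\subset G/W$. Lifting to $G$ (over the preimage $\widetilde{\calC^*}$ of $\calC^*$ under $G\to G/W$), $\psi$ becomes a map $\phi:\widetilde{\calC^*}\to\Prob(\bbP(V)(k))$ that is left $\Gamma$-equivariant and right $W$-invariant. The first step is ergodicity: we need that $\Gamma$ acts ergodically on $(\calC^*,\lambda)$, or dually that $W$ acts ergodically on $\calF^*$ with respect to $m_{\mathrm{Haar}}\vert_{\calF^*}$. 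I would extract this from Theorem~\ref{thm:generalmeasurerigidity}: any $W$-ergodic component of Haar restricted to $\calF^*$ is $c\,m_{yH}\vert_{\calF^*}$, and absolute continuity forces $H=G$. From ergodicity one also gets that $U$ (hence also $N\cap W$-type unipotents) acts ergodically, via Mautner/Howe--Moore style arguments applied inside $W$.

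Next I would run the algebraic representation machinery. Consider the category of $\Gamma$-equivariant measurable maps from $\widetilde{\calC^*}$, with its natural right $P$-action up to the $W$-invariance, into coset spaces $\bfH/\bfL(k)$. Using ergodicity of $U$ and the Zariski density and unboundedness of $\rho$, I would first upgrade $\psi$ from probability measures to a map into $\bbP(V)(k)$, or more generally into a nontrivial homogeneous variety. The upgrade relies on the standard stabilizer argument for measures on projective space over local fields (Furstenberg's lemma), where unboundedness rules out compact stabilizers. This yields an \emph{initial object}: a $\Gamma$-equivariant map $\Phi:\widetilde{\calC^*}\to\bfH/\bfL(k)$ together with a homomorphism $\tau:P\to(\bfN_{\bfH}(\bfL)/\bfL)(k)$ that records right multiplication by $P$. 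Here I need to check that $\calC^*$ is $P$-saturated in a suitable sense. It is not $G$-invariant, so I would use that the set of frames whose $W$-orbit meets the interior of the core is invariant under right multiplication by $C_G(W)$ and by the stabilizer-type groups, and work with the $U$- and $A$-actions that preserve $\calC^*$. This is where the geometry of Fuchsian ends should enter.

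The $U$-invariance of $\phi$ gives that $\tau(U)$ is trivial, so compatibility forces the Zariski closure of $\tau(N)$ to be trivial as well; hence $\Phi$ is $N$-invariant. Now the key ergodicity step: $N$- and $W$-invariance together generate enough of $G$ that $\Phi$ is invariant under a subgroup acting ergodically. In \cite{BFMS} the groups $W$ and $N$ generate $G$, so $\Phi$ is essentially constant on $G$ modulo $\Gamma$. Here the domain is only $\widetilde{\calC^*}$, so I would instead argue that $\Phi$ extends to a measurable $\Gamma$-equivariant map on all of $G$ invariant under the group generated by $W$ and $N$. The extension comes from saturating $\widetilde{\calC^*}$ by $N$-orbits: a.e.\ point of $G$ lies in $\widetilde{\calC^*}\cdot N$, since limit points are dense enough, using the convex cocompact structure. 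The conclusion is that $\Phi$ is invariant under $G$ on the right. A right $G$-invariant, left $\Gamma$-equivariant map $G\to\bfH/\bfL(k)$ then defines a $\Gamma$-fixed point structure that yields $\widetilde{\rho}$, by the standard argument producing a homomorphism $G\to\bfH(k)$ from the graph of $\Phi$ viewed in $\bfH/\bfL\times G$. Continuity follows from measurability by automatic continuity of measurable homomorphisms.

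I expect the main obstacle to be this extension/saturation step. One must show that invariance established only on the non-$G$-invariant set $\calC^*$ propagates to give full $G$-equivariance, and that $\tau$ is genuinely defined on $P$ even though $\calC^*$ is merely $W$- and not $P$-invariant. I would first try to exploit that, because of Fuchsian ends, every $m$-plane through the interior of the core can be moved by elements of $N$ and $A$ near the identity while staying in $\calC^*$. This openness should give local $P$-actions, and combined with ergodicity it should yield a well-defined $\tau$ via a cocycle-rigidity argument on a positive-measure open set.
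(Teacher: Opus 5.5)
Your proposal breaks down at a step you treat as routine: the claim that, once $\Gamma$ acts ergodically on $(\calC^*,\lambda)$, Mautner/Howe--Moore arguments give ergodicity of $U$. The measure here is infinite, so those arguments only control $L^2$ vectors, not invariant sets of infinite measure. In fact the claim is false. Consider the open set of $g\in\pi_W^{-1}(\calC^*)$ with $g^+\in\Omega_\Gamma=\partial\bbH^n\ssm\Lambda_\Gamma$. On it, the map $g\mapsto\Gamma g^+\in\Gamma\backslash\Omega_\Gamma$ is left $\Gamma$-invariant and right $U$-invariant, and it pushes the Haar class to a smooth nonatomic class. This produces infinitely many $\Gamma\times U$-invariant sets, so the category $\AR_U(G,[\widetilde{\lambda}])$ has no initial object at all (Proposition~\ref{prop:noinitial}).

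Hence the gate $(\Phi,\tau)$ that you want to feed into compatibility does not exist. The obstacle you flagged, that $\pi_W^{-1}(\calC^*)$ is not $P$-invariant, is not the real one. That set is $AU$-invariant, and the gate already fails for $U$. Local $N$- and $A$-moves near $\calC^*$ cannot repair this, because the Bader--Furman existence theorem requires global algebraic triviality of the acting group on the measure class in question.

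The paper's remedy is to change measure class. It shows that the lifted BMS and BR classes both push forward to $[\lambda]$ on $\calC^*$, using Maucourant--Schapira's comparison on $U$-invariant sets and a projection theorem for the Patterson--Sullivan measure with $\delta_\Gamma>n-2$. So $\psi$ yields coset algebraic representations in both classes. Gates exist there: $A$ is mixing for BMS, and every nontrivial connected subgroup of $N$ is ergodic for BR. Neither class carries both the $P$-action and the Weyl element $w$ with the needed ergodicity, so the two are compared through $U$- and $P$-linking transfer functors.

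Your endgame is also misdirected. A right $G$-invariant, left $\Gamma$-equivariant map into $\bfH/\bfL(k)$ is essentially constant and gives a $\rho(\Gamma)$-fixed point. When $\bfL$ is proper this contradicts Zariski density. That is how a nontrivial stabilizer $\bfL_0$ is excluded, not how $\widetilde{\rho}$ is built.

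The extension needs a gate with trivial stabilizer, i.e.\ a $P$-equivariant map into $\bfH(k)$, together with $w$-equivariance of its BMS transfer. The paper rules out non-archimedean $k$ by connectedness of $P$. For archimedean $k$ it glues these equivariances on a translated Bruhat cell $g_0NwP$ to get a real-analytic map. It then uses $\delta_\Gamma>n-2$ to upgrade almost-everywhere identities to genuine ones, producing a continuous homomorphism $G\to\bfH(k)$ whose conjugate is $\widetilde{\rho}$. Your proposal has no argument for this trivial-stabilizer case.
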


For the reader familiar with \cite{BFMS}, though we have stated the theorem similarly to that paper, the proof requires significantly more technology in Bader and Furman's theory of Algebraic Representations of Ergodic Actions (AREA).
Indeed, in the finite-volume case of \cite{BFMS}, Haar measure on $\Gamma\backslash G$ is mixing for any noncompact closed subgroup.
In particular, in that setting one has initial objects in all of the relevant categories and can quickly use that to find an extension of $\rho$.
Here, we do not have this ergodicity and we end up having to juggle several measure classes on the same underlying representations in order to conclude our theorem.
This involves finding categories of algebraic representations which we have deemed $T$-linked and also involves a new take on the final step to construct an extension of $\rho$.

Finally, we show that the hypothesis of Fuchsian ends cannot be omitted by showing counterexamples to the weakened statement which also have large critical exponent.

\begin{restatable}{thm}{counterexample}\label{thm:counterexamples}
Let $n\ge 3$. Then there are infinitely many commensurability classes of infinite-volume convex cocompact hyperbolic $n$-manifolds with infinitely many properly immersed totally geodesic hypersurfaces meeting the convex core and with critical exponent $\delta_\Gamma$ at least $n-2$ but which do not cover an arithmetic manifold.
These manifolds do not have Fuchsian ends.
\end{restatable}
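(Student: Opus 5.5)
The plan is to build the examples by gluing arithmetic pieces in a non-arithmetic way, in the spirit of Gromov--Piatetski-Shapiro, but keeping the resulting manifold convex cocompact and of infinite volume. Fix infinitely many pairwise incommensurable arithmetic groups of simplest type $\Gamma_1,\Gamma_2\le\SO_0(n,1)$, for instance defined by quadratic forms over $\bbQ$ or over distinct totally real fields, chosen so that both contain an $\SO_0(n-1,1)$-arithmetic subgroup coming from the same $(n-1)$-dimensional quadratic subform up to similarity. This lets us find closed embedded totally geodesic hypersurfaces $S_1\subset \Gamma_1'\backslash\bbH^n$ and $S_2\subset\Gamma_2'\backslash\bbH^n$, after passing to finite-index subgroups $\Gamma_i'$ using separability of geometrically finite subgroups (Bergeron, Agol, Wise), which are isometric. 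Cut each closed manifold $M_i$ along a family of such separating, or at least non-separating, embedded hypersurfaces. This produces compact pieces with totally geodesic boundary. I would then take the convex core $C$ to be a piece $P_1$ of $M_1$ glued to a piece $P_2$ of $M_2$ along one isometric boundary component, leaving at least one other boundary component of each piece unglued. The manifold $Y_\infty$ is obtained by attaching Fuchsian-type funnels $S\times[0,\infty)$ to the free boundary components and taking the interior of the resulting hyperbolic manifold.

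In this construction $Y_\infty$ is convex cocompact, and its convex core is the compact piece $C$ with totally geodesic boundary. To get a genuine counterexample to the statement with Fuchsian ends dropped, I would instead bend or glue along the interior hypersurface in a way that is not Fuchsian at the ends. The cleanest route is probably the opposite arrangement: let the core contain a single arithmetic piece $P_1$, and let the ends be non-Fuchsian, for example by doubling along a non-totally-geodesic convex boundary, or by small quasi-Fuchsian deformation of the funnels. Either way, the hypersurfaces inside $P_1$ are totally geodesic and meet the convex core. There are infinitely many of them, since an arithmetic manifold of simplest type has infinitely many closed totally geodesic hypersurfaces, and they survive in a compact piece with positive probability by an equidistribution or counting argument. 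To get them properly immersed in $Y_\infty$, I would use hypersurfaces through the interior that exit through the boundary. Their lifts are $\SO_0(n-1,1)$-orbits whose stabilizers in $\Gamma$ are convex cocompact, because $\Gamma$ is convex cocompact and the planes are closed in $\bbH^n$. The critical exponent bound $\delta_\Gamma\ge n-2$ comes from $\Gamma$ containing a cocompact $(n-1)$-dimensional Fuchsian subgroup of exponent $n-2$, or larger if the core is thick.

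Non-arithmeticity of any cover is proved as in Gromov--Piatetski-Shapiro. If $\Gamma\le\Lambda$ with $\Lambda$ arithmetic, then $\Gamma$ is Zariski dense in $\SO_0(n,1)$ by Zariski density of convex cocompact nonelementary subgroups that do not preserve a plane. Hence the commensurator of $\Lambda$ determines a unique commensurability class. However, $\Gamma$ contains Zariski-dense subgroups $\pi_1(P_1)$ and $\pi_1(P_2)$ from incommensurable arithmetic classes, and these would force the trace fields or quadratic-form invariants of $\Lambda$ to equal both. Infinitely many commensurability classes then follow by varying the pair, using invariant trace fields or the Hasse invariants of the ambient forms.

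The main obstacle I expect is making precise the claim that a convex cocompact Zariski-dense subgroup of an arithmetic lattice determines the lattice's commensurability class through any Zariski-dense subgroup. The issue is that the subgroups are infinite covolume, so it has to go through the adjoint trace field and the $\bbQ$-form generated by $\Ad(\Gamma)$, following Vinberg. The second delicate point is checking that the non-Fuchsian modification actually keeps infinitely many planes meeting the core.
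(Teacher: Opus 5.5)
There is a genuine gap: you never actually produce a group with all the required properties, and each arrangement you describe fails.

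\textbf{The first arrangement cannot work.} Gluing $P_1\cup P_2$ and capping the free boundary with funnels gives Fuchsian ends. Your Gromov--Piatetski-Shapiro argument, which is correct here, shows this manifold covers no arithmetic manifold. By Theorem~\ref{thm:arithmetic} itself it then has only finitely many properly immersed totally geodesic hypersurfaces meeting the core, so it is not even a candidate.

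\textbf{The ``opposite arrangement'' is not a construction.} Your instinct there (keep one arithmetic piece, make the ends non-Fuchsian) is the right one, but neither mechanism you offer exists.
\begin{itemize}
\item Doubling along a non-totally-geodesic boundary does not produce a hyperbolic metric.
\item The ends of a convex cocompact manifold are determined by the holonomy, so they cannot be changed without deforming it. For $n\ge4$, the holonomy of a compact hyperbolic manifold with totally geodesic boundary is locally rigid (Kerckhoff--Storm), so no small deformation of $\pi_1(P_1)$ exists.
\item For $n=3$, a quasiconformal deformation moves the core non-isometrically. The planes inherited from $P_1$ are then no longer totally geodesic, and the new group no longer lies in the arithmetic lattice that produced them.
\end{itemize}
With a single arithmetic piece, your two-piece non-arithmeticity argument also no longer applies.

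\textbf{The source of hypersurfaces is not justified.} Only finitely many compact maximal totally geodesic submanifolds lie in the core of $\pi_1(P_1)\backslash\bbH^n$ (Lee--Oh). So equidistributing closed hypersurfaces of $M_1$ do not stay in $P_1$ in infinite number. Also, a hyperplane $C$ being closed in $\bbH^n$ says nothing about properness of its image; that requires $\Gamma C$ to be discrete in the space of hyperplanes.

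\textbf{The missing construction is a free product with a generic loxodromic.}
\begin{itemize}
\item \emph{Setup.} Write $\Lambda=\Gamma_1*_\Delta\Gamma_2$ with $\Gamma_1=\pi_1(P_1)$, and take $\gamma\in\Gamma_2$ not conjugate into $\Delta$. Gitik's combination theorem gives $\langle\Gamma_1,\gamma^r\rangle\cong\Gamma_1*\langle\gamma^r\rangle$ for large $r$, with ping-pong balls $B^\pm$ about $\gamma^\pm$.
\item \emph{Choice of $g$.} Convex cocompactness and the ping-pong inclusions are open conditions, so they persist for every $g$ near $\gamma^r$. Since $\Comm_G(\Lambda)$ is countable, you may take $g\notin\Comm_G(\Lambda)$.
\item \emph{Basic properties.} Then $\Gamma_g=\langle\Gamma_1,g\rangle$ is Zariski dense, with $\delta_{\Gamma_g}\ge\delta_{\Gamma_1}>n-2$.
\item \emph{Non-arithmeticity.} If $\Gamma_g\le\Lambda'$ with $\Lambda'$ arithmetic, then $\Gamma_1\le\Lambda\cap\Lambda'$ is Zariski dense. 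The Gromov--Piatetski-Shapiro commensurability criterion then makes $\Lambda'$ commensurable with $\Lambda$, forcing $g\in\Comm_G(\Lambda)$. This criterion is all your anticipated ``main obstacle'' needs; no trace-field analysis is required.
\item \emph{Hypersurfaces.} This is exactly where non-Fuchsian ends enter. The ping-pong picture gives a neighborhood of $\partial B^+$ missing $\Lambda_{\Gamma_g}$, while $\partial B^+$ separates $g^+$ from $\Lambda_{\Gamma_1}$. So every round sphere in an open family near $\partial B^+$ bounds a hyperplane that meets the convex hull but whose ideal boundary avoids the limit set. Such a hyperplane is properly immersed with trivial stabilizer. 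Uncountably many of them, modulo the countable group $\Gamma_g$, give infinitely many distinct hypersurfaces. With Fuchsian ends, the domain of discontinuity is a disjoint union of round balls, so no such separating sphere can exist.
\item \emph{Commensurability classes.} Vary $\Lambda$. Since $\Gamma_1$ is one-ended, the Kurosh subgroup theorem places a finite-index subgroup of it inside a conjugate of the factor $\Gamma_1'$ of any commensurable $\Gamma_{g'}=\Gamma_1'*\langle g'\rangle$. This again forces the arithmetic lattices to be commensurable.
\end{itemize}
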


These are outlined in Example~\ref{ex:counterexample}.
We do not claim novelty in the ideas behind these constructions; indeed, most of the work in proving these statements follows from theorems in the literature.
However we were unable to find such manifolds written explicitly in the literature in this way so we call this a theorem herein.

\bigskip\noindent\textbf{Outline of the proofs.}
We now give a more detailed sketch of the proof, following the three steps described above.
The first step, carried out in Section~\ref{sec:limitingmeasure}, is to construct the limiting measure on which the remainder of the argument depends.
To this end, we pass to a sequence of constant dimension $m$, normalize the volume measure on each submanifold by the volume of its convex core, and average over the compact normal-frame fibers.
The geometry of the Fuchsian ends then gives uniform bounds on compact sets, so that, after passing to a subsequence, we obtain a nonzero $W_m$-invariant Radon measure $\mu_\infty$.
Passing to its transverse current gives a $\Gamma$-invariant measure on the plane space, whose restriction to $\calC^*$ we call $\eta_\infty$.

The orbit closure results of Lee and Oh \cite{LO} already tell us that the original maximal submanifolds become dense.
However, this topological information does not control where their normalized volume measures concentrate so an additional argument is needed for the limiting current.
For hypersurfaces, an atom on a closed hyperplane orbit would, after blowing up the corresponding point of hyperplane space, give an invariant probability measure for the projectivized derivative action of its stabilizer.
Zariski density of the hyperplane stabilizer in its intrinsic isometry group rules out this possibility.
In higher codimension, one must similarly exclude concentration on an entire family of planes contained in a proper closed geodesic submanifold.
For $m\ge3$, we blow up this family and compare the support restrictions imposed by recurrence at two distinct boundary intersections.
The compact case applies also when $m=2$ and, together with the orbit closure results of Lee and Oh, shows that $\eta_\infty$ is nonzero and almost every plane crosses two distinct boundary lifts.

We next show that $\eta_\infty$ is a constant multiple of Haar measure.
To convey the idea of the proof, we first describe the argument in the case of hypersurfaces of dimension at least three and then the necessary modifications.
A hyperplane crossing a lift $Z$ of a geodesic boundary component is described by its intersection with $Z$ and its angle of intersection.
Transferring the current to the frame bundle over the compact boundary component gives finite measures when the angle is restricted to a compact subinterval.
Disintegrating over this parameter allows us to apply Ratner's theorem to the conditional measures.
Each is the sum of a Haar component and an ergodic integral of homogeneous measures projecting to closed hypersurfaces of the boundary component.
The exceptional hyperplanes form countably many angle-parametrized families at each lift.
Comparing the decompositions at two distinct lifts, the Haar part at one lift gives zero mass to the exceptional families at the other, while the two exceptional sets have countable intersection.
Non-atomicity therefore eliminates both exceptional parts and the resulting boundary invariances give local $G$-invariance.
Here $m\ge3$ ensures that the boundary intersection group $W_{m-1}$ is generated by unipotents, as required for this application of Ratner's theorem.

In higher codimension, the boundary intersections need not be maximal, so the homogeneous components may be supported on smaller geodesic submanifolds of the boundary.
To circumvent this, we choose an algebraic set of least dimension carrying a positive part of the exceptional measure and compare its descriptions at two boundary lifts.
The point of this minimality is that every positive-measure subset is Zariski dense in the set.
The homogeneous boundary measures force it to be preserved by the intrinsic isometry groups of the corresponding boundary subspaces.
These groups generate the connected isometry group of their geodesic span, so the algebraic set is the family of planes in that span.
The restricted current is locally invariant under the same groups and is therefore a multiple of the intrinsic smooth measure.
Local finiteness forces the translates of this subspace to be discrete and the orbit closure theorem of Lee and Oh then makes it properly immersed.
The exceptional measure would therefore concentrate on a proper closed geodesic submanifold, which ambient maximality and the blow-up argument exclude.
The boundary conditional measures are consequently Haar, and comparison at two lifts gives the desired multiple of $\lambda$.
This is the argument of Section~\ref{sec:higherdimensionabsolute}, while Appendix~\ref{sec:hypersurfaceappendix} gives the shorter codimension-one proof.

For $m=2$, the intersections with the geodesic boundary are themselves geodesics, so the transferred measures are only known to be invariant under the frame flow.
Before we can use Ratner's theorem, we need to show that these measures are also invariant under nontrivial unipotent subgroups.
Section~\ref{sec:surfaceabsolute} obtains this additional invariance by comparing the intersections of the same family of planes with two distinct boundary lifts.
Although the resulting boundary measures live on different spaces, they describe the same current, and this imposes a relation between their entropies.

On the positive-entropy part, this relation forces the unstable leafwise measures to admit nontrivial translation symmetries up to scale.
We start with nearby points on the same stable plaque and use the frame flow and continuity to compare their unstable leafwise measures.
At the second boundary lift, the rescaled endpoint maps for these points differ asymptotically by a nontrivial translation.
Indeed, in the absence of such symmetries, strict concavity would force entropy growth incompatible with the comparison at the two boundary lifts.
Recurrence then gives invariance under nontrivial unipotent subgroups, so Ratner's theorem allows us to return to the carrier argument above.
As before, maximality and the blow-up construction would turn a proper carrier into an equivariant probability field on the projectivized tangent bundle of the plane space.
Recurrence at two boundary lifts rules out such a field, identifying the positive-entropy current with a multiple of Haar measure.
The same obstruction excludes a remaining zero-entropy current, since its small-scale behavior would also produce such a field.
Thus the entire limiting current is a multiple of $\lambda$.

In Section~\ref{sec:generalratner}, we obtain the general measure classification.
We pass from a $W_m$-ergodic Radon measure to its ergodic transverse current on the space of underlying planes.
If this current is atomic, it is supported on a single $\Gamma$-orbit, giving a properly immersed totally geodesic submanifold.
For a non-atomic current, if almost no plane crosses two boundary lifts, the frame measure is supported on a compact set and is finite, so Ratner's theorem applies.
In the remaining case, the homogeneous boundary decomposition and carrier argument above identify the current with intrinsic smooth measure on the planes in a properly immersed carrier and its translates.

To recover the original frame measure, ergodicity fixes the position of the carrier in the normal-frame coordinates and selects a normal-holonomy coset.
After a rotation centralizing $W_m$, this gives a single closed frame orbit.
We show that its restricted Haar measure is $W_m$-ergodic, and compact averaging then identifies the original measure with a multiple of this measure.
Translating back gives the subgroup description in Theorem~\ref{thm:generalmeasurerigidity}.

With the limiting current in hand, we can carry out the second step using the suspension construction from \cite{BFMS}, as described in Section~\ref{sec:suspension}.
As explained in Section~\ref{sec:arithmeticitycriterion}, the adjoint trace field of $\Gamma$ is a number field, since doubling the convex core gives a closed hyperbolic manifold whose fundamental group contains $\Gamma$.
For a natural representation arising from a place of this field, the stabilizers of the geodesic submanifolds give fixed points in a suitable projective representation.
These fixed points allow us to lift the adapted-frame measures to the corresponding suspension space.
Compactness of the projective fiber gives a limiting lifted measure and disintegrating this measure gives a $\Gamma$-equivariant map to probability measures on the projective fiber.
Theorem~\ref{thm:measurerigid} is precisely what makes this map measurable with respect to $\lambda$.

The final step, carried out in Section~\ref{sec:AREA}, is to turn this map into an extension of $\rho$.
As explained in Section~\ref{sec:insufficient}, we cannot simply follow \cite{BFMS}, since the relevant category of $U$-algebraic representations for the lifted Haar measure does not have initial objects.
Instead, we realize the same map using the lifted Bowen--Margulis--Sullivan (BMS) and Burger--Roblin (BR) measure classes, whose pushforwards to $\calC^*$ both agree with $[\lambda]$ by Section~\ref{sec:measureclasscomparison}.
After passing from probability measures to a proper coset space $\bfH/\bfL(k)$, we therefore have two algebraic representations coming from the same map but with different measure classes.
The notion of $T$-linking, developed in Section~\ref{sec:Tlinked}, provides the transfer functors that allow us to compare them.

In Section~\ref{sec:stabilizerreduction}, we work in the BR measure class and obtain an initial object in the category of $P$-algebraic representations with target $\bfH/\bfL_0(k)$, later called a $P$-gate.
If $\bfL_0$ is nontrivial, compatibility promotes the $U$-invariance of the map to $\bfH/\bfL(k)$ to $N$-invariance, while its transfer to the BMS measure class is invariant under the Weyl element.
These invariances occur on different measured spaces, but Lemma~\ref{lem:linkedconstancy} uses the transfer identity to compare them and prove constancy.
Since $\bfL$ is proper, this contradicts Zariski density of $\rho(\Gamma)$, so $\bfL_0$ must be trivial.

We complete the argument in Section~\ref{sec:superrigidityproof}.
We now have a $P$-equivariant map into $\bfH(k)$ in the BR measure class, whose transfer to the BMS measure class is also equivariant under the normalizer of $A$.
If $k$ is non-archimedean, connectedness of $P$ and total disconnectedness of $\bfH(k)$ give a contradiction by the same constancy argument.
In the archimedean case, the $P$-equivariance and the Weyl element give a real-analytic map on an open Bruhat cell.
The inequality $\delta_\Gamma>n-2$ then allows us to promote the almost-everywhere identities coming from the Patterson--Sullivan measure to real-analytic identities.
These identities give a continuous homomorphism on $G$, and after conjugation we obtain the extension of $\rho$ we were looking for.
Applying this to the natural representations from the arithmeticity criterion places $\Gamma$ in an arithmetic lattice $\Lambda$.
The compact totally geodesic boundary implies that $\Lambda$ is of simplest type and a final torsion-removal argument gives a torsion-free finite-index subgroup containing $\Gamma$.
Thus $Y_\infty$ covers an arithmetic hyperbolic manifold, completing the proof of Theorem~\ref{thm:arithmetic}.

\bigskip\noindent\textbf{Acknowledgements:} The author was partially supported by NSF grant DMS--2405264/2630020.
We are grateful to Uri Bader and Hee Oh for comments on an earlier draft of the paper that improved its exposition.

\bigskip\noindent\textbf{AI Disclosure:} Several of the ideas in Section \ref{sec:surfaceabsolute} were developed through conversations with an LLM.
In addition, the author used LLM tools for literature searches, proofreading, figure creation, and proof checking.

\section{Background}\label{sec:background}

We work in the hyperboloid model, writing $\bbR^{n,1}$ for $\bbR^{n+1}$ equipped with the standard $(n,1)$ symmetric bilinear form
$$\langle x,y\rangle=x_1y_1+\cdots+x_ny_n-x_{n+1}y_{n+1}.$$
In this model,
$$\bbH^n=\{x\in\bbR^{n,1}\mid \langle x,x\rangle=-1,\ x_{n+1}>0\},$$
and we identify $\partial\bbH^n\cong S^{n-1}$ with the rays in the null cone $\{\xi\mid \langle\xi,\xi\rangle=0,\ \xi_{n+1}>0\}$.
The orientation-preserving isometry group of $\bbH^n$ is $G=\SO_0(n,1)$, the identity component of the group preserving this form.
For our basepoint and distinguished unit tangent vector, we take $o=e_{n+1}$ and $v_0=e_n$, with endpoints $\xi^+=[e_{n+1}+e_n]$ and $\xi^-=[e_{n+1}-e_n]$.
The stabilizer $K=\Stab_G(o)\cong\SO(n)$ is a maximal compact subgroup and acts by rotations on $T_o\bbH^n=\Span(e_1,\ldots,e_n)$, therefore the map $gK\mapsto go$ identifies $G/K$ with $\bbH^n$.

To describe the corresponding boundary and geodesic subspaces, we use the minimal parabolic subgroup $P=\Stab_G(\xi^+)$ and its Langlands decomposition $P=MAN$.
The compact factor $M=\Stab_G(o,v_0)\cong\SO(n-1)$ acts by rotations on $\Span(e_1,\ldots,e_{n-1})$, while $A=\{a_t\mid t\in\bbR\}$ moves along the geodesic determined by $v_0$ and is given by
$$a_te_n=\cosh(t)e_n+\sinh(t)e_{n+1},\qquad a_te_{n+1}=\sinh(t)e_n+\cosh(t)e_{n+1},$$
with $a_t$ fixing $e_1,\ldots,e_{n-1}$.
The remaining factor is the unipotent radical of $P$, the horospherical group which we always denote by $N\cong\bbR^{n-1}$.
Writing $g^\pm=g\xi^\pm$ for $g\in G$, we can identify $G/P$ with $\partial\bbH^n$ by the map $gP\mapsto g^+$.
Right translation by $P$, hence by $N$, preserves this positive endpoint.
Keeping track of a unit tangent vector instead gives the identification $gM\mapsto(g o,g v_0)$ of $G/M$ with $T^1\bbH^n$, under which right translation by $a_t$ is the geodesic flow.
Passing to $G/MA$ forgets the position along the geodesic and identifies this quotient with the ordered pairs of distinct boundary points via $gMA\mapsto(g^-,g^+)$.

We will also use the compact description of the boundary coming from the Iwasawa decomposition $G=KAN$.
This identifies $G/AN$ with $K$ via $k\mapsto kAN$ and since $M=K\cap P$, the positive endpoint map gives $K/M\cong\partial\bbH^n$ by $kM\mapsto k\xi^+$.
Thus the projection $G/AN\to G/P$ becomes $K\to K/M$, whose fibers are right $M$-cosets.

For the spaces of totally geodesic planes, we fix an integer $2\le m<n$ and use the standard subspace
$$E_m=\Span(e_{n-m+1},\ldots,e_{n+1}).$$
There are two stabilizers to keep track of, depending on whether we remember the normal frame
$$W=W_m=\Stab_G(e_1,\ldots,e_{n-m})\cong\SO_0(m,1),\qquad S_m=\Stab_G(E_m).$$
The first is the lower-right block copy of $\SO_0(m,1)$ fixing the ordered normal frame pointwise, while the second preserves the underlying subspace.
The full stabilizer satisfies $S_m=N_G(W)$, with identity component $S_m^\circ=W\SO(n-m)$ and compact quotient $S_m/W$.
We write $C_0=\bbH^n\cap E_m$ and use the same notation in other dimensions, with $W_n=S_n=G$.
Our choice of standard subspace ensures that $A<W$, and intersecting the boundary stabilizer with $W$ gives
$$U=U_W=N\cap W\cong\bbR^{m-1},\qquad P_W=P\cap W=(M\cap W)AU.$$
Thus $P_W$ is the stabilizer of $\xi^+$ in $W$ and $U$ is its unipotent radical.

The resulting homogeneous spaces are $\calC=G/W$ and $\overline{\calC}=G/S_m$.
A point of the former records a totally geodesic $m$-plane together with its ordered orthonormal normal frame, while a point of the latter records only the underlying plane.
Forgetting the normal frame gives the natural map $p:\calC\to\overline{\calC}$, which has compact fibers $S_m/W$.
In the sequel, we call points of $\calC$ \emph{normally framed planes}, or simply planes when the normal frame plays no role.

Each underlying plane is determined by its ideal boundary, which is a round $(m-1)$-sphere in $S^{n-1}$.
Since $\partial C_0$ contains $\xi^+$, the projection $G\to G/W$ maps each right $P$-coset $gP$ onto the normally framed spheres through $g^+$, with fibers modeled on $P_W$.

\subsection{Convex cocompact hyperbolic manifolds}\label{sec:backgroundCC}

We now pass to the quotient by a torsion-free discrete subgroup $\Gamma$ of $G$, which we almost always identify with the fundamental group of the oriented hyperbolic $n$-manifold $Y_\infty=\Gamma\backslash\bbH^n$.
Recall that its \emph{limit set} $\Lambda_\Gamma\subseteq\partial\bbH^n$ consists of the accumulation points of an orbit $\Gamma o$ on the boundary, independently of the choice of $o\in\bbH^n$.
Taking its closed convex hull gives $D_\Gamma=\Hull(\Lambda_\Gamma)$, the intersection of the closed hyperbolic half-spaces whose closures in $\bbH^n\cup\partial\bbH^n$ contain $\Lambda_\Gamma$.
The quotient $\core(Y_\infty)=\Gamma\backslash D_\Gamma$ is the \emph{convex core} of $Y_\infty$ and we call the manifold \emph{convex cocompact} when this core is compact.

Following \cite[Def~1.1]{LO}, a convex cocompact hyperbolic manifold has \emph{Fuchsian ends} if its convex core has nonempty interior and totally geodesic boundary.
We assume throughout that $Y_\infty$ is an infinite-volume convex cocompact hyperbolic manifold with Fuchsian ends.
Thus $D_\Gamma$ is full-dimensional in $\bbH^n$ and the boundary of the convex core is nonempty and consists of totally geodesic hypersurfaces, that is, totally geodesic submanifolds of codimension one.
Geometrically, each connected component outside the convex core is a funnel $\Sigma\times(0,\infty)$, with metric $dt^2+\cosh^2(t)h_\Sigma$, where $\Sigma$ is a boundary component of the core and $h_\Sigma$ is its induced hyperbolic metric.

In dimension three, a compact core is \emph{acylindrical} if it has incompressible boundary and every essential annulus is boundary parallel.
Convex cocompact hyperbolic $3$-manifolds with nonempty Fuchsian ends are called \emph{rigid acylindrical} in \cite{MMO}.

For the dynamics, we work in the oriented frame bundle $\calF=\Gamma\backslash G$ of $Y_\infty$.
Here we want to single out the frames whose geodesics remain in a bounded part of the manifold, either for all time or in the forward direction.
Upstairs in $G$, the relevant sets are
$$X_0=\{g\in G\mid g^+,g^-\in\Lambda_\Gamma\},\qquad
X_+=\{g\in G\mid g^+\in\Lambda_\Gamma\}.$$
Their quotients give the \emph{renormalized frame bundle} and its one-sided counterpart
$$\calR=\Gamma\backslash X_0,\qquad
\calR_+=\Gamma\backslash X_+.$$
Thus $\calR$ requires both endpoints to lie in $\Lambda_\Gamma$, while $\calR_+$ requires only the positive endpoint.
Equivalently, if $A^+=\{a_t\mid t\ge0\}$ then
$$\calR=\{x\in\calF\mid xA\text{ is relatively compact}\},\qquad
\calR_+=\{x\in\calF\mid xA^+\text{ is relatively compact}\}.$$
These are the sets denoted $\RF Y_\infty$ and $\RF_+Y_\infty$ in \cite{LO}.
The set $\calR$ is compact and $A$-invariant, since its frames follow complete geodesics contained in the compact convex core.

This gives us a compact setting for the geodesic dynamics within the infinite-volume frame bundle.
In particular, $\calR$ supports the finite Bowen--Margulis--Sullivan measure introduced in Section~\ref{sec:PSBMSBR} so Poincar\'e recurrence applies to the frame flow on this set.
Although $\calR$ is generally not invariant under unipotent subgroups of $N$, the Fuchsian ends hypothesis gives thick returns to $\calR$ for trajectories starting there, which is a central input to the orbit closure arguments recalled in Section~\ref{sec:topologicalrigidity}.
The larger set $\calR_+$ is $N$-invariant and provides the natural setting for the corresponding unipotent orbit closures and the Burger--Roblin measure.

\subsection{The plane space and its smooth measure}\label{sec:planespace}

The space of normally framed planes meeting the interior of the lifted convex core is
$$\calC^*=\{C\in\calC\mid C\cap\interior(D_\Gamma)\ne\emptyset\},$$
and similarly its image under the projection map $p$ is denoted $\overline{\calC}^*$.
Both are open $\Gamma$-invariant subsets of their respective homogeneous spaces.
The corresponding subset $\calF^*=\Gamma\backslash\pi_W^{-1}(\calC^*)$ of $\calF$ is right $W$-invariant, where $\pi_W:G\to G/W$ is the quotient map.
Its definition records the plane associated to a frame though the basepoint of that original frame need not itself lie in the convex core.

The space $\calC$ carries a nonzero $G$-invariant Radon measure $\lambda_{\calC}$, unique up to multiplication by a positive constant.
This is the quotient measure induced by Haar measure $\mu_G$ on $G$.
It is smooth, meaning that in a smooth coordinate chart of dimension $d$ it has the form
$$d\lambda_{\calC}(x)=f(x)\,dx_1\cdots dx_d,$$
where $f$ is smooth and strictly positive.
Thus its null sets agree with those of Lebesgue measure in every chart.
The analogous invariant measure $\overline{\lambda}_{\calC}$ on $\overline{\calC}$ is normalized so that $\lambda_{\calC}$ is its Haar probability lift along $p$.

Throughout, we write $\lambda=\lambda_{\calC}\vert_{\calC^*}$ and refer to it as Haar measure on $\calC^*$.
Similarly, $\overline{\lambda}$ is its projection to $\overline{\calC}^*$.
These are smooth $\Gamma$-invariant Radon measures, positive on every nonempty open subset.
The measure class $[\lambda]$ is independent of its normalization.
Whenever $\lambda$ is regarded as a measure on all of $\calC$, we mean its extension by zero outside $\calC^*$.

\subsection{Topological orbit closure rigidity}\label{sec:topologicalrigidity}

Ratner's orbit closure theorem \cite{RatnerOrbit91} gives the finite-volume model for the rigidity results discussed in this subsection.
In our infinite-volume setting, the geometric hypotheses on $Y_\infty$ allow analogous conclusions for unipotent flows and totally geodesic submanifolds.
We recall the work of McMullen--Mohammadi--Oh \cite{MMOhoro,MMO,MMOcore} in dimension three and its higher-dimensional extension by Lee--Oh \cite{LO} and explain the consequences for the space $\calC^*$ used here.
The latter authors also establish properness and finiteness results for totally geodesic submanifolds contained in the convex core in \cite{LOcore}.

First consider the setting where $n=3$.
A geodesic plane means the image of a totally geodesic immersion $\bbH^2\to Y_\infty$.
The following is the part of the classification in \cite[Thms~1.1, ~1.5]{MMO} that concerns planes meeting the interior of the convex core.

\begin{thm}[McMullen--Mohammadi--Oh]\label{thm:MMOplanes}
Suppose that $n=3$ and let $\Sigma\subset Y_\infty$ be a geodesic plane meeting $\interior(\core(Y_\infty))$.
Then either $\Sigma$ is a properly immersed convex cocompact hyperbolic surface or $\Sigma$ is dense in $Y_\infty$.
In the former case, the surface may be compact or have infinite area.
\end{thm}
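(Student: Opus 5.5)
The plan is to derive Theorem~\ref{thm:MMOplanes} from the orbit-closure classification for the action of $H=W_2\cong\SO_0(2,1)$ on the frame bundle $\calF=\Gamma\backslash G$ in \cite{MMO}. A geodesic plane $\Sigma\subset Y_\infty$ is the image $\pi_0(xH)$ for some frame $x\in\calF$. Under the identification $G/W\cong\calC$, the condition that $\Sigma$ meets $\interior(\core(Y_\infty))$ says precisely that $x\in\calF^*$. Equivalently, a lift of the boundary circle $C=\partial(gC_0)$ separates the limit set $\Lambda_\Gamma$, meaning both complementary disks of $C$ meet $\Lambda_\Gamma$. So the task is to classify the closures $\overline{xH}$ for $x\in\calF^*$ and then project to $Y_\infty$.

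First, I would establish the dichotomy for $U$-orbits, where $U\cong\bbR$ is the horocyclic subgroup of $H$, acting on $\calR_+$. The key geometric input is the Fuchsian ends hypothesis. The complement $\partial\bbH^3\ssm\Lambda_\Gamma$ is a union of round open disks whose closures are pairwise disjoint and whose moduli of separation are bounded below, because the core boundary is compact and totally geodesic. From this I would deduce that for $y\in\calR$ the return-time set $\{u\in U\cong\bbR\mid yu\in\calR\}$ is $K$-thick for a uniform $K$. That is, every annulus $\{r<|t|<Kr\}$ contains a return time.

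Second, I would run the unipotent polynomial-divergence argument of Ratner and Margulis inside the compact set $\calR$. Thickness plays the role that recurrence plays in finite volume. The argument takes two nearby points in $\overline{xH}\cap\calR$ that are not on a common $H$-orbit and picks up extra invariance. This yields the following: if $\overline{xH}\ne xH$, then $\overline{xH}$ is invariant under a one-parameter subgroup transverse to $H$ inside $AN$, and then under all of $N$. Hence $\overline{xH}\supseteq\calR_+$, and by $H$-invariance it equals the set $\calF_\Lambda$ of frames whose circle meets $\Lambda_\Gamma$. I expect this step to be the main obstacle. The delicate points are producing a pair of nearby points in $\overline{xH}\cap\calR$ that differ in a direction transverse to $H$, and controlling divergence only along the thick return set. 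For the non-closed, non-dense case, I would also need the intermediate possibility of a closed $AU$-orbit closure, and I would rule it out using that $C$ separates $\Lambda_\Gamma$.

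Third, I would project both alternatives to $Y_\infty$. If $\overline{xH}=\calF_\Lambda$, then every point of $\bbH^3$ lies on some circle-plane whose boundary meets $\Lambda_\Gamma$, so $\pi_0(\calF_\Lambda)=Y_\infty$ and $\Sigma$ is dense. If $xH$ is closed, then $\Sigma$ is properly immersed, and $\Delta=\Gamma\cap gHg^{-1}$ acts with quotient $\Delta\backslash gC_0\to\Sigma$. For convex cocompactness I would argue that $\Lambda_\Delta=C\cap\Lambda_\Gamma$. Then $\Hull(\Lambda_\Delta)\subset gC_0\cap D_\Gamma$, and properness together with compactness of $\core(Y_\infty)$ shows that $\Delta\backslash(gC_0\cap D_\Gamma)$ is compact. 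This gives a convex cocompact surface: compact when $C\subset\Lambda_\Gamma$, and of infinite area otherwise.
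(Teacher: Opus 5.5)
The paper does not prove this statement. It quotes it from McMullen--Mohammadi--Oh \cite{MMO}, so your proposal is a reconstruction of their argument.

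Your overall architecture is the right one:
\begin{enumerate}
\item the positive modulus of the round disks of $\partial\bbH^3\ssm\Lambda_\Gamma$ gives $K$-thick $U$-returns to the compact set $\calR$;
\item unipotent divergence is run inside $\calR$;
\item an orbit-closure input on $\calR_+$ is applied;
\item properness finishes the argument.
\end{enumerate}
Step~3 is fine. A ray in $gC_0$ ending at $\xi\in C\cap\Lambda_\Gamma$ is asymptotic to $D_\Gamma$, so properness makes it recurrent in $\Delta\backslash gC_0$. This gives $\Lambda_\Delta=C\cap\Lambda_\Gamma$ and the convex cocompactness you want.

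The gap is Step~2: its stated conclusion is false. Suppose a closed set is invariant under $H$ and under a one-parameter subgroup of $AN$ not contained in $AU=H\cap AN$. Then it is invariant under the closed subgroup they generate. That subgroup is all of $G$, because $\mathfrak{g}/\mathfrak{h}\cong\mathfrak{h}$ is irreducible under $H$, so $\mathfrak{h}$ is a maximal subalgebra. Your claim would therefore give $\overline{xH}=\calF$, which is impossible in infinite volume. Indeed $\overline{xH}\subseteq\calF_\Lambda$, and the frames whose circle lies in a component of $\partial\bbH^3\ssm\Lambda_\Gamma$ form a nonempty open set outside $\calF_\Lambda$.

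Because $\calR$ is not $U$-invariant, divergence along thick return times only yields a relative statement. For a $U$-minimal set $Y\subseteq\overline{xH}\cap\calR$ relative to $\calR$, one gets $Yv\subseteq\overline{xH}$ for nontrivial $v$ (accumulating at the identity) in the line $V\le N$ complementary to $U$. This then has to be spread out using the $A$-action.

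Two further ingredients are missing.
\begin{itemize}
\item Passing from this relative statement to $\calR_+\subseteq\overline{xH}$ needs a separate theorem you do not state: density of horospherical orbits in $\calR_+$ on the frame bundle. That result relies on Zariski density of $\Gamma$.
\item You flag, but do not supply, the argument that non-closedness of $xH$ shows up as transverse accumulation inside $\calR$ near $Y$. This must include the case where $Y$ lies in a closed $H$-orbit contained in $\overline{xH}$. That is the intermediate case requiring work, not a closed $AU$-orbit, since an $H$-orbit closure is a union of $H$-orbits.
\end{itemize}
These steps are the actual content of the theorem, which is why the paper cites \cite{MMO} rather than proving it.
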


McMullen--Mohammadi--Oh also treat convex cocompact acylindrical $3$-manifolds whose convex core boundary need not be totally geodesic \cite[Thms~1.1--1.3]{MMOcore}.
For such a manifold $Z$, put $Z^*=\interior(\core(Z))$.
Then they show that every nonempty intersection $\Sigma\cap Z^*$ is closed or dense in $Z^*$, there are only countably many closed planes in $Z^*$, and any sequence of distinct closed planes becomes dense in $Z^*$.
Benoist and Oh \cite{BOPlanes} extend the closed-or-dense dichotomy and countability of closed planes in $Z^*$ to geometrically finite acylindrical hyperbolic $3$-manifolds.
This is not the case if one considers all of $Z$ instead of $Z^*$ \cite[Thm~1.2]{Zhang}.

The higher-dimensional result is naturally stated on the renormalized frame bundle $\calR$ from Section~\ref{sec:backgroundCC}.

\begin{thm}[Lee--Oh, {\cite[Cor~1.4]{LO}}]\label{thm:LOrelative}
Let $H\le G$ be a nontrivial connected closed subgroup generated by one-parameter unipotent subgroups and normalized by $A$.
For every $x\in\calR$, there is a connected closed subgroup $L\le G$ containing $H$, with $xL$ closed in $\calF$, such that
$$\overline{xH}\cap\calR=xL\cap\calR.$$
\end{thm}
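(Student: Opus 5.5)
Since this is \cite[Cor~1.4]{LO}, we only indicate how we would reconstruct it, following the strategy of McMullen--Mohammadi--Oh in dimension three. The plan is a Ratner-style ``polynomial divergence plus additional invariance'' argument, run inside the compact set $\calR$ rather than in a finite-volume space. Fix $x\in\calR$ and put $X=\overline{xH}\cap\calR$. Among connected closed subgroups $L\supseteq H$ such that some $y\in X$ has $yL$ closed and $X\supseteq yL\cap\calR$, choose one of maximal dimension. Such subgroups exist, since one can start from $H$ itself whenever $xH$ is closed. We then aim to show $X=yL\cap\calR$ and that we may take $y=x$.

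The first key input is the \emph{thick return} property coming from Fuchsian ends. For $y\in\calR$ and a one-parameter unipotent $u_t\subset H$, the return set $\{t\mid yu_t\in\calR\}$ is $k$-thick for a uniform $k$. Concretely, in every annulus $[r,kr]$ there is a return time, in both directions. This follows because $\Lambda_\Gamma$ is the complement of a disjoint union of round balls whose closures are pairwise separated by a uniform modulus; that separation is exactly what the totally geodesic convex core boundary and compactness of $\core(Y_\infty)$ give.

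With thickness, one runs the standard unipotent blowup. Suppose $X$ contains a point $z$ not in the closed orbit $yL$ but accumulating on it, say $z=yg$ with $g\to e$ transversally. Then polynomial divergence of $u_t g u_{-t}$, evaluated at thick return times, produces limit elements of $N_G(H)$ (or of $AN$-type directions) transverse to $L$ which preserve $X$. The group $A$ normalizes $H$, and $\calR$ is $A$-invariant, so one can flow by $A$ to renormalize. This yields invariance of $X$ under a strictly larger unipotent-generated group $L'$. That contradicts maximality once one shows $yL'$ is closed, which follows by feeding $L'$ back into the same scheme.

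The main obstacle, and the step we expect to be hardest, is avoidance. We must show that for a generic point, one-parameter unipotent trajectories intersected with $\calR$ do not stay near the countable union of singular closed orbits $zL''$ with $\dim L''<\dim L$. In finite volume this is handled by Dani--Margulis linearization. Here we would instead combine thickness with an induction on $\dim L$: first treat the case where $L$ is the stabilizer of a geodesic boundary component (these closed orbits are countable and isolated in a suitable sense), then establish equidistribution-free avoidance using a Margulis function, or a ``$\beta$-regularity'' estimate adapted to thick sets. Finally, one identifies the closed orbits $yL$ with properly immersed convex cocompact geodesic submanifolds with Fuchsian ends, so that the induction applies, concluding $\overline{xH}\cap\calR=xL\cap\calR$.
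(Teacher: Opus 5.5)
The paper gives no proof of this statement; it imports it from \cite[Cor~1.4]{LO}. Your outline therefore has to stand on its own as a reconstruction of Lee--Oh, and it has genuine gaps.

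\emph{Structure of the argument.} The maximal-$L$ scheme has no starting point. You only exhibit a candidate when $xH$ is already closed. Otherwise nothing shows that $X=\overline{xH}\cap\calR$ contains any $yL\cap\calR$ with $yL$ closed and $L\supseteq H$; the only automatic candidate, $L=G$, already requires $X=\calR$. Producing such an orbit is a large part of what has to be proved.

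\emph{The blowup step.} It does not work as stated. Suppose $xh_i=y'g_i$ with $y'\in yL\cap\calR$ and $g_i\to e$ transversally. The limit $y''v$ of $xh_iu_{t_i}$ lies in $\overline{xH}$ but need not lie in $\calR$, so you do not obtain elements preserving $X$. Likewise, ``feeding $L'$ back into the same scheme'' to make $yL'$ closed is circular. It would only work inside an induction whose hypotheses you check for the enlarged group and the translated point: unipotent generation, normalization by $A$, and basepoint in $\calR$. This is why McMullen--Mohammadi--Oh and Lee--Oh organize the argument around closed invariant sets that are minimal relative to $\calR$ or $\calR_+$, not around a maximal closed orbit.

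\emph{Thickness.} The thickness you can use is only for the symmetric set $\{t\mid r\le|t|\le kr\}$, and only for one-parameter subgroups of $N$ (or $N^-$), not for an arbitrary unipotent one-parameter subgroup of $H$. One-sided thickness fails when $y^-$ lies on the boundary sphere of a component of $\partial\bbH^n\ssm\Lambda_\Gamma$ and the circle $t\mapsto(yu_t)^-$ enters that ball.

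\emph{Avoidance.} This is the more serious gap: you correctly flag it as the main obstacle but do not resolve it.
\begin{itemize}
\item The return set $\{u\in U\mid yu\in\calR\}$ is only $k$-thick. It carries no lower density bound and is typically Lebesgue-null.
\item Dani--Margulis linearization, $(C,\alpha)$-good polynomial estimates, and Margulis-function arguments all control the proportion of \emph{Lebesgue} time spent near the singular set. They say nothing about the return times, all of which could lie near singular orbits.
\item The singular closed orbits are not isolated. Closed orbits of intermediate subgroups project to properly immersed geodesic submanifolds, which can be infinite in number and accumulate (compare Corollary~\ref{cor:planeclosures}).
\end{itemize}
This accumulation is exactly the higher-dimensional difficulty Lee--Oh single out, and stabilizers of boundary components do not give a useful base case for it. Their main new input is an avoidance theorem adapted to thick return sets, established inductively together with the orbit-closure statements themselves. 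Without an argument of this kind, you cannot produce points of $\overline{xH}\cap\calR$ off the singular set, i.e.\ points generic for the candidate $L$. Only at such points does the minimal-set and blowup analysis yield all of $yL\cap\calR$, rather than just a closed orbit of a smaller intermediate subgroup. So the sketch does not prove the statement; the paper itself only uses it as a black box.
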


The closure in this formula is taken in the oriented frame bundle $\calF$ from Section~\ref{sec:backgroundCC}.
In particular, this gives a relative homogeneity statement; it describes the closure on $\calR$, which need not itself be $H$-invariant.
Theorem~1.2 of \cite{LO} gives more precise orbit closure statements on the appropriate invariant sets, including that orbit closures of nontrivial connected closed subgroups $U\le N$ are of the form $\overline{xU}=xL\cap\calR_+$ for $x\in\calR_+$ and for a suitable closed homogeneous orbit $xL$.

As $W=W_m$ is generated by unipotent subgroups, these theorems apply to the geodesic submanifolds considered here.
For the geometric orbit statements, it is convenient to forget the normal frames and simply use the space of planes $\overline{\calC}=G/S_m$.
Indeed, closed $S_m$-orbits correspond to properly immersed geodesic $m$-dimensional submanifolds, whereas closed $W$-orbits may retain additional normal holonomy.
The description in \cite[Cor~1.9, Thm~1.7(2)]{LO} gives the following.

\begin{cor}\label{cor:planeclosures}\label{cor:hyperplaneclosures}
For every underlying plane $C\in\overline{\calC}^*$, there is a lift $F$ to $\bbH^n$ containing $C$ of a properly immersed closed geodesic submanifold such that the closure of $\Gamma C$ in $\overline{\calC}^*$ consists of the core-intersecting $m$-planes contained in the translates of $F$.
There are only countably many properly immersed geodesic submanifolds of dimension at least two meeting the core.
Moreover, if $\{\Gamma C_i\}$ is an infinite sequence corresponding to distinct maximal such submanifolds of fixed dimension $m$, every nonempty open subset of $\overline{\calC}^*$ meets $\Gamma C_i$ for all sufficiently large $i$.
\end{cor}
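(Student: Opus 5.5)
The plan is to translate the orbit closure results of Lee--Oh, Theorem~\ref{thm:LOrelative} together with \cite[Cor~1.9, Thm~1.7(2)]{LO}, from the frame bundle $\calF$ to the plane space $\overline{\calC}=G/S_m$. Fix $C\in\overline{\calC}^*$ and choose $g\in G$ with $gS_m=C$. Since $C$ meets $\interior(D_\Gamma)$ and $D_\Gamma$ is the hull of $\Lambda_\Gamma$, we can arrange that the frame $g$ has both endpoints $g^\pm$ in $\Lambda_\Gamma\cap\partial C$. Concretely, pick a point of $C\cap\interior(D_\Gamma)$; planes through interior points of the hull have boundary spheres meeting $\Lambda_\Gamma$ in at least two points, by the Fuchsian ends structure of the complement as funnels. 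Translating by $W$, we get $x=\Gamma g\in\calR$.

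Apply Theorem~\ref{thm:LOrelative} with $H=W$, which is generated by unipotents and normalized by $A<W$. This gives a connected closed $L\ge W$ with $xL$ closed and $\overline{xW}\cap\calR=xL\cap\calR$. Lee--Oh's structure theory identifies $L$, up to conjugation by an element of $C_G(W)$, with a group between $W_r$ and $N_G(W_r)$. The corresponding $r$-plane $F=gL\,o$ then contains $C$ and projects to a properly immersed closed geodesic submanifold, because $xL$ is closed.

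Next we pass from $\calR$ to all of $\overline{\calC}^*$. Every plane $C'\subset\gamma F$ meeting $\interior(D_\Gamma)$ carries a frame in $\calR$ lying in $xL$, so it lies in $\overline{\Gamma C}$. Conversely, any limit of $\Gamma C$ in $\overline{\calC}^*$ meets the interior of the core. It can therefore be represented by a limit of frames in $\calR$, using compactness of $\calR$ and openness of $\overline{\calC}^*$, and hence lies in $xL\cap\calR$. This proves the first assertion.

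For countability, the closed $L$-orbits in $\calF$ meeting $\calR$ arise from closed orbits of the countably many Lee--Oh subgroups. Each such orbit has a stabilizer that is a lattice-like (convex cocompact, Zariski dense in $L$) subgroup of $\Gamma$. Since $\Gamma$ is countable, with countably many subgroups of this form up to conjugacy, only countably many properly immersed geodesic submanifolds meet the core.

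For the final density statement, suppose an open $\calO\subset\overline{\calC}^*$ misses $\Gamma C_i$ for infinitely many $i$. Choose frames $x_i\in\calR$ over $C_i$ and pass to a limit $x_i\to x_\infty$ in the compact set $\calR$. Lee--Oh's equidistribution/accumulation statement \cite[Thm~1.7(2)]{LO} says that the closed orbits $x_iL_i$, for distinct maximal submanifolds, accumulate on a closed orbit $x_\infty L_\infty$ with $L_\infty$ strictly larger than the generic $L_i$. Maximality forces $L_i=S_m^\circ$-type groups, so after finitely many steps $L_\infty=G$ and the limit is all of $\calR$.

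The main obstacle is this last step. We must check that Lee--Oh's accumulation result really applies to a sequence of \emph{maximal} submanifolds, so that the limit group strictly enlarges. We then have to iterate this finitely often by dimension count, ruling out accumulation onto a fixed intermediate $r$-dimensional submanifold using maximality of the $C_i$. Once the limit is $G$, every open set of $\overline{\calC}^*$ is met, as desired.
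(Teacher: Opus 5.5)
\textbf{Gap in the density assertion.} The third assertion is not actually proved. In this paper \cite[Thm~1.7(2)]{LO} is the countability statement. There is no black box there saying that the closed orbits $x_iL_i$ of distinct maximal submanifolds accumulate on a closed orbit of a strictly larger group. A limit $x_i\to x_\infty$ of chosen base frames is also the wrong object. You need to control the whole limit set of $x_iL_i\cap\calR$ along the subsequence that avoids $\calO$.

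More importantly, the step you flag as the obstacle carries all the content, and you give no mechanism for it. Suppose the orbits accumulate on some $x_\infty L_\infty$ with $L_\infty\ne G$, corresponding to an intermediate closed geodesic submanifold $F_\infty$. This contradicts maximality only if you also know that the $C_i$ are eventually \emph{contained} in translates of $F_\infty$. Conversely, if they are not contained, you must show that the limit set picks up points of $\calR$ outside $x_\infty L_\infty$. Until one of these is established, there is nothing to "iterate by dimension count."

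Either form is a substantial infinite-volume input. One is a Mozes--Shah-type support-containment statement for limits of closed orbits. The other is transverse divergence along unipotent orbits with controlled returns to $\calR$, which is where the thick returns supplied by the Fuchsian ends hypothesis are needed. Neither follows from Theorem~\ref{thm:LOrelative}, which concerns a single orbit. With the containment form, a single step suffices. If the noncompact part of the limit group were $W_r$ with $r<n$, the $C_i$ would eventually lie in translates of one closed geodesic $r$-submanifold. That contradicts distinctness if $r=m$ and maximality if $r>m$. The paper does not reprove any of this; it takes the density statement, like the closure description, directly from \cite[Cor~1.9]{LO}.

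\textbf{The other two assertions.} Your derivation of the first assertion from Theorem~\ref{thm:LOrelative}, together with the Lee--Oh description of $L$, is a legitimate alternative to that citation.

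In the reverse inclusion, the relevant fact is not compactness of $\calR$. It is that $S_m/W$ has representatives in $M\cap S_m$. If $\gamma_jgs_j\to h'$ with $s_j=w_jc_j$ and $c_j\in M\cap S_m$, then after passing to a subsequence $\gamma_jgw_j\to h'c^{-1}$. This is a frame over the same plane with the same endpoints as $h'$, hence lies in $\calR$.

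For countability, note that a countable group can have uncountably many subgroups, so "countably many subgroups of this form" needs justification. You must use two facts. First, $\Gamma_F$ is convex cocompact, hence finitely generated, by \cite[Prop~4.7]{LO}. Second, $\Gamma_F$ determines $F$. For instance, when $F$ meets the interior of the core, $F\cap D_\Gamma=\Hull_F(\Lambda_{\Gamma_F})$ has nonempty interior in $F$, so $\Lambda_{\Gamma_F}$ is not contained in a smaller round sphere.
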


In codimension one, the first statement reduces to the usual closed-or-dense dichotomy for hypersurfaces.
The final statement of Corollary~\ref{cor:hyperplaneclosures} is a purely topological one.
We will obtain the requisite measure-theoretic conclusions in Sections~\ref{sec:limitingmeasure}--\ref{sec:surfaceabsolute}.

For use later, we record the following consequence of this corollary.

\begin{lemma}\label{lem:aeboundarylift}
Every underlying plane in $\overline{\calC}^*$ crossing at most one boundary lift is contained in a lift of a compact closed ambient geodesic submanifold.
Consequently the subset $\calU_\partial\subseteq\calC^*$ of normally framed planes crossing two distinct boundary lifts is $\lambda$-conull.
\end{lemma}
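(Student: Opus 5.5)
The plan is to work upstairs in $\bbH^n$ and use the geometry of $D_\Gamma$. Its boundary in $\bbH^n$ is a locally finite, $\Gamma$-invariant union of totally geodesic hyperplanes, the boundary lifts. Each is a lift of a compact boundary component of the core and is stabilized by a cocompact subgroup.

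Let $C\in\overline{\calC}^*$ be a plane crossing at most one boundary lift. First I would show that $C\cap D_\Gamma$ is a closed convex subset of $C$, with nonempty interior in $C$, whose relative boundary in $C$ lies in $\partial D_\Gamma$. Near each relative boundary point, $C$ crosses some boundary lift transversally, and it cannot be tangent to a lift there, since it meets the interior. Two cases follow.

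\begin{itemize}
\item If $C$ crosses no boundary lift, then $C\cap D_\Gamma$ is open and closed in $C$, so $C\subset D_\Gamma$.
\item If $C$ crosses exactly one lift $Z$, then $C\cap D_\Gamma$ is a closed half-plane of $C$ bounded by the $(m-1)$-plane $C\cap Z$.
\end{itemize}

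Now apply Corollary~\ref{cor:planeclosures} to $C$. It gives a lift $F\supseteq C$ of a properly immersed closed geodesic submanifold such that $\overline{\Gamma C}\cap\overline{\calC}^*$ consists of the core-meeting $m$-planes in the translates of $F$.

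The key step is to show that $F\cap D_\Gamma$ projects to a compact submanifold. Consider the convex core of $\Stab_\Gamma(F)$ acting on $F$. In the first case, $C$ lies in the convex hull of $\Lambda_\Gamma$. I would argue that the orbit closure then consists of planes inside $D_\Gamma$. Otherwise some core-meeting $m$-plane in a translate of $F$ would cross a boundary lift. Since it is a limit of translates of $C$, which lie in $D_\Gamma$, this contradicts the openness of crossing a boundary lift transversally.

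Hence every core-meeting $m$-plane in $F$ lies in $D_\Gamma$. Such planes through points of $F\cap\interior(D_\Gamma)$ fill $F$, so $F\subset D_\Gamma$. Its image is then a properly immersed submanifold inside the compact core, and is therefore compact.

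In the second case, I would run the same openness argument on planes near $C$ that meet only $Z$. This shows that $F$ meets at most the single lift $Z$ up to $\Stab_\Gamma(F)$-translates. It also shows that $F\cap D_\Gamma$ is a half-space of $F$ bounded by $F\cap Z$. Taking the boundary hyperplane $F\cap Z$ inside the compact $\Gamma_Z\backslash Z$, the image of $F\cap D_\Gamma$ is a properly immersed submanifold with one totally geodesic boundary that stays in the core.

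Doubling, or using the fact that the orbit closure would contain planes crossing two lifts unless $F\subseteq Z$ up to tilt, forces $F$ to be a lift of a compact submanifold of $Z$'s component or of the core. I expect this second case to be the main obstacle. The claim that such a half-space configuration is impossible unless the carrier is compact must use the structure of the orbit closure from Corollary~\ref{cor:planeclosures}. First I would try showing that $\Gamma C$ accumulates, via translates by $\Stab_\Gamma(F)$, on planes crossing two distinct lifts, contradicting the assumption through closedness of the condition of crossing at most one lift.

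For the consequence, note that there are only countably many properly immersed geodesic submanifolds meeting the core, by Corollary~\ref{cor:planeclosures}. So there are only countably many $\Gamma$-orbits of lifts $F$ of compact ones. Each such $F$ is a proper geodesic subspace unless $F=\bbH^n$, which is impossible since $Y_\infty$ has infinite volume. The set of $m$-planes contained in a fixed proper subspace $F$ is a lower-dimensional submanifold of $\overline{\calC}$, hence $\overline{\lambda}$-null.

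A countable union of these sets is therefore null. Pulling back along $p$, whose fibers are compact with Haar measure, the complement of $\calU_\partial$ is $\lambda$-null.
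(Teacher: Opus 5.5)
Your first case (where $C$ crosses no boundary lift) is correct, and your concluding measure-zero argument is essentially the paper's.

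\textbf{The gap is the second case.} There you list doubling, ``$F\subseteq Z$ up to tilt'', and accumulation through $\Stab_\Gamma(F)$ as things to try, but you never supply the fact that forces a contradiction.

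It helps to notice that this case is actually empty. Write $\Gamma_F=\Gamma\cap\Stab_G(F)$.
\begin{itemize}
\item If $F$ were a lift of a compact submanifold, then $\Lambda_{\Gamma_F}=\partial F$. Hence $C\subseteq F\subseteq D_\Gamma$, and $C$ could not cross $Z$.
\item $F\subseteq Z$ is excluded because $C$ meets $\interior(D_\Gamma)$.
\end{itemize}
Likewise, your intermediate claim that $F$ meets only $Z$ ``up to $\Stab_\Gamma(F)$-translates'' is exactly the configuration that must be ruled out. If $\gamma Z\neq Z$ for some $\gamma\in\Gamma_F$, then two distinct lifts already meet $F$.

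\textbf{The missing idea}, which is the core of the paper's proof, runs as follows. Suppose $\Gamma_F\backslash F$ is noncompact.
\begin{itemize}
\item $D_F=F\cap D_\Gamma$ is $\Gamma_F$-invariant, and $\Gamma_F\backslash D_F$ is compact: it is the preimage of the compact core under the proper immersion (or use \cite[Prop~4.7]{LO}). Its relative boundary in $F$ is nonempty and totally geodesic.
\item This boundary has at least two components in $F$. A single component $B$ would be $\Gamma_F$-invariant. That forces $\Lambda_{\Gamma_F}\subseteq\partial B$, hence $D_F\subseteq B$, which is absurd since $D_F$ has nonempty interior in $F$. Equivalently, $D_F$ would be a half-space of $F$ on which the distance to $B$ is an unbounded $\Gamma_F$-invariant function, contradicting compactness of $\Gamma_F\backslash D_F$.
\item Take an $m$-plane $C'\subseteq F$ containing a geodesic that joins two distinct boundary components. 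Then $C'$ meets $\interior(D_\Gamma)$ and crosses two distinct ambient lifts.
\item Corollary~\ref{cor:planeclosures} puts $C'$ in $\overline{\Gamma C}$. Crossing both lifts is an open condition, so some translate of $C$ crosses two lifts, a contradiction.
\end{itemize}
This single argument handles both of your cases at once, so no case split on the number of lifts crossed by $C$ is needed. It is precisely the accumulation you proposed to try at the end of your second case, made to work by the two-component observation.
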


\begin{proof}
Let $C\in\overline{\calC}^*$ cross at most one boundary lift, and choose $F\supseteq C$ as in Corollary~\ref{cor:planeclosures}.
Put $\Gamma_F=\Gamma\cap\Stab_G(F)$ and $D_F=F\cap D_\Gamma$.

Suppose that $\Gamma_F\backslash F$ is not compact.
Then $\Gamma_F\backslash D_F$ is a compact convex core with nonempty totally geodesic boundary.
There must be at least two components of $\partial D_F$.
Indeed, if there were only one, say $B$, then $\Gamma_F$ would preserve $B$, so $\Lambda_{\Gamma_F}\subseteq\partial B$ and hence $D_F\subseteq B$, contrary to the fact that $D_F$ has nonempty interior in $F$.
Choose points in two distinct components of $\partial D_F$ and an $m$-plane $C'\subseteq F$ containing the geodesic through them.
Then $C'$ meets $\interior(D_\Gamma)$ and crosses the two corresponding ambient boundary lifts.
Corollary~\ref{cor:planeclosures} gives $C'\in\overline{\Gamma C}$.
Since crossing these two lifts is an open condition, some translate of $C$ crosses both, a contradiction.
Thus $\Gamma_F\backslash F$ is compact.

By Corollary~\ref{cor:planeclosures}, there are only countably many such compact submanifolds and their lifts $F$.
Each $F$ is proper since $Y_\infty$ has infinite volume.
For each fixed $F$, the set $\{C'\in\overline{\calC}\mid C'\subseteq F\}$ is a submanifold of positive codimension and therefore has zero $\overline{\lambda}$-measure.
Taking the countable union and lifting along $p:\calC\to\overline{\calC}$ proves the second assertion.
\end{proof}

\subsection{Patterson--Sullivan, BMS, and BR measures}\label{sec:PSBMSBR}

We now fix the measures and notation used throughout the paper.
Recall that the \emph{critical exponent} of $\Gamma$ is
$$\delta_\Gamma=\inf\left\{s>0\mid \sum_{\gamma\in\Gamma}e^{-s\,d(o,\gamma o)}<\infty\right\}.$$
For $\xi\in\partial\bbH^n$, we use the Busemann cocycle
$$\beta_\xi(z,z')=\lim_{r\to\infty}\bigl(d(z,c(r))-d(z',c(r))\bigr),$$
where $c$ is any geodesic ray ending at $\xi$.
The Patterson--Sullivan construction gives a family $\{\nu_z\mid z\in\bbH^n\}$ of finite, non-atomic measures supported on $\Lambda_\Gamma$ satisfying
$$\gamma_*\nu_z=\nu_{\gamma z},\qquad
\frac{d\nu_z}{d\nu_{z'}}(\xi)=e^{-\delta_\Gamma\beta_\xi(z,z')}.$$
In our convex cocompact setting this density is unique up to scale.
Throughout, $\nu$ will always denote the Patterson--Sullivan measure $\nu_o$, normalized by $\nu(\partial\bbH^n)=1$.
The conformal relation shows that all $\nu_z$ have the same measure class $[\nu]$, which is $\Gamma$-invariant.
For the construction and its basic properties, see \cite{Patterson76,Sullivan79} and \cite[Ch~1]{Roblin03}.
We also write $\omega_z$ for the visual probability measure on $\partial\bbH^n$ based at $z$ obtained from normalized spherical measure on $T_z^1\bbH^n$ and put $\omega=\omega_o$.
The family $\{\omega_z\}$ is a $G$-equivariant conformal density of dimension $n-1$ satisfying the same Radon--Nikodym formula with $\delta_\Gamma$ replaced by $n-1$.

Recall that $\supp\nu=\Lambda_\Gamma$ is Zariski dense in the boundary $\partial\bbH^n$.
In our infinite-volume setting, we have the strict inequality $\delta_\Gamma=\dim_H\Lambda_\Gamma<n-1$ so in particular $\nu$ is singular with respect to $\omega$.
On the other hand, the stabilizer of a geodesic boundary lift is a cocompact lattice in $\SO_0(n-1,1)$, with critical exponent $n-2$ and infinite index in $\Gamma$.
Strict monotonicity of the critical exponent for this divergent subgroup \cite[Prop~2]{DOP} gives $\delta_\Gamma>n-2$.
In particular, for every $m\ge2$ considered here
$$\delta_\Gamma+\dim U=\delta_\Gamma+m-1>n-1,$$
which is the inequality used in the BR ergodicity and measure-class comparison arguments below.

The Hopf coordinates on $G/M=T^1\bbH^n$ are
$$gM\longmapsto(x,y,t)=\bigl(g^+,g^-,\beta_{g^+}(o,go)\bigr),$$
where $x,y$ are distinct.
Thus right translation by $a_s$ sends $t$ to $t+s$ and fixes the two endpoints.
After choosing a local trivialization of $G\to G/M$, we adjoin the compact coordinate in $M$ and write $dm_M$ for Haar probability measure on $M$.
In these coordinates, with $p=go$, the lifted Bowen--Margulis--Sullivan and Burger--Roblin measures on $G$ are given by
\begin{align*}
 d\widetilde{\mu}_{\BMS}(g)
 &=e^{\delta_\Gamma\bigl(\beta_x(o,p)+\beta_y(o,p)\bigr)}
   \,d\nu(x)\,d\nu(y)\,dt\,dm_M,\\
 d\widetilde{\mu}_{\BR}(g)
 &=e^{\delta_\Gamma\beta_x(o,p)+(n-1)\beta_y(o,p)}
   \,d\nu(x)\,d\omega(y)\,dt\,dm_M.
\end{align*}
These formulas are independent of the basepoint used to express them and define left $\Gamma$-invariant Radon measures.
They descend to measures $\mu_{\BMS}$ and $\mu_{\BR}$ on the frame bundle $\Gamma\backslash G$.
Throughout, tildes distinguish the measures on $G$ from their quotient measures on $\Gamma\backslash G$.
Our BR convention places $\nu$ at the positive endpoint, which is the endpoint fixed by right translation by $N$.
For these local descriptions and the passage from the unit tangent bundle to the frame bundle, see \cite[\S2.2, 4.1]{MS} and \cite[\S2.5--2.6]{Winter}.
See also \cite{Roblin03} for a fuller treatment.

The sets $X_0$ and $X_+$ are the supports of $\widetilde{\mu}_{\BMS}$ and $\widetilde{\mu}_{\BR}$, respectively, and their quotients $\calR$ and $\calR_+$ are the supports of $\mu_{\BMS}$ and $\mu_{\BR}$.
The BMS measure is right $MA$-invariant and is preserved by a Weyl element exchanging the endpoints, while the BR measure is right $MN$-invariant and its class is preserved by $P=MAN$.
The quotient BMS measure is finite, with compact support in the convex cocompact setting.
The quotient BR measure is locally finite but has infinite total mass.

For the measure classes, the Busemann factors are strictly positive and therefore do not affect null sets.
Consequently, in each Hopf coordinate chart
\begin{align*}
 [d\widetilde{\mu}_{\BMS}]&=[d\nu(x)\,d\nu(y)\,dt\,dm_M],\\
 [d\widetilde{\mu}_{\BR}]&=[d\nu(x)\,d\omega(y)\,dt\,dm_M].
\end{align*}
In a local trivialization over the endpoint map $G\to G/P$, the BR class is $[d\nu(x)\,dm_P]$, where $dm_P$ is Haar measure on $P$.
Thus BR has a smooth measure class along each $P$-fiber and the Patterson--Sullivan class transversely.
However, the two measures on $G$ are not equivalent.
Indeed, $\widetilde{\mu}_{\BMS}$ is supported on $X_0$, whereas $\widetilde{\mu}_{\BR}(X_0)=0$ because $\omega(\Lambda_\Gamma)=0$.
Their projected classes on $\calC^*$ will be compared in Proposition~\ref{prop:BRprojection}.

Whenever we push forward a $\sigma$-finite measure class $[\sigma]$ under a measurable map $q$, we mean $q_*[\sigma]=[q_*(f\sigma)]$, where $f>0$ is integrable with respect to $\sigma$.
This class is independent of $f$ and is determined by the condition that a set is null precisely when its preimage is $\sigma$-null.
This convention applies even when the unweighted pushforward $q_*\sigma$ is not locally finite.

We record here the ergodicity statements for these measures that will be used in Section~\ref{sec:AREA}.
For the BMS measure we will need the mixing property, which is a theorem of Winter \cite{Winter}.

\begin{thm}[Winter]\label{thm:BMSmixing}
The right $A$-action on $(\Gamma\backslash X_0,\mu_{\BMS})$ is mixing, hence weakly mixing.
\end{thm}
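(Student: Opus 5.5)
This is a theorem of Winter for which the paper gives no proof, so the plan is to reduce it to Winter's mixing theorem for the frame flow. The weak mixing assertion is then formal. One technical point is that the statement is on the frame bundle $\Gamma\backslash G$ rather than the unit tangent bundle, so the compact $M$-fibers have to be handled.

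\textbf{Step 1: the unit tangent bundle.} First I would prove mixing of the geodesic flow on $\Gamma\backslash G/M$ for the BMS measure. This is classical: Babillot's theorem gives mixing whenever the length spectrum of $\Gamma$ is non-arithmetic. Non-arithmeticity holds because $\Gamma$ is Zariski dense in $G$. Zariski density in turn holds because $\Lambda_\Gamma$ is Zariski dense in $\partial\bbH^n$, using the Fuchsian ends hypothesis and $\delta_\Gamma>n-2$.

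\textbf{Step 2: lifting to the frame bundle.} The obstruction to mixing on $\Gamma\backslash X_0$ is the possible failure of ergodicity of the $A$-action on the compact $M$-extension. I would follow Winter's argument in three parts.
\begin{enumerate}
\item Decompose $L^2(\Gamma\backslash X_0,\mu_{\BMS})$ into $M$-isotypic components. Since $M$ commutes with $A$, it suffices to prove decay of correlations on each component.
\item On the trivial component, apply Step 1.
\item On a nontrivial $M$-type, argue by contradiction. If matrix coefficients fail to decay, a Hilbert space argument (van der Corput, or the Babillot approach via the "transitivity group") produces a nonconstant $AN$-invariant, or $A$-eigen, measurable function.
\end{enumerate}
Such a function must then be shown to be invariant under a closed subgroup of $M$. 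That subgroup is generated by the holonomies of closed geodesics, i.e.\ the $M$-components of the Jordan projections of loxodromic elements of $\Gamma$. Zariski density of $\Gamma$ implies that these generate a dense subgroup of $M$ (a result of Guivarc'h--Raugi / Benoist). Hence the function is $M$-invariant, which contradicts the nontriviality of the $M$-type.

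\textbf{Step 3: weak mixing.} Mixing implies weak mixing: if $f\circ a_t=e^{i\theta t}f$ with $f\perp 1$, then $\langle f\circ a_t,f\rangle=e^{i\theta t}\|f\|^2$, which does not tend to zero unless $f=0$. So every eigenfunction of the action is constant.

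\textbf{Main obstacle.} The hardest step is Step 2, namely excluding non-decaying nontrivial $M$-types. It needs both the density of the holonomy group in $M$, which rests on Zariski density of $\Gamma$, and the contracting-horosphere (Hopf-type) argument on the non-compactly supported but finite BMS measure. In practice the proposal is simply to cite \cite{Winter}, where exactly this is carried out for Zariski dense geometrically finite $\Gamma$. The only remaining task is to confirm that $\Gamma$ here is Zariski dense, which follows as in Step 1.
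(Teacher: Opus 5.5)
Your proposal is correct and takes essentially the paper's route: the paper's proof only notes that $\Gamma$ is Zariski dense and $\mu_{\BMS}$ is finite, so that \cite[Thm~1.1]{Winter} applies, and your Steps~1--2 are an optional sketch of what that citation contains. The one inaccuracy is cosmetic: since $\Gamma$ is convex cocompact, $\mu_{\BMS}$ is in fact compactly supported on $\calR$, not merely finite with non-compact support.
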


\begin{proof}
The group $\Gamma$ is Zariski dense and its BMS measure is finite so the mixing assertion is precisely \cite[Thm~1.1]{Winter}.
\end{proof}

For the BR measure, Mohammadi and Oh \cite{MO} proved ergodicity of one-dimensional unipotent flows for convex cocompact hyperbolic $3$-manifolds when $\delta_\Gamma>1$.
The general criterion we use below, in which the critical exponent exceeds the codimension of the acting subgroup in $N$, is due to Maucourant and Schapira \cite{MS}.

\begin{thm}[Maucourant--Schapira]\label{thm:BRsubgroupergodicity}
Let $V\le N\cong\bbR^{n-1}$ be a nontrivial connected subgroup.
If the BMS measure is finite and $\delta_\Gamma>\operatorname{codim}_N V$, then the right $V$-action on $(\Gamma\backslash X_+,\mu_{\BR})$ is ergodic.
In particular, under our standing hypotheses every nontrivial connected subgroup of $N$ acts ergodically.
\end{thm}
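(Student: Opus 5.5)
This theorem is cited from \cite{MS}, so the proof consists of checking its hypotheses in our setting and then deriving the final ``in particular'' clause. I would first state the Maucourant--Schapira criterion in the form we need. Let $\Gamma$ be a Zariski dense, discrete, torsion-free subgroup of $G$ with finite BMS measure. Let $V\le N$ be a connected subgroup with $\delta_\Gamma>\dim N-\dim V$. Then $\mu_{\BR}$ is $V$-ergodic. Their argument has two steps. First, a Hopf/Marstrand-type projection argument: because $\delta_\Gamma+\dim V>n-1$, the projection of the Patterson--Sullivan conditionals along the $V$-direction is absolutely continuous. This shows that $V$-invariant functions are $N$-invariant up to null sets. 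Second, they use $N$-ergodicity of $\mu_{\BR}$, which follows from Roblin's theorem when the BMS measure is finite, or equivalently from mixing of the frame flow (Theorem~\ref{thm:BMSmixing}). Our setting matches their frame bundle formulation, with the conventions of \S\ref{sec:PSBMSBR}: BR places $\nu$ at the positive endpoint, which is the endpoint fixed by $N$.

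The hypotheses are verified as follows.
\begin{enumerate}
\item $\Gamma$ is convex cocompact, so $\mu_{\BMS}$ is finite with compact support $\calR$.
\item $\Gamma$ is Zariski dense, since $\Lambda_\Gamma$ is Zariski dense in $\partial\bbH^n$. Winter's mixing therefore applies, which is needed when passing from the unit tangent bundle to the frame bundle, i.e.\ for $M$-ergodicity.
\item For the ``in particular'' clause, take any nontrivial connected $V\le N$. Then $\operatorname{codim}_N V\le n-2$.
\end{enumerate}
In \S\ref{sec:PSBMSBR} we showed $\delta_\Gamma>n-2$, using the stabilizer of a boundary lift, which is a cocompact lattice in $\SO_0(n-1,1)$ of infinite index, together with strict monotonicity \cite[Prop~2]{DOP}. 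Hence $\delta_\Gamma>\operatorname{codim}_N V$ for every such $V$, and the criterion gives ergodicity.

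The main obstacle is making sure the cited result covers the frame bundle $\Gamma\backslash G$ and not only $T^1Y_\infty$. If \cite{MS} is stated on the unit tangent bundle, I would pass to the frame bundle via an $M$-extension argument. The BR measure is $M$-invariant, and $V$-ergodicity lifts from $\Gamma\backslash G/M$ to $\Gamma\backslash G$ as long as the frame flow is ergodic for $\mu_{\BMS}$, which Winter's theorem provides. This would use the standard Mautner/Hopf argument along $AN$ as in Winter's treatment of BR ergodicity on frame bundles.
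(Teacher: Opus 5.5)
Your proposal matches the paper's proof: it cites \cite[Thm~1.3]{MS} and gets the final clause from $\delta_\Gamma>n-2\ge\operatorname{codim}_N V$, with $\delta_\Gamma>n-2$ coming from \cite[Prop~2]{DOP}. Your contingency about passing from the unit tangent bundle is unnecessary, since Maucourant--Schapira already work on the frame bundle $\Gamma\backslash G$; it also would not work as stated, because the action of a proper subgroup $V<N$ is not normalized by $M$ and so does not descend to $\Gamma\backslash G/M$.
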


\begin{proof}
This follows from \cite[Thm~1.3]{MS}.
In our setting $\delta_\Gamma>n-2$, whereas $\operatorname{codim}_N V\le n-2$ for every nontrivial connected $V\le N$, so the result follows.
\end{proof}

\subsection{Leafwise measures and entropy for compact boundary flows}\label{sec:leafwiseentropy}

For the surface case $m=2$ in arbitrary ambient dimension $n\ge3$, we will need some facts about leafwise measures and entropy for the frame flows over boundary components of $\core(Y_\infty)$.
We recall these here and fix the normalizations used in Section~\ref{sec:surfaceabsolute}.

Let $H=\SO_0(n-1,1)$ and let $\Delta<H$ be a cocompact lattice.
Put $X=\Delta\backslash H$ and let $A=\{a_t\}<H$ be the standard split subgroup, parametrized as in Section~\ref{sec:background}.
We also allow a compact parameter space $I$ on which $A$ acts trivially, which will record the angles of intersection in Section~\ref{sec:surfaceabsolute}.
Write $U^+,U^-\cong\bbR^{n-2}$ for the two full horospherical subgroups of $H$, with coordinates chosen so that
\begin{equation}\label{eqn:rootconjugation}
 a_tu^+(v)a_{-t}=u^+(e^{-t}v),\qquad
 a_tu^-(v)a_{-t}=u^-(e^tv).
\end{equation}
A \emph{$U^\pm$-plaque} through $x\in X$ is a local piece $xU^\pm_\epsilon$ of the $U^\pm$-orbit, where $U^\pm_\epsilon$ is a sufficiently small connected neighborhood of the identity on which $u\mapsto xu$ is injective.
On $X\times I$, we keep the parameter in $I$ fixed.
By Equation~\eqref{eqn:rootconjugation}, right translation by $a_t$ expands the $U^+$-plaques and contracts the $U^-$-plaques in positive time.
We refer to them as unstable and stable plaques, respectively.
When an adapted frame identifies this model with a boundary group, we use the same notation for the transported subgroups and coordinates.

We retain the compact normal-frame coordinate throughout.
Write $M_H\cong\SO(n-2)$ for the compact centralizer of $A$ in $H$ and choose local coordinates $\varpi\mapsto\sfm(\varpi)$ about its identity.
Near a frame $x_0\in X$, we use the chart
\begin{equation}\label{eqn:rootflowbox}
(v,\varpi,u,\tau)\longmapsto x_0u^-(v)\sfm(\varpi)u^+(u)a_\tau,
\end{equation}
where $u,v\in\bbR^{n-2}$, $\varpi\in\bbR^{(n-2)(n-3)/2}$, and $\tau\in\bbR$ range over sufficiently small sets.
We take the chart small enough to lift injectively to $H$.

Let $\sigma$ be a finite $A$-invariant measure on $X\times I$.
In such a box, $A$-invariance gives a transverse measure $\kappa$ with
$$d\sigma=d\kappa(v,\varpi,u,c)\,d\tau,$$
where $c\in I$ and $d\tau$ is Lebesgue measure.
Disintegrating over $\xi=(v,\varpi,c)$ gives
\begin{equation}\label{eqn:plaqueconditionals}
 d\kappa(\xi,u)=d\vartheta(\xi)\,r_\xi(du).
\end{equation}
The measures $r_\xi$ are the conditional measures on the unstable plaques, which we also call \emph{plaque conditionals}.
We take them to be probabilities on a fixed, slightly larger plaque, so that $\vartheta$ is the transverse measure obtained from this restriction.
The stable conditionals are defined in the same way, using a chart with the two root groups interchanged.

For a point $x$ with coordinates $(v,\varpi,u_0,\tau_0,c)$, the centered root parameter is $z=e^{\tau_0}(u-u_0)$.
Pushing $r_\xi$ forward by this map gives, up to normalization and restriction, the unstable leafwise measure at $x$.
Write $\sigma_x^+$ and $\sigma_x^-$ for the \emph{complete leafwise measures}, which are Radon measures on the whole root group $\bbR^{n-2}$, determined up to a positive scalar.
Their restrictions give the local conditionals just described.
We write $[\sigma_x^\pm]_{\mathrm{sc}}$ for their scalar rays, while brackets without this subscript continue to denote measure classes determined by null sets.

For $T_v(u)=u+v$ and $D_b(u)=bu$, the translation and flow rules are
\begin{equation}\label{eqn:leafwisecovariance}
\begin{aligned}
[\sigma_{xu^\pm(v)}^\pm]_{\mathrm{sc}}&=[(T_{-v})_*\sigma_x^\pm]_{\mathrm{sc}},\\
[\sigma_{xa_t}^+]_{\mathrm{sc}}&=[(D_{e^t})_*\sigma_x^+]_{\mathrm{sc}},\\
[\sigma_{xa_t}^-]_{\mathrm{sc}}&=[(D_{e^{-t}})_*\sigma_x^-]_{\mathrm{sc}}.
\end{aligned}
\end{equation}
The identities hold on the usual conull sets, with the parameter in $I$ included in $x$.
In particular, rescaling an unstable restriction of radius $e^{-t}$ at $x$ gives a bounded restriction of the complete measure at $xa_t$.
We also use compatibility with disintegration over an $A$-invariant factor, including the ergodic component and the angle parameter, as in \cite[Thm~6.3, Lem~7.16, Prop~7.22]{ELClay} and \cite[Lem~3.8]{ELRankOne}.
Passing to almost every such component leaves the complete leafwise measures unchanged up to scale.

For the unstable measure, define its \emph{projective stabilizer} to be the translation stabilizer of its scalar ray, given by
\begin{equation}\label{eqn:projectiveperiods}
\calL_x=\{v\in\bbR^{n-2}\mid (T_v)_*\sigma_x^+\text{ is proportional to }\sigma_x^+\}.
\end{equation}
Each $\calL_x$ is a closed additive subgroup, its identity component is a vector subspace, and Equation~\eqref{eqn:leafwisecovariance} gives $\calL_{xa_t}=e^t\calL_x$.
A nonzero element of $\calL_x$ will be called a \emph{projective period}.
The use of the complete measure is important here, since a translation must preserve the same ray on every bounded set with one common multiplier.
When we need a representative of this ray, choose a nonnegative $\chi\in C_c^1(\bbR^{n-2})$, equal to one near zero, and divide by $\sigma_x^+(\chi)$.
This denominator is positive and finite, since the origin belongs to the support almost everywhere.
This normalization does not restrict the complete measure.

We now recall the entropy and dimension statements used below.
Normalize $\sigma$ to probability and use base-$2$ logarithms.
For a finite measurable partition $\calP$, put
$$H_\sigma(\calP)=-\sum_{P\in\calP}\sigma(P)\log\sigma(P),$$
with $0\log 0=0$.
The measure-theoretic entropy of a $\sigma$-preserving map $T$ is
$$h_\sigma(T)=\sup_{\calP}\lim_{j\to\infty}\frac1j
H_\sigma\left(\bigvee_{i=0}^{j-1}T^{-i}\calP\right).$$
We refer to \cite[\S3]{ELClay} for these definitions and the entropy chain rule.
Write $h(\sigma)=h_\sigma(a_{\ln2})$ and let $h_\sigma(x)$ be the entropy of the ergodic component through $x$.
This time step doubles unstable distances and Haar probability has entropy $n-2$.

For a Radon measure $\beta$, write
$$d_\beta(z)=\lim_{r\downarrow0}\frac{\log\beta(B(z,r))}{\log r},$$
when the limit exists.
The Ledrappier--Young dimension formula \cite[Prop~7.3.1, Thm~C]{LedrappierYoungII} gives
$$d_{\sigma_x^+}(0)=d_{\sigma_x^-}(0)=h_\sigma(x),$$
for almost every $x$.
Equivalently, on almost every ergodic component of entropy $h$, the stable and unstable conditionals have pointwise dimension $h$.
In particular, on almost every ergodic component of positive entropy, both conditionals are non-atomic.
The compact isometric extension $X\to X/M_H$ preserves entropy \cite{LedrappierWalters}, without requiring an individual component to be $M_H$-invariant.

We also need the measures obtained by projecting to an endpoint.
Using the Hopf conventions of Section~\ref{sec:PSBMSBR} for $H$ in place of $G$, we retain the compact coordinate $\varpi$ and write
\begin{equation}\label{eqn:boundaryhopf}
 h\longmapsto\bigl(e^-(h),e^+(h),\varpi(h),t(h)\bigr).
\end{equation}
The endpoints and $\varpi$ are fixed by $A$, while right translation by $a_s$ sends $t$ to $t+s$.
Write $\widetilde{\sigma}$ for the local lift of $\sigma$.
On a bounded product Hopf box $\calB=Q\times I_t$, $A$-invariance gives
\begin{equation}\label{eqn:hopfdisintegration}
 d\widetilde{\sigma}\vert_{\calB}=d\kappa_Q(e^-,e^+,\varpi,c)\,dt\vert_{I_t}.
\end{equation}
Define the \emph{endpoint measures associated to $\calB$} by
\begin{equation}\label{eqn:localizedendpointmarginals}
 \beta_{\calB}^\pm=(e^\pm)_*(\widetilde{\sigma}\vert_{\calB})
 =|I_t|\,(\pr_{e^\pm})_*\kappa_Q.
\end{equation}
The restriction to $\calB$ precedes projection so these are finite measures.
After normalization, they average the corresponding plaque conditionals in endpoint coordinates.
For almost every $x\in\calB$, we have
\begin{equation}\label{eqn:endpointdimension}
 d_{\beta_{\calB}^\pm}(e^\pm(x))=h_\sigma(x).
\end{equation}
For ergodic probabilities this is \cite[Thm~2.4 ]{KaimanovichEntropy} and the remark which follows it, and the componentwise identity follows from the local entropy formula \cite{BrinKatokEntropy}.
Geodesics with the same forward endpoint are asymptotic after adjusting the $A$-coordinate, so their projected component entropies agree almost everywhere.
Reversing time gives the same assertion for the backward endpoint.

The comparison in Section~\ref{sec:surfaceboundary} is made on finite partitions of an endpoint chart.
Let $\kappa$ be a probability on bounded coordinates $(V,R)$, and let $\calD_j$ be the partition of the $V$-coordinate into half-open dyadic cubes of side $2^{-j}$.
Write $V_j$ for the cube containing $V$.
If $\beta$ is the distribution of $V$, its entropy at this scale is
$$H(V_j)=H_\beta(\calD_j)=-\sum_{P\in\calD_j}\beta(P)\log\beta(P).$$
Disintegrating over the transverse coordinate gives $d\kappa(V,R)=d\vartheta(R)\,\beta_R(dV)$.
Therefore we put
$$H(V_j\mid R)=\int H_{\beta_r}(\calD_j)\,d\vartheta(r).$$
Here we hold $R$ fixed in full, including any compact frame and angle coordinates, and partition only the endpoint coordinate.
All measures in these entropy formulas are normalized to probability and the entropies are not divided by $j$.
The comparison between pointwise dimension and these entropies is the one in \cite[Lem~3.3, Prop~3.4]{HochmanShmerkin}, which we apply below on bounded charts.

Finally, we record how a difference between conditional probabilities contributes to entropy.
Put $\calI_j=H(V_j)-H(V_j\mid R)$.
For $j'>j$, given a coarse cube $Q\in\calD_j$ and a fine cube $P\in\calD_{j'}$ which is contained in $Q$, write
$$\sfp_{r,Q}(P)=\frac{\beta_r(P)}{\beta_r(Q)},\qquad
\sfq_Q(P)=\frac{\beta(P)}{\beta(Q)},$$
whenever the denominators are nonzero.
For probability vectors indexed by the fine cubes $P\in\calD_{j'}$ contained in $Q$, write $H(\sfp)=-\sum_P\sfp(P)\log\sfp(P)$ and define the relative entropy (Kullback--Leibler divergence) \cite{KullbackLeibler} by
$$D_{\mathrm{KL}}(\sfp\Vert\sfq)=\sum_P\sfp(P)\log\frac{\sfp(P)}{\sfq(P)}.$$
Terms with $\sfp(P)=0$ are zero and the expression is infinite if $\sfp(P)>0=\sfq(P)$.
Expanding the conditional entropies gives
\begin{equation}\label{eqn:conditionalentropydifference}
\begin{aligned}
\calI_{j'}-\calI_j
&=H(V_{j'}\mid V_j)-H(V_{j'}\mid R,V_j),\\
&=\int D_{\mathrm{KL}}(\sfp_{R,V_j}\Vert\sfq_{V_j})\,d\kappa\ge0.
\end{aligned}
\end{equation}
Thus the entropy difference measures the separation between the conditional probability vector on a plaque and its average over the transverse coordinate in the coarse cube.
Zero-denominator cases have zero measure in this integral.

The elementary strict-concavity estimate we use is
\begin{equation}\label{eqn:strictconcavitygap}
\begin{aligned}
 D_{\mathrm{KL}}(\sfp\Vert\sfq)+D_{\mathrm{KL}}(\sfp'\Vert\sfq)
 &\ge 2H\left(\frac{\sfp+\sfp'}2\right)-H(\sfp)-H(\sfp').
\end{aligned}
\end{equation}
Indeed, the difference between the two sides is $2D_{\mathrm{KL}}((\sfp+\sfp')/2\Vert\sfq)\ge0$.
On a fixed finite set, the right side has a positive lower bound whenever $\|\sfp-\sfp'\|_1$ is bounded away from zero, by strict concavity and compactness.
In Section~\ref{sec:surfaceperiods}, we will apply this estimate to two separated plaque images in the same coarse cube, obtaining a definite entropy increase upon averaging over the transverse coordinate.

\section{Construction of the limiting measure and its transverse current}\label{sec:limitingmeasure}

Retain the notation of Section~\ref{sec:backgroundCC}, where $D_\Gamma=\Hull(\Lambda_\Gamma)$ is the closed convex hull in $\bbH^n$ and the convex core is $\core(Y_\infty)=\Gamma\backslash D_\Gamma$.
We now fix the given sequence of maximal properly immersed totally geodesic submanifolds as in Theorem~\ref{thm:measurerigid}, passing to a subsequence of constant dimension $m$ and retaining $W=W_m$ and its full stabilizer $S_m$.
We can and do discard the finitely many compact maximal members by \cite[Thm~1.7(3)]{LO}.
Similarly, a maximal plane meeting the core only in its boundary must be one of the finitely many boundary hypersurfaces, so we may discard these as well.
In particular, we may assume that every plane in our sequence has infinite volume and meets the interior of the convex core.

Choose lifts $C_i=g_iC_0\subset\bbH^n$, and put
$$\Delta_i=\Gamma\cap g_iS_mg_i^{-1},\qquad \Sigma_i=\Delta_i\backslash C_i,\qquad \calF_i=\Gamma g_iS_m\subset\calF.$$
Here the action of $\Delta_i$ on $C_i$ is faithful, since its kernel is discrete and contained in a compact group, and $\Gamma$ is torsion-free.
By the properness of the immersion and \cite[Prop~4.7]{LO}, $\Sigma_i$ is convex cocompact with Fuchsian ends and $C_i\cap D_\Gamma=\Hull_{C_i}(\Lambda_{\Delta_i})$.
Consequently
$$\core(\Sigma_i)=\Delta_i\backslash(C_i\cap D_\Gamma),$$
is compact with totally geodesic boundary.
Let $V_i$ be the volume of this convex core.
The geometry in a boundary collar gives a uniform comparison between the volume outside the convex core and the volume of the corresponding inward collars.
Summing these comparisons, we obtain
$$\vol(\Sigma_i\cap N_R)\le C(R)V_i,$$
where $N_R$ is the closed $R$-neighborhood of $\core(Y_\infty)$ and, crucially, $C(R)$ is independent of $i$.

To obtain measures on the frame bundle, equip the compact fiber of the adapted-frame bundle over $\Sigma_i$ with Haar probability measure.
Let $\mu'_i$ denote the resulting measure on $\calF_i$, which is right $S_m$-invariant and pushes forward to hyperbolic volume on $\Sigma_i$.
Finally, set $\mu_i=\mu'_i/V_i$.
Then if $\calN_R\subset\calF$ denotes the full frame bundle over $N_R$, we have
\begin{equation}\label{eqn:compactbounds}
\mu_i(\calN_0)=1,\qquad \mu_i(\calN_R)\le C(R).
\end{equation}
Since these compact sets exhaust $\calF$, the family of measures is uniformly bounded on compact sets.
After passing to a subsequence, the $\mu_i$ therefore weak-$*$ converge to a Radon measure which is right $S_m$-invariant, and we denote this limit by $\mu_\infty$.
Moreover, the limit is not the zero measure by Equation~\eqref{eqn:compactbounds}.

We next pass the limiting measure to the space of planes by taking its transverse current.
To this end, let $\pi_W:G\to\calC=G/W$ be the natural projection map and let $\widetilde{\mu}$ denote the $\Gamma$-invariant lift of a right $W$-invariant Radon measure $\mu$ on $\Gamma\backslash G$.
On a sufficiently small open set $\calO\subset\calC$, choose a smooth section $\sigma:\calO\to G$, which gives a local trivialization
\begin{align*}
\calO\times W&\longrightarrow\pi_W^{-1}(\calO),\\
(C,w)&\longmapsto\sigma(C)w.
\end{align*}
In these coordinates, right $W$-invariance gives the unique expression
$$d\widetilde{\mu}=d\eta^{\calC}(C)\,dm_W(w).$$
On overlaps these transverse measures agree and hence determine a $\Gamma$-invariant Radon current on $\calC$.
The construction is continuous for weak-$*$ convergence, so the convergence of our frame measures gives $\eta_i^{\calC}\to\eta_\infty^{\calC}$.
A useful consequence of the right $S_m$-invariance of the frame measures is that the resulting currents are invariant under the compact group $S_m/W$.
Writing $p:\calC\to\overline{\calC}=G/S_m$ for the map forgetting the normal frame, there are therefore currents $\overline{\eta}_i^{\calC}$ and $\overline{\eta}_\infty^{\calC}$ on $\overline{\calC}$ whose Haar probability lifts are the currents just constructed.
Naturally, for each $i$ the current $\overline{\eta}_i^{\calC}$ is supported on the discrete orbit $\Gamma C_i$ of underlying planes.

We can exclude atoms of these limit measures by studying the derivative action of the stabilizer of the corresponding plane.
The following general lemma gives the criterion we need.
For hypersurfaces this alone would be sufficient for our purposes.
However, in higher codimension we will also need to exclude concentration on entire families of planes contained in a fixed submanifold.
We return to this additional issue in Section~\ref{sec:incidencenonconcentration}.

\begin{lemma}\label{lem:noprob}
Let $\Gamma$ be a countable group acting by $C^1$ diffeomorphisms on a smooth manifold $Y$ and suppose that $q\in Y$ is fixed by $\Gamma$.
Let $\eta_i$ be $\Gamma$-invariant Radon measures weak-$*$ converging to $\eta$ with $\eta_i(\{q\})=0$ for every $i$.
If the projectivized derivative action of $\Gamma$ on $\bbP(T_qY)$ admits no invariant probability measure, then $\eta(\{q\})=0$.
\end{lemma}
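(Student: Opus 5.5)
We argue by contradiction: assuming $\eta(\{q\})=c>0$, we blow up $Y$ at $q$ and build a $\Gamma$-invariant probability measure on the exceptional divisor $\bbP(T_qY)$. Work in a coordinate chart $\phi:B\to\bbR^d$ around $q$ with $\phi(q)=0$. For $v\neq0$ write $[v]\in\bbP(\bbR^d)\cong\bbP(T_qY)$. Since $\Gamma$ fixes $q$ and acts by $C^1$ maps, each $\gamma$ has the local expansion $\phi\gamma\phi^{-1}(v)=D_q\gamma(v)+o(|v|)$. Consequently, for any $\epsilon>0$ there is a $\delta>0$ such that for $0<|v|<\delta$ the class $[\phi\gamma\phi^{-1}(v)]$ lies within $\epsilon$ of $[D_q\gamma\, v]$. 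This holds for each fixed $\gamma$, and uniformity is only needed over finitely many $\gamma$ at a time.

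For the construction, fix radii $r_k\downarrow0$ with $\eta(\partial B(q,r_k))=0$. Weak-$*$ convergence then gives $\eta_i(B(q,r_k))\to\eta(B(q,r_k))\ge c$. Because $\eta_i(\{q\})=0$, the measure $\eta_i|_{B(q,r_k)}$ lives on $B(q,r_k)\smallsetminus\{q\}$, and we can push it forward by $v\mapsto[\phi(v)]$. For each $k$, choose $i=i_k$ large and set
$$\theta_k=\frac{1}{\eta_{i_k}(B(q,r_k))}\,[\phi]_*\bigl(\eta_{i_k}|_{B(q,r_k)\smallsetminus\{q\}}\bigr),$$
which is a probability measure on the compact space $\bbP(\bbR^d)$. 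Pass to a weak-$*$ limit $\theta$.

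Next we check that $\theta$ is $\Gamma$-invariant. Fix $\gamma$ and a continuous $f$ on $\bbP$. By $\Gamma$-invariance of $\eta_{i_k}$,
$$\int_{B(q,r_k)} f([\phi\gamma\phi^{-1}v])\,d\eta_{i_k}=\int_{\gamma B(q,r_k)} f([\phi v])\,d\eta_{i_k}.$$
By the expansion above, the left side equals $\int f([D_q\gamma v])\,d\eta_{i_k}+o(1)\cdot\eta_{i_k}(B(q,r_k))$, using uniform continuity of $f$.

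The right side differs from $\int_{B(q,r_k)} f([\phi v])\,d\eta_{i_k}$ by at most $\|f\|_\infty\,\eta_{i_k}(\gamma B(q,r_k)\,\triangle\, B(q,r_k))$. The symmetric difference lies in an annulus $B(q,Lr_k)\smallsetminus B(q,r_k/L)$, where $L$ is a Lipschitz constant for $\gamma^{\pm1}$ near $q$.

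\textbf{Main obstacle.} The crux is showing that this annulus mass is small relative to $c$. Since $\eta$ is Radon, $\eta(B(q,s)\smallsetminus\{q\})\to0$ as $s\to0$. Choosing $r_k$ small, and then $i_k$ large so that $\eta_{i_k}$ of the closed annulus is close to $\eta$ of it (closed sets give limsup control), makes the annulus mass $o(1)$ while the normalizer stays at least $c/2$.

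To choose $i_k$ uniformly, we enumerate $\Gamma=\{\gamma_1,\gamma_2,\dots\}$ and at stage $k$ handle only $\gamma_1,\dots,\gamma_k$ and a countable dense family of test functions $f$. Dividing by the normalizers and letting $k\to\infty$ then gives $(D_q\gamma)_*\theta=\theta$ for every $\gamma$. Thus $\theta$ is a $\Gamma$-invariant probability measure for the projectivized derivative action on $\bbP(T_qY)$, contradicting the hypothesis. Hence $\eta(\{q\})=0$.
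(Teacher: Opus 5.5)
Your argument is correct and is the paper's blow-up argument carried out by hand in a chart: the paper lifts each $\eta_i$ to $\Bl_q(Y)$ with no mass on $E=\bbP(T_qY)$, uses properness of $\beta$ to extract a limit $\widehat\eta$ with $\beta_*\widehat\eta=\eta$, and normalizes $\widehat\eta\vert_E$. On the blow-up, invariance comes for free, because the lifted action is continuous (which is exactly your Taylor-expansion estimate) and $E$ is exactly $\Gamma$-invariant; your diagonal construction instead has to pay for the non-invariance of the balls $B(q,r_k)$ with the annulus estimate, and you handle that step correctly.
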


\begin{proof}
Let $\beta:\Bl_q(Y)\to Y$ be the real projective blow-up with exceptional divisor $E=\beta^{-1}(q)=\bbP(T_qY)$.
The action of $\Gamma$ lifts canonically to the blow-up and each $\eta_i$ has a unique lift $\widehat{\eta}_i$ giving zero mass to $E$.
Since $\beta$ is proper, we may pass to a subsequence to obtain a limit $\widehat{\eta}$ satisfying $\beta_*\widehat{\eta}=\eta$.
If $\eta(\{q\})>0$, we may therefore normalize the restriction of $\widehat{\eta}$ to $E$ to obtain an invariant probability measure for the projectivized derivative action.
This contradicts the hypothesis of the lemma.
\end{proof}

To see how this applies to hypersurfaces, note that the tangent representation at an underlying hyperplane is the standard representation of its stabilizer.
For a closed orbit meeting the core, Zariski density and the usual Furstenberg lemma \cite{Furstenberg,Zimmer} show that there can be no invariant probability measures on $\bbP(T_qY)$.
Since the original hypersurfaces are distinct, Lemma~\ref{lem:noprob} therefore excludes limiting atoms on these hyperplanes.
The orbit closure theorem above excludes all other atoms on $\calC^*$.

For general $m$, we have a stronger avoidance property for the original currents coming from maximality.
To state it, we first fix some notation for hyperbolic subspaces.
For a hyperbolic subspace $F\subseteq\bbH^n$, write $V_F\subseteq\bbR^{n,1}$ for its linear span, so that $F=\bbH^n\cap V_F$.
In particular, if $\dim F=m$ then $\dim V_F=m+1$.
For a normally framed plane $C\in\calC$, we write $V_C$ for the linear span of its underlying plane $p(C)$.
Suppose now that $F\subset\bbH^n$ is a proper totally geodesic subspace projecting to a properly immersed geodesic submanifold and define
\begin{equation}\label{eqn:VFdefn}
\calV_F=\{C\in\overline{\calC}\mid C\subseteq F\},
\end{equation}
where $C\subseteq F$ is equivalently the linear containment $V_C\subseteq V_F$.
Then
\begin{equation}\label{eqn:incidenceavoidance}
\overline{\eta}_i^{\calC}(\calV_F)=0,
\end{equation}
for all sufficiently large $i$.
Unfortunately, this property need not pass to a weak-$*$ limit so we will need an additional argument to make use of maximality.

Finally, write $\eta_\infty$ and $\overline{\eta}_\infty$ for the restrictions of the limiting currents to $\calC^*$ and $\overline{\calC}^*$.
In Section~\ref{sec:incidencenonconcentration}, we will show that $\mu_\infty$ gives zero mass to frames based on the boundary of the convex core.
Since $\mu_\infty$ gives positive mass to frames based in the core, this will show that the restricted currents are nonzero.
We will also show that $\eta_\infty$-almost every plane crosses two distinct boundary lifts.
We use these properties to identify $\eta_\infty$ with a constant multiple of $\lambda$ in Section~\ref{sec:higherdimensionabsolute} for $m\ge3$ and in Section~\ref{sec:surfaceabsolute} for $m=2$.

\section{Absolute continuity for maximal submanifolds of dimension at least three}\label{sec:higherdimensionabsolute}

Throughout this section, we retain the notation of Section~\ref{sec:limitingmeasure} and assume that $m\ge3$, so in particular $n\ge4$.
The crossing coordinates, the Haar lift, and the compact nonconcentration argument below also apply to surfaces.
We will return to these constructions in Section~\ref{sec:surfaceabsolute}, where recurrence and entropy provide the additional input needed when $m=2$.
Our goal is to prove the following measure rigidity statement by studying intersections with the geodesic boundary and applying Ratner's theorem.

\begin{thm}\label{thm:higherdimensionhaar}
The current $\eta_\infty$ is nonzero and there exists a positive constant $c$ such that $\lambda=c\eta_\infty$.
\end{thm}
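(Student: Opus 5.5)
The plan follows the outline in the introduction, working at the boundary lifts of the convex core.

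\textbf{Step 1: nonconcentration and crossing.} First I will establish the nonconcentration statements promised in Section~\ref{sec:limitingmeasure}.
\begin{itemize}
\item Show that $\mu_\infty$ gives zero mass to frames based on $\partial\core(Y_\infty)$, so that $\eta_\infty\neq0$ because $\mu_\infty(\calN_0)\ge 1$ survives in the limit.
\item Show that $\eta_\infty$-almost every plane crosses two distinct boundary lifts.
\end{itemize}
For the second point I will use Lemma~\ref{lem:aeboundarylift}: planes crossing at most one lift lie in the countably many compact proper submanifolds $F$. Mass on each $\calV_F$ is then excluded by a blow-up argument in the spirit of Lemma~\ref{lem:noprob}. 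I blow up $\overline{\calC}$ along $\calV_F$. Equation~\eqref{eqn:incidenceavoidance} says the approximants give zero mass to $\calV_F$, so any limiting mass there lifts to a $\Gamma_F$-invariant measure on the projectivized normal bundle of $\calV_F$. Zariski density of $\Gamma_F$ in $\Stab(F)$ together with Furstenberg's lemma, applied fiberwise, forbids such a measure.

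\textbf{Step 2: crossing coordinates and Ratner at one lift.} Fix a boundary lift $Z$ with stabilizer $\Gamma_Z$, a cocompact lattice in $\SO_0(n-1,1)$. A plane $C$ crossing $Z$ transversally is recorded by
\begin{itemize}
\item its intersection $C\cap Z$, an $(m-1)$-plane in $Z$ with a frame, and
\item its intersection angle $\theta\in(0,\pi/2]$.
\end{itemize}
Transferring $\eta_\infty$ to the frame bundle $\Gamma_Z\backslash\SO_0(n-1,1)\times I$ and restricting $\theta$ to a compact interval $I$ gives a finite measure. This measure is invariant under $W_{m-1}$, since $W$-invariance of the current restricts to the stabilizer of $C\cap Z$ in $C$. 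As $m\ge3$, $W_{m-1}$ is generated by unipotents. I disintegrate over $\theta$ and apply Ratner's measure classification to the ergodic components. Each conditional is a Haar component plus an integral of homogeneous measures on closed orbits $yH$ with $W_{m-1}\le H\lneq\SO_0(n-1,1)$; these project to countably many closed geodesic submanifolds of $\Gamma_Z\backslash Z$.

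\textbf{Step 3: comparing two lifts (main obstacle).} Suppose the exceptional part has positive mass. I choose a real algebraic subset $\calA$ of plane space of least dimension carrying a positive part of it, so every positive-measure subset of $\calA$ is Zariski dense in $\calA$. Comparing at two distinct lifts $Z,Z'$ crossed by a.e.\ plane (Step 1):
\begin{itemize}
\item the Haar part at $Z$ is smooth, so it is null on the measure-zero exceptional families at $Z'$, and vice versa;
\item both homogeneous descriptions force $\calA$ to be invariant under the intrinsic isometry groups of two boundary subspaces; these generate the connected isometry group of their geodesic span $F$.
\end{itemize}
Hence $\calA=\calV_F$, and the restricted current there is a multiple of the intrinsic smooth measure. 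Local finiteness makes $\Gamma F$ discrete, and Corollary~\ref{cor:planeclosures} makes $F$ properly immersed. If $F\neq\bbH^n$, this is concentration on $\calV_F$ for a proper properly immersed $F$, which the Step 1 blow-up argument (now in the noncompact case, using recurrence at two boundary intersections and maximality via Equation~\eqref{eqn:incidenceavoidance}) rules out. The delicate points are exactly this minimal-carrier step and the two-lift blow-up.

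\textbf{Step 4: conclusion.} So the boundary conditionals are Haar at every lift, and the current is locally invariant under $\SO_0(n-1,1)$ acting at $Z$ and at $Z'$. These two groups generate $G$, so $\eta_\infty$ is locally $G$-invariant on the conull set $\calU_\partial$. A locally $G$-invariant Radon measure on the connected open set $\calC^*$ is a constant multiple of $\lambda$. Since $\eta_\infty\neq0$, this gives $\lambda=c\eta_\infty$ with $c>0$.
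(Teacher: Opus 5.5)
\textbf{Main gap: the compact case in Step~1.} Your outline follows the paper's route: Ratner at one boundary lift via the Haar lift, blow-up nonconcentration on $\calV_F$, minimal algebraic carriers compared at two lifts, and then local $G$-invariance. The mechanism you give in Step~1 for compact $F$ does not work when $\dim F>m$.
\begin{itemize}
\item The limiting mass on the exceptional divisor is then spread over the positive-dimensional family $\calV_F$. A generic plane $C\in\calV_F$ has trivial stabilizer in $\Gamma_F$, so no group acts on an individual fiber $\bbP\Hom(V_C,V_F^\perp)$. There is nothing for Furstenberg's lemma to act on ``fiberwise''; equivariance only relates different fibers.
\item Zariski density is also not the operative input. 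Stabilizers of noncompact carriers are Zariski dense too. If your Step~1 reasoning sufficed, it would dispose of them without the two-crossing recurrence you correctly invoke in Step~3.
\end{itemize}
What the paper actually uses is compactness of $\Gamma_F\backslash F$ (Proposition~\ref{prop:incidencenonconcentration}):
\begin{itemize}
\item Lift the divisor measure with Haar measure along the fibers to the adapted-frame bundle $\Gamma_F\backslash(\SO_0(V_F)\times K_F)$. This bundle is compact, so the lift is finite.
\item In the adapted coordinates of Equations~\eqref{eqn:pullback} and \eqref{eqn:Jinvariant}, its projective marginal is a probability measure on the single space $\bbP\Hom(V_*,V_F^\perp)$, invariant under $\SO_0(V_C)\cong W_m$.
\item Poincar\'e recurrence excludes such a measure, because $W_m$ is generated by unipotents and fixes no nonzero map (Lemma~\ref{lem:normalrecurrence}).
\end{itemize}
Applying this compact case to $F=Z$, a boundary lift, is also what proves your first bullet. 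A plane not contained in a boundary lift meets $\partial D_\Gamma$ in an $m$-dimensional null set, so $\mu_\infty$ gives zero mass to frames based on the boundary.

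\textbf{Smaller points.}
\begin{itemize}
\item In Step~4 you only have local $G$-invariance on $\calU_\partial$, so connectedness of $\calC^*$ alone does not give a single constant. You need, as in the paper, that each $U_Z\cap U_{Z'}$ is connected and that the double-crossing sets of pairs sharing a lift overlap (three boundary points lie in a common $m$-plane). Alternatively, prove that $\calU_\partial$ itself is connected.
\item In Step~3, invariance of $\calA$ under the two boundary groups yields $\calA=\calV_F$ only together with the containment $\calA\subseteq\calV_F$. That containment comes from the support conditions $V_C=(V_C\cap V_Z)+(V_C\cap V_{Z'})\subseteq V_B+V_{B'}$ holding on a positive-measure set, combined with minimality.
\item In codimension $n-m\ge 2$, the position of a plane relative to $Z$ is an angle vector in $\bbB^{n-m}$. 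A scalar $\theta$ suffices only if the tilt direction is recorded in the frame. So $I$ should be a compact subset of $\bbB^{n-m}$; this is harmless for the argument.
\end{itemize}
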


We quickly sketch the proof before giving the details.
First, fix $Z$ to be a lift of a geodesic boundary component and consider the intersections of our planes with this lift.
Along with the intersection, we record the angles between the chosen normal directions to the plane and the normal to the boundary lift.
These are the angle parameters that we introduce below.
We then transfer the current to the compact frame bundle over the boundary and disintegrate over this choice of angles.
The resulting conditional measures are finite, so Ratner's theorem shows that their ergodic components are homogeneous.
Some of these components may nevertheless be supported on smaller geodesic submanifolds of the boundary.
In the finite-volume setting, the corresponding step uses the support-containment theorem of Mozes and Shah \cite[Thm~1.1]{MozesShah} to rule out a proper homogeneous limit by maximality, as in \cite[Prop~3.1 and Thm~3.3]{BFMS}.
Here the intersections of our original submanifolds with the boundary need not themselves be maximal so this argument does not apply directly.

The way around this is to compare these disintegrations at two distinct boundary lifts.
If a smaller boundary submanifold contributes positive mass, then the current gives positive mass to a proper algebraic subset of the plane space.
Choose such a subset $\calA$ of least dimension.
Minimality lets us show that $\calA$ is preserved by the intrinsic isometry groups of the two boundary subspaces contributing positive mass.
Together, these groups force $\calA$ to be the family of all $m$-planes contained in a single proper geodesic subspace $F$.
The restricted current is also locally invariant under these groups, which makes it a multiple of intrinsic Haar measure on this family.

To bring maximality back into the argument, we still need $F$ to give a properly immersed submanifold of the original manifold.
Here local finiteness forces the translates of the subspace to be discrete and the orbit closure theorem of Lee and Oh then gives a proper immersion.
We are therefore left with concentration on the planes in a proper closed geodesic submanifold.
Maximality excludes this for the original sequence and another blow-up argument shows that it cannot arise in the limit either.
This time, the blow-up records the normal directions along which nearby planes approach the family, and recurrence at two boundary intersections rules out positive mass concentrating on those directions.
This contradiction rules out the smaller boundary submanifolds, hence the conditional measures on the frame bundles over the boundary components are Haar.
Comparing at two boundary lifts once more then gives the required multiple of Haar measure on $\calC^*$.
The argument is considerably shorter in codimension one, that is, when the geodesic submanifolds are genuine hypersurfaces.
We will explain these simplifications in Appendix~\ref{sec:hypersurfaceappendix}.

\subsection{Transferring the transverse current to the frame bundle over the boundary}\label{sec:crossinghyperplanes}

In this subsection, we transfer the transverse current to a space where we may apply Ratner's theorem.
The useful feature of the frame bundle over the boundary is that the transferred measure will be finite, even though our original measures are not.
To this end, let $Z$ be a lift of a component of the geodesic boundary with unit outward normal $n_Z$.
Throughout we use
$$H_Z=\SO_0(V_Z)\cong\SO_0(n-1,1),\qquad \Delta_Z=\Gamma\cap H_Z,$$
so that the corresponding frame bundle $\Delta_Z\backslash H_Z$ is compact.
We let $U_Z\subset\calC^*$ denote the set of normally framed planes crossing $Z$ and describe these planes by their angle parameters.
More precisely, if $(n_1,\ldots,n_{n-m})$ is the normal frame, define
$$a=(\langle n_Z,n_1\rangle,\ldots,\langle n_Z,n_{n-m}\rangle)\in\bbB^{n-m},$$
where $\bbB^{n-m}$ is the open unit ball.
A plane with this normal frame crosses $Z$ if and only if $|a|<1$.
Fix an $m$-dimensional indefinite linear subspace $V_0\subset V_Z$ and let $J=\SO_0(V_0)\cong\SO_0(m-1,1)$.
We choose an adapted frame identifying $H_Z$ with the model boundary group and $J$ with its standard $\SO_0(m-1,1)$ subgroup.
Choose an orthonormal basis $\{f_1,\ldots,f_{n-m}\}$ for the positive-definite orthogonal complement $D_0=V_Z\cap V_0^\perp$ and put
$$V_*=V_0\oplus\bbR n_Z,\qquad C_*=\bbH^n\cap V_*.$$
Then $V_Z=V_0\oplus D_0$ and $V_*^\perp=D_0$.

For an angle parameter $a=(a_1,\ldots,a_{n-m})\in\bbB^{n-m}$, write $\bfa=\sum_i a_i f_i\in D_0$ and $s_a=\sqrt{1-|a|^2}$.
Let $R_a$ be the orientation-preserving rotation of $D_0\oplus\bbR n_Z$ that fixes the vectors in $D_0$ perpendicular to $\bfa$ and sends $n_Z$ to $w_a=s_a n_Z-\bfa$.
Extend $R_a$ by the identity on $V_0$.
This choice varies smoothly with $a$, has $R_0=\Id$, and commutes with $J=\SO_0(V_0)$.
Define
\begin{equation}\label{eqn:referenceframe}
n_i(a)=R_af_i,\qquad V_{C_a}=R_aV_*=V_0\oplus\bbR w_a,\qquad C_a=\bbH^n\cap V_{C_a}.
\end{equation}
We give $C_a$ the ordered orthonormal normal frame $(n_1(a),\ldots,n_{n-m}(a))$, for which $\langle n_i(a),n_Z\rangle=a_i$ and $V_{C_a}\cap V_Z=V_0$.
Figure~\ref{fig:surface-rotation} illustrates this construction when $m=2$ and $n=3$, although we assume that $m\ge 3$ throughout this section unless explicitly stated otherwise.

\begin{figure}[t]
\centering
\begingroup
\definecolor{rotationboundary}{HTML}{315E86}
\definecolor{rotationtilt}{HTML}{A45132}
\definecolor{rotationnormal}{HTML}{3D735C}
\definecolor{rotationmuted}{HTML}{626970}
\tikzset{every node/.style={font=\small},
  vec/.style={-{Stealth[length=2.1mm]},line width=.9pt},
  geodesic/.style={line width=1.25pt},
  guide/.style={densely dashed,line width=.55pt,color=rotationmuted}}
\scalebox{0.9}{\begin{tikzpicture}[baseline=(current bounding box.north),x=1cm,y=1cm]
  \coordinate (e) at (2.0,.4);
  \coordinate (f) at (1.02,-.544);
  \coordinate (n) at (.15,1.50);
  \coordinate (w) at (-.69,1.632);
  \filldraw[fill=black!5,draw=black!28,line width=.6pt]
    ($(e)+(f)$)--($(e)-(f)$)--($-1*(e)-(f)$)--($-1*(e)+(f)$)--cycle;
  \filldraw[fill=rotationboundary!11,draw=rotationboundary!55,line width=.7pt,fill opacity=.75]
    ($(e)+(n)$)--($(e)-(n)$)--($-1*(e)-(n)$)--($-1*(e)+(n)$)--cycle;
  \filldraw[fill=rotationtilt!15,draw=rotationtilt!75,line width=.85pt,fill opacity=.60]
    ($(e)+(w)$)--($(e)-(w)$)--($-1*(e)-(w)$)--($-1*(e)+(w)$)--cycle;
  \draw[geodesic] (-2.85,-.57)--(2.85,.57);
  \node[anchor=west] at (2.9,.59) {$\ell$};
  \fill (0,0) circle (1.4pt);
  \node[below right] at (0,0) {$q$};
  \node[rotationboundary,anchor=south west,fill=white,inner sep=1.2pt] at (2.05,1.89) {$C_*$};
  \node[rotationtilt,anchor=south east,fill=white,inner sep=1.2pt] at (-2.54,1.23) {$C_a$};
  \node[anchor=north west] at (2.12,-.27) {$Z$};
  \draw[vec,rotationboundary] (0,0)--(.12,1.20);
  \node[rotationboundary,anchor=west,fill=white,inner sep=1pt] at (.16,1.15) {$n_Z$};
  \draw[vec,rotationtilt] (0,0)--(-.552,1.306);
  \node[rotationtilt,anchor=east,fill=white,inner sep=1pt] at (-.59,1.32) {$w_a$};
\end{tikzpicture}
\hspace{.05cm}
\begin{tikzpicture}[baseline=(current bounding box.north),x=1cm,y=1cm]
  \draw[guide] (-2.55,0)--(2.55,0);
  \draw[guide] (0,-.65)--(0,2.72);
  \draw[vec,rotationmuted] (0,0)--(2.4,0);
  \node[anchor=north] at (2.40,-.08) {$f_1$};
  \draw[vec,rotationboundary] (0,0)--(0,2.4);
  \node[rotationboundary,anchor=south west] at (.03,2.4) {$n_Z$};
  \draw[vec,rotationtilt] (0,0)--(-1.44,1.92);
  \node[rotationtilt,anchor=south east] at (-1.44,1.92) {$w_a$};
  \draw[vec,rotationnormal] (0,0)--(1.92,1.44);
  \node[rotationnormal,anchor=south west] at (1.02,1.70) {$n_1(a)$};
  \draw[guide] (1.92,1.44)--(0,1.44);
  \draw[rotationmuted] (-.07,1.44)--(.07,1.44);
  \node[anchor=east,fill=white,inner sep=1pt] at (-.10,1.44) {$a$};
  \draw[-{Stealth[length=1.7mm]},rotationtilt,line width=.75pt] (0,.82) arc[start angle=90,end angle=126.8699,radius=.82];
  \node[rotationtilt] at (-.39,1.03) {$\theta$};
  \draw[-{Stealth[length=1.7mm]},rotationnormal,line width=.75pt] (.65,0) arc[start angle=0,end angle=36.8699,radius=.65];
  \node[rotationnormal] at (.88,.25) {$\theta$};
  \draw[rotationnormal!70,line width=.5pt] (-.12,.16)--(.04,.28)--(.16,.12);
  \fill (0,0) circle (1.3pt);
  \node[below left] at (0,0) {$q$};
\end{tikzpicture}
}\endgroup
\caption{The reference surface $C_*$ and the rotated surface $C_a$ about $\ell=C_*\cap Z$ when $n=3$, with $a=\sin\theta$ and $0<\theta<\pi/2$.
On the right, at $q\in\ell$, the rotation $R_a$ in the positive-definite plane perpendicular to $\ell$ sends $n_Z$ to $w_a$ and $f_1$ to $n_1(a)$.}
\label{fig:surface-rotation}
\end{figure}

There is an $H_Z$-equivariant identification
\begin{align}
\label{eqn:equivariant} H_Z/J\times\bbB^{n-m}&\to U_Z,\\
 (hJ,a)&\mapsto hC_a.\nonumber
\end{align}
In these coordinates, every normally framed plane $C$ crossing $Z$ has the form $C=hC_a$ and ordered normal frame $(hn_1(a),\ldots,hn_{n-m}(a))$.
The angle parameter $a$ is determined by $C$, while $h$ is determined up to right multiplication by $J$.
Choosing $h$ records an oriented orthonormal tangent frame based at a point of $C\cap Z$, and $hR_a$ carries the reference frame of $C_*$ to the corresponding adapted frame of $C$.
When $m=2$, we have $J=A$, and right multiplication by $a_t$ moves the basepoint along the geodesic $C\cap Z$.

We will also use these adapted frames to express tangent vectors in a fixed reference space.
For a normally framed plane $C$, we write $T_C\overline{\calC}$ for $T_{p(C)}\overline{\calC}$, so that
$$T_C\overline{\calC}=\Hom(V_C,V_C^\perp).$$
Given $C=hC_a$ and $\Theta\in T_C\overline{\calC}$, pulling $\Theta$ back to the reference plane gives
\begin{equation}\label{eqn:pullback}
T=(hR_a)^{-1}\Theta(hR_a)\vert_{V_*}\in T_{C_*}\overline{\calC}=\Hom(V_*,V_*^\perp).
\end{equation}
For $g\in J$, replacing $h$ by $hg$ changes this pullback to $g^{-1}Tg=T\circ g$, since $R_a$ commutes with $J$ and $J$ fixes $V_*^\perp$ pointwise.
Thus on the projective tangent bundle, the action is given in coordinates by
\begin{equation}\label{eqn:Jinvariant}
(h,a,[T])\cdot g=(hg,a,[T\circ g]).
\end{equation}

For a compact set $I\subset\bbB^{n-m}$, let $U_{Z,I}$ consist of the planes in $U_Z$ whose angle parameter belongs to $I$.
We then transfer the current to a finite measure on $(\Delta_Z\backslash H_Z)\times I$.
This is the usual correspondence between transverse measures and measures obtained by adjoining Haar measure along the fibers.
We include the construction because we will use it several times and because compactness of the frame bundle over the boundary is what makes the resulting measure finite.

\begin{lemma}\label{lem:banana}
For every $2\le m<n$, the restriction to $U_{Z,I}$ of a $\Gamma$-invariant Radon current on $\calC^*$ has a canonical Haar lift to $H_Z\times I$.
This descends to a finite right $J$-invariant measure $\mu_I$ on $(\Delta_Z\backslash H_Z)\times I$.
The conditional probability measures $\mu_a$ obtained by disintegrating over $I$ are $J$-invariant for almost every $a$.
The construction is positive and linear in the current.
\end{lemma}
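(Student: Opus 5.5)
The plan is to work entirely through the $H_Z$-equivariant identification \eqref{eqn:equivariant}, $\Phi:H_Z/J\times\bbB^{n-m}\to U_Z$, $(hJ,a)\mapsto hC_a$. First I would check that $\Phi$ is a diffeomorphism onto the open set $U_Z\subseteq\calC$. The angle parameter $a$ is a smooth function of $C$. For fixed $a$, the $H_Z$-orbit of $C_a$ is the set of normally framed planes crossing $Z$ with that angle. Its stabilizer in $H_Z$ is exactly $J$: an element of $H_Z$ fixing $C_a$ with its frame must preserve $V_{C_a}\cap V_Z=V_0$ and fix the $n_i(a)$, hence fix $D_0$ and $n_Z$. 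Given a $\Gamma$-invariant Radon current $\eta$ on $\calC^*$, I would restrict it to $U_{Z,I}$ and pull it back by $\Phi$ to a Radon measure $\eta_\Phi$ on $H_Z/J\times I$. Since $U_{Z,I}\subseteq\calC^*$ after intersecting, this is harmless. Then I would define the Haar lift on $H_Z\times I$ by adjoining Haar measure on $J$ along the fibres of $H_Z\to H_Z/J$. Concretely, this is $\widetilde\mu_I(f)=\int\int_J f(hg,a)\,dm_J(g)\,d\eta_\Phi(hJ,a)$ for $f\in C_c(H_Z\times I)$. This is well defined because $J$ is unimodular, so the inner integral depends only on $hJ$. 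It is canonical once Haar measure on $J$ is fixed, and it is right $J$-invariant by construction. Positivity and linearity in $\eta$ are immediate from the formula.

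Next, $\Delta_Z\le\Gamma$ preserves $Z$ and hence $U_{Z,I}$ and the angle parameter. The map $\Phi$ is $H_Z$-equivariant for the left action, so $\eta_\Phi$ is left $\Delta_Z$-invariant, and therefore so is $\widetilde\mu_I$, since left and right actions commute. The measure $\widetilde\mu_I$ thus descends to a right $J$-invariant Radon measure $\mu_I$ on $(\Delta_Z\backslash H_Z)\times I$. Equivalently, one can take $\mu_I(f)=\widetilde\mu_I(\tilde f\,\phi)$ for a Borel fundamental-domain cutoff $\phi$, which is finite since $\Delta_Z$ acts discretely and cocompactly.

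Finiteness is the step I expect to need the only real care. Since $\Delta_Z\backslash H_Z\times I$ is compact, I would fix a compact fundamental domain $\calK\subset H_Z$ for $\Delta_Z$ and a compact neighbourhood $B\subset J$ of the identity. It then suffices to show that $\Phi(\calK\times I)$, with the relevant $J$-directions absorbed, lies in a compact subset of $\calC^*$, and then use local finiteness of $\eta$. The subtlety is that $J$ is noncompact, so one cannot take $\calK J$. Instead I would note that $\mu_I(\calK B\times I)$ equals $m_J(B)$ times $\eta_\Phi$ of the image of $\calK B\times I$ in $H_Z/J\times I$. That image is compact, and $\Phi$ maps it to a compact set of planes, each crossing $Z$ at a point of a compact piece of $Z$ with angle in $I$. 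Such planes meet $\interior(D_\Gamma)$, since they cross $Z\subseteq\partial D_\Gamma$ transversally, so the image is a compact subset of $\calC^*$. Covering the compact quotient by finitely many translates of the image of $\calK B$ then gives $\mu_I((\Delta_Z\backslash H_Z)\times I)<\infty$.

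Finally, I would disintegrate the finite measure $\mu_I$ over the projection to $I$, writing $d\mu_I=\int_I\mu_a\,d\bar\mu(a)$ with $\mu_a$ probabilities on $\Delta_Z\backslash H_Z$; this is possible because all spaces are standard Borel. The action of $J$ is trivial on $I$. Hence for each $g$ in a countable dense subset of $J$, uniqueness of disintegration gives $(r_g)_*\mu_a=\mu_a$ for almost every $a$. Since $g\mapsto(r_g)_*\mu_a$ is weak-$*$ continuous, this yields full $J$-invariance of $\mu_a$ for almost every $a$, completing the lemma for all $2\le m<n$.
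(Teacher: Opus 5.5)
Your proposal is correct and is essentially the paper's proof. It uses the Haar lift along the $J$-fibres via \eqref{eqn:equivariant} and descends through a $\Delta_Z$-fundamental domain. Finiteness comes from compactness of $(\Delta_Z\backslash H_Z)\times I$ together with local finiteness of the current on $U_Z\subseteq\calC^*$. $J$-invariance of the conditionals comes from uniqueness of disintegration, which you spell out more carefully than the paper does.

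There are two small corrections. Independence of the representative $h$ uses only left invariance of $m_J$; unimodularity of $J$ is what gives right $J$-invariance of the lift, as the paper notes. In the finiteness step, your factor $m_J(B)$ is not correct as stated. It should be replaced by the bound $\sup_h m_J(J\cap h^{-1}\calK B)$ over representatives $h\in\calK B$, which is finite by compactness and suffices for the argument.
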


\begin{proof}
Let $\eta_{U_{Z,I}}$ denote the given current.
The construction uses the following surjective maps
$$\xymatrix@C=3em@R=2em{
& H_Z\times I \ar[dl]_-{\pi_{\Delta_Z}} \ar[dr]^-{\alpha} & \\
\makebox[8em][c]{$(\Delta_Z\backslash H_Z)\times I$}
&&
\makebox[8em][c]{$U_{Z,I},$}
}$$
where $\pi_{\Delta_Z}(h,a)=(\Delta_Zh,a)$ and $\alpha(h,a)=hC_a$.
We first lift the current along the right-hand map and then descend the resulting measure along the left-hand map.

More precisely, for any $\varphi\in C_c(H_Z\times I)$, define the Haar lift $\widetilde{\eta}_{U_{Z,I}}$ by
$$\int_{H_Z\times I}\varphi\,d\widetilde{\eta}_{U_{Z,I}}
=\int_{U_{Z,I}}\int_J\varphi(hj,a)\,dm_J(j)\,d\eta_{U_{Z,I}}(hC_a),$$
where $(h,a)$ is a local representative of the plane $hC_a$ and $m_J$ is Haar measure on $J$.
Changing the representative amounts to left translation in the inner integral hence the formula is independent of this choice.
In particular, in local sections this is simply the product of the transverse current and Haar measure along the $J$-fibers.
The resulting measure is Radon and left $\Delta_Z$-invariant, and unimodularity of $J$ gives right $J$-invariance as well.

We then descend this measure to the quotient by choosing a measurable fundamental domain $\frakD$ for the left $\Delta_Z$-action on $H_Z\times I$ and defining
$$\mu_I(A)=\widetilde{\eta}_{U_{Z,I}}\bigl(\pi_{\Delta_Z}^{-1}(A)\cap \frakD\bigr),$$
for Borel sets $A\subseteq(\Delta_Z\backslash H_Z)\times I$.
The left invariance makes this independent of the choice of fundamental domain, while the commuting right $J$-action preserves $\mu_I$.
Moreover, the measure is locally finite in quotient charts and, since $\Delta_Z\backslash H_Z$ and $I$ are compact, finitely many such charts suffice to show that $\mu_I$ is finite.

Finally, let $\sigma_I=(\pr_I)_*\mu_I$ be the measure recording the angle parameters.
Disintegrating over projection to the second factor gives
$$\mu_I=\int_I\mu_a\otimes\delta_a\,d\sigma_I(a),$$
where $\mu_a$ is a probability measure on $\Delta_Z\backslash H_Z$ for $\sigma_I$-almost every $a$.
Thus, for any continuous function $f$ on $(\Delta_Z\backslash H_Z)\times I$, we obtain
$$\int f\,d\mu_I=\int_I\left(\int_{\Delta_Z\backslash H_Z}f(x,a)\,d\mu_a(x)\right)d\sigma_I(a).$$
By the usual properties of disintegration, the conditional measures $\mu_a$ are $J$-invariant.
\end{proof}

\begin{rem}
The same construction applies on the full angle ball $\bbB^{n-m}$ and gives a possibly infinite Radon measure on $(\Delta_Z\backslash H_Z)\times\bbB^{n-m}$.
Since $\Delta_Z\backslash H_Z$ is compact, the angle marginal is locally finite, so we may disintegrate this measure directly into $J$-invariant conditional probability measures.
Thus the restriction to a compact set $I$ is not essential to the construction.
We make this restriction for the convenience of working with finite measures throughout, recovering the full current by compact exhaustion when needed.
\end{rem}

Since $m\ge3$, the group $J$ is generated by one-parameter unipotent subgroups, so Ratner's theorem \cite{Ratner2} shows that the $J$-ergodic components of $\mu_a$ are homogeneous.
Put $X_Z=\Delta_Z\backslash H_Z$.
Separating the Haar component gives the decomposition
\begin{equation}\label{eqn:ratnerdecomp}
\mu_a=c(a)m_{X_Z}+\mu_a^{\mathrm{Ex}},
\end{equation}
for $\sigma_I$-almost every $a$.
Here $m_{X_Z}$ is Haar probability measure, and the exceptional term is the ergodic integral
$$\mu_a^{\mathrm{Ex}}=\int_{\Omega_a^{\mathrm{Ex}}}m_\upsilon\,d\tau_a(\upsilon),$$
where each $m_\upsilon$ is a homogeneous $J$-ergodic probability on a proper closed orbit $\Delta_ZhL$ with $J\le L$, and $\tau_a$ has total mass $1-c(a)$.
We do not assume that $\tau_a$ is atomic.
The countable grouping of invariant measures in \cite[Thm~2.2]{MozesShah} likewise allows each grouped measure to have a continuous ergodic decomposition.
In particular, $0\le c(a)\le1$ and $\mu_a^{\mathrm{Ex}}(X_Z)=1-c(a)$ and our goal is to show that $c(a)=1$ for $\sigma_I$-almost every $a$.

To describe the support of these exceptional measures, we introduce the following notation.
For a totally geodesic subspace $B\subseteq Z$, let
$$\calE_{Z,B}=\{C\in\overline{\calC}\mid \dim(V_C\cap V_B)\ge m\},$$
the set of planes whose linear spans meet $V_B$ in dimension at least $m$.
For a plane $C$ crossing $Z$, membership in $\calE_{Z,B}$ is equivalent to $C\cap Z\subseteq B$.
If $B\ne Z$, then $\calE_{Z,B}$ is a proper algebraic subset of the relevant Grassmannian.
Using this notation, we show that $\eta_\infty\vert_{U_Z}$ decomposes similarly to Ratner's theorem and study the support of the corresponding exceptional components.

\begin{prop}\label{prop:boundaryhomogeneous}
Let $\eta$ be a $\Gamma$-invariant Radon current.
The decomposition in Equation~\eqref{eqn:ratnerdecomp} induces a decomposition
$$\eta\vert_{U_Z}=\eta_{Z,Z}+\eta_Z^{\mathrm{Ex}},\qquad
\eta_Z^{\mathrm{Ex}}=\sum_{B\subset Z}\eta_{Z,B},$$
where $B$ ranges over the countable collection of lifts of proper closed geodesic submanifolds of the boundary component covered by $Z$ that have dimension at least $m-1$.
Each $\eta_{Z,B}$ is locally $\SO_0(V_B)$-invariant and satisfies
$$\eta_{Z,B}\bigl(U_Z\ssm p^{-1}(\calE_{Z,B})\bigr)=0.$$
After disintegrating over the angle parameters, its conditional measures on the $\SO_0(V_B)$-orbits are smooth homogeneous measures.
Finally, the current $\eta_{Z,Z}$ corresponds to $c(a)m_{X_Z}$ and is $H_Z$-invariant on $U_Z$.
\end{prop}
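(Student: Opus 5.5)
The plan is to build the decomposition on the finite measures $\mu_I$ of Lemma~\ref{lem:banana} and then transport it back to $U_Z$ through the Haar lift. Fix a compact exhaustion $I_1\subset I_2\subset\cdots$ of $\bbB^{n-m}$. Since the construction of Lemma~\ref{lem:banana} is positive and linear and compatible with restriction to smaller $I$, the decompositions on the various $I_k$ agree on overlaps, so it suffices to work with one compact $I$.

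\textbf{Step 1: grouping the exceptional components.} For $\sigma_I$-almost every $a$, Equation~\eqref{eqn:ratnerdecomp} writes $\mu_a$ as a Haar part plus an integral over proper closed homogeneous orbits $\Delta_ZhL$ with $J\le L$. I would group these components by geometry, following the countable grouping of \cite[Thm~2.2]{MozesShah}. Project $\Delta_ZhL$ to the compact boundary manifold $\Delta_Z\backslash Z$. The $L$-orbit of the $(m-1)$-plane $hJ$ sweeps out the closed geodesic submanifold $\Delta_Z\backslash\Delta_Z h(L\cdot o)$. Let $B$ be the lift containing $h(J o)$ of the smallest closed totally geodesic submanifold containing this image. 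Then $L$ lies between $h^{-1}\SO_0(V_B)h$ and its normalizer, up to a compact factor. Also $\dim B\ge m-1$, because $B$ contains an $(m-1)$-plane.

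If $B=Z$, the orbit has full dimension, hence $L=H_Z$ and the component is Haar. This component is absorbed into $c(a)m_{X_Z}$. Otherwise $B$ is a proper closed geodesic submanifold of $Z$. There are only countably many such submanifolds in the compact boundary, by the countability in Corollary~\ref{cor:planeclosures} applied in $\Delta_Z\backslash Z$, or simply because closed orbits of $\SO_0(V_B)$ in a compact space are countable. To make the indexing by lifts $B\subset Z$ well defined, I would choose for each point $\Delta_Zh$ the lift $B$ with $h(Jo)\subseteq B$. The set of components attached to each $\Gamma$-class of $B$ is Borel, since it is a countable union of closed $L$-orbit families. Restricting $\tau_a$ to these sets gives measures $\mu_{a,[B]}$ with $\sum_{[B]}\mu_{a,[B]}=\mu_a^{\mathrm{Ex}}$.

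\textbf{Step 2: lifting back to $U_Z$.} Integrate over $a$ against $\sigma_I$ to get measures $\mu_{I,[B]}$. Lift these to $H_Z\times I$ and push forward along $\alpha$ to $U_{Z,I}$. This inverts the Haar lift of Lemma~\ref{lem:banana}, because each grouped measure is again right $J$-invariant. It then remains to split the lifted measure over the individual lifts $B$ in the $\Delta_Z$-orbit. Upstairs, each point $h$ of the preimage has $hJo\subseteq \delta B$ for exactly one translate $\delta B$, which gives the decomposition into $\eta_{Z,B}$.

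\textbf{Step 3: support and invariance.} By construction, $\eta_{Z,B}$-almost every plane $hC_a$ has $C\cap Z=hJo\subseteq B$. This is exactly membership in $p^{-1}(\calE_{Z,B})$, which gives the support statement. Local $\SO_0(V_B)$-invariance comes from the $L$-invariance of the homogeneous measures $m_\upsilon$ together with $\SO_0(V_B)\le hLh^{-1}$ up to compact factors. Here one uses that left translation by $\SO_0(V_B)$ on $h$ corresponds to the action on $U_Z$ through the equivariant identification \eqref{eqn:equivariant}. Smoothness of the conditionals on $\SO_0(V_B)$-orbits follows because those conditionals are Haar measures on homogeneous orbits. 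Finally, the Haar part $c(a)m_{X_Z}$ lifts to a left $H_Z$-invariant measure on $H_Z\times I$. By \eqref{eqn:equivariant}, this yields an $H_Z$-invariant current $\eta_{Z,Z}$.

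\textbf{Main obstacle.} I expect the hard step to be the measurable, canonical grouping of the possibly continuous family of ergodic components by the geodesic hull $B$. This requires two things. First, the mixed cases must be handled correctly, namely $L$ strictly between $\SO_0(V_B)$ and its normalizer, or $L$ containing compact centralizing factors; these must still produce the correct $B$. Second, the lifts must be chosen consistently so that the sum over $B$ is locally finite and $\Gamma$-compatible. I would handle both through the orbit-closure description in Corollary~\ref{cor:planeclosures} together with the Mozes--Shah tube/grouping argument.
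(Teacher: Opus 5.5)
Your outline matches the paper's proof. You group the Ratner components of $\mu_a^{\mathrm{Ex}}$ by a subspace $B\subset Z$, use countability of closed geodesic submanifolds of $\Delta_Z\backslash Z$, undo the Haar lift of Lemma~\ref{lem:banana}, and read off support and invariance. But the one step with real content is asserted rather than proved.

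Taking $B$ to be the totally geodesic hull of the sweep $hL(V_0\cap\bbH^n)$ gives only an upper bound. The group $hLh^{-1}$ preserves the sweep and hence its hull, so $hLh^{-1}\le\Stab_{H_Z}(B)$. That is enough for $\eta_{Z,B}(U_Z\ssm p^{-1}(\calE_{Z,B}))=0$. Your sentence ``Then $L$ lies between $h^{-1}\SO_0(V_B)h$ and its normalizer'' needs the lower bound $\SO_0(V_B)\le hLh^{-1}$, and the hull construction does not give it. Your preceding claim that the $L$-orbit ``sweeps out the closed geodesic submanifold'' already presupposes it.

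The lower bound is what produces three of the conclusions:
the local $\SO_0(V_B)$-invariance,
the smooth homogeneous conditionals on $\SO_0(V_B)$-orbits,
and closedness of the image of $B$, since $\Delta_Z\cap hLh^{-1}$ is then a lattice in $\SO_0(V_B)$ times a compact group.
Neither Corollary~\ref{cor:planeclosures} nor a Mozes--Shah tube argument supplies it.

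The missing input is the subgroup description the paper cites, \cite[Lem~3.2]{BFMS}. For $J\le L\le H_Z$ it gives $B_0\subseteq Z$ with $V_0\subseteq V_{B_0}$ such that the noncompact factor of $L$ is $\SO_0(V_{B_0})$ and the rest lies in its compact centralizer; then $B=hB_0$.

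For $m\ge3$ this is also a short direct computation. As a $J$-module, $\frakso(V_Z)=\frakso(V_0)\oplus(V_0\otimes D_0)\oplus\frakso(D_0)$, with $V_0$ absolutely irreducible. So an $\Ad(J)$-invariant subalgebra $\frakl\supseteq\frakso(V_0)$ equals $\frakso(V_0)\oplus(V_0\otimes E)\oplus\mathfrak{k}$ for some $E\subseteq D_0$ and $\mathfrak{k}\subseteq\frakso(D_0)$. Bracket closure then forces $\frakl=\frakso(V_0\oplus E)\oplus\mathfrak{k}'$ with $\mathfrak{k}'\subseteq\frakso(D_0\cap E^\perp)$. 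This gives $V_0\subseteq V_{B_0}$ and the lower bound at once, and it is where $m\ge3$ is used. Compare Lemma~\ref{lem:surfaceboundaryhomogeneous}: for $m=2$ the paper needs a separate recurrence argument just to place $C\cap Z$ inside $B$.

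Two smaller corrections.

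First, your fallback countability claim is false in general. Right translation by the compact centralizer of $\SO_0(V_{B_0})$ moves a closed $\SO_0(V_{B_0})$-orbit through a continuum of distinct closed orbits. Only the geodesic submanifolds of $\Delta_Z\backslash Z$ and their lifts are countable; the paper uses the finite-volume case of \cite[Thm~1.7(2)]{LO} for this. The compact and angle parameters simply remain inside the integrals defining each $\eta_{Z,B}$.

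Second, attach $B$ to the coset $hL$ rather than to the plane $hJo$. The subspace $B=hB_0$ is independent of the representative because $L$ preserves $B_0$. A plane, by contrast, can lie in several translates of $B$ when these meet in dimension at least $m-1$, so your ``exactly one translate'' holds only almost everywhere.

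With these adjustments, the grouping you flag as the main obstacle is routine.
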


\begin{proof}
Fix an angle parameter $a$ for which the preceding decomposition is defined and let $m_{\Delta_ZhL}$ be a homogeneous component in its exceptional ergodic integral, with $J\le L$.
Recalling that $V_0$ was the linear subspace defining $J$, the subgroup description in \cite[Lem~3.2]{BFMS} then supplies a totally geodesic subspace $B_0\subset Z$ with $V_0\subseteq V_{B_0}$ for which the noncompact factor of $L$ is $\SO_0(V_{B_0})$ and the remaining factor is contained in its compact centralizer.
Writing $B=hB_0$, the corresponding geodesic submanifold in $\Delta_Z\backslash Z$ is closed and there are only countably many such submanifolds and lifts by the finite-volume case of \cite[Thm~1.7(2)]{LO}.

It remains to interpret these homogeneous measures as transverse currents.
Lifting each one back to $H_Z$ gives, locally, Haar measure on a coset of $L$, which is invariant under the left action of $\SO_0(V_B)$.
Moreover, the compact factor commutes with $\SO_0(V_B)$, so we may disintegrate over that factor as well and then pass to the quotient by the right $J$-action.
We integrate these transverse currents over the ergodic components and the angle parameters, grouping together those assigned to the same lift $B$.
All terms in a group have the same local $\SO_0(V_B)$-invariance so integration preserves this invariance and gives $\eta_{Z,B}$ with the claimed conditional measures.
The resulting sum is countable because the collection of subspaces $B$ is countable.
The angle and compact parameters may still vary continuously and these parameters remain inside the integrals defining each $\eta_{Z,B}$.
Moreover, for $\ell\in L$, $h\ell V_{C_a}\cap V_Z=h\ell V_0\subseteq V_B$, giving
$$\eta_{Z,B}\bigl(U_{Z,I}\ssm p^{-1}(\calE_{Z,B})\bigr)=0,$$
as desired.
Taking a compact exhaustion of $\bbB^{n-m}$ by compact balls gives the decomposition on all of $U_Z$, which completes the proof.
\end{proof}

\subsection{Nonconcentration on closed geodesic submanifolds}\label{sec:incidencenonconcentration}

Recall from Section~\ref{sec:limitingmeasure} that maximality implies that the original currents give zero mass to $\calV_F$ from Equation~\eqref{eqn:VFdefn}, the set of planes contained in a fixed proper closed geodesic submanifold.
What we need, however, is the corresponding statement for the limiting current.
As in Lemma~\ref{lem:noprob}, we will obtain this by blowing up the set on which concentration might occur and studying the resulting projective action.
Here the normal space to $\calV_F$ at a plane $C\subseteq F$ is given by $N_C\calV_F=\Hom(V_C,V_F^\perp)$.
The following well-known consequence of Poincar\'e recurrence will allow us to control invariant measures on the projectivization of this normal space.
We add a proof for completeness.

\begin{lemma}\label{lem:normalrecurrence}
Let $V=V_E\oplus\bbR t$, where $V_E$ is indefinite of dimension $m\ge2$, $\langle t,t\rangle>0$, and let $V_D$ be positive definite.
If $m\ge3$ and $\mu$ is an $\SO_0(V_E)$-invariant probability measure on $\bbP\Hom(V,V_D)$, then
$$\mu\bigl(\{[T]\in\bbP\Hom(V,V_D)\mid T\vert_{V_E}=0\}\bigr)=1.$$
For every $m\ge2$, the action of $\SO_0(V)$ on the entire domain admits no invariant projective probability measure.
\end{lemma}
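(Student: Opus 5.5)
The plan is to decompose $\Hom(V,V_D)$ according to $V=V_E\oplus\bbR t$ and run a Poincar\'e recurrence argument for a one-parameter diagonal subgroup of $\SO_0(V_E)$. Write $T=(T_E,T_t)$ with $T_E\in\Hom(V_E,V_D)$ and $T_t=T(t)\in V_D$. The subgroup $\SO_0(V_E)$ acts by $T\mapsto T\circ g^{-1}$; it fixes $t$ and acts trivially on the target. So it acts on $T_E$ through the dual of the standard representation tensored with the trivial space $V_D$, and it fixes $T_t$. Choose a hyperbolic one-parameter subgroup $a_s\le\SO_0(V_E)$ with eigenvalues $e^{s},1,\dots,1,e^{-s}$ on $V_E$. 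The eigenvalue $1$ occurs with multiplicity $m-2$, which is positive exactly when $m\ge3$. Decompose $\Hom(V,V_D)=\calW^+\oplus\calW^0\oplus\calW^-$ into $a_s$-weight spaces. Here $\calW^0$ consists of the $T_t$-component together with the $T_E$-components on the $(m-2)$-dimensional fixed subspace of $a_s$.

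The key step is the standard fact that an $a_s$-invariant probability on a projective space is supported on the union of the projectivized weight spaces $\bbP(\calW^+)\cup\bbP(\calW^0)\cup\bbP(\calW^-)$. Take a class $[T]$ lying in none of them, so that $T$ has nonzero components in two distinct weights. Then $a_s[T]$ converges, as $s\to\pm\infty$, to classes in the extreme weight spaces, and never returns near $[T]$. Poincar\'e recurrence therefore forces the complement to have measure zero. Now apply this to several conjugate hyperbolic subgroups $a_s^{(k)}=k a_s k^{-1}$ with $k\in\SO(V_E)\cap\SO_0(V_E)$, all of which preserve $\mu$. For each $k$, the measure lives on the corresponding union of weight spaces. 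Only the fixed vector $t$ is common to all of these fixed-space decompositions.

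To pin down the support, take $[T]$ in the support of $\mu$ with $T_E\ne0$. The induced $\SO_0(V_E)$-invariant measure on the $T_E$-part lives on a projective space of a nontrivial representation with no invariant vectors. Its pushforward along $[T]\mapsto[T_E]$, defined off $\{T_E=0\}$, is an $\SO_0(V_E)$-invariant finite measure on $\bbP\Hom(V_E,V_D)$. By Furstenberg's lemma, as cited earlier in the paper for the Zariski-dense case, such a measure can only be supported on classes fixed by a noncompact normal subgroup. Since $\SO_0(V_E)$ is simple noncompact and $\Hom(V_E,V_D)$ has no nonzero invariant vectors, no such measure exists. Hence $\mu(\{T_E\ne0\})=0$, which gives the first claim. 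The step requiring care is that Furstenberg's argument applies to the image measure. This is why $m\ge3$ is used: with $m\ge3$, $\SO_0(V_E)$ is semisimple of real rank one and the standard proof works, and recurrence handles measurability.

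For the second claim, $\SO_0(V)$ acts on $\Hom(V,V_D)\cong V^*\otimes V_D$, a sum of copies of the standard representation of a simple noncompact group with no invariant vectors. Furstenberg's lemma then gives no invariant probability on the projectivization. This holds for every $m\ge2$, since $\dim V=m+1\ge3$ and $\SO_0(V)$ is noncompact simple. Concretely, one can reuse the weight-space argument with a hyperbolic element of $\SO_0(V)$ and its rotates, which eventually move every class and leave no common support. I expect the main obstacle to be a clean justification in the first claim that the invariance descends to the pushforward measure on $\{T_E\ne0\}$.
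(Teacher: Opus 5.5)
Your proof is correct, but it takes a different route from the paper's. The paper uses only unipotent recurrence. Poincar\'e recurrence forces an invariant probability on a projective space to be carried by the projectivized fixed space of every unipotent one-parameter subgroup. Since $\SO_0(V_E)$ (for $m\ge3$) and $\SO_0(V)$ (for $m\ge2$) are generated by unipotents, the measure lives on the projectivized common fixed space, which is $\{T : T\vert_{V_E}=0\}$, respectively $\{0\}$. That argument absorbs the trivial summand $T(t)$ automatically and makes the role of $m\ge3$ transparent.

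You instead restrict $\mu$ to the open invariant set $\{T_E\ne0\}$, push forward along $[T]\mapsto[T_E]$, and invoke the semisimple form of Furstenberg's lemma. This is valid. The step you flagged is harmless, since $[T]\mapsto[T_E]$ is continuous and $\SO_0(V_E)$-equivariant on that open invariant set. It does, however, rely on a heavier black box whose standard proof is essentially the unipotent argument, or a minimal-subspace argument combined with Furstenberg's compactness lemma.

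A few points to tidy:
\begin{itemize}
\item Your paraphrase of Furstenberg's lemma (``supported on classes fixed by a noncompact normal subgroup'') is not the statement you use. What you need is: a connected semisimple group $H$ without compact factors, acting linearly on $W$, has every invariant probability on $\bbP(W)$ carried by $\bbP(W^H)$.
\item The paragraph with hyperbolic conjugates is incomplete as written. An $a_s$-invariant measure can sit on $\bbP(\mathcal{W}^\pm)$ rather than on $\bbP(\mathcal{W}^0)$, so ``only $t$ is common'' needs a case analysis. Since nothing in your final argument depends on it, you can drop it.
\item The reason $m\ge3$ matters is that $\SO_0(V_E)\cong\SO_0(m-1,1)$ must be semisimple, equivalently generated by unipotents; real rank one is not the relevant property. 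For $m=2$, Dirac masses on $\bbP(\mathcal{W}^\pm)$ are invariant and have $T_E\ne0$.
\end{itemize}
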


\begin{proof}
By Poincar\'e recurrence, any invariant probability measure is supported on the projectivized fixed space of each unipotent one-parameter subgroup.
For $m\ge3$, the group $\SO_0(V_E)$ is generated by unipotents, so $\mu$ is supported on their common fixed space in $\Hom(V,V_D)$, which is precisely $\{T\mid T\vert_{V_E}=0\}$.
Since $\SO_0(V)$ is generated by unipotents for every $m\ge2$ and its only fixed map is zero, there can be no invariant probability measure on the projective space.
\end{proof}

We use this to generalize the argument of non-atomicity in Lemma~\ref{lem:noprob} to nonconcentration on the families $\calV_F$.

\begin{prop}\label{prop:incidencenonconcentration}
Suppose that $F\subset\bbH^n$ is a totally geodesic subspace of dimension at least $m$ that projects to a properly immersed closed geodesic submanifold.
Let $\overline{\eta}_i^{\calC}\to\overline{\eta}^{\calC}$ be $\Gamma$-invariant Radon currents satisfying $\overline{\eta}_i^{\calC}(\calV_F)=0$.
Then for any two distinct boundary lifts $B,B'$ of the geodesic boundary of the intrinsic core of $F$, we have
$$\overline{\eta}^{\calC}\bigl(\{C\in\calV_F\mid C\text{ crosses both }B\text{ and }B'\}\bigr)=0.$$
If the quotient of $F$ is compact, then $\overline{\eta}^{\calC}(\calV_F)=0$.
This conclusion in the compact case also holds for $m=2$.
\end{prop}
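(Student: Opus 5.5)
We follow the blow-up strategy of Lemma~\ref{lem:noprob}, now blowing up along the submanifold $\calV_F\subset\overline{\calC}$ rather than a point. Let $\beta:\Bl_{\calV_F}(\overline{\calC})\to\overline{\calC}$ be the real blow-up along $\calV_F$. Its exceptional divisor $E$ is the projectivized normal bundle, with fiber $\bbP N_C\calV_F=\bbP\Hom(V_C,V_F^\perp)$ over $C\in\calV_F$. The stabilizer $\Gamma_F=\Gamma\cap\Stab_G(F)$ preserves $\calV_F$ and acts on $E$ through its derivative action. Since $\overline{\eta}_i^{\calC}(\calV_F)=0$, each current lifts uniquely to a measure $\widehat{\eta}_i$ giving $E$ zero mass. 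Properness of $\beta$ then gives, along a subsequence, a limit $\widehat{\eta}$ with $\beta_*\widehat{\eta}=\overline{\eta}^{\calC}$. Invariance passes to the limit, so $\widehat{\eta}\vert_E$ is a $\Gamma_F$-invariant Radon measure on $E$ projecting to $\overline{\eta}^{\calC}\vert_{\calV_F}$.

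\emph{Compact case.} Suppose $\Gamma_F\backslash F$ is compact. Then $\Gamma_F\backslash\calV_F$ is compact, so $\widehat{\eta}\vert_E$ descends to a finite measure on $\Gamma_F\backslash E$. We then pass to frames. Lift this measure along $S_F$-frames of $F$ to a finite measure on $\Gamma_F\backslash(\SO_0(V_F)\times\bbP\Hom(V_*,V_F^\perp))$, where $V_*$ is a reference $(m+1)$-plane. Adjoining Haar measure along the $\SO_0(V_*)$-fibers, as in Lemma~\ref{lem:banana}, makes the fiber coordinate transform by $[T]\mapsto[T\circ g]$, as in Equation~\eqref{eqn:Jinvariant}. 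Disintegrating over the compact base $\Gamma_F\backslash\SO_0(V_F)$ gives, almost surely, $\SO_0(V_*)$-invariant probability measures on $\bbP\Hom(V_*,V_F^\perp)$. Lemma~\ref{lem:normalrecurrence} with $V=V_*$ forbids these for every $m\ge2$. Hence $\widehat{\eta}(E)=0$, and so $\overline{\eta}^{\calC}(\calV_F)=0$; this covers $m=2$ as well.

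\emph{General case.} Now let $\calV_F^{B,B'}$ be the open subset of planes crossing both $B$ and $B'$; we want it to be null. We transfer the restriction of $\widehat{\eta}$ to $E$ over $\calV_F^{B,B'}$ to the frame bundle over the compact boundary component $\Delta_B\backslash B$ of the intrinsic core of $F$. This is Lemma~\ref{lem:banana} applied inside $F$: the angle coordinates are measured relative to $B$, the fiber $\bbP\Hom(V_C,V_F^\perp)$ is carried along, and everything is pulled back to a reference plane $C_a$. Restricting the angles to a compact set gives a finite measure on $(\Delta_B\backslash\SO_0(V_B))\times I\times\bbP\Hom(V_*,V_F^\perp)$ that is invariant under $J=\SO_0(V_0)$, where $V_0=V_{C_a}\cap V_B$ has dimension $m$. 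Since $m\ge3$, disintegrating over the base and applying Lemma~\ref{lem:normalrecurrence} with $V_E=V_0$ and $t=w_a$ shows that almost every normal direction $[T]$ satisfies $T\vert_{V_{C}\cap V_B}=0$. Doing the same at $B'$ gives $T\vert_{V_C\cap V_{B'}}=0$ almost everywhere as well.

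The main obstacle is checking that these two constraints are compatible as measures on the same exceptional fibers. The transfers at $B$ and at $B'$ are both disintegrations of the single measure $\widehat{\eta}\vert_{E}$, so each constraint defines a $\widehat{\eta}$-conull set of $E$ over $\calV_F^{B,B'}$, and their intersection is conull. On that intersection, $T$ vanishes on $(V_C\cap V_B)+(V_C\cap V_{B'})$. Because $C$ crosses two distinct boundary lifts, these two $m$-dimensional subspaces of the $(m+1)$-dimensional space $V_C$ are distinct, so they span $V_C$ and $T=0$. That is impossible for a point of projective space. Hence $\widehat{\eta}$ gives zero mass to $E$ over $\calV_F^{B,B'}$, and therefore $\overline{\eta}^{\calC}(\calV_F^{B,B'})=0$.
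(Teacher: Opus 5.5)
Your proposal follows the paper's proof: the same blow-up along $\calV_F$, the same Haar lift of the exceptional part along the $J$-fibers over the compact frame bundle of an intrinsic boundary lift $B$, Lemma~\ref{lem:normalrecurrence} to force $T\vert_{V_C\cap V_B}=0$, two distinct lifts to force $T=0$, and in the compact case the full group $\SO_0(V_*)$ together with the last assertion of that lemma. All the ideas are right, but one step is false as you wrote it and needs the paper's bookkeeping.

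\textbf{The disintegration step.} Disintegrating the finite measure over the frame base $\Delta_B\backslash\SO_0(V_B)$ (or $\Gamma_F\backslash\SO_0(V_F)$) does not produce $J$-invariant (respectively $\SO_0(V_*)$-invariant) probabilities on $\bbP\Hom(V_*,V_F^\perp)$. The reason is that $J$ acts on that base by right translation. The conditionals are therefore only equivariant: $\zeta_{xg}$ is the image of $\zeta_x$ under $[T]\mapsto[T\circ g]$.

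\textbf{The fix.} What is invariant is the pushforward of the whole finite measure to the fiber coordinate. This is what the paper normalizes and feeds into the lemma. Since the lemma's conclusion is a support statement, it then holds at almost every point upstairs.

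For this pushforward to be defined, the fiber coordinate must be a genuine product coordinate on the quotient, and in your space it is not. The group $\Gamma_{F,B}$ (respectively $\Gamma_F$) acts on $V_F^\perp$ through the normal holonomy of $F$, and after passing to orientation covers it embeds only in $\SO_0(V_B)\times K_F$. The paper therefore records $T$ in a normal frame of $F$ and works over $\Gamma_{F,B}\backslash(\SO_0(V_B)\times K_F)$. There $J$ acts by $T\mapsto T\circ j$ with the normal frame fixed. Alternatively, you could apply Poincar\'e recurrence for each unipotent one-parameter subgroup directly to the finite measure on the total space. This works because the relevant fixed sets are invariant under post-composition by rotations of $V_F^\perp$.

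\textbf{Compact case.} There is no finite quotient of $\widehat{\eta}\vert_E$ on $\Gamma_F\backslash E$ before lifting, because $\Gamma_F$ does not act properly on $\calV_F$. Finiteness comes only after the Haar lift to the compact frame bundle, as in Lemma~\ref{lem:banana}, so the order of your two steps there should be reversed.
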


\begin{proof}
We blow up $\overline{\calC}$ along the smooth submanifold $\calV_F$ to obtain
$$\beta:\Bl_{\calV_F}(\overline{\calC})\to\overline{\calC}.$$
This map is proper and the exceptional divisor $E_F=\beta^{-1}(\calV_F)$ has fiber $\bbP\Hom(V_C,V_F^\perp)$ at $C\subseteq F$.
As in Lemma~\ref{lem:noprob}, each original current has a unique lift $\widehat{\eta}_i$ satisfying
$$\beta_*\widehat{\eta}_i=\overline{\eta}_i^{\calC},\qquad \widehat{\eta}_i(E_F)=0.$$
After passing to a subsequence, write $\widehat{\eta}$ for the weak-$*$ limit and set $\widehat{\eta}_{E_F}=\widehat{\eta}\vert_{E_F}$.
Properness of $\beta$ again gives that $\beta_*\widehat{\eta}=\overline{\eta}^{\calC}$ and $\beta_*\widehat{\eta}_{E_F}=\overline{\eta}^{\calC}\vert_{\calV_F}$.
Thus $\widehat{\eta}_{E_F}$ records precisely the possible limiting mass on $\calV_F$ together with the normal directions along which that mass approaches.
It is invariant under $\Gamma_F=\Gamma\cap\Stab_G(F)$ and $\beta$ is equivariant for the action of this group.

To use recurrence, we again work over a frame bundle of a boundary component.
Let $B$ be a lift of a boundary component of the convex core in $F$ and let $K_F$ be the compact group acting trivially on $B$ and by rotations on the normal directions.
Put $\Gamma_{F,B}=\Gamma\cap\Stab_G(F)\cap\Stab_G(B)$.
Then, passing to finite orientation covers if necessary, we have that $\Gamma_{F,B}$ embeds in $\SO_0(V_B)\times K_F$ and the bundle of adapted frames $\Gamma_{F,B}\backslash(\SO_0(V_B)\times K_F)$ is compact.

We use the reference planes and adapted frames of Section~\ref{sec:crossinghyperplanes} inside $F$, with $B$ as the boundary lift.
Thus $C_a\subset F$, $V_{C_a}\subset V_F$, and the angle parameter $a$ belongs to $\bbB^{\dim(F)-m}$.
We lift $\widehat{\eta}_{E_F}$ along the $J=\SO_0(m-1,1)$-fibers with Haar measure.
We extend the intrinsic adapted frames by the normal frame of $F$.
In the coordinates of Equation~\eqref{eqn:pullback}, a projective normal direction $[\Theta]$, with $\Theta:V_C\to V_F^\perp$, is represented by a nonzero map $T:V_*\to V_F^\perp$.
Equation~\eqref{eqn:Jinvariant} gives $T\mapsto T\circ j$ for $j\in J$, while leaving the normal frame fixed.
These coordinates identify the reference domains with $V_*$ so the resulting action on the projective space is independent of the angle parameter.

For a compact angle set $I$, denote the resulting measure on the quotient bundle by $\widehat{\eta}_{E_F,I}$ and let $q$ be the projection map to the projective space in the above coordinates.
If $\widehat{\eta}_{E_F,I}\ne0$, then the normalized pushforward
$$\zeta_I=\frac{q_*\widehat{\eta}_{E_F,I}}{\|\widehat{\eta}_{E_F,I}\|},$$
is a $J$-invariant probability measure, hence Lemma~\ref{lem:normalrecurrence} implies that $T\vert_{V_0}=0$.
Since $hR_aV_0=V_C\cap V_B$, this says that $\Theta\vert_{V_C\cap V_B}=0$ for $\widehat{\eta}_{E_F}$-almost every point over a plane crossing $B$.
There are only countably many boundary lifts, so for almost every $(C,[\Theta])$ the map $\Theta$ vanishes on $V_C\cap V_B$ for every lift $B$ crossed by $C$.
If $C$ crosses two distinct lifts $B,B'$, these intersections are distinct hyperplanes in $V_C$ and hence span $V_C$.
It follows that $\Theta=0$, contradicting the fact that $[\Theta]$ is represented by a nonzero map.

When the quotient of $F$ is compact, the same construction can be carried out on its entire adapted-frame bundle.
The resulting measure is again finite but its pushforward to the projective space is now invariant under the full group $\SO_0(V_C)$, identified with $W_m$ in adapted frames.
The last assertion of Lemma~\ref{lem:normalrecurrence} rules out such a measure directly, giving the compact case for every $m\ge2$.
\end{proof}

Applying this proposition to the currents satisfying Equation~\eqref{eqn:incidenceavoidance}, we obtain the two properties of the limiting current promised at the end of the preceding section.

\begin{cor}\label{cor:twocrossings}
The transverse current $\eta_\infty$ is nonzero and $\eta_\infty(\calC^*\ssm\calU_\partial)=0$, where $\calU_\partial$ is the set of normally framed planes crossing two distinct boundary lifts.
These assertions also hold for $m=2$ in every ambient dimension $n\ge3$.
\end{cor}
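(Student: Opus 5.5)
The plan is to prove the two assertions separately. Both follow from Proposition~\ref{prop:incidencenonconcentration}, Lemma~\ref{lem:aeboundarylift}, and the avoidance property of Equation~\eqref{eqn:incidenceavoidance}. None of the inputs used below assumed $m\ge3$, except the first assertion of Proposition~\ref{prop:incidencenonconcentration}, and that assertion is never invoked. So the argument covers $m=2$ as well.

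\textbf{Nonvanishing.} Equation~\eqref{eqn:compactbounds} gives $\mu_\infty(\calN_0)\ge1$; to make this rigorous I would test against a compactly supported function equal to one on $\calN_0$ and use $\mu_i(\calN_0)=1$. A frame based in $\interior(\core(Y_\infty))$ determines a plane meeting the interior, so it lies in $\calF^*$. Hence it suffices to show that $\mu_\infty$ gives zero mass to frames based on $\partial\core(Y_\infty)$ whose associated plane misses the interior. Such a plane is contained in the corresponding boundary lift $Z$, a lift of a compact closed geodesic hypersurface. Thus the bad mass is carried by the transverse current on $\calV_Z$. Apply the compact case of Proposition~\ref{prop:incidencenonconcentration} with $F=Z$. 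Its hypothesis $\overline{\eta}_i^{\calC}(\calV_Z)=0$ holds for large $i$, because we discarded boundary hypersurfaces and every $\Sigma_i$ is maximal of infinite volume, hence not contained in $\Gamma Z$. This gives $\overline{\eta}_\infty^{\calC}(\calV_Z)=0$. There are finitely many boundary components, so $\mu_\infty$ gives zero mass to frames whose plane lies in the boundary. The remaining frames over the core have planes meeting $\interior(D_\Gamma)$, so $\mu_\infty(\calF^*)>0$ and hence $\eta_\infty\ne0$.

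\textbf{Two crossings.} By Lemma~\ref{lem:aeboundarylift}, every plane in $\calC^*\ssm\calU_\partial$ lies in some lift $F$ of a compact properly immersed geodesic submanifold. By Corollary~\ref{cor:planeclosures}, there are only countably many such $F$. So it suffices to show $\overline{\eta}_\infty^{\calC}(\calV_F)=0$ for each such $F$.

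If $\dim F>m$, or if $\dim F=m$ and $F$ is not in the sequence, maximality gives Equation~\eqref{eqn:incidenceavoidance} for large $i$. The planes in $\calV_F$ are not contained in the $\Sigma_i$ for large $i$, and finitely many compact members were discarded. The compact case of Proposition~\ref{prop:incidencenonconcentration} then gives zero mass; it holds for all $m\ge2$. The case $\dim F=m$ means $\calV_F$ is a single point, where the conclusion is non-atomicity.

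The case $F=G$ (i.e.\ $\dim F=n$) does not arise, since $Y_\infty$ has infinite volume and so $F$ is proper.

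Summing over the countable family then finishes the proof.

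\textbf{Main obstacle.} The main obstacle is the bookkeeping in the nonvanishing step. Mass at frames over $\partial\core$ must be correctly attributed to planes contained in $Z$, rather than to planes merely tangent-like near $Z$. A weak-$*$ limit can push mass onto the closed set $\calC\ssm\calC^*$ of planes missing the interior. Planes that touch $D_\Gamma$ only along its boundary are exactly planes in some $Z$, because $D_\Gamma$ is bounded by totally geodesic hyperplanes. This convexity observation is what I would verify carefully first.
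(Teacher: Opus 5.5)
Your proposal is correct and is essentially the paper's argument. It applies the compact case of Proposition~\ref{prop:incidencenonconcentration} to the countably many lifts of compact closed geodesic submanifolds, and combines this with $\mu_\infty(\calN_0)\ge1$ for nonvanishing and with Lemma~\ref{lem:aeboundarylift} for the two-crossing property.

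The only difference is in the nonvanishing step. You use local convexity to see that a frame over the core whose plane misses $\interior(D_\Gamma)$ has its plane contained in a boundary lift. The paper instead shows that frames based on $\partial\core(Y_\infty)$ carry no mass, because every plane not contained in a boundary lift meets $\partial D_\Gamma$ in $m$-dimensional volume zero. Also, to get $\mu_\infty(\calN_0)\ge1$ you need upper semicontinuity on the compact set $\calN_0$, that is, a decreasing sequence of test functions rather than a single one.
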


\begin{proof}
By the compact case of Proposition~\ref{prop:incidencenonconcentration}, $\overline{\eta}_\infty^{\calC}(\calV_F)=0$ for every lift $F$ of a compact closed geodesic submanifold.
There are only countably many such submanifolds and lifts by Corollary~\ref{cor:planeclosures}, so their union also has measure zero.
In particular, $\overline{\eta}_\infty^{\calC}$-almost no plane is itself contained in a boundary lift, while every other plane meets the boundary of the convex core in a set of $m$-dimensional volume zero.
Disintegrating $\mu_\infty$ over its transverse current therefore shows that $\mu_\infty$ gives zero mass to frames based on the boundary of the core.
Since $\mu_\infty(\calN_0)\ge1$, it follows that the frames based in the interior have positive mass, and hence $\eta_\infty$ is nonzero.

For the second assertion, Lemma~\ref{lem:aeboundarylift} shows that a plane meeting the interior of the core and crossing at most one boundary lift must lie in a lift of a compact closed geodesic submanifold.
These planes have measure zero by the first paragraph, which proves the assertion.
\end{proof}

\subsection{Intermediate boundary orbit closures}\label{sec:intermediateclosures}

We return to the boundary decomposition of $\eta_\infty$, with the goal of proving $\eta_Z^{\mathrm{Ex}}=0$.
The idea is to show that if $\eta_{Z,B}\ne0$ for some proper $B\subset Z$ then there would be a proper closed geodesic submanifold $F$ with $\overline{\eta}_\infty(\calV_F)>0$, contradicting the results of the previous subsection.
Throughout this subsection we consider the space of $m$-dimensional planes $\overline{\calC}$ and write $\overline{\eta}=\overline{\eta}_\infty$.

Recall that a totally geodesic $m$-plane $C\subset\bbH^n$ is determined by its linear span $V_C\subset\bbR^{n,1}$, with $C=\bbH^n\cap V_C$.
The correspondence $C\mapsto V_C$ identifies $\overline{\calC}$ with the open subset of $\Gr_{m+1}(\bbR^{n,1})$ consisting of subspaces on which the form has signature $(m,1)$.
We therefore regard algebraic subsets of $\overline{\calC}$ as intersections of this open subset with real algebraic subsets of the Grassmannian.
In particular, $\calV_F$ is defined by the algebraic incidence condition $V_C\subseteq V_F$.

Suppose that $\overline{\eta}$ gives positive measure to a proper algebraic subset of $\overline{\calC}$ and choose $d$ to be the least dimension of such a subset.
Two distinct irreducible subsets of dimension $d$ intersect in smaller dimension and hence in a set of measure zero.
It follows that there are only countably many irreducible subsets of dimension $d$ with positive measure.
We call these the \emph{minimal algebraic carriers} of $\overline{\eta}$.
Their union is $\Gamma$-invariant, so we apply the boundary decomposition to the restriction of $\overline{\eta}$ to this union and write $\overline{\eta}_{Z,B}$ for its components on the underlying space $\overline{\calC}$.
The point of this minimality assumption is that every positive-measure subset of a minimal algebraic carrier is Zariski dense in it.
This will allow us to extend algebraic conditions holding on a set of positive measure to the entire carrier.

Before proceeding, we record the Lie algebra calculation that will let us combine the invariances obtained at two boundary lifts.
If the linear spans $V_B,V_{B'}$ share a common vector $e$ for which $\langle e,e\rangle>0$, then
\begin{equation}\label{eqn:boundarygeneration}
\langle\frakso(V_B),\frakso(V_{B'})\rangle=\frakso(V_B+V_{B'}).
\end{equation}
This is a simple fact but it will be used several times in the sequel.
In particular, one of its uses will be to prove the following.

\begin{lemma}\label{lem:algebraiccarrier}
Every minimal algebraic carrier of $\overline{\eta}$ is $\calV_F$ for a proper totally geodesic subspace $F\subset\bbH^n$.
\end{lemma}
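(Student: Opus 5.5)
Let $\calA$ be a minimal algebraic carrier of $\overline{\eta}$, with $\dim\calA=d$ and $\overline{\eta}(\calA)>0$. The plan is to show that $\calA$ is preserved by a large group $\SO_0(V_F)$, and then that such an invariant irreducible set of planes must be all of $\calV_F$. Throughout, the key tool is the Zariski density of positive-measure subsets of $\calA$.

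\textbf{Step 1: local invariance at one boundary lift.} By Corollary~\ref{cor:twocrossings}, $\overline{\eta}$-almost every plane in $\calA$ crosses two distinct boundary lifts. Since there are only countably many pairs of lifts, we can choose $Z\ne Z'$ such that the planes in $\calA$ crossing both carry positive mass. Apply Proposition~\ref{prop:boundaryhomogeneous} to $\overline{\eta}\vert_{\calA}$, which is $\Gamma$-invariant after replacing $\calA$ by the union of its translates. Some component $\overline{\eta}_{Z,B}$ with $B\subseteq Z$ (possibly $B=Z$) must give positive mass to $\calA$. That component is locally $\SO_0(V_B)$-invariant, and its conditionals along the $\SO_0(V_B)$-orbits are smooth.

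So for a positive-measure set of $C\in\calA$, a neighbourhood of $C$ in the orbit $\SO_0(V_B)C$ is charged by the smooth conditional. Since $\calA$ carries this mass, for almost every such $C$ the local orbit lies in $\calA$ up to a null set, and hence lies in $\calA$ by closedness. Because $\calA$ is algebraic and $\SO_0(V_B)$ is connected, the set $\{C\in\calA\mid \frakso(V_B)\cdot C\subseteq T_C\calA\}$ is algebraic. It contains a positive-measure, hence Zariski dense, subset, so it is all of $\calA$. It follows that $\SO_0(V_B)\calA=\calA$.

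\textbf{Step 2: combining two lifts.} Repeating Step 1 at $Z'$ produces $B'\subseteq Z'$ with $\SO_0(V_{B'})\calA=\calA$. We may arrange that the same positive-measure set of planes is used at both lifts, by intersecting the two full-measure sets. A typical plane $C$ then satisfies $C\cap Z\subseteq B$ and $C\cap Z'\subseteq B'$. I expect to use the two intersections to find a common vector $e\in V_B\cap V_{B'}$ with $\langle e,e\rangle>0$, namely a spacelike vector in $V_C\cap V_Z\cap V_{Z'}$, which is nonzero by dimension count since $m\ge3$. Alternatively, one can conjugate by elements already known to preserve $\calA$. Equation~\eqref{eqn:boundarygeneration} then shows that $\calA$ is invariant under $\SO_0(V_F)$, where $V_F=V_B+V_{B'}$ or an iterate of such sums.

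Iterate over all pairs of lifts and components that charge $\calA$. The resulting span stabilises at the maximal such $V_F$, because the dimension is bounded.

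\textbf{Step 3: identifying $\calA$ with $\calV_F$.} Almost every $C\in\calA$ satisfies $V_C\cap V_Z\subseteq V_B$ and $V_C\cap V_{Z'}\subseteq V_{B'}$. These two subspaces are distinct hyperplanes of $V_C$, so they span $V_C$, and therefore $V_C\subseteq V_B+V_{B'}\subseteq V_F$. Thus a Zariski dense subset of $\calA$ lies in $\calV_F$, and since $\calV_F$ is closed and algebraic, $\calA\subseteq\calV_F$. Conversely, $\calV_F\cong\SO_0(V_F)/\Stab$ is a single homogeneous orbit, hence irreducible, and $\calA$ is an $\SO_0(V_F)$-invariant nonempty subset of it. It follows that $\calA=\calV_F$.

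\textbf{Properness.} If $F=\bbH^n$, then $\calA=\overline{\calC}$, contradicting that $\calA$ is a proper algebraic subset.

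\textbf{Main obstacle.} The delicate step is Step 1: passing from local invariance of the measure component $\overline{\eta}_{Z,B}$ to invariance of the algebraic set $\calA$. This requires care that the smooth orbit conditionals genuinely live inside $\calA$, rather than $\calA$ merely meeting the orbits. The natural tool is minimality: a conditional that charged the orbit outside $\calA$ would contradict $\overline{\eta}$ being restricted to $\calA$.
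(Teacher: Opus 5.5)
Your proposal is correct and follows essentially the same route as the paper: $\SO_0(V_B)$- and $\SO_0(V_{B'})$-invariance of $\calA$ from the smooth orbit conditionals and Zariski density of positive-measure subsets, the inclusion $V_C=(V_C\cap V_Z)+(V_C\cap V_{Z'})\subseteq V_B+V_{B'}$, generation via Equation~\eqref{eqn:boundarygeneration} using a spacelike vector in $V_C\cap V_Z\cap V_{Z'}$, and transitivity on $\calV_F$. Two things to tighten: $B$ and $B'$ must be chosen jointly so that their components share a positive-measure set of planes (the paper does this by a countable pigeonhole over the measures $\overline{\eta}_{B,B'}$ built from products of Radon--Nikodym derivatives, rather than choosing them independently), and the iteration over further pairs is unnecessary, since the single pair already gives $\calA=\calV_F$ with $V_F=V_B+V_{B'}$.
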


\begin{proof}
Let $\calA$ be a minimal algebraic carrier.
By Corollary~\ref{cor:twocrossings} and countability of the boundary lifts, we can choose distinct lifts $Z,Z'$ such that the planes in $\calA$ crossing both $Z$ and $Z'$ have positive measure.
Write $\overline{\eta}^{\calA}$ for the restriction of $\overline{\eta}$ to these planes.
We will then compare the two boundary decompositions on this common part of the current.

Let $\overline{\eta}_B$ and $\overline{\eta}'_{B'}$ be the restrictions of $\overline{\eta}_{Z,B}$ and $\overline{\eta}_{Z',B'}$ to this same set, so that
$$\overline{\eta}^{\calA}=\sum_{B\subseteq Z}\overline{\eta}_B=\sum_{B'\subseteq Z'}\overline{\eta}'_{B'}.$$
To isolate components that occur together, form the measure
$$d\overline{\eta}_{B,B'}=\frac{d\overline{\eta}_B}{d\overline{\eta}^{\calA}}\frac{d\overline{\eta}'_{B'}}{d\overline{\eta}^{\calA}}\,d\overline{\eta}^{\calA},$$
and note that
$$\sum_{B,B'}\overline{\eta}_{B,B'}=\overline{\eta}^{\calA}.$$
Therefore we may choose a pair $(B,B')$ with $\overline{\eta}_{B,B'}\ne0$.
Since $\overline{\eta}_{B,B'}\ll\overline{\eta}_B,\overline{\eta}'_{B'}$, both boundary descriptions hold on a common set of positive $\overline{\eta}$-measure in $\calA$.

We first show that both boundary groups preserve $\calA$.
By Proposition~\ref{prop:boundaryhomogeneous}, the conditionals of $\overline{\eta}_{Z,B}$ on the $\SO_0(V_B)$-orbits are smooth homogeneous measures.
An orbit is analytic and $\calA$ is algebraic, so their intersection has measure zero unless the entire connected orbit lies in $\calA$.
Our restricted current is carried by the countable union of the minimal algebraic carriers, whose pairwise intersections have measure zero.
It follows that almost every orbit contributing to the measure on $\calA$ lies entirely in $\calA$.
The vector fields from $\frakso(V_B)$ are therefore tangent to $\calA$ on a positive-measure subset of its smooth locus.
By minimality this subset is Zariski dense so the algebraic tangency condition holds throughout and $\SO_0(V_B)$ preserves $\calA$.
The same argument applies to $B'$, giving
$$\SO_0(V_B)\calA=\calA=\SO_0(V_{B'})\calA.$$

For $\overline{\eta}_{B,B'}$-almost every $C$, the support conditions in Proposition~\ref{prop:boundaryhomogeneous} give
$$V_C=(V_C\cap V_Z)+(V_C\cap V_{Z'})\subseteq V_B+V_{B'}=V_F,$$
where $F=\bbH^n\cap(V_B+V_{B'})$ (see Figure~\ref{fig:geodesic-carrier}).
Since $V_C\subseteq V_F$ is an algebraic condition holding on a positive-measure subset of $\calA$, minimality gives $\calA\subseteq\calV_F$.

\begin{figure}[t]
\centering
\begingroup
\definecolor{carrierfirst}{HTML}{315E86}
\definecolor{carriersecond}{HTML}{A45132}
\definecolor{carriersurface}{HTML}{3D735C}
\definecolor{carriermuted}{HTML}{626970}
\scalebox{0.81}{\begin{tikzpicture}[x={(3.7cm,-.35cm)},y={(1.15cm,.70cm)},z={(0cm,2.35cm)},
  every node/.style={font=\small}]
  \coordinate (Lbf) at (-.68,-.83,-.70);
  \coordinate (Lbb) at (-.68,.83,-.70);
  \coordinate (Ltb) at (-.68,.83,.70);
  \coordinate (Ltf) at (-.68,-.83,.70);
  \coordinate (Rbf) at (.68,-.83,-.70);
  \coordinate (Rbb) at (.68,.83,-.70);
  \coordinate (Rtb) at (.68,.83,.70);
  \coordinate (Rtf) at (.68,-.83,.70);
  \draw[black!15,densely dashed] (-1,-1,-.85)--(1,-1,-.85)--(1,1,-.85)--(-1,1,-.85)--cycle;
  \draw[black!15,densely dashed] (-1,-1,.85)--(1,-1,.85)--(1,1,.85)--(-1,1,.85)--cycle;
  \foreach \x in {-1,1}\foreach \y in {-1,1}
    \draw[black!15,densely dashed] (\x,\y,-.85)--(\x,\y,.85);
  \filldraw[fill=carrierfirst!13,draw=carrierfirst!60,line width=.65pt,fill opacity=.65]
    (Lbf)--(Lbb)--(Ltb)--(Ltf)--cycle;
  \filldraw[fill=carriersecond!13,draw=carriersecond!65,line width=.65pt,fill opacity=.65]
    (Rbf)--(Rbb)--(Rtb)--(Rtf)--cycle;
  \filldraw[fill=carriersurface!15,draw=carriersurface!65,line width=.8pt,fill opacity=.65]
    (-.96,-.64,0)--(.96,-.64,0)--(.96,.64,0)--(-.96,.64,0)--cycle;
  \draw[carrierfirst,line width=1.6pt] (-.68,-.64,0)--(-.68,.64,0);
  \draw[carriersecond,line width=1.6pt] (.68,-.64,0)--(.68,.64,0);
  \node[carrierfirst,anchor=south] at (-.68,0,.78) {$B\subset Z$};
  \node[carriersecond,anchor=south] at (.68,0,.78) {$B'\subset Z'$};
  \node[carriersurface,fill=white,inner sep=1.5pt] at (.08,0,0) {$C$};
  \node[anchor=west,font=\large] at (1.03,1,.66) {$F$};
  \draw[carrierfirst!70,line width=.6pt] (-.68,-.24,0)--(-.78,-.65,-.57);
  \node[carrierfirst,anchor=north,fill=white,inner sep=2pt] at (-.78,-.65,-.57) {$C\cap Z\subset B$};
  \draw[carriersecond!70,line width=.6pt] (.68,-.24,0)--(.77,-.65,-.57);
  \node[carriersecond,anchor=north,fill=white,inner sep=2pt] at (.77,-.65,-.57) {$C\cap Z'\subset B'$};
\end{tikzpicture}
}\endgroup
\caption{The boundary support conditions force $C\subset F$, where $V_F=V_B+V_{B'}$.
The picture shows a three-dimensional schematic.}
\label{fig:geodesic-carrier}
\end{figure}

It remains to show that this inclusion is an equality.
The space $V_Z\cap V_{Z'}$ is positive definite and for any of the contributing planes $C$ the $(m-1)$-dimensional space
$$V_C\cap V_Z\cap V_{Z'}\subseteq V_B\cap V_{B'},$$
therefore contains a vector of positive norm.
Equation~\eqref{eqn:boundarygeneration} now shows that the two boundary groups generate $\SO_0(V_F)$, so this group preserves $\calA$.
Its transitivity on $\calV_F$ gives $\calA=\calV_F$.
Since $\calA$ is proper, $F$ is also proper, proving the lemma.
\end{proof}

At this point we have found the appropriate subspace but to apply the preceding subsection we still need to verify that $F$ projects to a properly immersed submanifold.
This additional information will come from the measures.
In particular, the two boundary groups preserving $\calV_F$ must also locally preserve the restricted current.
We use this to identify the measure on $\calV_F$, after which local finiteness will force the translates of $F$ to be discrete.

\begin{prop}\label{prop:closedcarrier}
If $\calV_F$ is a minimal algebraic carrier of positive $\overline{\eta}$-mass, then $F$ projects to a properly immersed closed geodesic submanifold with nonempty Fuchsian ends.
Moreover, there exists a positive constant $b$ such that
$$\overline{\eta}\vert_{\calV_F}=b\lambda_F\vert_{\calV_F\cap\overline{\calC}^*},$$
where $\lambda_F$ is the intrinsic smooth invariant measure on the space of $m$-planes in $F$.
\end{prop}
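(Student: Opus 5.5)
The plan is to go back to the pair of boundary lifts $Z,Z'$ and the subspaces $B\subset Z$, $B'\subset Z'$ found in the proof of Lemma~\ref{lem:algebraiccarrier}, with $V_F=V_B+V_{B'}$. The first goal is to show that $\overline{\eta}\vert_{\calV_F}$ is locally invariant under $\SO_0(V_B)$ and $\SO_0(V_{B'})$, hence under $\SO_0(V_F)$, on the part of $\calV_F$ crossing both $Z$ and $Z'$.

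By Proposition~\ref{prop:boundaryhomogeneous}, on the planes crossing $Z$ the restricted current decomposes into the components $\overline{\eta}_{Z,B''}$. Each component is locally $\SO_0(V_{B''})$-invariant and is supported on $\calE_{Z,B''}$. For a plane $C\in\calV_F$ crossing $Z$, we have $C\cap Z\subseteq F\cap Z$. I will argue that only the component indexed by $B''=B$ can charge $\calV_F$, apart from the Haar component $\overline{\eta}_{Z,Z}$ and components with $B\subsetneq B''$. The set $\calV_F\cap\calE_{Z,B''}$ is algebraic, so if it is a proper subset of $\calV_F$ it has measure zero by minimality of $\calV_F$. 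The Haar component and components with $B''\supsetneq B$ are locally invariant under $\SO_0(V_{B''})\supseteq\SO_0(V_B)$; they either vanish on $\calV_F$ or are again $\SO_0(V_B)$-invariant there. Restricting a locally invariant current to the invariant set $\calV_F$ preserves the invariance, because Lemma~\ref{lem:algebraiccarrier} shows that $\SO_0(V_B)$ preserves $\calV_F$. Hence $\overline{\eta}\vert_{\calV_F}$ is locally $\SO_0(V_B)$-invariant on the planes crossing $Z$, and symmetrically $\SO_0(V_{B'})$-invariant on those crossing $Z'$.

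On the open subset of $\calV_F$ of planes crossing both lifts, Equation~\eqref{eqn:boundarygeneration} shows that these two local invariances generate local $\SO_0(V_F)$-invariance. The subgroup $\SO_0(V_F)$ acts transitively on $\calV_F$. Uniqueness of locally invariant Radon measures on a homogeneous space then gives $\overline{\eta}=b\lambda_F$ there, with $b>0$. Next I propagate this to all of $\calV_F\cap\overline{\calC}^*$. I use $\Gamma$-invariance together with Corollary~\ref{cor:twocrossings}, which says that almost every plane crosses two boundary lifts. For any other pair of lifts crossed by a positive-measure set of planes in $\calV_F$, the same argument gives local invariance under $\SO_0(V_F)$. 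The constants agree on overlaps, and the region of planes in $F$ meeting the core is connected, so $b$ is a single constant.

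Next I show that the translates of $F$ are discrete and that $F$ is properly immersed. The set $\bigcup_{\gamma}\calV_{\gamma F}$ is $\Gamma$-invariant, and each $\gamma\calV_F$ is also a minimal carrier, carrying $b\lambda_{\gamma F}$. If the translates $\gamma F$ accumulated, infinitely many distinct carriers would meet a fixed compact subset of $\overline{\calC}^*$. Each would contribute mass bounded below by a uniform constant times $b$, since every one of them contains a definite $\lambda_{\gamma F}$-ball near the accumulation point. This contradicts local finiteness of $\overline{\eta}$, and it also uses that distinct minimal carriers intersect in measure zero. So $\Gamma F$ is discrete in the space of subspaces. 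By Corollary~\ref{cor:planeclosures} applied to a plane $C\subset F$, the orbit closure of $C$ consists of planes in translates of some properly immersed $F_0\supseteq C$. I will choose $C$ generic in $F$ and compare: if $F_0\supsetneq F$, then the closure of $\Gamma C$ would contain planes near $C$ lying in $F_0$ but not in any of the discretely many translates of $F$, which is a contradiction. This forces $F_0=F$, so $F$ projects to a properly immersed closed geodesic submanifold. Its ends are Fuchsian and nonempty: by \cite[Prop~4.7]{LO} the intrinsic core has totally geodesic boundary, and the quotient is noncompact because the compact case of Proposition~\ref{prop:incidencenonconcentration} would give $\overline{\eta}(\calV_F)=0$.

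The main obstacle is the first step. It requires justifying that restricting the countable, continuously parametrized boundary decomposition to $\calV_F$ kills every component except those locally invariant under $\SO_0(V_B)$. It also requires handling the case where the generic $C\cap Z$ lies in a strictly smaller $B''\subsetneq B$. There the minimality argument is essential, and I would first try to show that $\calE_{Z,B''}\cap\calV_F$ is a proper algebraic subset of $\calV_F$ unless $B''\supseteq F\cap Z$.
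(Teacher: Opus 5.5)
Your proposal is correct and follows essentially the paper's route. The steps match: local invariance of $\overline{\eta}\vert_{\calV_F}$ under the boundary groups at two crossed lifts (the paper phrases your $B''\supseteq F\cap Z$ bookkeeping as the statement that every contributing boundary subspace is exactly $F\cap Z$), generation of $\SO_0(V_F)$ via Equation~\eqref{eqn:boundarygeneration}, matching of constants on overlapping double-crossing sets, discreteness of $\Gamma F$ from local finiteness, and then Corollary~\ref{cor:planeclosures} and \cite[Prop~4.7]{LO}.

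Three points need tightening:
\begin{itemize}
\item Check that the planes of $\calV_F\cap\overline{\calC}^*$ crossing at most one lift are also $\lambda_F$-null. This follows from Lemma~\ref{lem:aeboundarylift}, since a lift of a compact submanifold lies in the core and cannot contain $F$.
\item Chain the constants through pairs of lifts that share a lift. Connectedness of the whole region does not by itself force a single constant.
\item Nonempty ends follow directly from $F$ crossing an ambient boundary lift, so Proposition~\ref{prop:incidencenonconcentration} and the maximal sequence are not needed there.
\end{itemize}
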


\begin{proof}
We first identify $\overline{\eta}\vert_{\calV_F}$.
Choose $B\subseteq Z$ with $\overline{\eta}_{Z,B}(\calV_F)>0$.
The proof of Lemma~\ref{lem:algebraiccarrier} shows that $\SO_0(V_B)$ preserves $\calV_F$ and hence also the linear span $V_F$ of its planes.
Its action on $V_B$ is the standard one, while $V_B^\perp$ is positive definite and fixed pointwise.
Thus every invariant indefinite linear subspace contains $V_B$, giving $V_B\subseteq V_F\cap V_Z$.
Conversely, the support condition $V_C\cap V_Z\subseteq V_B$ holds on a Zariski dense subset of $\calV_F$ and hence on every plane in $\calV_F$ crossing $Z$.
These intersections span $V_F\cap V_Z$, so
$$V_B=V_F\cap V_Z,\qquad B=F\cap Z,$$
as one expects.
Thus every boundary component contributing to $\overline{\eta}\vert_{\calV_F}$ over $Z$ comes from $F\cap Z$.
The homogeneous disintegration therefore gives local $\SO_0(V_F\cap V_Z)$-invariance on the set of planes crossing $Z$.
We may restrict the current to $\calV_F$ here because almost every contributing orbit lies in $\calV_F$, as in the preceding proof.

If a plane crosses two distinct lifts $Z,Z'$, we obtain both boundary invariances near that plane.
By Equation~\eqref{eqn:boundarygeneration}, the two Lie algebras generate $\frakso(V_F)$.
Conjugates of the two boundary Lie algebras by small products of elements of the boundary subgroups span $\frakso(V_F)$.
The inverse function theorem applied to the corresponding product map therefore gives a neighborhood of the identity in $\SO_0(V_F)$.
We may choose these products and shrink the neighborhood of the plane so that all intermediate images still cross both lifts and hence each factor preserves the restricted current.
It follows that the current is locally $\SO_0(V_F)$-invariant and, by transitivity and local uniqueness of invariant measure, locally a constant multiple of $\lambda_F$.

We next show that these local constants agree.
For a fixed pair $Z,Z'$, parametrize the planes in $F$ crossing both lifts by a point on each intersection with $F$ and an $m$-plane containing their joining geodesic.
The base and the fiber of this parameter space are connected, the latter being an unoriented Grassmannian, so its image is connected as well.
Pairs sharing a boundary lift give overlapping sets of planes, since three boundary points lie in a geodesic subspace of dimension at most two and can therefore be included in an $m$-plane.
The local constants consequently agree on the union and their common value $b$ is positive because this union carries all of $\overline{\eta}\vert_{\calV_F}$ by Corollary~\ref{cor:twocrossings}.

To obtain the stated equality of measures, we also need the complement of this union to be $\lambda_F$-null.
By Lemma~\ref{lem:aeboundarylift}, every remaining plane lies in a lift of one of countably many compact closed geodesic submanifolds.
The planes that such a lift contains in $F$ form a proper algebraic subset of $\calV_F$ unless the lift contains $F$ itself.
The latter cannot happen, since $F$ crosses an ambient boundary lift whereas a compact geodesic submanifold is contained in the core.
Thus the complement is null for both measures, and we have
$$\overline{\eta}\vert_{\calV_F}=b\lambda_F\vert_{\calV_F\cap\overline{\calC}^*}.$$

We now use this equality to show that the translates of $F$ are discrete.
Normalize the intrinsic measures equivariantly under $\Gamma$ so that the same constant $b$ occurs on every translate $\gamma F$.
The measures on distinct translates are mutually singular, since a plane common to two translates lies in their intersection, which has smaller dimension.
Suppose that distinct translates $F_j$ converge to a totally geodesic subspace $F_0$ meeting the core interior.

Choose a nonnegative $f\in C_c(\overline{\calC}^*)$ supported near an $m$-plane in $F_0$ and satisfying $\int f\,d\lambda_{F_0}>0$.
The intrinsic measures vary smoothly with the subspace so $\int f\,d\lambda_{F_j}\ge\epsilon>0$ for all sufficiently large $j$.
Since the restrictions to distinct translates are mutually singular, for every $N\ge j_0$ we have
$$
\int f\,d\overline{\eta}
\ge \sum_{j=j_0}^{N}\int_{\calV_{F_j}}f\,d\overline{\eta}
= b\sum_{j=j_0}^{N}\int f\,d\lambda_{F_j}
\ge b\epsilon(N-j_0+1),
$$
where $j_0$ is chosen so that the preceding lower bound holds for all $j\ge j_0$.
Letting $N\to\infty$ contradicts local finiteness.
Thus $\Gamma F$ has no accumulation among the subspaces meeting the core interior.

Applying Corollary~\ref{cor:planeclosures}, the plane-space form of \cite[Cor~1.9(1)]{LO}, we find that $F$ is contained in a lift of a properly immersed closed geodesic submanifold.
If that lift were larger than $F$, the closure of $\Gamma F$ would contain a positive-dimensional family of planes of dimension $\dim( F)$ meeting the core interior, contrary to the discreteness just proved.
Hence $F$ itself projects to a properly immersed closed geodesic submanifold.
Its core is compact with totally geodesic boundary by \cite[Prop~4.7]{LO}, and it has nonempty ends because it crosses an ambient boundary lift.
\end{proof}

\begin{cor}\label{cor:algebraicnull}
For every proper algebraic subset $\calA\subset\overline{\calC}$, we have $\overline{\eta}_\infty(\calA)=0$.
\end{cor}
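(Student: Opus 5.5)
We argue by contradiction, using the minimal-carrier machinery just developed. Suppose some proper algebraic subset $\calA\subset\overline{\calC}$ has $\overline{\eta}_\infty(\calA)>0$. Then the least dimension $d$ of a proper algebraic subset of positive measure is defined. Decomposing such a set into finitely many irreducible components shows that a minimal algebraic carrier of $\overline{\eta}_\infty$ exists. By Lemma~\ref{lem:algebraiccarrier}, this carrier has the form $\calV_F$ for a proper totally geodesic subspace $F\subset\bbH^n$. By Proposition~\ref{prop:closedcarrier}:
\begin{itemize}
\item $F$ projects to a properly immersed closed geodesic submanifold with nonempty Fuchsian ends;
\item $\overline{\eta}_\infty\vert_{\calV_F}=b\lambda_F\vert_{\calV_F\cap\overline{\calC}^*}$ with $b>0$.
\end{itemize}
Since $\calV_F$ carries $m$-planes, $\dim F\ge m$. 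Since $F$ is proper, $\dim F<n$.

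Next we verify the hypotheses of Proposition~\ref{prop:incidencenonconcentration} for the original currents. The $\Sigma_i$ are pairwise distinct maximal properly immersed submanifolds of dimension $m$. We split into two cases according to $\dim F$.
\begin{itemize}
\item If $\dim F>m$, maximality gives Equation~\eqref{eqn:incidenceavoidance}: $\overline{\eta}_i^{\calC}(\calV_F)=0$ for all large $i$.
\item If $\dim F=m$, then $\calV_F=\{F\}$ is a single point. In this case the current $\overline{\eta}_i^{\calC}$ charges $\calV_F$ only if $\Gamma C_i=\Gamma F$. Distinctness allows this for at most one $i$, which we discard.
\end{itemize}
After passing to a tail of the sequence, Proposition~\ref{prop:incidencenonconcentration} applies with $\overline{\eta}^{\calC}=\overline{\eta}_\infty^{\calC}$. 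We use its two-boundary form, since $F$ has Fuchsian ends. For any two distinct boundary lifts $B,B'$ of the intrinsic core of $F$, it gives
$$\overline{\eta}_\infty\bigl(\{C\in\calV_F\mid C\text{ crosses both }B\text{ and }B'\}\bigr)=0.$$
The core of $F$ has only countably many boundary lifts, so the union of these sets over all pairs is also null.

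The step needing the most care is matching the two kinds of crossing. Corollary~\ref{cor:twocrossings} says that $\overline{\eta}_\infty$-almost every plane crosses two distinct ambient boundary lifts $Z,Z'$. For a plane $C\subseteq F$ meeting the core interior that crosses both, we claim $C$ crosses the intrinsic boundary lifts $F\cap Z$ and $F\cap Z'$ of $F$.
\begin{itemize}
\item By \cite[Prop~4.7]{LO}, $F\cap D_\Gamma$ is the intrinsic hull.
\item Its boundary components are the nonempty intersections $F\cap Z$ with ambient boundary lifts that separate $F$.
\item If $C$ crosses $Z$ at a point of $F$, then $F\cap Z$ is such an intrinsic boundary lift, and $C$ crosses it inside $F$.
\item The lifts $F\cap Z$ and $F\cap Z'$ are distinct, since $Z$ and $Z'$ are disjoint.
\end{itemize}
Hence almost every plane of $\calV_F$ lies in one of the null sets above, and $\overline{\eta}_\infty(\calV_F)=0$. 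This contradicts the choice of $\calV_F$ as a carrier of positive mass.

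Finally, if some proper algebraic subset $\calA$ of the open set $\overline{\calC}$ has no Zariski closure in the Grassmannian, we first replace it by that closure intersected with $\overline{\calC}$. This set is still proper, so the argument applies. The only delicate points are this bookkeeping about irreducible components and closures, and the identification of ambient crossings with intrinsic crossings above.
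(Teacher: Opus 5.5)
Your proposal is correct and follows essentially the same route as the paper. You obtain a minimal carrier $\calV_F$ from Lemma~\ref{lem:algebraiccarrier} and Proposition~\ref{prop:closedcarrier}, apply the two-boundary case of Proposition~\ref{prop:incidencenonconcentration}, and contradict Corollary~\ref{cor:twocrossings} by identifying the intrinsic boundary lifts of $F$ with the intersections $F\cap Z$. Your separate treatment of $\dim F=m$ via distinctness and your explicit check that ambient crossings give intrinsic crossings just spell out what the paper compresses into ``by maximality'' and a single sentence.
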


\begin{proof}
Indeed, if not then Lemma~\ref{lem:algebraiccarrier} and Proposition~\ref{prop:closedcarrier} give a proper subspace $F$ projecting to a properly immersed closed geodesic submanifold with $\overline{\eta}(\calV_F)>0$.
By maximality, the original currents satisfy Equation~\eqref{eqn:incidenceavoidance} hence Proposition~\ref{prop:incidencenonconcentration} gives measure zero to the planes in $F$ crossing two boundary lifts of its convex core.
These boundary lifts are precisely the intersections $F\cap Z$ with ambient boundary lifts $Z$ crossed by $F$.
However, almost every plane in $\calV_F$ crosses two of them by Corollary~\ref{cor:twocrossings}, contradicting $\overline{\eta}(\calV_F)>0$.
\end{proof}

\subsection{Absolute continuity}\label{sec:absolutecontinuityhigher}

Now that we have the boundary coordinates and the local invariance argument in place, we prove that the $\Gamma$-action on $(\calC^*,\lambda)$ is ergodic.
We will also need this in the surface argument and in Section~\ref{sec:measureclasscomparison}.

\begin{prop}\label{prop:lambdaergodicity}
For every $2\le m<n$, the $\Gamma$-action on $(\calC^*,\lambda)$ is ergodic.
\end{prop}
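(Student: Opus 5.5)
We prove ergodicity by a local invariance argument, transferring a $\Gamma$-invariant set to the boundary frame bundles and using ergodicity of the compact homogeneous actions there.

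Let $E\subseteq\calC^*$ be a $\Gamma$-invariant measurable set of positive $\lambda$-measure. Consider the $\Gamma$-invariant current $\eta=\lambda\vert_E$. We show that $\eta$ is locally $G$-invariant on $\calU_\partial$. Since $\calU_\partial$ is $\lambda$-conull by Lemma~\ref{lem:aeboundarylift}, we then show that $\calU_\partial$ is connected modulo null sets. Together these force $\mathbbm{1}_E$ to be locally constant, hence $E$ is conull.

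\textbf{Step 1: boundary invariance.} Fix a boundary lift $Z$ and apply Lemma~\ref{lem:banana} to $\eta$ on $U_{Z,I}$. Since $\lambda$ is $H_Z$-invariant, the Haar lift of $\lambda\vert_{U_{Z,I}}$ is $m_{X_Z}\otimes\sigma_I$ with $\sigma_I$ smooth. The lift of $\lambda\vert_E$ is therefore $\mathbbm{1}_{\widetilde E}\,(m_{X_Z}\otimes\sigma_I)$, where $\widetilde E\subseteq X_Z\times I$ is right $J$-invariant, because $E$ is $\Gamma$-invariant and hence $\Delta_Z$-invariant.

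For $m\ge2$, the group $J\cong\SO_0(m-1,1)$ is noncompact; when $m=2$ it equals $A$. By the Howe--Moore theorem, $J$ acts ergodically on $(X_Z,m_{X_Z})$, since $\Delta_Z$ is a lattice in the simple group $H_Z$. So for $\sigma_I$-a.e.\ $a$, the fiber $\widetilde E_a$ is null or conull. Consequently $E\cap U_Z$ agrees, up to a $\lambda$-null set, with a set of the form $\{hC_a : a\in I_Z\}$ for a measurable $I_Z\subseteq\bbB^{n-m}$. This set is invariant under $H_Z$.

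\textbf{Step 2: combining two lifts.} Work on $U_Z\cap U_{Z'}$ for distinct lifts $Z,Z'$. There $E$ is, up to null sets, invariant under both $H_Z$ and $H_{Z'}$ locally. Since $V_Z\cap V_{Z'}$ contains a positive vector, Equation~\eqref{eqn:boundarygeneration} gives $\langle\frakso(V_Z),\frakso(V_{Z'})\rangle=\frakso(\bbR^{n,1})$.

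We then run the product-map and inverse function theorem argument from the proof of Proposition~\ref{prop:closedcarrier}. This shows that near every plane of $U_Z\cap U_{Z'}$, the measure $\lambda\vert_E$ is invariant under a neighborhood of the identity in $G$. It is therefore a locally constant multiple of $\lambda$, so $\mathbbm{1}_E$ is a.e.\ locally constant on $U_Z\cap U_{Z'}$.

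The technical point is that a.e.\ invariance under each group separately must be upgraded to a.e.\ invariance under products. I would handle this by applying Fubini to the product map $(g_1,\dots,g_k)\mapsto C g_1\cdots g_k$, using smoothness of $\lambda$.

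\textbf{Step 3: connectedness.} This is the main obstacle. By Step 2, $\mathbbm{1}_E$ is a.e.\ constant on each connected component of each set $U_Z\cap U_{Z'}$, and $\calU_\partial$ is the union of these sets. We must show that the resulting graph on components is connected. Here I would reuse the parametrization in the proof of Proposition~\ref{prop:closedcarrier}: a plane crossing both lifts is given by a point of $Z$, a point of $Z'$, and an $m$-plane containing the geodesic joining them. This parameter space is connected.

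Pairs sharing a lift overlap, since three boundary points lie in an $m$-plane. Distinct lifts $Z_1,Z_2,Z_3$ can therefore be chained through planes meeting all three, and the boundary lifts are connected through adjacency in $D_\Gamma$. Hence $\mathbbm{1}_E$ is a.e.\ constant on $\calU_\partial$, and ergodicity follows.

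One caveat: the argument as written uses only the $H_Z$-invariance coming from Step 1, which required no Ratner theory. This is consistent with the statement being asserted for every $m\ge2$.
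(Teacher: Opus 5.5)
Your proposal is correct and takes essentially the same route as the paper. Howe--Moore for $J$ on $\Delta_Z\backslash H_Z$ gives $H_Z$-invariance depending only on the angle parameter; two boundary lifts generate $\frakso(n,1)$, so the local-invariance argument from Proposition~\ref{prop:closedcarrier} makes the invariant object locally constant on the connected set $U_Z\cap U_{Z'}$; and overlaps of pairs sharing a lift propagate the constant across $\calU_\partial$. The only differences are cosmetic: you work with an invariant set where the paper uses a bounded invariant function, and your appeal to ``adjacency in $D_\Gamma$'' is unnecessary, since any two pairs of distinct lifts are already joined by a chain of pairs in which consecutive pairs share a lift.
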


\begin{proof}
By Lemma~\ref{lem:aeboundarylift}, the set
$$\calU_\partial=\bigcup_{Z\ne Z'}(U_Z\cap U_{Z'}),$$
is $\lambda$-conull in $\calC^*$.
Let $f$ be a bounded $\Gamma$-invariant measurable function.
We will show that $f$ is essentially constant on this union by comparing its descriptions at two boundary lifts.

Fix $Z$ and use the coordinates $H_Z/J\times\bbB^{n-m}$ from Equation~\eqref{eqn:equivariant}.
The smooth measure class is the product of Haar measure on $H_Z/J$ and Lebesgue measure on the ball of angle parameters.
Since $J=\SO_0(m-1,1)$ is noncompact for $m\ge2$, Howe--Moore \cite{HoweMoore} gives ergodicity of its action on the finite-volume quotient $\Delta_Z\backslash H_Z$.
Equivalently, the $\Delta_Z$-action on $H_Z/J$ is ergodic.
Disintegrating over the angle parameter, we therefore obtain that $f(hC_a)=f_Z(a)$ almost everywhere on $U_Z$ for a measurable function $f_Z$.
In particular, $f$ is $H_Z$-invariant there.

Now let $Z'$ be a second boundary lift and repeat the argument.
Writing $a,a'$ for the two angle parameters, we have
$$f(C)=f_Z(a)=f_{Z'}(a'),$$
for almost every $C\in U_Z\cap U_{Z'}$.
Thus $f$ is invariant under both boundary groups on this intersection.
By Equation~\eqref{eqn:boundarygeneration} their Lie algebras generate the Lie algebra of $G$, so the local invariance argument in Proposition~\ref{prop:closedcarrier} shows that $f$ is locally constant almost everywhere.
As $U_Z\cap U_{Z'}$ is connected, $f$ takes a single value there.

Finally, the intersections corresponding to pairs sharing a boundary lift have nonempty open overlap by the geometric argument in Proposition~\ref{prop:closedcarrier}.
Their constants therefore agree as we vary $Z,Z'$, and $f$ is essentially constant on $\calU_\partial$.
\end{proof}

\begin{rem}\label{rem:metricergodicity}
The same proof also gives metric ergodicity of the $\Gamma$-action on $(\calC^*,\lambda)$ in the sense of Bader and Gelander \cite{BG}.
We use their Corollary~6.7 in the boundary step, after which the local invariance and overlap arguments apply to metric-valued maps.
\end{rem}

\begin{proof}[Proof of Theorem~\ref{thm:higherdimensionhaar}]
Fix a boundary lift $Z$ and apply Proposition~\ref{prop:boundaryhomogeneous} to $\eta_\infty$.
The support of $\eta_{Z,B}$ lies in $p^{-1}(\calE_{Z,B})$, which is $\eta_\infty$-null for $B\subset Z$ by Corollary~\ref{cor:algebraicnull}.
Since $\eta_{Z,B}\le\eta_\infty\vert_{U_Z}$, we obtain $\eta_{Z,B}=0$ for every such $B$ and hence $\eta_Z^{\mathrm{Ex}}=0$.
Equivalently, $\mu_a^{\mathrm{Ex}}=0$ for $\sigma_I$-almost every $a$ on each compact angle set $I$.
Since $\mu_a$ is a probability measure, we have $c(a)=1$ and therefore $\mu_a=m_{X_Z}$ for $\sigma_I$-almost every $a$ and $\mu_I=m_{X_Z}\otimes\sigma_I$.
Returning to the transverse current and exhausting the angle ball, we conclude that $\eta_\infty\vert_{U_Z}$ is $H_Z$-invariant.

Applying this at a second boundary lift $Z'$, Equation~\eqref{eqn:boundarygeneration} gives local $G$-invariance on $U_Z\cap U_{Z'}$ by the argument in Proposition~\ref{prop:closedcarrier}.
Thus the current is locally a constant multiple of $\lambda$ on each such intersection and the connectedness argument in Proposition~\ref{prop:lambdaergodicity} shows that the constants agree.
Since the union of these intersections is conull for both measures by Lemma~\ref{lem:aeboundarylift} and Corollary~\ref{cor:twocrossings}, this gives the required equality of measures.
Finally, $\eta_\infty$ is nonzero by Corollary~\ref{cor:twocrossings} so the common constant is positive.
\end{proof}

The assumption $m\ge3$ enters the boundary decomposition in Proposition~\ref{prop:boundaryhomogeneous}, where $J=\SO_0(m-1,1)$ must be generated by unipotents to apply Ratner's theorem, and the noncompact-carrier argument in Proposition~\ref{prop:incidencenonconcentration}, which uses the first assertion of Lemma~\ref{lem:normalrecurrence}.
For maximal geodesic surfaces this group is $\SO_0(1,1)=A$ in every ambient dimension.
We therefore need different arguments for nonconcentration and for obtaining homogeneous boundary measures.
We take up these two points in the next section.

\section{Absolute continuity for maximal geodesic surfaces}\label{sec:surfaceabsolute}

Throughout this section we fix $m=2$ and $n\ge3$.
We retain the notation of Sections~\ref{sec:limitingmeasure} and \ref{sec:higherdimensionabsolute}, including the limiting currents $\eta_\infty$ and $\overline{\eta}_\infty$.
Our goal is to prove the following.

\begin{thm}\label{thm:surfacehaar}
The current $\eta_\infty$ is nonzero, and there exists a positive constant $c$ such that $\lambda=c\eta_\infty$.
\end{thm}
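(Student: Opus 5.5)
Nonvanishing of $\eta_\infty$ is already available: Corollary~\ref{cor:twocrossings} holds for $m=2$, so $\eta_\infty\ne0$ and $\eta_\infty$-almost every plane crosses two distinct boundary lifts. The plan is to reduce to the situation of Section~\ref{sec:higherdimensionabsolute}. The missing input is unipotent invariance of the boundary conditionals. Fix a boundary lift $Z$ and apply Lemma~\ref{lem:banana}. This gives finite measures $\mu_I$ on $X_Z\times I$, where now $J=A$ and $I\subset\bbB^{n-2}$ is a compact angle set; the measures are invariant only under the frame flow. Using the leafwise machinery of Section~\ref{sec:leafwiseentropy}, decompose $\mu_I$ into a zero-entropy part and a positive-entropy part. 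The decomposition is by component entropy $h_\sigma(x)$, which is $A$-invariant, so it is compatible with disintegration. The two parts are treated separately.

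\textbf{Positive-entropy part.} Use that the same plane $C\in U_Z\cap U_{Z'}$ is recorded at two lifts. The geodesics $C\cap Z$ and $C\cap Z'$ lie in the same 2-plane $C$, and their endpoints lie on the circle $\partial C$. The endpoint measures $\beta^\pm_{\calB}$ at $Z$ and at $Z'$ are therefore pushforwards of one and the same current. By Equation~\eqref{eqn:endpointdimension}, both have pointwise dimension equal to the component entropy, so the entropies at the two lifts must agree.

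Next take nearby points on one stable plaque at $Z$. Flowing backward, they converge, and continuity identifies their rescaled unstable leafwise measures. At $Z'$, the corresponding endpoint maps are smooth changes of variable between the boundary circles, and after rescaling they differ asymptotically by a nontrivial translation. Suppose $\calL_x$ were trivial. Then the two plaque images inside a common coarse dyadic cube would be separated in $\ell^1$. Equation~\eqref{eqn:conditionalentropydifference} and the strict concavity gap~\eqref{eqn:strictconcavitygap} would then give a definite increase of $\calI_j$ at each scale, producing an entropy gap at $Z'$ incompatible with the equality above. Hence $\calL_x\ne0$ almost everywhere.

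Since $\calL_{xa_t}=e^t\calL_x$, Poincar\'e recurrence on the compact space $X_Z\times I$ forces $\calL_x$ to contain a full line, rather than a discrete group that would blow up under the flow. This line gives invariance of the component under a one-parameter unipotent subgroup of $U^+$. I expect this period-production step to be the main obstacle. The delicate points are making the asymptotic translation uniform enough to feed into the KL estimate, and handling the scalar (projective) ambiguity.

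With unipotent invariance in hand, apply Ratner's theorem to the components. They are then homogeneous under subgroups $L\supseteq AU'$ and correspond to closed boundary submanifolds $B\subset Z$. From here, rerun Proposition~\ref{prop:boundaryhomogeneous}, Lemma~\ref{lem:algebraiccarrier}, Proposition~\ref{prop:closedcarrier} and Corollary~\ref{cor:algebraicnull} for this part of the current. The nonconcentration input is not Lemma~\ref{lem:normalrecurrence}, whose first assertion needs $m\ge3$. Instead, blow up $\calV_F$ as in Proposition~\ref{prop:incidencenonconcentration}. The additional unipotent invariance at each of two boundary crossings forces $\Theta$ to vanish on $V_C\cap V_B$ and on $V_C\cap V_{B'}$. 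These two lines span $V_C$, so $\Theta=0$, a contradiction. The exceptional parts therefore vanish, and the positive-entropy current is $H_Z$-invariant. Comparing at two lifts via Equation~\eqref{eqn:boundarygeneration} and using the connectedness argument of Proposition~\ref{prop:lambdaergodicity} makes it a constant multiple of $\lambda$.

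\textbf{Zero-entropy part.} Here the leafwise measures are atomic. At small scales, nearby planes carrying mass approach the support only along the stable or unstable directions. Blowing up along these directions produces an equivariant probability field on the projectivized tangent bundle of the plane space. Recurrence at two boundary lifts shows that such a field would have to kill both boundary lines, as in the previous paragraph, which is impossible. The zero-entropy part therefore vanishes. Combining the two parts gives $\lambda=c\,\eta_\infty$, and $c>0$ because $\eta_\infty\ne0$.
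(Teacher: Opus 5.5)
Your architecture matches the paper's: entropy agreement across two lifts, $\calI_j=o(j)$, the stable-pair and strict-concavity argument for projective periods, recurrence to upgrade periods to a line, Ratner, the carrier argument, and elimination of the zero-entropy part. The gap is in the step that replaces Lemma~\ref{lem:normalrecurrence} for $m=2$, which fails as you state it.

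After the Haar lift along $J$-fibers, the measure on the exceptional divisor of the blow-up along $\calV_F$ is invariant only under $J=A$, and the same holds for any field on $\bbP T\overline{\calC}$. The unipotent invariance you produced is a property of the base boundary measures, read off from their unstable leafwise measures. Since $U_E^+\not\subset A$ moves the intersection geodesic, it does not act on the coordinates $[T]$ of Equation~\eqref{eqn:Jinvariant}. Nothing makes the limiting normal directions $U_E^+$-invariant.

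With $A$ alone, recurrence does not give $\Theta\vert_{V_C\cap V_Z}=0$. Write $V_C=\Span(v_+,v_-,w)$ with $V_C\cap V_Z=\Span(v_+,v_-)$. Recurrence only places the field on the projectivized weight spaces $\bbP(V_C^\perp\otimes\ell_\pm)$ and $\bbP(V_C^\perp\otimes\ell_0)$. A map $u\otimes v_+^*$ in the first of these does not vanish on $v_+$. So ``killing both boundary lines'' is unavailable in both your positive- and zero-entropy paragraphs.

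The idea you are missing is Proposition~\ref{prop:noequivprob}. These weight spaces consist of rank-one maps whose kernel is $V_C\cap V_Z$ or one of the degenerate planes $v_\pm^\perp\cap V_C$. Since $C\cap Z$ and $C\cap Z'$ are ultraparallel, the six kernels are distinct, so the two admissible supports are disjoint. Proper carriers are then excluded by transporting the $\Gamma_F$-equivariant blow-up field to the union of the translates of $\calV_F$ and applying that proposition.

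The zero-entropy paragraph also lacks its main construction. Zero entropy makes the stable and unstable conditionals trivial. That says nothing about where nearby mass lies, and certainly not that it lies along those leaves. There is also no submanifold to blow up.

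The real difficulty is equivariance, since $\Gamma$ does not carry small balls about $C$ to balls about $\gamma C$. The paper pushes the normalized restrictions $\varrho\vert_{B(C,e^{-t})}/\varrho(B(C,e^{-t}))$ forward by $C'\mapsto\fraka(C,C')$ and averages over $t\in[0,s]$. The total-variation equivariance defect is bounded by logarithms of ratios of ball masses, which telescope. So the defect tends to zero provided $\frac1t\ln\varrho(B(C,e^{-t}))\to0$.

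That zero-dimensionality of the current is the real content of the step. It is deduced from zero pointwise dimension of the endpoint measures through the map $\Phi=(\xi_Z^+,\xi_Z^-,\xi_{Z'}^+)$, which has a Lipschitz inverse, together with a Borel--Cantelli covering count. Only then does a subsequential limit give an equivariant field, which the weight-space argument above excludes. A minor slip: stable pairs converge under forward flow, not backward flow.
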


The boundary intersection group of the previous section is now $J=\SO_0(1,1)=A$ so the transferred measures are invariant under the frame flow.
Since this group is not generated by unipotents, we cannot apply Ratner's theorem directly.
We therefore seek additional invariance under unipotent subgroups, after which the carrier argument of Section~\ref{sec:intermediateclosures} will identify the current with Haar measure.

There are two parts to the argument.
First, for every nonzero invariant $\varrho\le\overline{\eta}_\infty$, we obtain a boundary measure $\mu^\varrho_{Z,I}$ whose probability normalization $\sigma^\varrho_{Z,I}$ has positive entropy.
If all of these normalized measures had zero entropy, the small-scale behavior of the current would produce a $\Gamma$-equivariant probability field on the fibers of $\bbP T\overline{\calC}$.
Section~\ref{sec:surfacenonconcentration} rules out such a field and Section~\ref{sec:surfaceboundary} carries out this zero-entropy argument after comparing the entropy functions at distinct boundary lifts.

On the positive-entropy part, we show that the complete unstable leafwise measures have nontrivial projective stabilizers.
The comparison at two boundary lifts is again essential.
Two points on the same stable plaque have close unstable measures after flowing but their target endpoint descriptions differ by a translation.
Unless that translation preserves the complete ray, the two plaque images remain separated and strict concavity forces a positive entropy gap.
The entropy comparison already established rules this out.
We carry this out in Section~\ref{sec:surfaceperiods} and use the resulting additional invariance in Section~\ref{sec:surfaceidentification} to return to the carrier argument of the previous section.

\subsection{Nonconcentration and measures on the projectivized tangent bundle}\label{sec:surfacenonconcentration}

In this subsection, we adapt the recurrence arguments of Section~\ref{sec:incidencenonconcentration} to show that there is no $\Gamma$-equivariant probability field on the projective tangent bundle $\bbP T\overline{\calC}$.
Before proving this, we recall the reference planes and tangent coordinates in the surface case.

Recall from Section~\ref{sec:crossinghyperplanes} that for an underlying plane $C\in\overline{\calC}$, the fiber of the tangent bundle over $C$ is given by
$$T_C\overline{\calC}=\Hom(V_C,V_C^\perp),$$
where, as in Section~\ref{sec:limitingmeasure}, $V_C$ is the three-dimensional indefinite subspace giving rise to $C$ and $V_C^\perp$ is the $(n-2)$-dimensional orthogonal complement with respect to our form.
As before, fix a lift of a geodesic boundary component $Z$ with outward unit normal $n_Z$.
We use the construction of Section~\ref{sec:crossinghyperplanes} with $m=2$, so $V_0$ is two-dimensional and $D_0=V_*^\perp$ has dimension $n-2$.
Here $V_*=V_0\oplus\bbR n_Z$ is the reference three-dimensional vector space giving rise to the surface $C_*=V_*\cap\bbH^n$, which intersects $Z$ in the geodesic $V_0\cap\bbH^n$.
Using the adapted-frame convention of Section~\ref{sec:crossinghyperplanes}, we write $J=A=\{a_t\}=\SO_0(V_0)$ for the corresponding split subgroup of $H_Z$.
It acts loxodromically on this geodesic and fixes $D_0+\bbR n_Z$ pointwise.
Equation~\eqref{eqn:referenceframe} gives the reference surface $C_a$ with linear span $V_{C_a}=R_aV_*$ and ordered normal frame $(n_1(a),\ldots,n_{n-2}(a))$ for $a\in\bbB^{n-2}$.
The rotations $R_a$ commute with $A$.
For $C=hC_a$ and $\Theta\in T_C\overline{\calC}$, we use Equation~\eqref{eqn:pullback} to define $T\in\Hom(V_*,V_*^\perp)$.
Since $J=A$, Equation~\eqref{eqn:Jinvariant} gives the right $A$-action on these coordinates.

As in Proposition~\ref{prop:incidencenonconcentration}, the idea is to compare the support restrictions imposed by recurrence at two boundary components.
Each intersection gives three weight spaces in $\Hom(V_C,V_C^\perp)$.
The Haar lift along the $A$-orbits descends to a finite invariant measure on the frame bundle over the boundary, so recurrence forces the probability field to be supported on the union of these projectivized weight spaces \cite{Furstenberg}.
Almost every plane crosses two distinct boundary lifts and the corresponding unions are disjoint, giving the contradiction.

\begin{prop}\label{prop:noequivprob}
Let $\varrho\le\overline{\eta}_\infty$ be a nonzero $\Gamma$-invariant current.
Then there does not exist a $\varrho$-measurable field of probabilities $\zeta$ on $\bbP T\overline{\calC}$ such that
\begin{enumerate}[label=\textup{(\roman*)}]
\item $\zeta_C$ is a probability measure supported on $\bbP T_C\overline{\calC}$ for $\varrho$-almost every $C$, and
\item $\zeta_{\gamma C}=(D\gamma_C)_*\zeta_C$ for every $\gamma\in\Gamma$ and $\varrho$-almost every $C$.
\end{enumerate}
In particular, the current $\overline{\eta}_\infty$ is non-atomic.
\end{prop}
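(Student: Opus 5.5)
We argue by contradiction, following the blueprint of Proposition~\ref{prop:incidencenonconcentration}. Suppose such a field $\zeta$ exists. By Corollary~\ref{cor:twocrossings} (valid for $m=2$) and countability of boundary lifts, we may choose distinct lifts $Z,Z'$ such that the set of planes crossing both has positive $\varrho$-measure. Fix $Z$ and a compact angle set $I$. Work in the coordinates $(hA,a)\mapsto hC_a$ of Equation~\eqref{eqn:equivariant} and the pulled-back tangent coordinates $T\in\Hom(V_*,V_*^\perp)$ of Equation~\eqref{eqn:pullback}. Pull $\zeta$ back to a field $\zeta'_{(h,a)}$ of probabilities on $\bbP\Hom(V_*,V_*^\perp)$. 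Equivariance (ii) makes $\zeta'$ left $\Delta_Z$-invariant. Equation~\eqref{eqn:Jinvariant} shows that right translation by $a_t$ acts on the fiber by $[T]\mapsto[T\circ a_t]$, independently of $a$.

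Next, form the Haar lift of Lemma~\ref{lem:banana} applied to $\varrho\vert_{U_{Z,I}}$, and integrate the fiber measures against it. This yields a finite right $A$-invariant measure $\widehat{\mu}_I$ on $(\Delta_Z\backslash H_Z)\times I\times\bbP\Hom(V_*,V_*^\perp)$, where $A$ acts diagonally. Finiteness comes from compactness of $X_Z$ and $I$, as in the lemma. Poincar\'e recurrence for $a_1$ on this finite measure, combined with the dynamics of $T\mapsto T\circ a_t$, forces $\widehat{\mu}_I$ to be carried by fibers whose points are recurrent. The latter action is diagonalizable on $\Hom(V_*,V_*^\perp)$, since $a_t$ acts on $V_*=V_0\oplus\bbR n_Z$ with eigenvalues $e^{t},e^{-t},1$. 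Its projective recurrent set is therefore the union of the three projectivized weight spaces $\bbP\Hom(\ell,V_*^\perp)$, where $\ell$ ranges over the two null lines of $V_0$ and the line $\bbR n_Z$, each extended by zero on the complementary eigenlines. Pushing forward, for $\varrho$-a.e.\ $C$ crossing $Z$, $\zeta_C$ is supported on maps $\Theta$ that vanish on two of the three eigenlines of $V_C$. These three lines are the two boundary points of the geodesic $C\cap Z$ and the tangent direction $hR_a n_Z$ normal to $C\cap Z$ inside $C$.

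Now compare with $Z'$. The analogous three lines of $V_C$ for $Z'$ are the two endpoints of $C\cap Z'$ and the normal to it within $C$. Since $C\cap Z$ and $C\cap Z'$ are disjoint geodesics (distinct boundary lifts are disjoint) with disjoint endpoint sets, the only possible coincidence of a weight space would require the kernel planes to coincide. Each kernel is spanned by two of the corresponding three lines, and the plane spanned by the two null endpoint lines of $C\cap Z$ differs from those for $Z'$. The remaining cases, where a kernel contains a normal line, need a short check that the eigenline triples are distinct, so that no rank-one map is killed by the required pairs for both. I expect this linear-algebra case analysis to be the main obstacle, and I would do it in the Klein model on $C\cong\bbH^2$. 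The check rests on the fact that a plane in $V_C$ spanned by a null line and a spacelike normal corresponds to a geodesic through one endpoint perpendicular to $C\cap Z$, and such geodesics cannot coincide with the analogous ones for $Z'$ given that the two geodesics are ultraparallel. Once this is established, the supports of $\zeta_C$ from $Z$ and $Z'$ are disjoint for a positive-measure set of $C$, contradicting that $\zeta_C$ is a probability.

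For the final assertion, suppose $\overline{\eta}_\infty$ had an atom at $C$. Take $\varrho$ to be the restriction of $\overline{\eta}_\infty$ to the orbit $\Gamma C$, which is nonzero and invariant. Run the blow-up of Lemma~\ref{lem:noprob} at each point of the orbit, using that the original currents $\overline{\eta}_i$ charge $\Gamma C$ at most for finitely many $i$ by distinctness. This produces equivariant probabilities on $\bbP T_{\gamma C}\overline{\calC}$, that is, a field as above, which is impossible.
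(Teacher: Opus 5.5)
Your proposal is essentially the paper's proof. As there, you transfer the field to the frame bundle over a boundary lift via the Haar lift of Lemma~\ref{lem:banana}, use recurrence of the diagonal $A$-action to confine $\zeta_C$ to the three projectivized weight spaces, and compare at a second lift. The atom case likewise comes from the blow-up of Lemma~\ref{lem:noprob}; it is cleanest, as in the paper, to blow up only at $C$ and transport the $\Gamma_C$-invariant limit around $\Gamma C$ by $D\gamma_C$.

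One correction is needed in the step you flagged as the main obstacle. The geometric fact you rely on is false. Since $v\perp\xi_\pm$, the plane $\Span(\xi_\pm,v)$ is degenerate and equals $\xi_\pm^\perp\cap V_C$, with radical $\bbR\xi_\pm$. In the Klein model it is the tangent line to the circle at $\xi_\pm$, not a geodesic; indeed no geodesic through an endpoint of $C\cap Z$ is perpendicular to it.

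Using the correct description, the case analysis is immediate. A degenerate kernel plane for $Z$ is determined by its radical, so it can equal a kernel plane for $Z'$ only if $C\cap Z$ and $C\cap Z'$ share an endpoint. It can never equal the indefinite plane $V_{C\cap Z'}$. Ultraparallelism excludes a shared endpoint, so the two support sets are disjoint.
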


\begin{proof}
Fix a lift $Z$ of a geodesic boundary component and a surface $C$ such that $\ell=Z\cap C$ is a geodesic.
Then $V_C=V_\ell\oplus\bbR v$ for some vector $v\in V_C$ of positive norm.
The group $\SO_0(V_\ell)\cong\SO_0(1,1)$ decomposes $V_C^*$ into one-dimensional weight spaces which we denote $\ell_+$, $\ell_-$, and $\ell_0$.
This group exponentially expands and contracts the first two spaces, respectively, and acts trivially on the third.
Consequently
\begin{equation}\label{eqn:tangentdecomposition}
T_C\overline{\calC}\cong\left(V_C^\perp\otimes\ell_-\right)\oplus\left(V_C^\perp\otimes\ell_0\right)\oplus\left(V_C^\perp\otimes\ell_+\right).
\end{equation}
A projective point with nonzero components in more than one of these summands tends to the projectivization of its highest occupied weight.
Poincar\'e recurrence therefore forces an invariant probability to be supported on
\begin{equation}\label{eqn:support}
\bbP(V_C^\perp\otimes\ell_-)\cup\bbP(V_C^\perp\otimes\ell_0)\cup\bbP(V_C^\perp\otimes\ell_+).
\end{equation}
When $C$ intersects a second boundary lift $Z'$ in a geodesic $\ell'$, we get another decomposition of the form in Equation~\eqref{eqn:tangentdecomposition}.
The two intersection geodesics are ultraparallel so their corresponding unions in Equation~\eqref{eqn:support} are disjoint.
It remains to obtain this support restriction from an equivariant field.

Suppose that such a field exists.
Lift $\varrho$ to $\varrho_{\mathrm{fr}}$ on $\calC$ by adding Haar probability in every compact normal-frame fiber then pull the bundle and field back along $p:\calC\to\overline{\calC}$.
Thus, for a framed plane $C$, the notation $\zeta_C$ means $\zeta_{p(C)}$.
For a compact set $I\subset\bbB^{n-2}$, apply the Haar-lift construction of Lemma~\ref{lem:banana} to the joint measure $d\zeta_C\,d\varrho_{\mathrm{fr}}(C)$.
Using Equation~\eqref{eqn:pullback}, its defining formula is
\begin{equation}\label{eqn:projectivelift}
\int\phi\,d\widetilde{\varrho}_{Z,I}^{\mathrm{proj}}=\int_{U_{Z,I}}\int_{\bbP\Hom(V_C,V_C^\perp)}\int_A\phi(hg,a,[T\circ g])\,dm_A(g) d\zeta_C([\Theta])\,d\varrho_{\mathrm{fr}}(C),
\end{equation}
where $C=hC_a$ and $T$ is as in Equation~\eqref{eqn:pullback}.
Since $\varrho$ is $\Gamma$-invariant and the field is $\Gamma$-equivariant, this lift descends to a finite measure on
$$\Delta_Z\backslash H_Z\times I\times\bbP\Hom(V_*,V_*^\perp),$$
invariant under the right $A$-action of Equation~\eqref{eqn:Jinvariant}.
Disintegrating over $I$ and normalizing, its projective marginal is an $A$-invariant probability.
Recurrence gives the support restriction in Equation~\eqref{eqn:support} and Equation~\eqref{eqn:projectivelift} gives the same restriction for $\zeta_C$ almost everywhere.
A countable exhaustion of the angle parameters and comparison at a second boundary lift give the contradiction.

Finally, suppose that $C$ is an atom of $\overline{\eta}_\infty$.
Its $\Gamma$-orbit is locally finite, since all orbit points have the same positive mass.
After removing at most one of the distinct original surfaces, their currents give $C$ zero mass.
The blow-up construction of Lemma~\ref{lem:noprob} then gives a $\Gamma_C$-invariant measure of mass $\overline{\eta}_\infty(\{C\})$ on $\bbP T_C\overline{\calC}$.
Normalize it and transport it by $D\gamma_C$ around $\Gamma C$.
Stabilizer invariance makes this well defined, yielding the field just excluded over $\overline{\eta}_\infty\vert_{\Gamma C}$.
Therefore no such atom exists.
\end{proof}

\subsection{Entropy of the boundary measures}\label{sec:surfaceboundary}

In this subsection, we show that for every nonzero $\Gamma$-invariant $\varrho\le\overline{\eta}_\infty$ there is some boundary measure $\mu^\varrho_{Z,I}$ whose probability normalization $\sigma^\varrho_{Z,I}$ has nonzero entropy.
As before, these measures come from the construction of Lemma~\ref{lem:banana}.
In Proposition~\ref{prop:entropyindependence}, we will use the fact that the transition between endpoint coordinates is locally bi-Lipschitz in either root direction to show that the component entropies agree on overlaps and define a $\Gamma$-invariant function on the current.
The entropy--dimension formulas of Ledrappier--Young \cite[Prop~7.3.1, Thm~C]{LedrappierYoungII} and Kaimanovich \cite[\S2.3, Thm~2.4 and the following remark]{KaimanovichEntropy}, recalled in Section~\ref{sec:leafwiseentropy}, allow us to make this comparison at the level of endpoint measures.
The geometric input is that changing the boundary lift preserves dimension along each root plaque.
An inequality between the two entropies would then force a plaque image to be carried by a set of smaller dimension.
Proposition~\ref{prop:dyadiccompute} records the resulting comparison between the target entropy and its conditional entropy, which we use in the next subsection.

Finally, for the positive-entropy assertion, we argue by contradiction in Proposition~\ref{prop:positiveentropycomponent}.
If all normalized boundary measures had zero entropy, their endpoint measures, and hence localizations of the current, would have sublinear small-scale entropy.
For each $\gamma\in\Gamma$, the normalized restrictions to balls centered at $\gamma C$ and the $\gamma$-images of those centered at $C$ would, after averaging over scales, differ by a quantity tending to zero in total variation.
Pushing these restrictions forward to the projective classes of the linear maps describing nearby planes as graphs and taking a subsequential limit would give the equivariant probability field excluded in the previous subsection.
We now fix the notation for both arguments.

Fix a lift $Z$ of a geodesic boundary component and let $U_Z\cong H_Z/A\times\bbB^{n-2}$ as in Lemma~\ref{lem:banana}.
For an invariant $\varrho\le\overline{\eta}_\infty$, apply that construction over a compact angle set $I$ with Haar measure in the compact fibers and denote the resulting finite measure on $X_Z\times I$ by $\mu^\varrho_{Z,I}$.
Since $A$ acts trivially on $I$, the angle parameter is constant on almost every ergodic component.
When $\mu^\varrho_{Z,I}$ has positive mass, put
$$\sigma^\varrho_{Z,I}=\frac{\mu^\varrho_{Z,I}}{\mu^\varrho_{Z,I}(X_Z\times I)}.$$
Using the component entropy notation of Section~\ref{sec:leafwiseentropy}, define
\begin{equation}\label{eqn:boundarycomponententropy}
h^\varrho_{Z,I}(x)=h_{\sigma^\varrho_{Z,I}}(x).
\end{equation}
If $\mu^\varrho_{Z,I}=0$, this function is left undefined.
We write $h_Z(x)$ when $\varrho$ and $I$ are fixed by context.

For the comparison, take a small overlap box lifted to $H_Z$.
In the chart of Equation~\eqref{eqn:rootflowbox}, let $\sfX,\sfY\in\bbR^{n-2}$ be Euclidean coordinates for the forward and backward endpoints and retain the compact frame coordinate $\varpi$.
Suppressing the $A$-coordinate gives
\begin{equation}\label{eq:overlapcoordinates}
(hA,a)\longmapsto(\sfX,\sfY,\varpi,a)=(\sfX,R),
\end{equation}
where $R=(\sfY,\varpi,a)$.
On a single ergodic component the angle is fixed so we may then suppress it from $R$.
Fixing the transverse coordinates $R$ specifies a local $U^+$-plaque.
For a product Hopf box $\calB=Q\times I_t$ of positive measure, write $\kappa_{Q,R}$ for the plaque conditionals of the probability $\kappa_Q/\kappa_Q(Q)$, obtained by normalizing the transverse measure in Equations~\eqref{eqn:plaqueconditionals} and \eqref{eqn:hopfdisintegration}.
Their average is the probability normalization of the endpoint measure $\beta^+_{\calB}$ from Equation~\eqref{eqn:localizedendpointmarginals}.
The dimension identity in Equation~\eqref{eqn:endpointdimension} applies to both endpoint measures.

On a smaller overlap with a second boundary lift $Z'$, choose either endpoint of $C\cap Z'$.
Its coordinate in a fixed chart is a smooth function $f=f_{Z,Z'}(\sfX,R)\in\bbR^{n-2}$ of the underlying plane and its image angle may vary.
For a boundary frame $x$, we write
$$f(x)=f_{Z,Z'}\bigl(\sfX(x),R(x)\bigr),$$
and let $D_{U^\pm}f(x)$ denote the derivative at zero of $z\mapsto f(xu^\pm(z))$.
We use the dyadic entropy notation of Section~\ref{sec:leafwiseentropy} with scale index $j$ throughout.
Write $f_j(x)$ for the dyadic cube of side length $2^{-j}$ containing $f(x)$.

\begin{prop}\label{prop:entropyindependence}
For every nonzero $\Gamma$-invariant $\varrho\le\overline{\eta}_\infty$, the function $h^\varrho_{Z,I}$ of Equation~\eqref{eqn:boundarycomponententropy} is almost everywhere constant along the fibers that forget the $A$-coordinate and compact normal frame.
Its resulting value on the underlying current is independent of $I$ and agrees on every positive-mass overlap of two boundary lifts $Z$ and $Z'$.
In particular, these values define a measurable $\Gamma$-invariant function $h^\varrho(C)$ for $\varrho$-almost every plane $C$.
On every sufficiently small overlap, both $D_{U^+}f(x)$ and $D_{U^-}f(x)$ are invertible.
\end{prop}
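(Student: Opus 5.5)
I would prove the four assertions in roughly reverse order of difficulty: first the geometric invertibility statement, then the fiberwise constancy, then independence of $I$, and last the comparison at two boundary lifts.

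\emph{Invertibility.} Let $x$ be a boundary frame at a point of $\ell=C\cap Z$. Moving $x$ along $U^+$ moves the forward endpoint $\sfX$ of $\ell$ along an open set of $\partial Z$ and leaves $\sfY$, $\varpi$ and $a$ unchanged. The underlying plane $C$ is the span of $\ell$ and a direction determined by $(\varpi,a)$. Its boundary circle meets $\partial Z'$ in two points, and $f$ records one of them. As $\sfX$ varies, the circle pivots about the fixed point $\sfY$. The endpoint of $C\cap Z'$ therefore moves by the restriction to $\partial Z'$ of a M\"obius-type correspondence, which is a local diffeomorphism because $\partial Z$ and $\partial Z'$ are disjoint round spheres. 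This is a transversality statement, and I would check it by computing $D_{U^+}f$ at a reference configuration: $C_a$ crossing a second ultraparallel hyperplane. Since the configuration space is connected and the determinant is real analytic, the derivative is invertible off a proper analytic set. Shrinking the overlap to avoid that set gives invertibility on every sufficiently small overlap. The $U^-$ case is symmetric, pivoting about $\sfX$.

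\emph{Fiberwise constancy and independence of $I$.} Component entropy is $A$-invariant. By Ledrappier--Walters it is unchanged under the compact extension $X_Z\to X_Z/M_H$, and by the remark after Equation~\eqref{eqn:endpointdimension} it is constant along forward-asymptotic leaves. So $h_Z$ depends only on the underlying plane, $\sigma^\varrho_{Z,I}$-almost everywhere. For independence of $I$, note that for $I\subset I'$ the measure $\sigma^\varrho_{Z,I}$ is a normalized restriction of $\sigma^\varrho_{Z,I'}$ to an $A$-invariant set. Ergodic components are then unchanged, so $h_{Z,I}=h_{Z,I'}$ on the smaller set, and a compact exhaustion of the angle ball finishes this step. $\Gamma$-invariance follows because $\gamma$ carries $Z$ to $\gamma Z$ and conjugates the whole boundary construction. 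Measurability comes from the measurability of the ergodic decomposition.

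\emph{Agreement on overlaps.} This is the main obstacle, since the measures at $Z$ and $Z'$ live on different spaces and share only the current. My plan is to push everything to endpoint coordinates. At $Z$, Equation~\eqref{eqn:endpointdimension} says that for almost every $x$ the endpoint measure $\beta^+_\calB$ has local dimension $h_Z(x)$ at $\sfX(x)$. The same holds for the plaque conditionals $\kappa_{Q,R}$, by Ledrappier--Young and the fact that the two root coordinates are essentially transverse. On the overlap, the map $(\sfX,R)\mapsto (f,R')$ to the $Z'$ coordinates is a smooth change of chart on the space of planes. By invertibility of $D_{U^+}f$, for each fixed $R$ it is bi-Lipschitz on the unstable plaque. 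It carries the restricted current at $Z$ to the restricted current at $Z'$, since both are built from the same $\varrho$.

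Local dimension of the plaque conditional at $Z$ is then the local dimension of its image under $f(\cdot,R)$. The images are the conditionals of the transported measure along the family of curves $\{R=\text{const}\}$ in the $Z'$ chart, and these curves are in general not the $Z'$ unstable plaques. To handle this, I would express the $Z'$ endpoint measure $\beta'^{+}$ as an average over $R$ of these images. The upper local dimension of an average is at least the essential infimum of the dimensions of the summands, which gives $h_{Z'}\ge h_Z$ almost everywhere. Running the same argument with the roles of $Z$ and $Z'$ exchanged gives the reverse inequality, using invertibility of $D_{U^-}f$ for the stable direction to control the transversal mixing if needed. I would set this up through the dyadic entropies $H(f_j\mid R)$ and $H(f_j)$ of Section~\ref{sec:leafwiseentropy}, together with the Hochman--Shmerkin comparison between pointwise dimension and scale-$j$ entropy. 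The expected difficulty is ensuring that averaging over $R$ does not raise the dimension above $h_{Z'}$. This is exactly the two-sided bound the symmetric argument is meant to supply.
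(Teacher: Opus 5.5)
The gap is in the invertibility assertion, and it carries over into your main step. Analyticity of the determinant plus one reference computation only shows that $D_{U^\pm}f$ is invertible off a proper analytic subset $\Sigma$ of the planes crossing both $Z$ and $Z'$. ``Shrinking the overlap to avoid $\Sigma$'' says nothing about overlaps around points of $\Sigma$. This matters because nothing at this stage prevents $\varrho$ from charging $\Sigma$. Ruling out concentration on proper algebraic subsets is exactly what Section~\ref{sec:surfaceidentification} achieves later, and it does so through Proposition~\ref{prop:surfaceperiods}, which needs uniform inverse bounds for $D_{U^\pm}f$ on boxes of positive measure. On a positive-mass part of $\Sigma$ your bi-Lipschitz step fails, since a map that degenerates along plaques can lower the dimension of the plaque images. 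What you need is invertibility at every plane crossing both lifts. The paper gets this from the hyperboloid computation $\pr_{V_C^\perp}(\delta\xi')=b_2\pr_{V_C^\perp}(v)$. There $\pr_{V_C^\perp}$ is an isomorphism both on $N_Z$ and on $T_{[\xi']}\partial Z'$, and $b_2\ne0$ because the endpoints of $C\cap Z$ do not lie in $\partial Z'$. Your M\"obius picture can be made equally complete. Put $\sfY$ at $\infty$, so that $U^+$ acts by translations parallel to the affine plane $\partial Z\ssm\{\infty\}$. The circle $\partial C$ becomes a line $L$ through $\sfX$ that is not parallel to this plane, since $C\not\subset Z$. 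The boundary $\partial Z'$ is a bounded round sphere, since $\sfY\notin\partial Z'$, and $L$ crosses it transversally in two points. Then $D_{U^+}f$ is, up to linear isomorphisms, the projection of the translation directions onto the tangent space of $\partial Z'$ at the chosen endpoint along $L$. This projection has trivial kernel, and the $U^-$ case is the same with $\sfX$ at $\infty$.

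Granting pointwise invertibility, the rest of your plan is essentially the paper's. Fiberwise constancy and independence of $I$ are justified as in the paper, and the overlap comparison uses the same mechanism. Restrict to a set where $h_Z\ge h_1>h_0\ge h_{Z'}$, with rational thresholds, so that you get an almost-everywhere conclusion rather than a statement about essential infima. There the plaque conditionals at $Z$ have bi-Lipschitz images of lower dimension at least $h_1$. Their average is only absolutely continuous with respect to the $Z'$ endpoint measure $\beta'$, not equal to it, but that suffices: it is carried by $\{z\mid\overline{d}_{\beta'}(z)\le h_0\}$, a set of Hausdorff dimension at most $h_0$. Exchanging $Z$ and $Z'$ gives the other inequality. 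Your concern in the last paragraph about averaging raising the dimension is therefore moot. Neither the stable derivative nor the dyadic entropies and the Hochman--Shmerkin comparison are needed for this proposition.
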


\begin{proof}
We first compute the derivatives $D_{U^\pm}$ of the endpoint map.
Fix a surface $C$ crossing $Z$ and $Z'$.
Choose vectors $v_-$, $v_+$ representing the backward and forward endpoints of $C\cap Z$, normalized by the condition that $\langle v_-,v_+\rangle=-1$.
Let $w_C$ be the unit vector in $V_C$ orthogonal to both, with positive $n_Z$-component.
Then we may write $V_C=\Span(v_-,v_+,w_C)$, where $w_C=sn_Z+b$, $s$ is positive, and
$$b\in N_Z=V_Z\cap\Span(v_-,v_+)^\perp.$$
Both $N_Z$ and $V_C^\perp$ clearly have dimension $n-2$.

Consider the horospherical subgroup of $H_Z$ fixing $v_-$ and identify its Lie algebra with $N_Z$.
For $v\in N_Z$, write $\delta$ for differentiation at $t=0$ along the corresponding one-parameter subgroup.
One computes that its action gives
$$\delta v_-=0,\qquad\delta v_+=v,\qquad\delta q=\langle q,v\rangle v_-,\qquad\delta n_Z=0,$$
where $q\in N_Z$.
In particular,
$$\delta w_C=\langle b,v\rangle v_-\in V_C.$$
As $C$ varies, choose a smooth representative $\xi'$ for one endpoint of $C\cap Z'$ and write
$$\xi'=b_1v_-+b_2v_++b_3w_C\in V_C\cap V_{Z'}.$$
Differentiating and projecting to $V_C^\perp$ gives
\begin{equation}\label{eqn:projection}
\pr_{V_C^\perp}(\delta\xi')=b_2\pr_{V_C^\perp}(v).
\end{equation}
We extrapolate three facts from this calculation.
First, a vector in the kernel of $\pr_{V_C^\perp}\vert_{N_Z}$ lies in $N_Z\cap V_C$ and must be a multiple of $w_C$.
Its $n_Z$ component is zero so this vector vanishes and the projection is an isomorphism.
Second, the same projection identifies the tangent space
$$T_{[\xi']}\partial Z'\cong\bigl(V_{Z'}\cap(\bbR\xi')^\perp\bigr)/(\bbR\xi'),$$
with $V_C^\perp$.
Indeed, a representative in its kernel lies in the indefinite $2$-plane $V_C\cap V_{Z'}$ and is orthogonal to $\xi'$, so is a multiple of $\xi'$.
Finally, we must have $b_2\ne0$, since otherwise $\xi'$ would be an isotropic vector in $\Span(v_-,w_C)$ and therefore a multiple of $v_-$, whereas the endpoints of $C\cap Z$ do not lie in $\partial Z'$.
The calculation for the other root is the same.
After shrinking the chart if necessary, the derivatives and their inverses are uniformly bounded on compact sets.

We now show that the entropy functions agree.
If they did not then, after restricting to a set $E$ of positive measure in an overlap and interchanging $Z$, $Z'$ if necessary, we may assume that
$$h_Z\ge h_1>h_0\ge h_{Z'},$$
on $E$, where $h_0,h_1$ are rational.
Disintegrate the restricted measure along the local $U^+$-plaques, keeping the opposite endpoint, compact frame, and angle fixed.
By Section~\ref{sec:leafwiseentropy}, almost every resulting conditional has lower Hausdorff dimension at least $h_1$.
The same holds for its image under $f$, since $f$ is locally bi-Lipschitz on each plaque.
Their average is the image of the restricted overlap measure, which the Haar-lift construction makes absolutely continuous with respect to an endpoint measure $\beta'$ for $Z'$.
By Equation~\eqref{eqn:endpointdimension}, this image is carried by
$$E_{h_0}(\beta')=\{z\mid\overline{d}_{\beta'}(z)\le h_0\},$$
where $\overline{d}_{\beta'}$ denotes upper pointwise dimension.
For every $\epsilon>0$, this is a countable union of bounded sets such that $\beta'(B(z,r))\ge r^{h_0+\epsilon}$ below a common positive radius on each.
Counting disjoint balls shows that each such set meets $O(2^{(h_0+\epsilon)j})$ dyadic cubes of side length $2^{-j}$.
Thus $E_{h_0}(\beta')$ has Hausdorff dimension at most $h_0$.
Almost every plaque image gives this set zero mass whereas their average is carried by it, a contradiction.

Taking countably many overlap charts and rational thresholds makes the equalities $h_Z=h_{Z'}$ simultaneous.
Invariance under $A$ and Equation~\eqref{eqn:endpointdimension} show that the common value is independent of the $A$-coordinate and compact normal frame.
The Haar lifts and their disintegrations commute with restriction to the angle parameters so the value is also independent of $I$.
The construction is $\Gamma$-equivariant and hence defines the claimed function $h^\varrho(C)$.
\end{proof}

A straightforward consequence is the following entropy comparison.
Only the target endpoint coordinate is needed here and throughout we condition on the full value of $R$.

\begin{prop}\label{prop:dyadiccompute}
Let $\varrho$ and $h^\varrho$ be as in Proposition~\ref{prop:entropyindependence}.
For almost every ergodic component of entropy $h$, normalize its transverse measure on a bounded overlap chart of positive measure to a probability $\kappa$.
Then
\begin{equation}\label{eqn:surfaceentropyidentities}
H(f_j\mid R)=hj+o(j),\qquad
\calI_j=H(f_j)-H(f_j\mid R)=o(j),
\end{equation}
where $\calI_j$ denotes the displayed entropy difference.
The assertion holds for either target endpoint and after restriction to smaller bounded charts of positive measure.
\end{prop}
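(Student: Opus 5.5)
The plan is to work on a single ergodic component of entropy $h$, in a bounded overlap chart where $f=f_{Z,Z'}$ is smooth and, by Proposition~\ref{prop:entropyindependence}, restricts to a bi-Lipschitz map on each local $U^+$-plaque. The conditional term and the unconditioned term are then controlled separately, and subtracting gives $\calI_j=o(j)$. Nonnegativity of $\calI_j$ is immediate from Equation~\eqref{eqn:conditionalentropydifference}, so only upper bounds for $H(f_j)$ and a matching lower bound for $H(f_j\mid R)$ are needed.

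\emph{The conditional term.} Fix the full transverse value $R=(\sfY,\varpi,a)$. The plaque conditional $\kappa_{Q,R}$ is, up to the bounded reparametrization $z=e^{\tau_0}(u-u_0)$, a restriction of the unstable leafwise measure. By the Ledrappier--Young formula recalled in Section~\ref{sec:leafwiseentropy}, it has pointwise dimension $h$ at almost every point. Since $f(\cdot,R)$ is bi-Lipschitz on the plaque, with constants uniform on the chart, the pushforward $f(\cdot,R)_*\kappa_{Q,R}$ also has pointwise dimension $h$ almost everywhere. The dyadic comparison of \cite[Lem~3.3, Prop~3.4]{HochmanShmerkin} then gives $\tfrac1j H_{f_*\kappa_{Q,R}}(\calD_j)\to h$ for almost every $R$. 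The normalized entropies are bounded by $n-2$ plus a constant, because the chart is bounded. Dominated convergence after integrating in $R$ therefore yields $H(f_j\mid R)=hj+o(j)$.

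\emph{The unconditioned term.} The measure $f_*\kappa$ is the target endpoint image of the restricted overlap measure. As in the proof of Proposition~\ref{prop:entropyindependence}, the Haar-lift construction makes it absolutely continuous with respect to an endpoint measure $\beta'$ for $Z'$, built from the same current $\varrho$. By Equation~\eqref{eqn:endpointdimension} together with $h_{Z'}=h_Z=h$ almost everywhere from Proposition~\ref{prop:entropyindependence}, $f_*\kappa$ has upper pointwise dimension at most $h$ at almost every point. For every $\epsilon>0$, the mass of $f_*\kappa$ then concentrates, up to $\epsilon$, on sets covered by $O(2^{(h+\epsilon)j})$ dyadic cubes. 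The standard splitting of entropy over a good set and its complement gives $H(f_j)\le (h+\epsilon)j+\epsilon\cdot O(j)+O(1)$. Combined with $H(f_j)\ge H(f_j\mid R)$, this gives $H(f_j)=hj+o(j)$ and hence $\calI_j=o(j)$.

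The main obstacle is justifying the absolute continuity of $f_*\kappa$ with respect to $\beta'$ on a single ergodic component with the correct entropy value. This requires checking that the $Z'$-component entropy through the image point equals $h$. That is exactly the content of Proposition~\ref{prop:entropyindependence} combined with the compatibility of leafwise measures with the ergodic decomposition, since the ergodic decomposition on the $Z$ side corresponds to one for the common $\Gamma$-invariant current. The same argument applies to the backward endpoint by reversing time, using $D_{U^-}f$. It also passes to smaller bounded charts of positive measure, because restriction only changes the normalizing constant and preserves the almost-everywhere pointwise dimension statements.
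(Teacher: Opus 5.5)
Your outline follows the paper's proof step for step: Ledrappier--Young plus bi-Lipschitz transfer and \cite{HochmanShmerkin} give $H(f_j\mid R)$; a covering bound controls $H(f_j)$; and $H(f_j)\ge H(f_j\mid R)$ closes the argument. The gap is in your justification of the unconditioned term.

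You claim $f_*\kappa\ll\beta'$, where $\kappa$ is the transverse measure of a \emph{single} $A$-ergodic component. The Haar-lift absolute continuity used in Proposition~\ref{prop:entropyindependence} applies only to the image of the full restricted overlap measure. When the ergodic decomposition of $\sigma^\varrho_{Z,I}$ is non-atomic, an individual component is typically singular to the whole, and its target image can then be singular to $\beta'$. Your proposed remedy does not fix this. The $A$-ergodic components on $X_Z\times I$ correspond to $\Delta_Z$-ergodic currents on $U_Z$. These are not $\Gamma$-invariant, do not give $A$-invariant measures over $Z'$, and refine rather than match any decomposition of $\varrho$ into $\Gamma$-invariant pieces. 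So differentiating against $\beta'$ does not show that $f_*\kappa$ itself has upper pointwise dimension at most $h$.

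The fix is the paper's ordering of the steps. First apply Equation~\eqref{eqn:endpointdimension} and Proposition~\ref{prop:entropyindependence} to the full restricted measure, where absolute continuity with respect to $\beta'$ does hold. This places almost every image point $f(x)$ in the set where $\beta'$ has upper pointwise dimension at most $h_Z(x)$. Then disintegrate over the source components: for almost every component of entropy $h$, $f_*\kappa$ is carried by $E_h(\beta')$, with no absolute continuity needed. The covering bound then comes from the finite mass of $\beta'$, not of $f_*\kappa$. For each $\epsilon>0$, $E_h(\beta')$ is a countable union of bounded sets on which $\beta'(B(z,r))\ge r^{h+\epsilon}$ below a common radius. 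Counting disjoint balls shows each such set meets $O(2^{(h+\epsilon)j})$ dyadic cubes. Only this carrying statement is needed, and after it your splitting estimate for $H(f_j)$ goes through unchanged.
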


\begin{proof}
By Section~\ref{sec:leafwiseentropy}, almost every source plaque conditional has pointwise dimension $h$.
By Proposition~\ref{prop:entropyindependence}, the same is true of its image under $f$.
The dimension--entropy comparison \cite[Lem~3.3, Prop~3.4]{HochmanShmerkin} and dominated convergence on the bounded chart give
$$H(f_j\mid R)=hj+o(j).$$
For the other equation, let $\beta'$ be a target endpoint measure.
Equation~\eqref{eqn:endpointdimension} and Proposition~\ref{prop:entropyindependence}, followed by disintegration over the source components, show that $f_*\kappa$ is carried by $E_h(\beta')$ for almost every such component.
By the covering argument above, for every $\epsilon,\delta>0$, finitely many sets meeting $O(2^{(h+\epsilon)j})$ dyadic cubes carry mass at least $1-\delta$.
Their complement lies in a bounded chart meeting $O(2^{(n-2)j})$ cubes so
$$\limsup_{j\to\infty}\frac{H(f_j)}j\le h+\epsilon+\delta(n-2).$$
Letting $\epsilon,\delta\to0$ and using $H(f_j)\ge H(f_j\mid R)$ proves Equation~\eqref{eqn:surfaceentropyidentities}.
Restriction to a smaller chart preserves the dimension statements by differentiation and the argument treats either endpoint.
\end{proof}

We now prove the positive-entropy assertion.

\begin{prop}\label{prop:positiveentropycomponent}
For every nonzero $\Gamma$-invariant $\varrho\le\overline{\eta}_\infty$, there is a boundary lift $Z$ and a compact angle set $I$ for which $\mu^\varrho_{Z,I}$ has positive mass and its normalized $A$-invariant probability has positive entropy, that is to say, $h(\sigma^\varrho_{Z,I})>0$.
\end{prop}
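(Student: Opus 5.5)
We argue by contradiction, following the outline given at the start of this subsection. Suppose that for every boundary lift $Z$ and every compact angle set $I$ with $\mu^\varrho_{Z,I}\ne0$, the normalized measure $\sigma^\varrho_{Z,I}$ has zero entropy. Since entropy is the integral of the component entropies, $h^\varrho(C)=0$ for $\varrho$-almost every $C$ by Proposition~\ref{prop:entropyindependence}. Corollary~\ref{cor:twocrossings} shows that $\varrho$-almost every plane crosses two distinct lifts $Z,Z'$. Proposition~\ref{prop:dyadiccompute} with $h=0$ then gives, on bounded overlap charts, $H(f_j)=o(j)$ for the endpoint coordinates. The boundary endpoint coordinates $(\sfX,\sfY)$, the angle parameter, and the $A$-coordinate together give local coordinates on $\overline{\calC}$ near $C$. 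The Haar-lift construction of Lemma~\ref{lem:banana} is locally a product of $\varrho$ with Lebesgue measure on $A$ and Haar measure on compact fibers. Hence the localizations of $\varrho$ to small balls $B(C,2^{-j})$ have dyadic entropy $o(j)$ in a chart of $\overline{\calC}$. Here the angle and remaining transverse coordinates are handled by applying the same argument with the roles of the two lifts exchanged, using the invertibility of $D_{U^\pm}f$.

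Next we build the field. For $\varrho$-almost every $C$ and scale $r=2^{-j}$, let $\varrho_{C,r}$ be the normalized restriction of $\varrho$ to $B(C,r)\ssm\{C\}$. This is nonzero since $\varrho$ is non-atomic by Proposition~\ref{prop:noequivprob}. Let $\zeta_{C,r}$ be its pushforward to $\bbP T_C\overline{\calC}$ under the map sending a nearby plane $C'$ to the projective class of the linear map $\Theta\in\Hom(V_C,V_C^\perp)$ whose graph is $V_{C'}$. For $\gamma\in\Gamma$, the current $\varrho$ is invariant and $\gamma$ is smooth. Therefore $\gamma_*\varrho_{C,r}$ and $\varrho_{\gamma C,r}$ are normalized restrictions of the same measure to the sets $\gamma B(C,r)$ and $B(\gamma C,r)$. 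These sets are comparable ellipsoids, squeezed between balls of radii $r/L$ and $Lr$ with $L$ depending on $\gamma$ and $C$. We then average over scales $j\le J$, forming Ces\`aro averages $\bar\zeta_{C,J}$. The sublinear entropy growth is used to show that, for most $j$, the mass ratio $\varrho(B(C,Lr))/\varrho(B(C,r/L))$ tends to one on average. A doubling ratio bounded away from one at a positive proportion of scales would force linear entropy growth, in the manner of the Hochman--Shmerkin comparison. Consequently $\|\gamma_*\varrho_{C,r}-\varrho_{\gamma C,r}\|_{\mathrm{TV}}\to0$ in Ces\`aro mean. Since the derivative $D\gamma_C$ is the linearization of the graph map, this yields $\|(D\gamma_C)_*\bar\zeta_{C,J}-\bar\zeta_{\gamma C,J}\|\to0$.

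Finally, we take a limit. The fibers $\bbP T_C\overline{\calC}$ are compact, so we take a measurable subsequential limit of the field $C\mapsto\bar\zeta_{C,J}$ in $L^\infty(\varrho;\Prob)$. This can be done, for example, by a Banach limit or by weak-$*$ compactness of the space of measurable fields of probability measures under the pairing with $L^1(\varrho;C(\bbP T\overline{\calC}))$. We arrange the choice so that it commutes with the countably many $\gamma\in\Gamma$. The resulting field satisfies (i) and (ii) of Proposition~\ref{prop:noequivprob}, which is a contradiction. Hence some $Z,I$ gives $h(\sigma^\varrho_{Z,I})>0$.

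The main obstacle is the middle step. It requires converting the zero-entropy statement, which is about endpoint coordinates at dyadic scales, into the asymptotic equality of normalized localizations on balls of comparable but different shapes. In particular, we must control the mass ratio of nested balls at most scales uniformly enough that the total variation defect vanishes in Ces\`aro mean. We would first try a pigeonhole argument on the increments $H(f_{j+k})-H(f_j)$: zero entropy forces these increments to be small for most $j$, and this should translate into small relative mass in the annuli $B(C,Lr)\ssm B(C,r/L)$.
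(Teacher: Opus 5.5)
Your architecture matches the paper's: assume zero entropy everywhere, push normalized restrictions of $\varrho$ to small balls forward by the graph map $\fraka(C,\cdot)$, average over scales, and extract a limit that contradicts Proposition~\ref{prop:noequivprob}. (Note that $\fraka(\gamma C,\gamma C')=D\gamma_C\,\fraka(C,C')$ holds exactly, since $\gamma$ is linear and preserves orthogonal complements, so there is no linearization error.)

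The gap is in the step you flag as the main obstacle, which is the real content of the proposition. You do not need a pigeonhole on entropy increments. A pigeonhole on $H(f_{j+k})-H(f_j)$ is in any case an averaged statement about one endpoint coordinate, and it does not by itself control ball masses centered at $C$ and $\gamma C$. Instead, take $B(\gamma C,e^{-(t+b)})\subseteq\gamma B(C,e^{-t})\subseteq B(\gamma C,e^{-(t-b)})$. Then $\|\zeta_{\gamma C,t}-(D\gamma_C)_*\zeta_{C,t}\|_{\mathrm{TV}}$ is at most $2\ln\bigl(\varrho(B(\gamma C,e^{-(t-b)}))/\varrho(B(\gamma C,e^{-(t+b)}))\bigr)$. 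The scale integral of this log ratio telescopes exactly to $\int_{s-b}^{s+b}\ln\bigl(1/\varrho(B(\gamma C,e^{-u}))\bigr)\,du$ minus a fixed term. So everything reduces to \eqref{eqn:currentzerodimension}: the current $\varrho$ itself has local dimension zero almost everywhere.

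Your route to that dimension statement is not justified, for two reasons.

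\emph{Ergodic components versus the full measure.} Proposition~\ref{prop:dyadiccompute} concerns a single ergodic component. Localizations of $\varrho$ are continuous mixtures of components, and sublinear entropy of the pieces does not in general pass to such mixtures. The input that does apply to the full boundary measure is \eqref{eqn:endpointdimension}. Combine it with $(\xi^\pm_Z)_*\varrho_E\ll\sum_\ell\beta^\pm_{\calB_\ell}$, which comes from the local Haar lift.

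\emph{Recovering the plane from endpoint data.} Invertibility of $D_{U^\pm}f$ concerns horospherical directions in $H_Z$, and it does not recover the transverse coordinates of the plane. Also, the $A$-coordinate is a fiber direction, not a coordinate on $\overline{\calC}$. The paper instead uses $\Phi(C)=(\xi_Z^+(C),\xi_Z^-(C),\xi_{Z'}^+(C))$. This map has a Lipschitz inverse because the three null lines are independent and span $V_C$, which holds because the two intersection geodesics are ultraparallel.

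With these two inputs, each marginal is carried by countably many sets meeting $O(2^{\epsilon j})$ dyadic cubes. Hence $\varrho_E$ is carried by sets covered by $O(2^{3\epsilon j})$ cells of diameter $O(2^{-j})$, and no independence of the endpoints is needed. Borel--Cantelli then gives $\varrho(B(C,e^{-t}))\ge\varrho(E)\,2^{-4\epsilon j}$ with $j\approx t/\ln 2$ for all large $t$, which is the required estimate.
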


\begin{proof}
Fix $\varrho$ as above and suppose for contradiction that every positive-mass boundary measure has probability normalization of entropy zero.
We will use this assumption to construct the equivariant probability field excluded by Proposition~\ref{prop:noequivprob}.

For $C'\ne C$ sufficiently close to $C$, write $V_{C'}$ as the graph of $A_{C,C'}:V_C\to V_C^\perp$ and put
$$\fraka(C,C')=[A_{C,C'}]\in\bbP T_C\overline{\calC}.$$
The orthogonal splittings are $\Gamma$-equivariant, hence
$$\fraka(\gamma C,\gamma C')=D\gamma_C\fraka(C,C').$$
Choose a complete smooth auxiliary metric on $\overline{\calC}$ and, for sufficiently large $t$, define
$$\zeta_{C,t}=\fraka(C,\cdot)_*\left(\frac{\varrho\vert_{B(C,e^{-t})\ssm\{C\}}}{\varrho(B(C,e^{-t}))}\right).$$
Proposition~\ref{prop:noequivprob} makes $\varrho$ non-atomic so this is a probability for $\varrho$-almost every $C$.
The initial range of $t$, before the ball lies in a graph chart, is locally bounded in $C$ and may be filled with measurably chosen probabilities.

The balls need not be preserved by $\Gamma$ so we average over scales by letting
$$\overline{\zeta}_{C,s}=\frac1s\int_0^s\zeta_{C,t}\,dt.$$
Fix $\gamma\in\Gamma$ and a compact set $E$ in the locus where planes cross two distinct boundary lifts, which is $\varrho$-conull by Corollary~\ref{cor:twocrossings}.
Bounded local distortion gives $b>0$ and $t_0>b$ such that
$$B(\gamma C,e^{-(t+b)})\subseteq\gamma B(C,e^{-t})\subseteq B(\gamma C,e^{-(t-b)}),$$
for $C\in E$ and $t\ge t_0$.
The corresponding normalized restrictions have a common part of mass at least the ratio of the inner and outer ball masses.
By $\Gamma$-invariance and covariance of $\fraka$, this gives
$$\begin{aligned}
\|\zeta_{\gamma C,t}-(D\gamma_C)_*\zeta_{C,t}\|_{\mathrm{TV}}
&\le 2\left(1-\frac{\varrho(B(\gamma C,e^{-(t+b)}))}{\varrho(B(\gamma C,e^{-(t-b)}))}\right),\\
&\le 2\ln\frac{\varrho(B(\gamma C,e^{-(t-b)}))}{\varrho(B(\gamma C,e^{-(t+b)}))},
\end{aligned}$$
where $\|\alpha-\beta\|_{\mathrm{TV}}=\sup_{\|\phi\|_\infty\le1}|\alpha(\phi)-\beta(\phi)|$ denotes total variation.

The scale integral cancels at intermediate scales, since for $s>t_0+2b$ we have
$$\int_{t_0}^s\ln\frac{\varrho(B(C,e^{-(t-b)}))}{\varrho(B(C,e^{-(t+b)}))}\,dt=\int_{s-b}^{s+b}\ln\frac1{\varrho(B(C,e^{-u}))}\,du-\int_{t_0-b}^{t_0+b}\ln\frac1{\varrho(B(C,e^{-u}))}\,du.$$
The second term is fixed and finite for $C\in\supp\varrho$, while the first is at most $2b\ln(1/\varrho(B(C,e^{-(s+b)})))$.
Thus the pointwise estimate we need is
\begin{equation}\label{eqn:currentzerodimension}
\lim_{t\to\infty}\frac1t\ln\varrho(B(C,e^{-t}))=0,
\end{equation}
for $\varrho$-almost every $C$.
Assuming it for now, convexity of total variation and $\Gamma$-invariance give
$$\|\overline{\zeta}_{\gamma C,s}-(D\gamma_C)_*\overline{\zeta}_{C,s}\|_{\mathrm{TV}}\longrightarrow0,$$
for almost every $C$, with the initial interval contributing at most $2t_0/s$.
This defect is bounded by $2$, so bounded convergence on $E$ gives
\begin{equation}\label{eqn:averagedfieldequivariance}
\int_E\|\overline{\zeta}_{\gamma C,s}-(D\gamma_C)_*\overline{\zeta}_{C,s}\|_{\mathrm{TV}}\,d\varrho(C)\longrightarrow0.
\end{equation}
The measures
$$d\widehat{\varrho}_s(C,[\Theta])=d\varrho(C)\,d\overline{\zeta}_{C,s}([\Theta]),$$
on the total space $\bbP T\overline{\calC}$ push forward to $\varrho$, hence have uniformly bounded mass on compact sets.
A diagonal subsequence therefore converges to a Radon measure $\widehat{\varrho}$ and, since the bundle projection is proper, the pushforward of this limit is still $\varrho$.
Equation~\eqref{eqn:averagedfieldequivariance}, applied on the compact projections of test supports, shows that $\widehat{\varrho}$ is $\Gamma$-invariant.
Disintegration over the fibers and uniqueness of conditional probabilities will give the equivariant probability field excluded by Proposition~\ref{prop:noequivprob}.
It remains to prove Equation~\eqref{eqn:currentzerodimension}, which will follow from the zero-entropy assumption.

Equation~\eqref{eqn:endpointdimension} gives pointwise dimension zero for every localized endpoint measure.
We transfer this information to the current by recovering a surface from three endpoints.
Fix a small chart of surfaces crossing two distinct boundary lifts $Z,Z'$, choose the endpoint labels continuously, and put
$$\Phi(C)=\bigl(\xi_Z^+(C),\xi_Z^-(C),\xi_{Z'}^+(C)\bigr).$$
The three null lines are independent because the intersection geodesics are ultraparallel, and their span recovers $V_C$.
After shrinking the chart, $\Phi$ therefore has a Lipschitz inverse.

Let $E$ be a positive-mass Borel set contained in a compact subset of this chart and put $\varrho_E=\varrho\vert_E/\varrho(E)$.
We will use the zero pointwise dimension of the localized endpoint measures to bound the number of small sets needed to carry $\varrho_E$.
The Lipschitz inverse of $\Phi$ will then let us compare the masses of these sets with the ball masses in Equation~\eqref{eqn:currentzerodimension}.

We first compare the endpoint distributions of $\varrho_E$ with the localized endpoint measures, for which the dimension bounds are already known.
Over a neighborhood of $\overline{E}$, choose local normal-frame and flow coordinates $(k,t)$.
Restrict these coordinates to a relatively compact product patch $P$ of positive Haar mass, small enough that the lift lies in an injective quotient chart with angles in a compact set $I$.
Lemma~\ref{lem:banana} gives the local Haar lift
$$d\widetilde{\mu}^\varrho_{Z,I}(C,k,t)=d\varrho(C)\,dk\,dt.$$
The endpoints are independent of $k,t$, so the endpoint images of the restriction to $E\times P$ are proportional to $(\xi_Z^\pm)_*\varrho_E$.

Covering $E\times P$ by finitely many bounded product Hopf boxes $\calB_1,\ldots,\calB_N$ and applying Equation~\eqref{eqn:localizedendpointmarginals} gives
$$(\xi_Z^\pm)_*\varrho_E\ll\sum_{\ell=1}^{N}\beta^\pm_{\calB_\ell}.$$
Repeating the construction at $Z'$ gives the same comparison for $(\xi_{Z'}^+)_*\varrho_E$.

Fix $\epsilon>0$ and let $\beta$ be one of these finite endpoint measures on a bounded chart.
For each positive integer $m$, let $F_{\beta,m}$ be the set of points $z$ for which
$$\beta(B(z,2^{-j}))\ge 2^{-\epsilon j},$$
for every $j\ge m$.
Since $\beta$ has pointwise dimension zero, the union of the sets $F_{\beta,m}$ has full $\beta$-measure.
To count the dyadic cubes meeting $F_{\beta,m}$, choose a maximal $2^{1-j}$-separated collection of points in this set.
The balls of radius $2^{-j}$ about these points are disjoint and each has $\beta$-mass at least $2^{-\epsilon j}$, so their number is $O(2^{\epsilon j})$.
By maximality, the balls with the same centers and radius $2^{1-j}$ cover $F_{\beta,m}$.
Each of these enlarged balls meets a bounded number of dyadic cubes of side length $2^{-j}$, giving the required bound of $O(2^{\epsilon j})$ cubes.

The measure comparison shows that each of the three endpoint distributions of $\varrho_E$ is carried by a countable union of sets with this covering bound.
For any choice of one such set in each endpoint coordinate, their product is covered by $O(2^{3\epsilon j})$ product cubes.
Taking the inverse images of these products under $\Phi$, we find that $\varrho_E$ is carried by countably many sets, each covered by $O(2^{3\epsilon j})$ cells of
$$\calQ_j=\Phi^{-1}(\calD_j^{\otimes3})\vert_E.$$
No independence of the endpoints is needed, since we only multiply the numbers of cubes covering the three chosen sets.
Moreover, the Lipschitz inverse of $\Phi$ gives a constant $K$ such that every cell of $\calQ_j$ has diameter at most $K2^{-j}$.

We now use this bound on the number of cells to obtain a lower bound for the mass of the cell containing almost every point.
For one of the sets $F$ just constructed, write $Q_j(C)$ for the cell containing $C$.
Since $F$ is covered by $O(2^{3\epsilon j})$ cells, summing the masses of the covering cells whose $\varrho_E$-mass is less than $2^{-4\epsilon j}$ gives
$$\varrho_E\bigl(\{C\in F\mid\varrho_E(Q_j(C))<2^{-4\epsilon j}\}\bigr)\le O(2^{3\epsilon j})\,2^{-4\epsilon j}=O(2^{-\epsilon j}).$$
The right-hand side is summable in $j$.
Borel--Cantelli therefore shows that almost every $C\in F$ belongs to such a cell for only finitely many $j$.
Since there are only countably many sets $F$ and their union has full $\varrho_E$-measure, we obtain
$$\varrho_E(Q_j(C))\ge 2^{-4\epsilon j},$$
for all sufficiently large $j$, for $\varrho_E$-almost every $C\in E$.

Choose a fixed integer $k$ with $K2^{-k}<1$ and put $j=\lceil t/\ln2\rceil+k$.
The cell $Q_j(C)$ contains $C$ and has diameter less than $e^{-t}$, hence $Q_j(C)\subset B(C,e^{-t})$.
Since $Q_j(C)\subset E$, the definition of $\varrho_E$ gives
$$\varrho(B(C,e^{-t}))\ge\varrho(E)\varrho_E(Q_j(C))\ge\varrho(E)\,2^{-4\epsilon j},$$
for all sufficiently large $t$ and almost every $C\in E$.
For each such $C$, local finiteness also gives a finite positive upper bound $M_C$ for these ball masses.
Taking logarithms of both bounds, dividing by $t$, and using $j\ln2/t\to1$, we obtain
$$-4\epsilon\le\liminf_{t\to\infty}\frac1t\ln\varrho(B(C,e^{-t}))\le\limsup_{t\to\infty}\frac1t\ln\varrho(B(C,e^{-t}))\le0,$$
for almost every $C\in E$.
Letting $\epsilon$ decrease through positive rational values therefore gives Equation~\eqref{eqn:currentzerodimension} for almost every $C\in E$.
Finally, cover the conull two-crossing locus by countably many such localizations.
The estimate then holds for $\varrho$-almost every $C$ and completes the construction of the equivariant probability field excluded by Proposition~\ref{prop:noequivprob}, giving the required contradiction.
\end{proof}

\subsection{Translation invariance of the conditional measures}\label{sec:surfaceperiods}

We now show that positive entropy forces the complete unstable leafwise measures to have nontrivial projective stabilizers.
Retain the overlap coordinates $(\sfX,R)$ and the target endpoint map $f=f_{Z,Z'}(\sfX,R)$ of the previous subsection.

The argument belongs to the shearing and drift methods originating in Ratner's $H$-property and developed with leafwise measures in the low-entropy method \cite[\S4]{EKLLittlewood} and \cite[\S2]{ELRankOne}.
Our use of exponential contraction and expansion is closer to the exponential-drift method of Benoist and Quint \cite[\S1.2, \S7]{BQStationary} and Eskin--Mirzakhani \cite[\S2.1, \S2.3]{EskinMirzakhani}.
The geometric comparison is closest to the eight-point construction of Eskin and Lindenstrauss \cite[\S1.2, Fig~1, \S6.1]{ELZariskiWalks}, \cite[\S1.5, \S10]{ELLocalWalks}.
There, an additional dynamical branch produces a separated pair whose leafwise measures are compared using flow covariance and continuity on a compact Lusin set \cite[Lem~5.7, Claim~6.9]{ELZariskiWalks}.
Here we compare the endpoints determined by the second boundary lift for corresponding points on two nearby unstable plaques.
Since $f$ is not assumed to preserve the measure, we cannot identify the target images by a leafwise transport relation.

The way around this is to use the sublinear entropy bound $\calI_j=o(j)$ of Proposition~\ref{prop:dyadiccompute}.
We argue by contradiction on a positive-entropy ergodic component $\sigma$ whose complete unstable measures have trivial projective stabilizers.
Our aim is to find, at every sufficiently small scale, a set of stable pairs whose coupling mass has a uniform positive lower bound.
Their target plaque measures, restricted and normalized to a common coarse cube, should give probability vectors on a fixed subdivision whose $\ell^1$-distance also has a uniform positive lower bound.
The common cube lets us compare both vectors with the same average, so Equations~\eqref{eqn:conditionalentropydifference} and \eqref{eqn:strictconcavitygap} give a uniform positive lower bound for the entropy increase across a fixed block of scales.

To obtain this separation, we need to compare unstable restrictions at a shrinking scale.
Weak-$*$ continuity of the normalized complete measures controls integrals against fixed tests but does not directly compare normalized restrictions to shrinking windows.
We therefore follow one stable pair through two successive flows, as in Figure~\ref{fig:surface-sixpoints}.
For an initial stable displacement $v$ in a fixed compact annulus, the first flow gives a middle pair $(x,y)$ with stable displacement $r_jv$, where $r_j=e^{-t_j}$.
At the middle pair, we translate both endpoint images by $-f(x)$ and dilate by $r_j^{-1}$.
In these coordinates, Taylor expansion shows that the endpoint of $yu^+(r_ju)$ is displaced from that of $xu^+(r_ju)$ by $D_{U^-}f(x)v+O(r_j)$, uniformly for $u$ in a fixed bounded set.
The uniform inverse bound for $D_{U^-}f(x)$ bounds the norm of the translation vector away from zero.
The second flow magnifies the unstable restrictions to fixed windows and reduces the stable displacement to $r_j^2v$.
When the final points lie in a compact Lusin set, the difference of their normalized complete measures weak-$*$ converges to zero, uniformly over the chosen pairs.

The image of a common source measure under $D_{U^+}f(x)$ is not proportional to its translate by $D_{U^-}f(x)v$, since proportionality would give a nonzero projective period.
We express this using cross-products of test integrals.
Compactness gives finitely many pairs of tests for which the maximum absolute cross-product has a uniform positive lower bound.
Using finitely many translated dyadic grids, we place both centers and the test supports inside a common coarse cube.
Uniform upper bounds for the masses of this cube give a positive lower bound for a test-integral difference after normalization and approximation on a fixed subdivision gives the required $\ell^1$-separation.
Summing the entropy increases over the finite family of grids forces linear growth, contradicting the sublinear entropy bound.

\begin{figure}[!t]
\centering

\begin{tikzpicture}[x=1cm,y=1cm,line width=.45pt,
  every node/.style={inner sep=1.5pt}]
\coordinate (xzero) at (-.92,0);
\coordinate (yzero) at (.92,0);
\coordinate (xmid) at (-.20,2.12);
\coordinate (ymid) at (.20,2.12);
\coordinate (xtop) at (-.075,4.05);
\coordinate (ytop) at (.075,4.05);
\draw (xzero) .. controls (-.49,1.10) and (-.29,1.62) .. (xmid)
  node[pos=.46,left=4pt] {$a_{t_j}$};
\draw (yzero) .. controls (.49,1.10) and (.29,1.62) .. (ymid)
  node[pos=.46,right=4pt] {$a_{t_j}$};
\draw (xmid) .. controls (-.11,2.62) and (-.075,3.27) .. (xtop)
  node[pos=.48,left=4pt] {$a_{t_j}$};
\draw (ymid) .. controls (.11,2.62) and (.075,3.27) .. (ytop)
  node[pos=.48,right=4pt] {$a_{t_j}$};
\foreach \p in {xzero,yzero,xmid,ymid,xtop,ytop}{\fill (\p) circle (1.35pt);}
\node[below left=2pt] at (xzero) {$x_0$};
\node[below right=2pt] at (yzero) {$y_0$};
\node[below left=2pt] at (xmid) {$x$};
\node[below right=2pt] at (ymid) {$y$};
\node[above left=2pt] at (xtop) {$xa_{t_j}$};
\node[above right=2pt] at (ytop) {$ya_{t_j}$};
\end{tikzpicture}
\caption{Two successive right translations by $a_{t_j}$ contract the stable displacement from $v$ to $r_jv$ and then to $r_j^2v$, where $r_j=e^{-t_j}$.
We compare rescaled endpoint images at the middle pair and complete unstable measures at the final pair.
Compare \cite[Fig~1]{ELZariskiWalks}.}
\label{fig:surface-sixpoints}
\end{figure}

\begin{prop}\label{prop:surfaceperiods}
Let $\varrho\le\overline{\eta}_\infty$ be a nonzero $\Gamma$-invariant current.
For every boundary lift $Z$ and compact angle set $I$ for which $\mu^\varrho_{Z,I}$ has positive mass, put $\sigma=\sigma^\varrho_{Z,I}$.
For $\sigma$-almost every $x$ with $h^\varrho_{Z,I}(x)>0$, the projective stabilizer $\calL_x$ of the complete unstable leafwise measure $\sigma_x^+$ is nontrivial.
Equivalently,
$$\sigma\bigl(\{x\mid h^\varrho_{Z,I}(x)>0,\ \calL_x=\{0\}\}\bigr)=0.$$
\end{prop}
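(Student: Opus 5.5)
We argue by contradiction along the lines sketched before the statement. Suppose the set $E_0=\{x\mid h^\varrho_{Z,I}(x)>0,\ \calL_x=\{0\}\}$ has positive $\sigma$-measure. Since $\calL_{xa_t}=e^t\calL_x$ and $h^\varrho_{Z,I}$ is $A$-invariant, $E_0$ is $A$-invariant. By the compatibility of leafwise measures with ergodic decomposition recalled in Section~\ref{sec:leafwiseentropy}, we may pass to a single positive-entropy $A$-ergodic component, still denoted $\sigma$, with fixed angle, entropy $h>0$ and $\calL_x=\{0\}$ almost everywhere. By Corollary~\ref{cor:twocrossings}, a positive-measure part of this component lies over planes crossing a second lift $Z'$. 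We fix a small overlap chart carrying positive mass, with target endpoint map $f=f_{Z,Z'}$. On this chart Proposition~\ref{prop:entropyindependence} makes $D_{U^\pm}f$ uniformly invertible, and Proposition~\ref{prop:dyadiccompute} gives $\calI_j=o(j)$. The aim is to show that $\calI_j$ in fact grows linearly.

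\textbf{Separation from trivial stabilizers.} By Lusin's theorem we choose a compact set $K$ of measure close to one on which $x\mapsto[\sigma^+_x]_{\mathrm{sc}}$, normalized by a fixed cutoff $\chi$, is weak-$*$ continuous. On $K$ we also have $\calL_x=\{0\}$ and uniform bounds on $D_{U^\pm}f$ and their inverses. For $x\in K$ and $w$ in a fixed compact annulus $\mathcal{W}$, the measure $(D_{U^+}f(x))_*\sigma^+_x$ is not proportional to its translate by $w$. A compactness argument over $K\times\mathcal{W}$ then produces finitely many pairs of test functions $(\phi_k,\psi_k)$ with
\[
\max_k\bigl|\alpha(\phi_k)\,T_w\alpha(\psi_k)-\alpha(\psi_k)\,T_w\alpha(\phi_k)\bigr|\ge c_0>0,
\qquad \alpha=(D_{U^+}f(x))_*\sigma^+_x,
\]
uniformly on $K\times\mathcal{W}$. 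Here upper semicontinuity of the failure set, and closedness of proportionality, are what give the uniform constant.

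\textbf{The two-flow construction.} For scale $r_j=e^{-t_j}$, we take pairs $x_0,y_0=x_0u^-(v)$ with $v\in\mathcal{W}$. We require that $x_0$, its image $x=x_0a_{t_j}$, and $xa_{t_j}$, together with the corresponding points on the $y$ side, all lie in $K$. By $A$-invariance and a maximal-inequality argument, the coupling of such pairs has mass bounded below uniformly in $j$. At the middle pair $(x,y)$ the stable displacement is $r_jv$. Taylor expansion of $f$ on $r_j$-unstable plaques gives
\[
r_j^{-1}\bigl(f(yu^+(r_ju))-f(x)\bigr)=r_j^{-1}\bigl(f(xu^+(r_ju))-f(x)\bigr)+D_{U^-}f(x)v+O(r_j).
\]
So after recentring at $f(x)$ and dilating by $r_j^{-1}$, the two target plaque images are the pushforwards by approximately $D_{U^+}f(x)$ of the rescaled unstable restrictions, with one translated by $w=D_{U^-}f(x)v$. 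Here $|w|$ is bounded away from zero. By \eqref{eqn:leafwisecovariance}, these rescaled restrictions are restrictions of $\sigma^+_{xa_{t_j}}$ and $\sigma^+_{ya_{t_j}}$. The final points are at stable distance $r_j^2$ and both lie in $K$, so by Lusin continuity their normalized complete measures differ by $o(1)$ in weak-$*$ uniformly.

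The cross-product bound $c_0$ then passes to the two target plaque images. We use finitely many translated dyadic grids so that both images and the test supports sit in a common coarse cube of scale $\asymp r_j$. Uniform mass bounds on that cube convert this into an $\ell^1$-separation, bounded below, of the normalized probability vectors on a fixed subdivision of $j_0$ further dyadic levels.

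\textbf{Entropy growth and the main obstacle.} Applying \eqref{eqn:strictconcavitygap} to the two separated vectors against their common average, and integrating via \eqref{eqn:conditionalentropydifference} with the coupling mass, gives
\[
\calI_{j+j_0}-\calI_j\ge c_1>0
\]
for every sufficiently large $j$, after summing over the finitely many grids. Iterating over blocks yields $\calI_j\ge c_1j/j_0-O(1)$, which contradicts $\calI_j=o(j)$.

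The step I expect to be hardest is the passage from the weak-$*$ separation of complete measures to a uniform $\ell^1$-separation of normalized restrictions inside one coarse cube. This requires controlling mass near cube boundaries, which the translated grids and the positivity of $\chi$ near the origin should handle. It also requires ensuring that the coupling uses the same conditioning $R$ that appears in $H(f_j\mid R)$. To handle that, I would first try disintegrating along the stable direction inside a fixed $R$-fibre, before pairing.
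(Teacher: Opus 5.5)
Your architecture matches the paper's proof: a positive-entropy ergodic component with $\calL_x=\{0\}$, an overlap chart with $\calI_j=o(j)$, Lusin continuity of the normalized complete unstable measures, two successive flows with the shift $D_{U^-}f(x)v$ at the middle pair, translated grids, and the strict-concavity entropy increment. However, two steps fail as written.

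\textbf{The coupling and the use of $h>0$.} You never define the coupling and never use $h>0$, yet this is exactly where positivity is essential. On a zero-entropy component the unstable leafwise measures are trivial, $\sigma_x^+=\delta_0$, and their projective stabilizer is $\{0\}$. So an argument that never invokes $h>0$ would prove a false statement. The paper disintegrates $\sigma$ over bounded pieces of local stable plaques and takes the self-coupling $\Pi=\int\sigma_\omega\otimes\sigma_\omega\,d\vartheta(\omega)$. Both of its marginals equal $\sigma$, which is what yields $\int(g(x)+g(y))\,d\Pi_j=2\int g\,d\sigma$. Positive entropy makes the stable conditionals non-atomic, so $\Pi$ gives the diagonal zero mass and one may choose $r_->0$ with $\Pi\{|v|<r_-\}<c/10$. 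After that, the uniform-in-$j$ lower bound on good pairs is only a union bound, because $\Pi_j$ still has marginals $\sigma$ by $A$-invariance. No maximal inequality is involved, and nothing needs to be required of $x_0$.

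\textbf{The separation constant is proved for the wrong family.} You show that $(D_{U^+}f(x))_*\sigma_x^+$ is uniformly non-proportional to its $w$-translates, with the derivative and the measure taken at the same point. But the measure actually pushed forward by $D_{U^+}f(x)$ is $\beta_{xa_{t_j}}$. By Equation~\eqref{eqn:leafwisecovariance}, $[\sigma^+_{xa_{t_j}}]_{\mathrm{sc}}=[(D_{r_j^{-1}})_*\sigma^+_x]_{\mathrm{sc}}$. So at $x$ you would need non-proportionality of $\sigma_x^+$ with translates by vectors of length $\asymp r_j$, which is not uniform in $j$. Evaluating your bound at $xa_{t_j}$ instead gives the wrong linear map $D_{U^+}f(xa_{t_j})$, and $xa_{t_j}$ need not even lie in the chart.

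The paper decouples these. It runs compactness over $E\times\calK_\sigma\times\{r_-\le|v|\le r_+\}$, where $E\subset\calB$ is compact and carries the derivative, and $\calK_\sigma$ is the compact image of $x\mapsto\beta_x$ on the Lusin set, varying independently. This also splits the two incompatible roles of your set $K$, which cannot both have measure near one and lie in a small overlap chart. The middle point lies in $E$ with $\sigma(E)=c$, while only the final points must lie in $E_{\mathrm{Lus}}$, with $\sigma(E_{\mathrm{Lus}}^c)<c/10$.

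\textbf{The conditioning on $R$.} Your closing worry is misplaced, and pairing inside one $R$-fibre would destroy the mechanism. $R$ labels unstable plaques, so a stable pair necessarily lies on two different ones. The entropy increment comes precisely from comparing the two plaque vectors $\sfp_{j,x}$ and $\sfp_{j,y}$ with their common $R$-average $\sfq$ in one coarse cube. Since $g_{j,z}$ depends on a single point, the coupling enters only through its marginals.
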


\begin{proof}
Suppose that the assertion fails.
By the covariance and disintegration rules of Section~\ref{sec:leafwiseentropy}, we may pass to an ergodic component $\sigma$ of positive entropy on which $\calL_x=\{0\}$ almost everywhere and Propositions~\ref{prop:entropyindependence} and \ref{prop:dyadiccompute} apply.
Its angle is fixed and may be suppressed from $R$.
Choose a small overlap flow box $\calB$ for boundary lifts $Z,Z'$ and normalize its transverse measure to a probability $\kappa$, so that
$$d\sigma\vert_{\calB}=\sigma(\calB)\,d\kappa(\sfX,R)\,\frac{d\tau\vert_{I_t}}{|I_t|}.$$
Here $R$ specifies an unstable plaque and $f=f_{Z,Z'}(\sfX,R)$ records the chosen endpoint at $Z'$.
Shrink the box so that $f$ is uniformly bi-Lipschitz on unstable plaques and both $D_{U^\pm}f$ have uniformly bounded inverses.
Proposition~\ref{prop:dyadiccompute} gives
$$\calI_j=H(f_j)-H(f_j\mid R)=o(j).$$
To construct the stable pairs, choose a compact set $E$ in the interior of $\calB$ with $\sigma(E)=c>0$.
Disintegrate $\sigma$ over a measurable partition into bounded pieces of local stable plaques and choose two points independently from each conditional probability.
This gives
$$\sigma=\int\sigma_\omega\,d\vartheta(\omega),\qquad \Pi=\int\sigma_\omega\otimes\sigma_\omega\,d\vartheta(\omega),$$
where $\Pi$ is the self-coupling of $\sigma$, with both marginals equal to $\sigma$.
Take the plaque pieces small enough that almost every pair has the form
$$y_0=x_0u^-(v),\qquad |v|\le r_+.$$
Positive entropy makes the stable conditionals non-atomic so $\Pi$ gives the diagonal zero mass.
We may therefore choose $r_->0$ with $\Pi\{|v|<r_-\}<c/10$.

We choose representatives of the complete unstable leafwise measures by fixing a smooth compactly supported $0\le\chi\le1$, equal to one near zero, and put
$$\beta_x=\frac{\sigma_x^+}{\sigma_x^+(\chi)}.$$
Choose a compact Lusin set $E_{\mathrm{Lus}}$ with $\sigma(E_{\mathrm{Lus}}^c)<c/10$ on which $x\mapsto\beta_x$ is continuous for weak-$*$ convergence and every $\beta_x$ has trivial projective stabilizer.
Its image $\calK_\sigma$ under this map is compact.

Put $r_j=2^{-j}$, $t_j=j\ln2$, and let $\Pi_j$ be the pushforward of $\Pi$ under simultaneous right translation by $a_{t_j}$.
For the middle pair $x=x_0a_{t_j}$, $y=y_0a_{t_j}$, the conjugation rules give
\begin{equation}\label{eqn:stablepaircontraction}
y=xu^-(r_jv),\qquad ya_{t_j}=xa_{t_j}u^-(r_j^2v).
\end{equation}
The marginals remain $\sigma$ by $A$-invariance and hence the conditions
$$x\in E,\qquad r_-\le|v|\le r_+,\qquad xa_{t_j},ya_{t_j}\in E_{\mathrm{Lus}},$$
define good pairs of mass at least $7c/10$ for every $j$.
For large $j$, the middle point $y$ also lies in the interior of $\calB$.

Right translation by $a_{t_j}$ sends the unstable root parameter $r_ju$ to $u$, since
$$xu^+(r_ju)a_{t_j}=xa_{t_j}u^+(u).$$
By Equation~\eqref{eqn:leafwisecovariance}, pushing $\sigma_x^+$ and $\sigma_y^+$ forward under $u\mapsto r_j^{-1}u$ gives positive multiples of $\sigma_{xa_{t_j}}^+$ and $\sigma_{ya_{t_j}}^+$, respectively.
The final stable displacement is $r_j^2v$ so uniform continuity on $E_{\mathrm{Lus}}$ gives
$$\beta_{xa_{t_j}}-\beta_{ya_{t_j}}\longrightarrow0,$$
uniformly over good pairs against each compactly supported continuous test.

We now translate both endpoint images by $-f(x)$ and dilate by $r_j^{-1}$.
For corresponding points on the two plaques, this gives the maps
$$\Psi_{j,x}(u)=r_j^{-1}\bigl(f(xu^+(r_ju))-f(x)\bigr),\qquad \Psi_{j,y}(u)=r_j^{-1}\bigl(f(yu^+(r_ju))-f(x)\bigr).$$
These maps are defined while $xu^+(r_ju)$ and $yu^+(r_ju)$ remain in their respective plaques in $\calB$.
For every fixed bounded set of root parameters, both conditions hold for all sufficiently large $j$.
Uniformly on such sets, Taylor expansion gives
\begin{equation}\label{eqn:stablepairlinearization}
\Psi_{j,x}(u)=D_{U^+}f(x)u+O(r_j),\qquad \Psi_{j,y}(u)=D_{U^-}f(x)v+D_{U^+}f(x)u+O(r_j).
\end{equation}
By flow covariance, pushing the two plaque measures forward under $f$ and then under $z\mapsto r_j^{-1}(z-f(x))$ gives positive scalar multiples of
$$\alpha_{j,x}=(\Psi_{j,x})_*\beta_{xa_{t_j}},\qquad \alpha_{j,y}=(\Psi_{j,y})_*\beta_{ya_{t_j}},$$
with each root measure restricted to the domain of its map.
Thus $\beta_{xa_{t_j}}-\beta_{ya_{t_j}}$ weak-$*$ converges to zero while $\Psi_{j,y}(u)-\Psi_{j,x}(u)=D_{U^-}f(x)v+O(r_j)$ uniformly for $u$ in each fixed bounded set.

We next obtain a uniform positive lower bound on cross-products of test integrals for a measure and its translate.
For $x\in E$, $\beta\in\calK_\sigma$, and $r_-\le|v|\le r_+$, consider
$$\alpha=(D_{U^+}f(x))_*\beta,\qquad \alpha'=(T_{D_{U^-}f(x)v})_*\alpha=(T_{D_{U^-}f(x)v}\circ D_{U^+}f(x))_*\beta.$$
If these measures were proportional, then $(D_{U^+}f(x))^{-1}D_{U^-}f(x)v$ would be a nonzero projective period of $\beta$, contrary to our assumption.
We can therefore choose compactly supported smooth tests $\phi,\psi$ for which $\alpha(\phi)\alpha'(\psi)-\alpha(\psi)\alpha'(\phi)\ne0$.
After rescaling the tests to have sup norm at most one, continuity preserves a positive lower bound on a neighborhood of that parameter value.
The parameter space $E\times\calK_\sigma\times\{v\mid r_-\le|v|\le r_+\}$ is compact so finitely many such neighborhoods cover it.
Let $\calT$ consist of all the tests chosen for this finite subcover.
Taking the minimum of the corresponding positive lower bounds gives $\epsilon>0$ such that
\begin{equation}\label{eqn:stablepairseparation}
\max_{\phi,\psi\in\calT}\left|\alpha(\phi)\alpha'(\psi)-\alpha(\psi)\alpha'(\phi)\right|\ge\epsilon.
\end{equation}

For an actual good pair, take $\beta=\beta_{xa_{t_j}}$.
The inverse images of the test supports lie in a common compact set of root parameters on which the measures in $\calK_\sigma$ have uniformly bounded mass.
The errors in Equation~\eqref{eqn:stablepairlinearization} change these test integrals by quantities tending uniformly to zero.
Replacing $\beta_{ya_{t_j}}$ by $\beta_{xa_{t_j}}$ in these integrals also gives an error tending uniformly to zero.
Indeed, their difference weak-$*$ converges to zero uniformly over good pairs, and the pullbacks of the tests by $u\mapsto D_{U^+}f(x)u+D_{U^-}f(x)v$ form a compact family of continuous functions with common compact support.
Equation~\eqref{eqn:stablepairseparation} consequently gives that
\begin{equation}\label{eqn:actualcrossproductseparation}
\max_{\phi,\psi\in\calT}\left|\alpha_{j,x}(\phi)\alpha_{j,y}(\psi)-\alpha_{j,x}(\psi)\alpha_{j,y}(\phi)\right|\ge\epsilon/2,
\end{equation}
for all sufficiently large $j$.

To apply the entropy formula, we need to restrict both target plaque measures to the same coarse cube while keeping all the test supports inside it.
Small diameter alone does not ensure this, since a dyadic face may separate the centers or cut through a test support.
There is a positive constant $C_0$, independent of $j$ and the good pair, such that the ball centered at $f(x)$ of radius $C_0r_j$ contains both of the points $f(x)$, $f(y)$ and all the sets $f(x)+r_j\supp\phi$ for $\phi\in\calT$.
To choose such a cube, consider the finite family of nested translated grids
$$\calD_j^z=z+\calD_j,\qquad z\in\calZ_{\mathrm{gr}}=\{0,\tfrac13,\tfrac23\}^{n-2}.$$
At every dyadic side length at most one, the three sets of boundary points in each coordinate are offset by one-third of that side length.
Some choice of grid therefore places $f(x)$ at distance at least one-third of a side length from every face of its cube.
Choose $b_-$ large enough that, for every good pair and all large $j$, one grid has a cube $Q\in\calD_{j-b_-}^z$ containing both points and all the test supports above, with their distance from $\partial Q$ bounded below by a fixed positive multiple of $r_j$.
We call such a grid successful for the pair.
Since this grid may depend on the pair and on $j$, we will sum the entropy increments over all $z\in\calZ_{\mathrm{gr}}$.

We next express the common cube in the rescaled coordinates of our measures, restrict to this cube, and normalize.
The two cube masses may differ so we need an upper bound for both to retain a uniform lower bound on the cross-products.
For a successful grid, put $\widetilde{Q}=r_j^{-1}(Q-f(x))$.
The sets $\Psi_{j,x}^{-1}(\widetilde{Q})$ and $\Psi_{j,y}^{-1}(\widetilde{Q})$ lie in a fixed bounded set of root parameters.
Compactness of $\calK_\sigma$ therefore gives a common upper bound $M_0$ for $\alpha_{j,x}(\widetilde{Q})$ and $\alpha_{j,y}(\widetilde{Q})$.
These masses are positive by Equation~\eqref{eqn:actualcrossproductseparation}, since all test supports lie in $\widetilde{Q}$.
Put
$$\widehat{\alpha}=\frac{\alpha_{j,x}\vert_{\widetilde{Q}}}{\alpha_{j,x}(\widetilde{Q})},\qquad \widehat{\alpha}'=\frac{\alpha_{j,y}\vert_{\widetilde{Q}}}{\alpha_{j,y}(\widetilde{Q})}.$$
Restriction preserves the test integrals and normalization divides each cross-product by the product of the two masses, which is at most $M_0^2$.
In particular,
$$\max_{\phi,\psi\in\calT}\left|\widehat{\alpha}(\phi)\widehat{\alpha}'(\psi)-\widehat{\alpha}(\psi)\widehat{\alpha}'(\phi)\right|\ge\frac{\epsilon}{2M_0^2}.$$
For probabilities, this cross-product difference is at most $|\widehat{\alpha}(\phi)-\widehat{\alpha}'(\phi)|+|\widehat{\alpha}(\psi)-\widehat{\alpha}'(\psi)|$ and hence some test function satisfies
$$|\widehat{\alpha}(\phi)-\widehat{\alpha}'(\phi)|\ge\frac{\epsilon}{4M_0^2}.$$

We have separated the two normalized measures by a smooth test and now need to obtain a corresponding separation between their masses on a finite subdivision.
Subdivide $Q\in\calD_{j-b_-}^z$ into cubes $P\in\calD_{j+b_+}^z$, whose side length is $2^{-b_+}r_j$.
In the rescaled coordinates, these cubes subdivide $\widetilde{Q}$ into cubes of side length $2^{-b_+}$.
For each $\phi\in\calT$, choose one of its values on each rescaled cube and replace it there by this constant.
Since $\calT$ is finite and its tests are smooth, this changes either probability integral by $O(2^{-b_+})$, uniformly over the tests and good pairs.
The resulting step function has absolute value at most one and its integral is the sum of its values on the fine cubes weighted by their masses.
The difference of its two integrals is therefore bounded above in absolute value by the $\ell^1$-distance between the vectors of these masses.
Taking $b_+$ large enough gives a lower bound $\epsilon_0>0$ for this distance, independent of the good pair and of $j$.
We record these vectors in the original endpoint coordinates.

Let $Q_j^z(x)$ be the cube of $\calD_{j-b_-}^z$ containing $f(x)$.
Recall that $R(x)$ labels the unstable plaque through $x$, and write $\kappa_R$ for the conditional probability on the plaque labeled by $R$.
Apply $f$ to this conditional measure, restrict the resulting endpoint measure to $Q_j^z(x)$, and normalize it to a probability.
For a fine cube $P\in\calD_{j+b_+}^z$ contained in $Q_j^z(x)$, its mass is
$$\sfp_{j,x}^z(P)=\frac{\kappa_{R(x)}\bigl(f^{-1}(P)\bigr)}{\kappa_{R(x)}\bigl(f^{-1}(Q_j^z(x))\bigr)},$$
and $\sfp_{j,x}^z$ denotes the vector of these masses.
For a good pair in a successful grid, flow covariance shows that $\sfp_{j,x}^z$ and $\sfp_{j,y}^z$ record the masses assigned by $\widehat{\alpha}$ and $\widehat{\alpha}'$ to the corresponding fine cubes in $\widetilde{Q}$.
For large $j$, the portions of the two plaques mapped into $Q$ lie in the interior of $\calB$.
We have therefore proved that
\begin{equation}\label{eqn:stablepairlabelseparation}
Q_j^z(x)=Q_j^z(y),\qquad \|\sfp_{j,x}^z-\sfp_{j,y}^z\|_1\ge\epsilon_0,
\end{equation}
for every good pair in at least one grid, with $b_-,b_+,\epsilon_0$ independent of $j$.

We now convert this separation into an entropy increase by comparing the masses obtained from an individual plaque with those obtained by allowing all plaques to contribute.
Let $f_j^z(x)$ record the cube of $\calD_j^z$ containing $f(x)$, so that $Q_j^z(x)=f_{j-b_-}^z(x)$, and put
$$\calI_j^z=H(f_j^z)-H(f_j^z\mid R).$$
For a coarse cube $Q\in\calD_{j-b_-}^z$, restrict $\kappa$ to $f^{-1}(Q)$ and normalize, without fixing the plaque coordinate $R$.
The resulting probability that the endpoint lies in a fine cube $P\subset Q$ is
$$\sfq_{j,Q}^z(P)=\frac{\kappa\bigl(f^{-1}(P)\bigr)}{\kappa\bigl(f^{-1}(Q)\bigr)},$$ 
for $P\in\calD_{j+b_+}^z$, whenever the denominator is positive.
Write $\sfq_{j,Q}^z$ for the vector of these probabilities.
Thus $\sfp_{j,x}^z$ uses the conditional measure on one plaque, whereas $\sfq_{j,Q}^z$ averages over all plaques after conditioning on the same cube $Q$.
For $x\in\calB$, define the relative entropy with respect to this average
$$g_{j,z}(x)=D_{\mathrm{KL}}(\sfp_{j,x}^z\Vert\sfq_{j,Q_j^z(x)}^z),$$
and set it equal to zero outside $\calB$.
This compares the probabilities of the fine labels $f_{j+b_+}^z$ when both the coarse label $f_{j-b_-}^z$ and the plaque $R$ are specified with their probabilities when only the coarse label is specified.
The function is constant in the flow coordinate so Equation~\eqref{eqn:conditionalentropydifference} gives
\begin{equation}\label{eqn:stablepairentropycost}
\int g_{j,z}\,d\sigma=\sigma(\calB)\bigl(\calI_{j+b_+}^z-\calI_{j-b_-}^z\bigr).
\end{equation}
For a good pair in a successful grid, both separated vectors are compared with the same average $\sfq_{j,Q}^z$.
The triangle inequality puts at least one vector at $\ell^1$-distance at least $\epsilon_0/2$ from this average so Equation~\eqref{eqn:strictconcavitygap} gives a uniform positive lower bound for the sum of their relative entropies.
Since the number of fine cubes is fixed, there is a uniform $\epsilon_1>0$ such that
$$g_{j,z}(x)+g_{j,z}(y)\ge\epsilon_1,$$
independent of $j$.

The sum $\sum_z(g_{j,z}(x)+g_{j,z}(y))$ is at least $\epsilon_1$ on every good pair and is nonnegative on every pair.
Since both marginals of $\Pi_j$ equal $\sigma$ and the good pairs have mass at least $7c/10$, we obtain
$$2\sum_z\int g_{j,z}\,d\sigma=\int\sum_z\bigl(g_{j,z}(x)+g_{j,z}(y)\bigr)\,d\Pi_j(x,y)\ge\frac{7c\epsilon_1}{10}.$$
Equation~\eqref{eqn:stablepairentropycost} therefore yields
\begin{equation}\label{eqn:stablepairpositivegap}
\sum_{z\in\calZ_{\mathrm{gr}}}\bigl(\calI_{j+b_+}^z-\calI_{j-b_-}^z\bigr)\ge\frac{7c\epsilon_1}{20\sigma(\calB)}=c_*>0,
\end{equation}
for sufficiently large $j$.

It remains to compare the translated grids with the original grid, where the sublinear bound was proved.
At the same scale, a cube of one translated grid meets at most $2^{n-2}$ cubes of another.
The entropy of either grid label conditioned on the other is at most $n-2$ and the same bound holds after also conditioning on $R$.
The entropy chain rule therefore gives $|\calI_j^z-\calI_j|\le2(n-2)$.
Consequently,
$$\calJ_j=\sum_{z\in\calZ_{\mathrm{gr}}}\calI_j^z=o(j).$$
However, Equation~\eqref{eqn:stablepairpositivegap} says that $\calJ_{j+b_+}-\calJ_{j-b_-}\ge c_*$ for all large $j$.
Writing $L=b_-+b_+$ and iterating gives $\calJ_{k+mL}\ge\calJ_k+mc_*$ for every sufficiently large $k$ and every integer $m\ge1$, contradicting $\calJ_j=o(j)$.
\end{proof}

\subsection{Identification of the limiting current}\label{sec:surfaceidentification}

Let $h$ be the $\Gamma$-invariant entropy function of Proposition~\ref{prop:entropyindependence} and put
$$\overline{\eta}_+=\overline{\eta}_\infty\vert_{\{h>0\}}.$$
Let $\eta_+$ be its lift obtained by adding Haar probability measure in the compact normal fibers.
Proposition~\ref{prop:positiveentropycomponent} implies that $\eta_+$ is nonzero.

To obtain the homogeneous decomposition, we need to pass from the projective periods of Proposition~\ref{prop:surfaceperiods} to invariance under a nontrivial unipotent subgroup.
This follows the general leafwise-measure strategy of \cite[Lem~3.10, \S8]{ELRankOne}.
We must then verify that the intersection geodesic belongs to the resulting homogeneous carrier.

\begin{lemma}\label{lem:surfaceboundaryhomogeneous}
For every boundary lift $Z$, the current $\eta_+\vert_{U_Z}$ admits the homogeneous decomposition of Proposition~\ref{prop:boundaryhomogeneous}.
Every nonzero exceptional term $\eta_{Z,B}$ in this decomposition is associated to a subspace $B$ of dimension at least two.
\end{lemma}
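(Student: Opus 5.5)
The plan is to upgrade the projective periods of Proposition~\ref{prop:surfaceperiods} into genuine invariance of the boundary measures under a nontrivial unipotent subgroup of $U^+\le H_Z$. Ratner's theorem then applies to the ergodic components, and the argument of Proposition~\ref{prop:boundaryhomogeneous} goes through verbatim.

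Fix $Z$ and a compact angle set $I$, and let $\sigma$ be a positive-entropy $A$-ergodic component of $\sigma^{\varrho}_{Z,I}$ with $\varrho=\overline{\eta}_+$. By Proposition~\ref{prop:surfaceperiods}, $\calL_x\ne\{0\}$ for $\sigma$-almost every $x$.

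\textbf{Step 1: the identity component of $\calL_x$ is a nonzero subspace.} Equation~\eqref{eqn:leafwisecovariance} gives $\calL_{xa_t}=e^t\calL_x$. The map $x\mapsto\calL_x$ into closed subgroups of $\bbR^{n-2}$ (Chabauty topology) is measurable. Poincar\'e recurrence along $a_{-t}$ returns $x$ to a Lusin set on which this map is continuous, while the periods of $\calL_{xa_{-t}}=e^{-t}\calL_x$ shrink to zero. Hence the limit subgroup contains arbitrarily small nonzero elements and has positive-dimensional identity component $V_x$. Ergodicity makes $\dim V_x$ constant.

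\textbf{Step 2: the multiplier is trivial.} The multiplier $\chi_x$ on $V_x$ is a homomorphism $V_x\to\bbR_{>0}$. It is conjugated under the flow as $\chi_{xa_t}(e^tv)=\chi_x(v)$. The same recurrence argument on a Lusin set, combined with local finiteness of $\sigma_x^+$, forces $\chi_x\equiv1$, as in \cite[Lem~3.10]{ELRankOne}. So $\sigma_x^+$ is $V_x$-invariant.

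\textbf{Step 3: from leafwise to global invariance.} Following \cite[\S8]{ELRankOne}, one shows that $x\mapsto V_x$ is $A$-invariant, since dilation preserves subspaces, and hence constant on the ergodic component, say $V$. Invariance of almost every unstable leafwise measure under $u^+(V)$ then gives invariance of $\sigma$ under the unipotent group $u^+(V)$. The compact coordinate $M_H$ may rotate $V$. To handle this, I would pass to the subgroup generated by $A$ and the conjugates $mu^+(V)m^{-1}$ occurring on the component, which is still generated by unipotents together with $A$.

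\textbf{Step 4: apply Ratner and read off the support.} Ratner's theorem \cite{Ratner2} now shows that the components are homogeneous on closed orbits $\Delta_Z hL$ with $L\supseteq A\cdot u^+(V)$. Reassembling over components and angles exactly as in Proposition~\ref{prop:boundaryhomogeneous} yields the decomposition, with \cite[Lem~3.2]{BFMS} identifying the noncompact part of $L$ as $\SO_0(V_B)$.

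For the dimension claim, $V_B$ contains $V_0$, the span of the intersection geodesic, because $A\le L$. It also contains the directions moved by the nontrivial unipotent $u^+(V)$. Hence $\SO_0(V_B)$ is noncompact and contains unipotents, which forces $\dim B\ge2$. In other words, $\SO_0(V_B)$ is not just $A$.

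\textbf{Main obstacle.} The hardest step is Step 3, especially its interaction with the compact fiber. The subspace $V_x$ is only $A$-equivariant up to $M_H$, so a naive argument gives invariance under a measurable family of unipotents rather than a fixed group. I would first try to handle this on the extension $X\to X/M_H$. Failing that, I would use the $M_H$-averaging already built into the Haar lift to pass to a component that is invariant under a single conjugacy class.
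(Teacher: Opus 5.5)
\textbf{Steps 1--3.} These follow the paper's argument. The paper removes the discrete part of $\calL_x$ using the shortest-period function $b(xa_t)=e^tb(x)$, which is your recurrence argument. For that argument to constrain $\calL_x$ itself rather than some accumulation point, you should recur to a neighbourhood of $x$ inside the Lusin set.

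The ``main obstacle'' you single out is not real. The measure $\sigma$ is only $A$-invariant, and $a_t$ acts on $U^+$ by the scalar dilation of Equation~\eqref{eqn:rootconjugation}. Hence $V_{xa_t}=V_x$ exactly, and no $M_H$-rotation ever enters. On an $A$-ergodic component $V_x$ is constant, and $\sigma$ is $u^+(V)$-invariant, as your Step~3 already says.

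\textbf{Step 4: the gap.} You assert that Ratner's theorem yields homogeneous components $\Delta_ZhL$ with $L\supseteq A\cdot u^+(V)$. Ratner's theorem applies to the unipotent group $u^+(V)$, not to $Au^+(V)$. It gives $u^+(V)$-ergodic components $m_{\Delta_ZhL}$ with only $u^+(V)\le L$.

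The flow $A$ permutes these components. Nothing you have said prevents a component from being carried by a subspace $B$ whose boundary contains the endpoint fixed by $u^+(V)$ but which does not contain the geodesic $C\cap Z$ through the frame. Such a component is not $A$-invariant, so it does not even descend to a transverse current.

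The containment $C\cap Z\subseteq B$ is precisely the support condition $\eta_{Z,B}(U_Z\ssm p^{-1}(\calE_{Z,B}))=0$ in Proposition~\ref{prop:boundaryhomogeneous}. There it came for free from $J\le L$, because Ratner was applied to $J$ itself. For $m=2$ we have $J=A$, so the containment must be proved separately.

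The paper proves it by a separate recurrence argument:
\begin{enumerate}
\item Keep pairs $(h,B)$ modulo the diagonal $\Delta_Z$-action. By uniqueness of the ergodic decomposition these form a finite $A$-invariant measure with $[h,B]a_t=[ha_t,B]$.
\item Since $\partial B$ contains the endpoint fixed by $u^+(V)$, the basepoint $q_t$ of $ha_{-t}$ satisfies $\sinh d(q_t,B)=e^{-t}\sinh d(q_0,B)$.
\item Recurrence then forces $d(q_0,B)=0$.
\end{enumerate}
Alternatively, $Au^+(V)$ is epimorphic in the rank-one group generated by $u^+(V)$ and its opposite. Mozes' epimorphic-subgroup theorem would then give invariance under a unipotently generated group containing $A$ before you apply Ratner. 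Either way, an additional input is required, and Step~4 as written does not supply it.

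\textbf{Dimension claim.} This part does not need $A\le L$. It suffices that the noncompact factor $\SO_0(V_B)$ contains the nontrivial unipotent group $u^+(V)$, which $\SO_0(1,1)$ cannot.
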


\begin{proof}
Fix a compact angle set $I$ for which the boundary measure of $\eta_+$ has positive mass and normalize it to a probability $\sigma$.
Proposition~\ref{prop:surfaceperiods} makes $\calL_x$ nontrivial for $\sigma$-almost every $x$.
Write $E_x$ for its identity component, which is a vector subspace of $\bbR^{n-2}$.
Equation~\eqref{eqn:leafwisecovariance} gives
$$\calL_{xa_t}=e^t\calL_x,\qquad E_{xa_t}=E_x.$$
If the discrete quotient $\calL_x/E_x$ is nontrivial, let $b(x)$ be the shortest nonzero length in this quotient, identified with $E_x^\perp$.
Then $b(xa_t)=e^tb(x)$, so no positive-measure set on which $b$ lies between two fixed positive bounds can be recurrent.
Finiteness and $A$-invariance therefore give $\calL_x=E_x$ almost everywhere.
The projective multipliers are equal to one by \cite[Lem~3.10]{ELRankOne}.

Disintegrate $\sigma$ over the angle parameter and the $A$-invariant map $x\mapsto E_x$.
By \cite[Lem~3.8]{ELRankOne}, the resulting probabilities retain the same complete unstable leafwise measures up to scale.
For a fixed subspace $E$, these measures are invariant under translations by $E$, hence local disintegration makes the probability invariant under
$$U_E^+=\{u^+(v)\mid v\in E\}.$$
It is also $A$-invariant and $E\ne\{0\}$.
Ratner's theorem \cite{Ratner2} now makes its $U_E^+$-ergodic components homogeneous.
Since $\Delta_Z\backslash H_Z$ is compact, the subgroup description in \cite[Cor~3.8, Prop~5.3]{LO} gives a noncompact factor conjugate to $\SO_0(V_B)$, where $B\subseteq Z$ has dimension at least two.
The remaining factors lie in its compact centralizer.

It remains to show that the intersection geodesic $C\cap Z$ lies in $B$.
In the homogeneous decomposition, retain pairs $(h,B)$, where $h\in H_Z$ is a frame and $B\subseteq Z$ is the corresponding geometric lift, and take the quotient by the diagonal action of $\Delta_Z$.
Uniqueness of the ergodic decomposition makes the resulting measure on pairs finite and $A$-invariant, with $[h,B]a_t=[ha_t,B]$.
The endpoint fixed by $U_E^+$ belongs to $\partial B$.
Let $q_t$ be the basepoint of the frame $ha_{-t}$, so that $q_t$ moves along $C\cap Z$ toward the endpoint fixed by $U_E^+$.
The hyperboloid model then gives
$$\sinh d(q_t,B)=e^{-t}\sinh d(q_0,B).$$
Recurrence of this measure forces the distance to vanish so the entire intersection geodesic is contained in $B$.

There are only countably many such closed geodesic submanifolds and lifts by the finite-volume case of \cite[Thm~1.7(2)]{LO}.
Grouping the homogeneous components by $B$, while retaining their compact parameters, now gives the locally $\SO_0(V_B)$-invariant currents supported on $p^{-1}(\calE_{Z,B})$ exactly as in Proposition~\ref{prop:boundaryhomogeneous}.
The term with $B=Z$ is $H_Z$-invariant.
Exhausting the angle ball by compact sets proves the lemma.
\end{proof}

\begin{proof}[Proof of Theorem~\ref{thm:surfacehaar}]
We apply the carrier argument of Section~\ref{sec:intermediateclosures} to $\overline{\eta}_+$.
Lemma~\ref{lem:surfaceboundaryhomogeneous} supplies the two homogeneous boundary decompositions used there.
For $m=2$, the common positive-definite intersection in the proof of Lemma~\ref{lem:algebraiccarrier} has dimension $m-1=1$, which still suffices for Equation~\eqref{eqn:boundarygeneration}.
Thus every minimal algebraic carrier is $\calV_F$ and Proposition~\ref{prop:closedcarrier} makes its restricted current intrinsic Haar measure and shows that $F$ projects to a properly immersed closed geodesic submanifold.

We exclude these proper carriers using Proposition~\ref{prop:noequivprob}.
By maximality, $\overline{\eta}_i^{\calC}(\calV_F)=0$ for all sufficiently large $i$, so the blow-up construction in Proposition~\ref{prop:incidencenonconcentration} gives a $\Gamma_F$-equivariant probability field over $\overline{\eta}_+\vert_{\calV_F}$ in $\bbP\Hom(V_C,V_F^\perp)\subseteq\bbP T_C\overline{\calC}$.
Distinct translates of this minimal carrier have null intersections.
Transporting the field by $\Gamma$ therefore gives a well-defined equivariant field over the nonzero invariant current obtained by restricting $\overline{\eta}_+$ to their union, contradicting Proposition~\ref{prop:noequivprob}.
Consequently every proper algebraic subset has zero $\overline{\eta}_+$-measure.

The exceptional terms in Lemma~\ref{lem:surfaceboundaryhomogeneous} are supported on such subsets and hence vanish.
Thus $\eta_+\vert_{U_Z}$ is $H_Z$-invariant for every $Z$.
The local invariance and connectedness arguments in the proof of Theorem~\ref{thm:higherdimensionhaar} now give $\eta_+=b\lambda$ for some $b>0$.
Finally, the complementary current $\overline{\eta}_0=\overline{\eta}_\infty\vert_{\{h=0\}}$ is invariant and compatibility with invariant restriction makes all its boundary measures have zero entropy.
Proposition~\ref{prop:positiveentropycomponent} therefore gives $\overline{\eta}_0=0$.
Lifting through the compact normal fibers yields $\eta_\infty=b\lambda$, proving the theorem with $c=b^{-1}$.
\end{proof}

Together, Theorems~\ref{thm:higherdimensionhaar} and \ref{thm:surfacehaar} prove Theorem~\ref{thm:measurerigid} for every $2\le m<n$.

\section{Classification of \texorpdfstring{$W_m$}{W(m)}-invariant Radon measures on \texorpdfstring{$\calF^*$}{F*}}
\label{sec:generalratner}

We now apply the boundary and carrier arguments of Sections~\ref{sec:higherdimensionabsolute} and \ref{sec:surfaceabsolute} to an arbitrary ergodic Radon measure.
Without the maximality assumption, proper geodesic carriers may have positive mass, and we must also recover the measure in the normal-frame directions.
We recall the statement of the measure classification for the reader's convenience.

\generalmeasurerigidity*

For a geodesic $r$-plane $F$, write $\lambda_F$ for the intrinsic smooth measure on $\calV_F$, the family of underlying $m$-planes contained in $F$.
The preceding arguments will identify the underlying current with this measure on a carrier and its translates.
To recover the original frame measure, we will average over the compact normal directions and use ergodicity of the resulting Haar measure.
The following extension of Proposition~\ref{prop:lambdaergodicity} supplies this ergodicity while retaining the normal holonomy.

\begin{lemma}\label{lem:selectedorbitergodicity}
Let $F=gF_r$, where $F_r=\bbH^n\cap E_r$ is the standard $r$-plane and $m\le r\le n$.
Suppose that $F$ meets $\interior(D_\Gamma)$ and projects to a properly immersed closed geodesic submanifold.
If $r>m$, suppose also that $\lambda_F$-almost every plane in $\calV_F\cap\overline{\calC}^*$ crosses two distinct boundary lifts of its convex core.
If $\Delta=g^{-1}\Gamma_Fg<S_r$ and $L_F=\overline{\Delta W_r}$, then $\Gamma gL_F$ is closed in $\Gamma\backslash G$ and $m_{\Gamma gL_F}\vert_{\calF^*}$ is $W_m$-ergodic.
\end{lemma}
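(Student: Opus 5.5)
Closedness comes first. Since $F$ projects to a properly immersed closed geodesic submanifold, $\Gamma gS_r$ is closed in $\Gamma\backslash G$, and $\Delta=g^{-1}\Gamma_Fg$ is discrete in $S_r$. The group $S_r/W_r$ is compact, so $\Delta W_r$ has closure $L_F=\overline{\Delta W_r}$. This is a closed subgroup with $W_r\le L_F\le S_r$, and $L_F/W_r$ is a closed subgroup of the compact group $S_r/W_r$. Then $\Gamma gL_F$ is the image of the closed orbit $\Gamma gS_r\cong\Delta\backslash S_r$ restricted to $\Delta\backslash\Delta L_F$, which is closed in $\Delta\backslash S_r$. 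Hence $\Gamma gL_F$ is closed. Its Haar measure $m_{\Gamma gL_F}$ is the right $L_F$-invariant measure on $\Delta\backslash L_F$, which fibers over $\Delta\backslash L_F/W_r$. By construction $\Delta$ acts on $L_F/W_r$ with dense orbits, and in fact transitively modulo closure, so this quotient is a single point up to null sets.

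For ergodicity, let $f$ be a bounded $W_m$-invariant function on $\Gamma gL_F\cap\calF^*$. Lifting to $L_F$, it becomes a left $\Delta$-invariant, right $W_m$-invariant function, so it descends to a $\Delta$-invariant function $\bar f$ on $(L_F/W_m)^*$. I will write $L_F/W_m$ as a bundle over the space of normally framed $m$-planes in $F_r$, namely $W_r/W_m$. The fiber is the compact group $L_F/W_r$, which records the normal frame of $F$. In the case $r=m$, $W_r=W_m$, so $\bar f$ is a $\Delta$-invariant function on the compact group $L_F/W_m$. Since $\Delta$ has dense image there, acting by left translation, continuity of translation in $L^2$ makes $\bar f$ invariant under the closure, so $\bar f$ is constant.

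For $r>m$, the plan is to run the proof of Proposition~\ref{prop:lambdaergodicity} inside $F$. The hypothesis that almost every plane in $\calV_F$ crosses two distinct boundary lifts $B,B'$ of the intrinsic core of $F$ plays the role of Lemma~\ref{lem:aeboundarylift}. At a boundary lift $B$ of $F$, Howe--Moore for $J=\SO_0(m-1,1)$ acting on the finite-volume quotient $\Gamma_{F,B}\backslash(\SO_0(V_B)\times K_F)$ is the tool. Here $\Gamma_{F,B}$ is a lattice in $\SO_0(V_B)$ up to the compact factor. After quotienting by the compact part, I will use Mautner/Howe--Moore on the noncompact simple factor. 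This shows that $\bar f$ depends only on the angle parameter and on the compact coordinate in $L_F/W_r$, and is invariant under $\SO_0(V_B)$ there. Repeating at $B'$ and applying Equation~\eqref{eqn:boundarygeneration} within $V_F$ yields local $\SO_0(V_F)=gW_rg^{-1}$-invariance on $U_B\cap U_{B'}$. Connectedness and the overlap argument of Proposition~\ref{prop:closedcarrier} then make $\bar f$ depend only on the compact fiber coordinate in $L_F/W_r$. At that point $\bar f$ is a $\Delta$-invariant function on $L_F/W_r$, and the density argument from the case $r=m$ finishes.

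I expect the main obstacle to be the compact normal-holonomy factor in the boundary Howe--Moore step. The $J$-action must be ergodic on $\Gamma_{F,B}\backslash(\SO_0(V_B)\times K)$, where $K$ is the relevant closure inside $K_F$, and not merely on the $\SO_0(V_B)$ quotient. I plan to handle this by first establishing invariance along the simple factor via Mautner. The remaining dependence then lies on the compact coordinate, where the projection of $\Gamma_{F,B}$ need not be dense. I will carry that dependence through the two-lift argument as a parameter, and only at the end use the density of $\Delta$ in $L_F/W_r$, which is exactly how $L_F$ was defined.
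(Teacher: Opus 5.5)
Your proposal is correct and follows the paper's proof essentially step for step. Closedness comes from the closed $S_r$-orbit. At an intrinsic boundary lift you apply Howe--Moore for $J$ with the compact coordinate $q\in D=L_F/W_r$ held fixed; the paper does this on $\Delta_B^+\backslash(H_B\times D)$ with $\Delta^+=\Delta\cap S_r^\circ$, which avoids the orientation twist in the splitting $S_r=W_r\rtimes Q$. The two-lift generation, connectedness and overlap argument inside $F$ then gives $f(s,q)=\phi(q)$, and density of $\pi(\Delta)$ in $D$ makes $\phi$ constant. The relevant compact factor is $D$ rather than all of $K_F$, as you already anticipate in your last paragraph.
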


\begin{proof}
Let $\pi:S_r\to Q=S_r/W_r$ and put $D=\overline{\pi(\Delta)}$, so that $L_F=\pi^{-1}(D)$.
Since $F$ projects to a properly immersed submanifold, $\Gamma gL_F$ is closed in $\Gamma\backslash G$.

Let $C_0=\bbH^n\cap E_m\subseteq F_r$ be the standard $m$-plane defined in Section~\ref{sec:background}.
Split $S_r=W_r\rtimes Q$ using a fixed spatial reflection in $C_0$ to compensate normal transformations of determinant $-1$.
The preimage of $\calF^*$ in $L_F$ is $\calO\times D$, with product Haar measure, where
$$\calO=\{s\in W_r\mid gsC_0\cap\interior(D_\Gamma)\ne\emptyset\}.$$
Let $f$ be a bounded $W_m$-invariant function on $\Delta\backslash(\calO\times D)$ and lift it to $\calO\times D$.
We first show that $f(s,q)=\phi(q)$ almost everywhere.

For $r=m$, this immediately follows from right $W_m$-invariance since $\calO=W_m$.
Therefore suppose $r>m$.
The intrinsic core $F_r\cap g^{-1}D_\Gamma$ has compact quotient and totally geodesic boundary.
Put $\Delta^+=\Delta\cap S_r^\circ$ and let $\Delta_B^+$ be the stabilizer of an intrinsic boundary lift $B$.
Its intrinsic projection is a cocompact lattice in $H_B=\SO_0(r-1,1)$ and $X_B=\Delta_B^+\backslash(H_B\times D)$ is compact.
In the intrinsic crossing coordinates $H_B/J\times\bbB^{r-m}$ of Equation~\eqref{eqn:equivariant} with $J=\SO_0(m-1,1)$, the function $f$ gives a bounded right $J$-invariant function on $X_B$ for almost every angle.
Since $J$ is noncompact, the Howe--Moore theorem \cite{HoweMoore} implies that the function $f$ is invariant under $H_B$ with $q$ fixed.

We may now apply the boundary comparison in the proof of Proposition~\ref{prop:lambdaergodicity} inside $F_r$, keeping $q$ fixed.
The generation, connectedness, and overlap arguments there make $f$ constant in the intrinsic normally framed plane on the union of the double-crossing sets.
This union is conull by hypothesis, so Fubini's theorem gives $f(s,q)=\phi(q)$ almost everywhere.
Finally, $\Delta$-invariance and density of $\pi(\Delta)$ in $D$ force $\phi$ to be constant.
\end{proof}

\begin{proof}[Proof of Theorem~\ref{thm:generalmeasurerigidity}]
Let $\eta$ be the transverse current of $\mu$ on $\calC^*$ and put $\varrho=p_*\eta$.
The local product correspondence of Section~\ref{sec:limitingmeasure} and properness of $p$ make $\varrho$ a nonzero $\Gamma$-invariant, $\Gamma$-ergodic Radon current.
We first identify its carrier.
If $\varrho$ is atomic, it is supported on a single locally finite $\Gamma$-orbit $\Gamma C$ and Corollary~\ref{cor:planeclosures} gives the properly immersed carrier $F=C$ with $r=m$.
In this case, we interpret $\lambda_F$ as the point mass $\delta_F$.

Suppose instead that $\varrho$ is non-atomic.
The invariant set of planes crossing two distinct boundary lifts is either null or conull.
In the null case, Lemma~\ref{lem:aeboundarylift} and ergodicity place the support of the current in the $\Gamma$-saturation of the planes in one compact closed geodesic submanifold, whose frames form a compact subset of $\calF^*$.
Thus $\mu$ is finite and we may extend it by zero and normalize it to a probability.
Ratner's theorem applies to $W_m$ in the form of \cite[Thm~1.1]{ShahUnipotentMeasures}.
The subgroup description in \cite[Lem~3.2(1)]{BFMS}, including the compact conjugation, therefore gives Equation~\eqref{eqn:Ratnerstyle}.

In the conull case, the boundary decomposition follows from Proposition~\ref{prop:boundaryhomogeneous} when $m\ge3$.
For $m=2$, the projective-field and entropy arguments of Proposition~\ref{prop:noequivprob} and Propositions~\ref{prop:entropyindependence}--\ref{prop:surfaceperiods} require non-atomicity, the two-crossing property, and finiteness of the transferred boundary measures on compact angle sets.
These hold for $\varrho$ and its invariant subcurrents, with finiteness supplied by Lemma~\ref{lem:banana}.
Proposition~\ref{prop:positiveentropycomponent} and ergodicity make the invariant entropy function positive almost everywhere, hence the proof of Lemma~\ref{lem:surfaceboundaryhomogeneous} gives the same homogeneous boundary decomposition.

The proofs of Lemma~\ref{lem:algebraiccarrier} and Proposition~\ref{prop:closedcarrier} now identify a minimal positive-mass carrier with $\calV_F$, where $F$ projects to a properly immersed closed geodesic submanifold, and hence give
$$\varrho\vert_{\calV_F}=b\lambda_F\vert_{\calV_F\cap\overline{\calC}^*}.$$
Maximality of an approximating sequence was used only to exclude proper carriers and is not needed here.
If every proper algebraic subset is null, the exceptional boundary terms vanish and the local invariance argument in the proof of Theorem~\ref{thm:higherdimensionhaar} gives $F=\bbH^n$.
Ergodicity makes the union of the translated carriers conull and distinct translated carriers have null intersection.
Including the atomic case above and using $\gamma_*\lambda_F=\lambda_{\gamma F}$, we therefore have that
\begin{equation}\label{eqn:generalcarriercurrent}
\varrho=b\sum_{\gamma\in\Gamma/\Gamma_F}
\lambda_{\gamma F}\vert_{\calV_{\gamma F}\cap\overline{\calC}^*},
\end{equation}
whose carrier $F$ satisfies the hypotheses of Lemma~\ref{lem:selectedorbitergodicity}.

Write $F=gF_r$ and let $L_F$ be as in Lemma~\ref{lem:selectedorbitergodicity}.
The normal directions in the carrier define a $W_m$-invariant map to $\Gr_{r-m}(\bbR^{n-m})$ and therefore ergodicity yields a single element of $C_G(W_m)$ which puts these directions in standard position.
After right translating $\mu$ by some $k_0^{-1}\in C_G(W_m)$, the translated measure $\mu_0$ is then carried by $\Gamma gS_r\cap\calF^*$.
The map $\Gamma gn\mapsto L_Fn\in L_F\backslash S_r$ is $W_m$-invariant so ergodicity again lets us choose $g$ such that $\mu_0$ is carried by $xL\cap\calF^*$, where $x=\Gamma g$ and $L=\overline{(g^{-1}\Gamma_Fg)W_r}$.
Its underlying current remains $\varrho$ and consequently Lemma~\ref{lem:selectedorbitergodicity} shows that $xL$ is closed and $m_{xL}\vert_{\calF^*}$ is $W_m$-ergodic.

Put $T_L=L\cap S_m\cap K$.
This compact group normalizes $W_m$ and acts transitively on each normal-frame fiber within the orbit since $L\cap S_m=W_mT_L$.
Averaging over $T_L$ therefore replaces the conditional probabilities on these fibers by their invariant probabilities, while continuing to preserve the underlying current $\varrho$.
Equation~\eqref{eqn:generalcarriercurrent} and Haar measure along the $W_m$-fibers then give that
$$\mu_{\mathrm{av}}=\int_{T_L}(\sfR_t)_*\mu_0\,dt=c\,m_{xL}\vert_{\calF^*},$$
for some $c>0$, where $\sfR_t$ is right translation and $dt$ is Haar probability.
Since $\mu_{\mathrm{av}}$ is $W_m$-ergodic, it spans an extreme ray of $W_m$-invariant Radon measures.
The averaging identity therefore implies that $(\sfR_t)_*\mu_0$ is proportional to $\mu_{\mathrm{av}}$ for almost every $t\in T_L$.
Both measures have underlying current $\varrho$ so the proportionality constant is one and continuity in $t$ then gives $\mu_0=\mu_{\mathrm{av}}$.
Finally, setting $y=xk_0$ and $H=k_0^{-1}Lk_0$, we obtain the closed orbit $yH=xLk_0$ and
$$\mu=c\,m_{yH}\vert_{\calF^*},\qquad W_r\le k_0Hk_0^{-1}\le S_r,$$
completing the proof.
\end{proof}

\section{The Margulis criterion for arithmeticity and construction of the lifted measure}\label{sec:marguliscriterion}

In this section, we first recall the form of Margulis' arithmeticity criterion that we will use and then construct a measure on the suspension space associated to each representation arising from that criterion.
Disintegrating this measure will produce the $\Gamma$-equivariant, $\lambda$-measurable map used in Theorem~\ref{thm:superrigid}.
The contents of this section are well known and follow the strategy of \cite[\S 2,~3]{BFMS}.
For completeness, we include the construction in order to specify the measure and the equivariance conventions needed in Section~\ref{sec:AREA}.

\subsection{Margulis' arithmeticity criterion}\label{sec:arithmeticitycriterion}

We first recall the arithmetic construction.
Let $\ell$ be a number field with ring of integers $\calO_\ell$ and let $\bfG$ be a connected semisimple $\ell$-algebraic group.
A faithful $\ell$-rational representation $\iota:\bfG\to\bfGL_V$ defines the group of integral points via the formula
$$\bfG(\calO_\ell)=\iota^{-1}\left(\iota(\bfG(\ell))\cap\bfGL_V(\calO_\ell)\right).$$
By work of Borel and Harish-Chandra \cite{BHC}, the diagonal image of $\bfG(\calO_\ell)$ is a lattice in the product $\prod_{v\mid\infty}\bfG(\ell_v)$, where the notation means that the product is taken over the archimedean places of $\ell$.
Suppose that one factor has identity component isogenous to $G$ and that every other factor is compact.
Projection to the noncompact factor then gives a lattice in $G$.
A lattice in $G$ is called \emph{arithmetic} if, up to conjugacy and commensurability, it is obtained in this way.
This definition is independent of the faithful $\ell$-rational representation used to define $\bfG(\calO_\ell)$ \cite[\S2.3]{PrasadRapinchuk}.

We now let $\Lambda<G$ be a finitely generated Zariski dense discrete subgroup and assume for the moment that its adjoint trace field
$$\ell=\bbQ\bigl(\tr(\Ad(\gamma)):\gamma\in\Lambda\bigr)\subset\bbR,$$
is a number field.
Let $v_{\infty}:\ell\to\bbR$ denote the distinguished real embedding induced by the given inclusion $\Lambda<G$.
Identify $G$ with the identity component of its adjoint real algebraic group.
Work of Vinberg \cite{Vinberg} gives a connected semisimple adjoint $\ell$-group $\bfG$ such that $\bfG(\ell_{v_\infty})^\circ\cong G$ and $\Lambda<\bfG(\ell)$.
We refer to \cite[\S2]{BFMS} for this reduction.

For every place $v$ of $\ell$, let $\ell_v$ denote the corresponding local field.
The inclusion $\Lambda<\bfG(\ell)$ gives a family of natural representations indexed by the places
$$\rho_v:\Lambda\longrightarrow\bfG(\ell)\longrightarrow\bfG(\ell_v).$$
If $\rho_v(\Lambda)$ is unbounded, put $k=\ell_v$ and let $\bfH=\bfG\times_\ell k$.
We retain this full connected adjoint semisimple target, as in \cite[\S3.3]{BFMS}, and write $\rho:\Lambda\longrightarrow\bfH(k)$ for the resulting Zariski dense, unbounded representation.

Recall that $\rho$ is called \emph{superrigid} if either $\rho(\Lambda)$ is relatively compact or, after conjugating in $\bfH(k)$, there is a continuous homomorphism
$$\widetilde{\rho}:G\longrightarrow \bfH(k),$$
whose restriction to $\Lambda$ agrees with $\rho$.
Margulis' arithmeticity criterion, in the form needed here, is the following; see \cite[Ch~IX]{MargulisBook} and \cite[\S3.2]{BFMS}.

\begin{thm}[Margulis' arithmeticity criterion]\label{thm:marguliscriterion}
The group $\Lambda$ is contained in an arithmetic lattice of $G$ if and only if $\rho_v(\Lambda)$ is relatively compact for every place $v\ne v_{\infty}$.
In particular, it is enough to prove that the representations arising from the places of $\ell$ are superrigid.
\end{thm}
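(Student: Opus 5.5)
The plan is to follow the standard argument from \cite[Ch~IX]{MargulisBook}, in the form recorded in \cite[\S3.2]{BFMS}. Throughout, $\Lambda<\bfG(\ell)$ is Zariski dense and $\bfG$ is the adjoint $\ell$-form supplied by Vinberg's theorem. The two implications are treated separately, and the final sentence is deduced from the equivalence.

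For the forward direction, suppose $\Lambda\le\Lambda'$ with $\Lambda'$ arithmetic. Since $\Lambda$ is Zariski dense, the adjoint trace field of $\Lambda'$ agrees with $\ell$. Vinberg's uniqueness of the adjoint form then identifies the arithmetic structure of $\Lambda'$, up to commensurability, with $\bfG(\calO_\ell)$, possibly after passing to a finite-index subgroup of $\Lambda$. The archimedean places are handled by the definition: for $v\ne v_\infty$ archimedean, $\bfG(\ell_v)$ is compact. The finite places are handled by finite generation: $\Lambda$ lies in $\bfG(\calO_{\ell,v})$, up to the finitely many denominators absorbed by commensurability, and $\bfG(\calO_{\ell,v})$ is compact. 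In both cases $\rho_v(\Lambda)$ is relatively compact.

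For the converse, I would assume that every $\rho_v$ with $v\ne v_\infty$ has relatively compact image, and proceed in three steps.
\begin{enumerate}
\item Since $\Lambda$ is finitely generated, its matrix entries in a fixed faithful $\ell$-representation involve only finitely many primes. Hence $\rho_v(\Lambda)\le\bfG(\calO_{\ell,v})$ for all but finitely many finite $v$, and at the remaining finite places it lies in a compact open subgroup $K_v$.
\item At an archimedean $v\ne v_\infty$, the closure $\overline{\rho_v(\Lambda)}$ is a compact subgroup that is Zariski dense in $\bfG$. The Zariski closure of a compact subgroup of a real algebraic group is compact, so $\bfG(\ell_v)$ itself is compact.
\item Combining the first two steps, $\Lambda$ lies in $\bfG(\ell)\cap\prod_{v\nmid\infty}K_v$. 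This group is commensurable with $\bfG(\calO_\ell)$, and by Borel--Harish-Chandra $\bfG(\calO_\ell)$ is a lattice in $\prod_{v\mid\infty}\bfG(\ell_v)$. Since every factor other than the $v_\infty$ factor is compact, projection gives an arithmetic lattice in $G$ containing a finite-index subgroup of $\Lambda$.
\end{enumerate}
To contain all of $\Lambda$ rather than a finite-index subgroup, I would enlarge to the commensurator-type group $\bfG(\ell)\cap\prod K_v'$, where each $K_v'$ is a compact open subgroup containing $\overline{\rho_v(\Lambda)}$. This group still projects to an arithmetic lattice.

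For the final sentence, it suffices to show that superrigidity of each unbounded $\rho_v$ with $v\ne v_\infty$ gives a contradiction, so that every such $\rho_v$ must be bounded. If $v$ is non-archimedean, a continuous homomorphism from the connected group $G$ into the totally disconnected group $\bfH(k)$ is trivial. This contradicts unboundedness of $\rho_v(\Lambda)=\widetilde\rho(\Lambda)$. If $v$ is archimedean, the extension $\widetilde\rho$ is a nontrivial continuous homomorphism between adjoint simple real groups, hence an isomorphism onto $\bfG(\ell_v)^\circ$ up to conjugation. Comparing $\tr\Ad\circ\widetilde\rho$ with $v\circ\tr\Ad$ on $\Lambda$ then forces $v=v_\infty$ as embeddings of $\ell$, which is again a contradiction.

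I expect the main obstacle to be this archimedean case: showing that an isomorphism $G\to\bfG(\ell_v)^\circ$ extending $\rho_v$ forces $v=v_\infty$. This is where I would lean on \cite[Ch~IX]{MargulisBook} rather than reprove it. The key point to use is that adjoint traces are preserved under the isomorphism, while the two embeddings differ on $\ell$.
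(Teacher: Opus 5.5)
Your outline is sound, but step~2 of your converse assumes without saying so that $\ell_v=\bbR$, and it fails as written when $v$ is a complex place. Nothing prevents the number field $\ell\subset\bbR$ from having a complex place. In that case $\bfG(\ell_v)=\bfG(\mathbb{C})$ is never compact. Moreover, a compact subgroup of it can be Zariski dense in $\bfG$, for instance a compact real form such as $\SO(n+1)<\SO_{n+1}(\mathbb{C})$. Real-algebraicity of compact groups only says that $\overline{\rho_v(\Lambda)}$ is Zariski closed in the real group underlying $\bfG(\mathbb{C})$, and Zariski density over $\mathbb{C}$ gives no density there.

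The missing idea is that bounded image at a complex place cannot occur. The group $\overline{\rho_v(\Lambda)}$ lies in a conjugate of a compact real form $U$, and $\Ad(u)$ preserves the real form $\mathfrak{u}=\Lie(U)$ of $\frakh$ for $u\in U$. Hence $v(\tr\Ad\gamma)=\tr\Ad\rho_v(\gamma)\in\bbR$ for every $\gamma\in\Lambda$. So $v$ and $\bar v$ agree on the adjoint traces, which generate $\ell$, and $v$ is a real embedding after all. Once complex places are excluded this way, your argument at the remaining real places is correct.

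The same oversight recurs in your last paragraph. At a complex place, the extension $\widetilde\rho$ of an unbounded $\rho_v$ is not an isomorphism onto $\bfG(\ell_v)^\circ$, since its image has half the real dimension. What does hold in both cases, by Zariski density and a dimension count, is that $d\widetilde\rho$ is an isomorphism onto $\frakh$, after complexifying when $v$ is complex, and it intertwines $\Ad$ with $\Ad\circ\widetilde\rho$. This gives $v(\tr\Ad\gamma)=\tr\Ad\gamma$ for all $\gamma\in\Lambda$, hence $v=v_\infty$. That is the whole archimedean argument, so you need not defer to Margulis for it.

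For comparison, the paper does not prove the equivalence itself; it cites \cite[Ch~IX]{MargulisBook} and \cite[\S3.2]{BFMS}. Its own argument concerns only the final sentence. There it uses the reduction of \cite[\S3.2]{BFMS} and a result of Tits to turn a continuous extension into a continuous field embedding $\ell_{v_\infty}\hookrightarrow\ell_v$ agreeing with $v$ on adjoint traces. This handles archimedean and non-archimedean places at once.

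Your case split is more elementary and avoids Tits's theorem. The non-archimedean case via connectedness of $G$ is the same device the paper uses later in the proof of Theorem~\ref{thm:superrigid}. The archimedean case is the direct trace comparison above.

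A minor point on step~3: $\bfG(\ell)\cap\prod_{v\nmid\infty}K_v$ already contains all of $\Lambda$ and is commensurable with $\bfG(\calO_\ell)$. Its projection is therefore already an arithmetic lattice containing $\Lambda$, and your final enlargement is redundant.
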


To explain the final assertion, suppose that $v\ne v_{\infty}$ and that the unbounded representation $\rho_v$ extends continuously to $G$.
The reduction in \cite[\S3.2]{BFMS} and work of Tits show that such an extension induces a continuous field embedding $\ell_{v_\infty}\hookrightarrow\ell_v$ which agrees with $v$ on every adjoint trace of an element of $\Lambda$ (see \cite[Rmk~1.8.2(III)]{MargulisBook}).
Since these adjoint traces generate $\ell$, the embeddings $v$ and $v_{\infty}$ must agree.
This contradicts the assumption that $v\ne v_{\infty}$.
Consequently, superrigidity forces $\rho_v(\Lambda)$ to be relatively compact at every nondistinguished place.
The construction of $\bfG(\calO_\ell)$ in the first paragraph then produces the requisite arithmetic lattice containing $\Lambda$.
We emphasize that this is a criterion for containment in an arithmetic lattice.
The group $\Lambda$ need not itself have finite covolume.

We now apply the preceding discussion with $\Lambda=\Gamma$ in our setting and verify the hypothesis that the adjoint trace field is a number field.
Indeed, since $\partial\core(Y_\infty)$ is totally geodesic, doubling $\core(Y_\infty)$ along its boundary produces a closed hyperbolic $n$-manifold with fundamental group $\Lambda_{\mathrm{dbl}}<G$.
Since this double is a finite-volume hyperbolic manifold, the adjoint trace field is a number field (see \cite[\S2]{BFMS} for references).
The natural inclusion of the convex core into its double realizes $\Lambda$ as a subgroup and therefore its adjoint trace field is a subfield of this number field, hence itself a number field as well.
This verifies the hypothesis.

\subsection{Suspension spaces and lifts of measures to them}\label{sec:suspension}

Let $X$ be a compact metrizable $\bfH(k)$-space.
Then given a representation $\rho:\Gamma\to \bfH(k)$, we define the \emph{suspension space} by
$$X_\rho=\Gamma\backslash(G\times X),$$
where $\Gamma$ acts diagonally on the left via
$$\gamma\cdot(g,x)=(\gamma g,\rho(\gamma)x).$$
We write $[g,x]$ for the image of $(g,x)$ in $X_\rho$.
Consequently, the projection
\begin{align*}
\pi_\rho:X_\rho&\longrightarrow\Gamma\backslash G,\\
 [g,x]&\longmapsto\Gamma g,
\end{align*}
makes $X_\rho$ an associated bundle with fiber $X$.
Right translation on the first coordinate defines a right $G$-action $[g,x]s=[gs,x]$ for which $\pi_\rho$ is $G$-equivariant.
In this way, the suspension space encodes the representation $\rho$ while simultaneously allowing for actions by subgroups of $G$.

Suppose that $\mu^\rho$ is a right $W$-invariant Radon measure on $X_\rho$ and that $(\pi_\rho)_*\mu^\rho=\mu$ for a right $W$-invariant Radon measure $\mu$ on $\Gamma\backslash G$.
Disintegrating $\mu^\rho$ over the bundle projection gives a measurable family of fiberwise probability measures on $X$.
After lifting this family to $G$, it may be represented by a measurable map
$$\Phi:G\longrightarrow\Prob(X),$$
satisfying
$$\Phi(\gamma gw)=\rho(\gamma)_*\Phi(g),$$
for every $\gamma\in\Gamma$, every $w\in W$, and almost every $g$.
It therefore descends to a measurable $\Gamma$-equivariant map
$$G/W\longrightarrow\Prob(X).$$
The purpose of the remainder of this section is to construct the measure $\mu^\rho$ with $X=\bbP(V)(k)$.

Fix the unbounded local representation $\rho:\Gamma\to\bfH(k)$ arising from $\rho_v$ at a place of $\ell$ as in the previous subsection.
Recall from Section~\ref{sec:limitingmeasure} that
$$\calF_i=\Gamma g_iS_m\subset\Gamma\backslash G,\qquad
\Delta_i=\Gamma\cap g_iS_mg_i^{-1},$$
and that the right $W$-invariant Radon measures $\mu_i$ on $\calF_i$ weak-$*$ converge to $\mu_\infty$.
Let $\bfL_i<\bfH$ be the Zariski closure of $\rho(\Delta_i)$.
Since $\Delta_i$ is contained in a conjugate of the proper algebraic stabilizer $S_m<G$, the same extension-of-scalars argument as in \cite[Prop~3.4]{BFMS} shows that $\bfL_i$ is a proper $k$-subgroup of $\bfH$.
Here the definition of $\Delta_i$ includes the compact normal factor, so the point constructed below is fixed by the full stabilizer.

After passing to a subsequence, assume that $d=\dim\bfL_i$ is independent of $i$.
Put $\frakh=\Lie(\bfH)$ and $\frakl_i=\Lie(\bfL_i)$.
Then the line
$$\left[\bigwedge^d\frakl_i\right]\in\bbP\left(\bigwedge^d\frakh\right)(k),$$
is fixed by $\rho(\Delta_i)$.
By complete reducibility, and after taking another subsequence, there is a common nontrivial irreducible summand $V\subseteq\bigwedge^d\frakh$ on which the projection of these lines is nonzero.
We denote the corresponding points of $\bbP(V)(k)$ by $x_i$ and hence $\rho(\Delta_i)x_i=x_i$.
As in \cite[Prop~3.4]{BFMS}, we may choose $V$ faithful.
When $n=3$ and $\bfH$ is not $k$-simple, we use the tensor product construction at the end of that proof.

Take $X=\bbP(V)(k)$ and form the suspension $X_\rho$.
The fixed-point identity above makes
\begin{align*}
\sigma_i:\calF_i&\longrightarrow X_\rho,\\
\Gamma g_is&\longmapsto [g_is,x_i],\qquad s\in S_m,
\end{align*}
a well-defined measurable section of $\pi_\rho$ over $\calF_i$.
Since right translation by $S_m$ leaves the second coordinate $x_i$ unchanged, the section $\sigma_i$ is right $S_m$-equivariant.
Consequently the lifted measure $\mu_i^\rho=(\sigma_i)_*\mu_i$ is right $S_m$-invariant, and in particular right $W$-invariant.
Moreover, $(\pi_\rho)_*\mu_i^\rho=\mu_i$.

Since $\bbP(V)(k)$ is compact, $\pi_\rho$ is proper.
The uniform bounds for the measures $\mu_i$ on compact subsets of $\Gamma\backslash G$ therefore give uniform bounds for the measures $\mu_i^\rho$ on compact subsets of $X_\rho$.
After passing to a subsequence, we obtain a weak-$*$ limit
$$\mu_i^\rho\longrightarrow\mu_\infty^\rho.$$
The limit is right $W$-invariant and satisfies
$$(\pi_\rho)_*\mu_\infty^\rho=\mu_\infty.$$
In particular, $\mu_\infty^\rho$ is nonzero.

As above, we obtain a $W$-invariant map $\Phi$ to probability measures on $X$.
Since $\Phi$ is essentially constant on the right $W$-leaves, it descends transversely to a $\Gamma$-equivariant map
$$\psi:\calC^*\longrightarrow\Prob(\bbP(V)(k)),$$
which is measurable with respect to the transverse current $\eta_\infty$.
By Theorem~\ref{thm:measurerigid}, the measures $\eta_\infty$ and $\lambda$ differ by a positive scalar.
Thus $\psi$ is $\lambda$-measurable and, after changing it on a null set, gives the equivariant map required by Theorem~\ref{thm:superrigid}.

We have therefore associated to every unbounded local representation $\rho_v$ a suspension space with compact fiber, a right $W$-invariant lifted measure, and the equivariant probability-measure map that will be the starting point of Section~\ref{sec:AREA}.

\section{Algebraic representations of ergodic actions}\label{sec:AREA}

In this section, we take the data of a $\Gamma$-equivariant measurable map
$$\calC^*\to\Prob(\bbP(V)(k)),$$
as constructed in Section~\ref{sec:suspension}, and convert it to a superrigidity statement.
We follow the broad strategy of \cite{BFMS}, but the measures arising here require several modifications to the finite-volume argument.
Section~\ref{sec:insufficient} explains where that argument breaks down and describes how we overcome the obstruction by transferring algebraic representations between measure classes.
Before turning to these issues, we recall the background we need from Bader and Furman's theory of Algebraic Representations of Ergodic Actions (AREA).
Throughout we continue to fix a local field $k$ of characteristic $0$ and $W=W_m$ with $2\le m<n$.
For algebraic quotients, we use $\bfH/\bfL(k)$ as shorthand for $(\bfH/\bfL)(k)$, and similarly for quotients of algebraic normalizers.
These expressions denote $k$-points of quotient varieties, whereas a quotient of groups of $k$-points is written $\bfH(k)/C$.
The dimension $m$ enters these arguments only through the subgroup $U=N\cap W$, the inclusions $\langle A,w\rangle<W$ and $AU<W$, and the inequality $\delta_\Gamma+\dim U>n-1$.

For the reader familiar with the work of Bader and Furman, we follow their categorical approach, proving general statements about AREA before specializing to the setting at hand.
This keeps the properties of the general machinery distinct from the geometric features we will need to exploit in our setting.

\subsection{Background on AREA}

We give a fairly complete account of the parts of AREA needed below, both to fix the language and to make the measure hypotheses explicit.
In the Haar-measure application of \cite{BFMS}, every subgroup preserves the same measure, so these hypotheses are automatic.

Here we must allow actions that preserve only a measure class and compare algebraic representations defined with respect to different classes.
Most of the arguments of Bader and Furman carry over with nonsingularity in place of invariance.
We indicate where this suffices and where the comparison of the BMS and BR classes requires an additional argument.

\begin{defn}
Fix a measured space $(X,[\mu])$ with commuting nonsingular left $\Gamma$- and right $S$-actions, where $S$ is a locally compact second countable (lcsc) group, and fix the representation $\rho:\Gamma\to\bfH(k)$.
An $S$-algebraic representation consists of the following data.
\begin{itemize}
\item a $k$-algebraic group $\bfI$,
\item a $k$-algebraic variety $\bfV$, with commuting left $\bfH$- and faithful right $\bfI$-actions,
\item a continuous, Zariski dense homomorphism $\tau:S\to\bfI(k)$, and
\item a measurable map $\phi:X\to\bfV(k)$ satisfying
\begin{equation}\label{eq:object}
 \phi(\gamma xt)=\rho(\gamma)\phi(x)\tau(t),
\end{equation}
for every $\gamma\in\Gamma$, every $t\in S$, and $[\mu]$-almost every $x\in X$.
\end{itemize}
\end{defn}

We frequently suppress the data by saying that $\phi$ or $\phi:X\to\bfV(k)$ is an $S$-algebraic representation of $(X,[\mu])$.
We may also define a morphism between two algebraic representations $\phi:X\to\bfV(k)$ and $\phi':X\to\bfV'(k)$.
Indeed, let $\bfI_{\bfV,\bfV'}$ be the Zariski closure of $\tau\times \tau'$ in $\bfI\times \bfI'$.
This group acts on $\bfV$ and $\bfV'$ through the coordinate projections.
A morphism of $S$-algebraic representations is a $\bfH\times\bfI_{\bfV,\bfV'}$-equivariant $k$-algebraic map $\alpha:\bfV\to\bfV'$ such that
$$\xymatrix{
X\ar[r]^-{\phi}\ar[dr]_-{\phi'}&\bfV(k)\ar[d]^-\alpha\\
&\bfV'(k),}$$
commutes $[\mu]$-almost everywhere.
In particular, we obtain a category.

\begin{defn}
Fix a measured space $(X,[\mu])$.
Define $\AR_S(X,[\mu])$ to be the category whose objects are $S$-algebraic representations and whose morphisms are those described above.
This category only depends on the measure class of $\mu$.
\end{defn}

There is a particular type of $S$-algebraic representation that will be of interest in the sequel.
A \emph{coset $S$-algebraic representation} is an $S$-algebraic representation for which $\bfV(k)=\bfH/\bfL(k)$ for some closed $k$-algebraic subgroup $\bfL\le\bfH$ and $\bfH$ acts by translation on the left.
We note that in the specific case of a coset $S$-algebraic representation, we may identify $\bfI$ with a $k$-algebraic subgroup of $\bfN_{\bfH}(\bfL)/\bfL$, since this is the group of $\bfH$-equivariant algebraic automorphisms of $\bfH/\bfL$ and the right $\bfI$-action is faithful.

An \emph{initial object} in the category $\AR_S(X,[\mu])$ is simply an initial object in the usual category-theoretic sense.
More explicitly, an initial object is an $S$-algebraic representation $\phi_0:X\to \bfV_0(k)$ such that for every other object $\phi:X\to \bfV(k)$, there exists a unique morphism $\alpha$ from $\phi_0$ to $\phi$.
When this object is additionally a coset algebraic representation, we will call it an \emph{$S$-gate}.

Gates need not exist, as we will see in Section~\ref{sec:insufficient}.
Bader and Furman give the following criterion for their existence.
To this end, we say that the $S$-space $\Gamma\backslash X$ has only trivial algebraic representations if every $S$-algebraic representation of the trivial homomorphism $\rho_{\mathrm{triv}}:\Gamma\to\bfH(k)$ is essentially constant.
Equivalently, every measurable map $\phi:\Gamma\backslash X\to\bfV(k)$ satisfying
$$\phi(\Gamma xt)=\phi(\Gamma x)\tau(t),$$
for every $t\in S$ and almost every $\Gamma x$ is essentially constant.
Bader and Furman prove the following criterion in \cite[Thm~7.3]{BF}, which is proved in the same way as the corresponding statement in \cite[Thm~4.3]{BF2}.

\begin{restatable}[Bader--Furman]{thm}{BFinitial}\label{thm:BFinitial}
Suppose that the $S$-space $\Gamma\backslash X$ has only trivial algebraic representations.
Then there exists an $S$-gate in the category $\AR_S(X,[\mu])$.
\end{restatable}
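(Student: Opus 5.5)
The plan follows the standard Bader--Furman construction: take a minimal object among the coset $S$-algebraic representations, and show it is initial by using the trivial-representation hypothesis to rigidify maps out of it.

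\textbf{Choosing the gate.} First we note that $\AR_S(X,[\mu])$ contains coset representations at all, for example the trivial one $\bfH/\bfH$. Among all coset $S$-algebraic representations $\phi:X\to\bfH/\bfL(k)$ we pick one for which $\bfL$ is minimal. Descending chain conditions on algebraic subgroups (first on dimension, then on the number of components) guarantee that such a minimal $\bfL_0$ exists. Call the resulting map $\phi_0:X\to\bfH/\bfL_0(k)$, with homomorphism $\tau_0:S\to\bfI_0(k)\le(\bfN_{\bfH}(\bfL_0)/\bfL_0)(k)$.

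\textbf{Building a morphism to an arbitrary object.} Let $\phi:X\to\bfV(k)$ be any object, with homomorphism $\tau:S\to\bfI(k)$. Form the product map $(\phi_0,\phi):X\to(\bfH/\bfL_0\times\bfV)(k)$, which is an object for the Zariski closure of $\tau_0\times\tau$. The key step is to pass to the $\bfH$-quotient. Consider the map
$$x\mapsto \text{the }\bfL_0\text{-orbit data }\bigl(h_x^{-1}\phi(x)\bigr),\qquad \phi_0(x)=h_x\bfL_0 .$$
This takes values in the quotient of $\bfV$ by $\bfL_0$, which is only a constructible or Borel space. Rosenlicht and Bernstein--Zelevinsky type results say that $\bfL_0(k)$-orbits on $\bfV(k)$ are locally closed. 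The map is $\Gamma$-invariant, since the $\Gamma$-action cancels against the left $\bfH$-action. It is also equivariant for the right $S$-action via the algebraic action of the joint Zariski closure. We therefore obtain an $S$-algebraic representation of $\Gamma\backslash X$ for the trivial homomorphism, which by hypothesis is essentially constant.

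\textbf{Extracting the morphism.} Fix the essential value $\bfL_0 v_0$. Then almost every $\phi(x)$ lies in $h_x\bfL_0 v_0$. Consider the stabilizer $\bfL_0\cap\Stab_{\bfH}(v_0)$. If it were a proper subgroup of $\bfL_0$, refining $\phi_0$ by choosing points in the orbit would give a coset representation to $\bfH/(\bfL_0\cap\Stab(v_0))$, contradicting minimality. The refinement is measurable by a measurable section of the orbit map, and equivariance holds after adjusting $\tau$. Hence $\bfL_0$ fixes $v_0$. The map $\alpha(h\bfL_0)=hv_0$ is then a well-defined $k$-algebraic $\bfH$-equivariant map with $\phi=\alpha\circ\phi_0$ almost everywhere. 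It is equivariant for the joint group, since equivariance holds on the Zariski dense image of $S$.

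\textbf{Uniqueness and the main obstacle.} For uniqueness, suppose $\alpha,\alpha'$ are two such morphisms. Then $\alpha\circ\phi_0=\alpha'\circ\phi_0$ almost everywhere. Both maps are determined by $\alpha(e\bfL_0)$, and the set of $h\bfL_0$ where they agree is a closed $\Gamma$-invariant algebraic condition containing the essential image of $\phi_0$. Applying the trivial-representation hypothesis once more, now to the pair, forces that image to be Zariski dense, so $\alpha=\alpha'$. The main obstacle is the refinement step: making the "$\bfL_0$-orbit of $h_x^{-1}\phi(x)$" a genuine algebraic representation with a faithful right algebraic action. This requires care with $k$-points of quotient varieties, and with the fact that the target is only a countably separated orbit space rather than a variety. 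As in \cite[Thm~4.3]{BF2}, we would handle it by replacing $\bfV$ with the variety of $\bfL_0$-orbit closures using the coset-variety formalism, and by passing to $\bfN_{\bfH}(\bfL_0)/\bfL_0$ to retain faithfulness.
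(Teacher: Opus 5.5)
Your outline (a coset representation with $\bfL_0$ minimal in dimension and number of components, factoring an arbitrary object through it via the pair $(\phi_0,\phi)$, uniqueness from $\bfH$-equivariance) is the Bader--Furman argument. The paper does not reproduce it; it only cites \cite[Thm~7.3]{BF} and \cite[Thm~4.3]{BF2}. However, the one step where the hypothesis on $\Gamma\backslash X$ is used is not established.

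\textbf{The gap.} The map $x\mapsto\bfL_0(k)\,h_x^{-1}\phi(x)$ takes values in the orbit space $\bfL_0(k)\backslash\bfV(k)$. This is not the set of $k$-points of a variety with an algebraic action, so ``we therefore obtain an $S$-algebraic representation of $\Gamma\backslash X$'' is false as written: the hypothesis only concerns maps into $k$-points of varieties. Your proposed repair via a ``variety of $\bfL_0$-orbit closures'' does not work. Any morphism constant on orbits identifies an orbit with those in its closure, so it cannot detect that the image lies in a single orbit. The local closedness you quote, combined with ergodicity, only gives a single orbit of the larger group generated by $\bfL_0(k)$ and the image of $S$, not a single $\bfL_0(k)$-orbit.

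Two further problems sit in the same step. First, an $h_x\in\bfH(k)$ with $\phi_0(x)=h_x\bfL_0$ need not exist, since $(\bfH/\bfL_0)(k)$ is in general a finite union of several $\bfH(k)$-orbits. Second, $\tau_0(s)\in(\bfN_{\bfH}(\bfL_0)/\bfL_0)(k)$ need not lift to $\bfN_{\bfH}(\bfL_0)(k)$, so your right $S$-action on $\bfL_0(k)$-orbits is not defined. The real content of the theorem is the lemma that the essential image of $(\phi_0,\phi)$ lies in a single $\bfH(k)$-orbit of $\mathbf{W}=\bfH/\bfL_0\times\bfV$. Once that is known, your stabilizer refinement and the map $\alpha$ are formal.

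\textbf{One way to prove the lemma.} Let $\bfJ$ be the Zariski closure of $(\tau_0\times\tau)(S)$. It acts on $\mathbf{W}$ on the right, commuting with the diagonal $\bfH$-action, so you quotient $\mathbf{W}$ by $\bfH$ rather than $\bfV$ by $\bfL_0$; this removes the lifting issue. Indicator functions into $\bbA^1$ show the hypothesis implies ergodicity of $S$ on $\Gamma\backslash X$. Since $(\bfH\times\bfJ)(k)$-orbits on $\mathbf{W}(k)$ are locally closed \cite[Ch~3]{Zimmer}, the essential image lies in one $(\bfH\times\bfJ)(k)$-orbit, say that of $w_0$.

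The $\bfH(k)$-orbits inside it are indexed by $\bfJ(k)_{[w_0]}\backslash\bfJ(k)$, where $\bfJ(k)_{[w_0]}=\{j\mid w_0j\in\bfH(k)w_0\}$. This subgroup has finite index in $\mathbf{Q}(k)$ for the closed $k$-subgroup $\mathbf{Q}=\{j\in\bfJ\mid w_0j\in\bfH w_0\}$. The reason is that the $k$-points of a homogeneous variety over a local field of characteristic $0$ form finitely many orbits of the $k$-points of the group.

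The induced map $\Gamma\backslash X\to(\mathbf{Q}\backslash\bfJ)(k)$ is a genuine algebraic representation, after dividing $\bfJ$ by the kernel of its action. Hence it is essentially constant. The residual map to the finitely many $\bfJ(k)_{[w_0]}$-cosets in that fiber, which $S$ permutes continuously through a finite group, is another algebraic representation and hence constant. A Rosenlicht stratification of $\mathbf{W}$ into $\bfH\times\bfJ$-invariant pieces with geometric quotients would serve equally well.

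\textbf{Finishing the argument.} Since $w_0=(p_0,v_0)$ need not have $p_0$ in the $\bfH(k)$-orbit of the base coset, apply your minimality to $\Stab_{\bfH}(p_0)\cap\Stab_{\bfH}(v_0)$ inside $\Stab_{\bfH}(p_0)$. The latter is a $\overline{k}$-conjugate of $\bfL_0$ defined over $k$, with the same dimension and number of components.

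Finally, uniqueness needs neither the hypothesis nor Zariski density. Two $\bfH$-equivariant morphisms out of the homogeneous variety $\bfH/\bfL_0$ that agree at one point agree everywhere.
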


We will be particularly interested in the relationship between different categories of algebraic representations when the acting group is changed.
For example, if $S\le T$, restriction gives a functor
$$\Res_S:\AR_T(X,[\mu])\to \AR_S(X,[\mu]),$$
which sends a $T$-algebraic representation $\phi:X\to\bfV(k)$, $\tau:T\to\bfI(k)$ to the $S$-algebraic representation with the same $\phi$ and with equivariance homomorphism $\tau'=\tau\vert_S$ (with $\bfI$ replaced by the Zariski closure of $\tau'(S)$).
When the underlying measurable $\Gamma$-map is unchanged, as $\phi$ is in this example, we say the algebraic representations \emph{have the same map}.
Bader and Furman show that restriction preserves gates under a normalizer hypothesis.
For this normalizer statement, suppose that $S$ is a closed subgroup of $G$ and that the right action on $X$ extends to $N_G(S)$, commuting with the left $\Gamma$-action.
Write $N(S)=N_G(S)$.
The following is \cite[Prop~7.4]{BF} (see also \cite[Thm~4.6]{BF2}).

\begin{restatable}[Bader--Furman]{thm}{BFnormal}\label{thm:BFnormal}
Suppose that the $S$-space $\Gamma\backslash X$ has only trivial algebraic representations and that $N(S)$ acts nonsingularly on $(X,[\mu])$.
Then the functor $\Res_S$ preserves gates, that is, if $\widetilde{\phi}_0$ is an $N(S)$-gate then the image $\phi_0=\Res_S(\widetilde{\phi}_0)$ is an $S$-gate with the same map.
Moreover, every $S$-gate admits a lift to $\AR_{N(S)}(X,[\mu])$ with the same map.
That is, if $\phi_0$ is an $S$-gate then there exists an $N(S)$-gate $\widetilde{\phi}_0$ with the same map for which $\phi_0=\Res_S(\widetilde{\phi}_0)$.
\end{restatable}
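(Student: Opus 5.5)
The plan is to prove the lifting assertion first and then deduce the preservation assertion from it. Throughout, the hypothesis that $\Gamma\backslash X$ has only trivial $S$-algebraic representations is used only to obtain an $S$-gate, via Theorem~\ref{thm:BFinitial}, and to guarantee uniqueness of morphisms out of it.

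\emph{Lifting.} Let $\phi_0:X\to\bfH/\bfL(k)$ be an $S$-gate with homomorphism $\tau_0:S\to\bfI(k)\le\bfN_{\bfH}(\bfL)/\bfL(k)$. For $n\in N(S)$, I would consider the twisted map $\phi_0^n(x)=\phi_0(xn)$ together with $\tau_0^n(s)=\tau_0(nsn^{-1})$.
\begin{itemize}
\item Because the right $N(S)$-action commutes with $\Gamma$ and is nonsingular, $\phi_0^n$ is well defined $[\mu]$-almost everywhere. It satisfies Equation~\eqref{eq:object} for $\tau_0^n$, so it is again a coset $S$-algebraic representation; its algebraic group is $\bfI$, since $\tau_0^n(S)=\tau_0(S)$ is still Zariski dense.
\item Initiality gives a unique morphism $\phi_0\to\phi_0^n$. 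It is an $\bfH$-equivariant algebraic self-map of $\bfH/\bfL$, hence right translation by a unique element $\widetilde{\tau}(n)\in\bfN_{\bfH}(\bfL)/\bfL(k)$, with $\phi_0(xn)=\phi_0(x)\widetilde{\tau}(n)$ almost everywhere.
\item Applying uniqueness to compositions, $\phi_0\to\phi_0^{n}\to\phi_0^{nn'}$, shows that $\widetilde{\tau}$ is a homomorphism. For $n=s\in S$, the element $\tau_0(s)$ itself supplies such a morphism, so $\widetilde{\tau}$ extends $\tau_0$.
\end{itemize}

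\emph{Continuity of $\widetilde{\tau}$.} I expect this to be the main technical obstacle. My first attempt is to show that $n\mapsto\widetilde{\tau}(n)$ is Borel and then invoke automatic continuity of measurable homomorphisms between lcsc groups.
\begin{itemize}
\item The map $(x,n)\mapsto\phi_0(xn)$ is jointly measurable.
\item For a fixed $n$, the relation $\phi_0(xn)=\phi_0(x)\widetilde{\tau}(n)$ determines $\widetilde{\tau}(n)$. Indeed, if $\phi_0(x)g=\phi_0(x)g'$ on a positive-measure set, then the gate would be invariant under right translation by $g^{-1}g'$. By uniqueness of self-morphisms and faithfulness of the right action, this forces $g=g'$.
\item Hence $\widetilde{\tau}(n)$ can be read off measurably via Fubini, for instance by choosing a measurable section of the orbit map on a positive-measure set of $x$.
\end{itemize}
With continuity in hand, let $\widetilde{\bfI}$ be the Zariski closure of $\widetilde{\tau}(N(S))$. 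Then $\widetilde{\phi}_0=\phi_0$ with $\widetilde{\tau}$ is an $N(S)$-coset representation, and by construction it restricts to $\phi_0$.

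\emph{$\widetilde{\phi}_0$ is an $N(S)$-gate.} Let $\phi:X\to\bfV(k)$ be any $N(S)$-algebraic representation.
\begin{itemize}
\item Its restriction to $S$ receives a unique morphism $\alpha$ from $\phi_0$.
\item For each $n$, the map $\alpha_n(v)=\alpha(v\widetilde{\tau}(n))\tau(n)^{-1}$ is also a morphism of $S$-representations $\phi_0\to\Res_S\phi$, so uniqueness forces $\alpha_n=\alpha$. Thus $\alpha$ intertwines the $N(S)$-actions; it is equivariant for $\bfI_{\bfV_0,\bfV}$ by Zariski density, and is therefore a morphism in $\AR_{N(S)}$.
\item Uniqueness of $\alpha$ in $\AR_{N(S)}$ follows from uniqueness in $\AR_S$.
\end{itemize}

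\emph{Preservation.} Given an arbitrary $N(S)$-gate $\widetilde{\phi}_0'$, Theorem~\ref{thm:BFinitial} provides an $S$-gate $\psi$, and the preceding steps lift it to an $N(S)$-gate $\widetilde{\psi}$. Any two initial objects are isomorphic, so $\widetilde{\phi}_0'\cong\widetilde{\psi}$ in $\AR_{N(S)}$. Restriction is a functor, so $\Res_S\widetilde{\phi}_0'\cong\Res_S\widetilde{\psi}=\psi$, which is an $S$-gate with the same map. The only bookkeeping point is that restriction replaces $\widetilde{\bfI}$ by the Zariski closure of $\tau(S)$. The isomorphism is $\bfH$-equivariant and intertwines the $S$-actions, so it remains a morphism for the smaller algebraic groups.
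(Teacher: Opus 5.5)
Your proof is correct. It is essentially the Bader--Furman argument that the paper cites for this statement; the paper itself gives no proof. You twist the $S$-gate by $n\in N(S)$, use initiality together with freeness of the right $\bfN_{\bfH}(\bfL)/\bfL$-action on $\bfH/\bfL$ to get a measurable, hence continuous, homomorphism $\widetilde{\tau}$ extending $\tau_0$, check initiality in $\AR_{N(S)}(X,[\mu])$ by the $\alpha_n=\alpha$ uniqueness trick, and deduce the restriction statement from uniqueness of initial objects up to isomorphism.

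Two cosmetic corrections. With right actions, the twisted homomorphism is $s\mapsto\tau_0(n^{-1}sn)$, not $s\mapsto\tau_0(nsn^{-1})$. Also, uniqueness of $\widetilde{\tau}(n)$ follows from freeness of that action at a single point. Your positive-measure self-morphism argument is not needed, and as written it does not quite apply, since it gives invariance only on a positive-measure set rather than almost everywhere.
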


The nonsingularity hypothesis ensures that the normalizer acts on the same measure class, which is automatic in the Haar-measure setting.
We now apply these results to $(X_0,[\widetilde{\mu}_{\BMS}])$ and $(X_+,[\widetilde{\mu}_{\BR}])$, beginning with the lifted BMS measure.
Fix once and for all the Weyl element $w\in N_W(A)\cap K$ acting as $-1$ on $e_{n-1},e_n$ and as $+1$ on the other basis vectors, for $n\ge3$.

\begin{prop}\label{prop:BMSgates}
The $A$-space $\Gamma\backslash X_0$ has only trivial algebraic representations.
In particular, each of the categories $\AR_A(X_0,[\widetilde{\mu}_{\BMS}])$ and $\AR_{N(A)}(X_0,[\widetilde{\mu}_{\BMS}])$ has gates.
Moreover, any $A$-gate can be lifted to an $N(A)=\langle M, w, A\rangle$-gate with the same map.
Consequently, such a gate is equivariant for the fixed Weyl element $w$.
\end{prop}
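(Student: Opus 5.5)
The plan is to deduce the triviality of algebraic representations from Winter's mixing theorem (Theorem~\ref{thm:BMSmixing}), and then to obtain the remaining assertions formally from Theorems~\ref{thm:BFinitial} and \ref{thm:BFnormal}.

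\textbf{Step 1: trivial algebraic representations.} Let $\phi:\Gamma\backslash X_0\to\bfV(k)$ be measurable, let $\tau:A\to\bfI(k)$ be continuous with Zariski dense image, and suppose $\phi(xa)=\phi(x)\tau(a)$. Since $\mu_{\BMS}$ is a finite $A$-invariant measure, the pushforward $\phi_*\mu_{\BMS}$ is a finite measure on $\bfV(k)$. It is invariant under the right action of $\tau(A)$, and hence, by continuity of the action, under its closure.

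To handle this measure I would use the standard structure theorem for algebraic actions of Zimmer and Bader--Furman. An $\bfI(k)$-action on the $k$-points of a variety is smooth, meaning the orbits are locally closed and the orbit space is countably separated. Moreover, the stabilizers of finite invariant measures are cocompact extensions of algebraic subgroups.

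\textbf{Step 2: reduction to a single orbit.} By $A$-ergodicity (a consequence of mixing) and smoothness, $\phi$ essentially takes values in a single $\bfI(k)$-orbit $v\bfI(k)\cong \bfI_v\backslash\bfI(k)$. After a measurable choice of section, we obtain an $A$-equivariant map to this homogeneous space, and hence an $A$-invariant finite measure on it.

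\textbf{Step 3: using weak mixing.} The subgroup $\tau(A)$ is abelian with Zariski dense image, so $\bfI$ is a commutative algebraic group. Its $k$-points then split into a compact part and a part that acts properly. An invariant finite measure forces the image of $\tau(A)$ in $\bfI_v\backslash\bfI(k)$ to be precompact. Its closure $T$ is then a compact abelian group, and $\phi$ composed with the quotient map gives an equivariant map to $T$ modulo a closed subgroup.

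Weak mixing now applies. For any nontrivial character $\chi$ of this compact abelian quotient, $\chi\circ\phi$ would be an $L^2$ eigenfunction for $A$ with eigenvalue $\chi\circ\tau$. Weak mixing admits no such eigenfunctions, so $\chi\circ\tau$ is trivial. Hence $\tau(A)$ lies in the algebraic subgroup $\bfI_v$, and by Zariski density $\bfI$ acts trivially on the orbit. Faithfulness of the action then forces $\phi$ to be $A$-invariant, and ergodicity makes it essentially constant.

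\textbf{Step 4: gates and lifting.} Theorem~\ref{thm:BFinitial} then gives an $A$-gate. The normalizer $N(A)=MA\langle w\rangle$ preserves $\widetilde{\mu}_{\BMS}$: the measure is $MA$-invariant, and $w$ exchanges endpoints while preserving the formula. So the action is nonsingular, and Theorem~\ref{thm:BFnormal} lifts any $A$-gate to an $N(A)$-gate with the same map. For the $N(A)$-category itself, I would apply Theorem~\ref{thm:BFinitial} after observing that an $N(A)$-representation restricts to an $A$-representation. If this restriction is essentially constant, then the $N(A)$-representation is too; alternatively, the lifted gate is itself an $N(A)$-gate by Theorem~\ref{thm:BFnormal}. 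Equivariance under $w$ is simply the $N(A)$-equivariance of the lift evaluated at $w$.

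The main obstacle I expect is Step 3, namely the precise passage from an invariant finite measure on an algebraic homogeneous space to a compact group quotient in which weak mixing can be applied. The point requiring care is the appeal to Zimmer's result for non-archimedean $k$. There I would cite \cite[Thm~1.2]{BF} or the corresponding lemma in \cite{BF2}, which treats exactly weakly mixing actions. In fact, that reference states directly that weak mixing implies only trivial algebraic representations, which would shortcut Steps 2 and 3 entirely.
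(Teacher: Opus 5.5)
Your proposal is correct, and its skeleton matches the paper's: Winter's mixing gives triviality of algebraic representations, and then Theorems~\ref{thm:BFinitial} and \ref{thm:BFnormal} give the gates and the lift, using that $N(A)$ acts nonsingularly. The difference is in the middle step. The paper does not argue by hand. It observes that the pushforward of the normalized BMS measure under any $A$-algebraic representation is a weakly mixing invariant probability, hence a point mass by \cite[Cor~1.13]{BDL} (see also \cite[Prop~3.3]{BF2}). This is the shortcut you mention at the end.

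Your Steps~2--3 instead give a self-contained proof in this special case by exploiting that $\bfI$ is commutative. It is in the same spirit as the paper's Lemma~\ref{lem:vectoralgebraictriviality} for the BR measure, with weak mixing replacing subgroup ergodicity. Your route needs only smoothness of algebraic actions and Pontryagin duality. The citation is shorter and does not depend on the acting group being abelian.

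One small cleanup. The sentence about composing with a quotient map to get a map to $T$ modulo a closed subgroup is imprecise, and the detour through the compact closure $T$ is unnecessary. Since $\mu_{\BMS}$ is finite, for every character $\chi$ of the locally compact abelian group $\bfI(k)/\bfI_v(k)$, the bounded function $\chi\circ\phi$ is an $L^2$ eigenfunction with eigencharacter $\chi\circ\tau$. Weak mixing then forces $\chi\circ\tau=1$, and separation of points gives $\tau(A)\subseteq\bfI_v(k)$ directly. If you want to keep $T$, the correct statement is that $A$-ergodicity places the $T$-invariant pushforward on a single $T$-coset.
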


\begin{proof}
By Theorem~\ref{thm:BMSmixing}, the pushforward of the normalized BMS measure under any $A$-algebraic representation of $\Gamma\backslash X_0$ is an invariant weakly mixing probability measure.
By \cite[Cor~1.13]{BDL} (see also \cite[Prop~3.3]{BF2}), this measure is a point mass so the $A$-space $\Gamma\backslash X_0$ has only trivial algebraic representations.
The action of $N(A)$ is nonsingular by the definition of $\widetilde{\mu}_{\BMS}$ so Theorems~\ref{thm:BFinitial} and \ref{thm:BFnormal} apply.
\end{proof}

In the finite-volume case, the BR measure is proportional to Haar measure on $\Gamma\backslash G$ and the $N$-action is mixing (hence weakly mixing) by Howe--Moore \cite{HoweMoore}.
Consequently, in this case \cite[Cor~1.13]{BDL} gives algebraic triviality for the $N$-action, just as for the BMS measure above.
However, the BR measure is infinite in our setting, so this argument does not apply directly.
The following elementary lemma allows us instead to obtain algebraic triviality from the subgroup ergodicity established in Theorem~\ref{thm:BRsubgroupergodicity}.

\begin{lemma}\label{lem:vectoralgebraictriviality}
Let $Y$ be a nonsingular $\bbR^d$-Lebesgue space and fix an integer $c$ with $1\le c\le d$.
If every codimension-$c$ linear subgroup of $\bbR^d$ acts ergodically on $Y$, then $Y$ has only trivial algebraic representations.
\end{lemma}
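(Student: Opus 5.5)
Take an $\bbR^d$-algebraic representation of $Y$ for the trivial homomorphism: a measurable map $\phi:Y\to\bfV(k)$ and a continuous Zariski dense homomorphism $\tau:\bbR^d\to\bfI(k)$ with $\phi(yt)=\phi(y)\tau(t)$ almost everywhere. The plan is to show that $\tau$ is trivial on some codimension-$c$ subgroup $L$, so that $\phi$ is $L$-invariant and hence essentially constant by ergodicity of $L$. Constancy of $\phi$ then forces $\tau$ to fix an essential value, and faithfulness of the right $\bfI$-action together with Zariski density should force $\tau$ to be trivial, as the definition of trivial algebraic representations requires.

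\textbf{Non-archimedean case.} Here $\bfI(k)$ is totally disconnected, so the continuous image of the connected group $\bbR^d$ is a point. Thus $\tau$ is trivial, $\phi$ is $\bbR^d$-invariant, and we are done, since ergodicity of any one codimension-$c$ subgroup already implies ergodicity of $\bbR^d$.

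\textbf{Archimedean case.} A continuous homomorphism from $\bbR^d$ is smooth, so $\tau(t)=\exp(D\tau(t))$ with $D\tau:\bbR^d\to\Lie(\bfI(k))$ linear. Put $K=\ker D\tau$. If $\dim K\ge d-c$, choose a codimension-$c$ subgroup $L\le K$; then $\tau\vert_L$ is trivial and the argument above applies.

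Otherwise the image $\frakh=D\tau(\bbR^d)$ is an abelian subalgebra of dimension $e>c$. In this case I would exploit ergodicity of many subgroups at once. For a codimension-$c$ subgroup $L$, the restriction $\tau\vert_L$ still has Zariski closure $\bfI_L$, which is an algebraic subgroup of the abelian group $\bfI$. Apply the standard Zimmer/Bader--Furman argument to the ergodic $L$-action: push the measure class forward to $\bfV(k)/\bfI_L(k)$. Since $\bfI_L$ is algebraic, this quotient is countably separated by Chevalley, so $\phi$ lands almost everywhere in a single $\bfI_L(k)$-orbit $v_L\bfI_L(k)$. Now vary $L$. Any two codimension-$c$ subgroups $L,L'$ with $L+L'=\bbR^d$ give orbits of $\bfI_L$ and $\bfI_{L'}$ that meet in a conull set. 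Choosing a base point, $\phi$ then takes values in $v(\bfI_L(k)\cap\bfI_{L'}(k))$, up to the stabilizer.

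Iterating over a generic family of such subgroups, whose images under $D\tau$ meet in the kernel, should shrink the group of possible values to $\tau(K)$. This gives $\phi(yt)=\phi(y)\tau(t)\in v\,\tau(K)\cdot\mathrm{Stab}$ for all $t$. Hence $\tau(\bbR^d)$ would lie in $\tau(K)\cdot\mathrm{Stab}(v)$, and faithfulness plus Zariski density would contradict $e>c$.

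\textbf{Expected obstacle.} The delicate step is this intersection argument. An algebraic hull $\bfI_L$ may be much larger than $\tau(L)$, for instance when one-parameter images are dense in a compact torus. I would handle this by replacing orbits of $\bfI_L$ with orbits of the closed subgroup $\overline{\tau(L)}$, which are still locally closed in the algebraic action, and then using a dimension count for the generic intersections $\bigcap\overline{\tau(L_i)}$.
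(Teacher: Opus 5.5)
Your non-archimedean case and the archimedean subcase $\dim\ker D\tau\ge d-c$ are correct. The remaining case $\dim\frakh>c$ carries all the content of the lemma, and there you only give a sketch whose concrete ingredients fail.

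\textbf{Generic subgroups are the wrong choice.} Generic subgroups have the \emph{largest} hulls. Take $d=2$, $c=1$, $\bfI=\SO(2)^2$ and $\tau(s,t)=(e^{is},e^{it})$. Every line $L$ of irrational slope has $\overline{\tau(L)}=\bfI(\bbR)$, so $\bfI_L=\bfI$, and intersecting over such lines shrinks nothing. Only the countably many rational-slope lines help.

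\textbf{The proposed remedy is false.} Orbits of the closed but non-algebraic group $\overline{\tau(L)}$ need not be locally closed. Let $\bfI(\bbR)^\circ\cong\bbR_{>0}\times T^2$ and $\tau(s,t)=(e^{s},e^{it},e^{i\sqrt{2}\,t})$. Suppose the essential image of $\phi$ lies in an orbit $\cong T^2$ whose stabilizer contains the split factor; faithfulness of $\bfI$ on all of $\bfV$ does not prevent this. For generic $L$, $\tau(L)$ is closed, yet its orbits in that $T^2$ are dense irrational lines. So ergodicity of $L$ cannot place $\phi$ in a single one of them.

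\textbf{Your first subcase uses the wrong kernel.} In this example $\ker D\tau=0$, so your first subcase misses the obvious choice $L=\bbR\times\{0\}$, which acts trivially on the relevant orbit. The kernel has to be taken for the action on the orbit containing the essential image, not on $\bfI$.

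\textbf{The closing step misreads the definition.} It only requires $\phi$ to be essentially constant. Faithfulness does not force $\tau$ to be trivial, since all of $\bfI$ may fix the constant value. So there is no contradiction with $e>c$ to extract, and none is needed.

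\textbf{The missing idea.} Choose the subgroups adapted to characters and test with scalar functions, which is what the paper does. After placing the essential image in one $\bfI(k)$-orbit, identify that orbit with the locally compact abelian group $Q=\bfI(k)/\bfJ(k)$. For a continuous character $\chi$ of $Q$, one has $\chi\circ\tau=e^{i\alpha}$ with $\alpha$ linear. If $\alpha\ne0$, pick a codimension-$c$ subgroup $L\subseteq\ker\alpha$, which is possible since $c\ge1$. Then the circle-valued function $\chi\circ\phi$ is $L$-invariant, hence essentially constant, and equivariance forces $\alpha=0$, a contradiction. Since characters separate points of $Q$, $\tau$ acts trivially on $Q$. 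Hence $\phi$ is $\bbR^d$-invariant and therefore essentially constant. This avoids non-locally-closed orbits entirely and treats archimedean and non-archimedean $k$ uniformly.
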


\begin{proof}
Let $\phi:Y\to\bfV(k)$ be an algebraic representation with associated $\tau:\bbR^d\to\bfI(k)$.
Since $\tau$ has Zariski dense image and $\bbR^d$ is abelian, $\bfI$ is abelian.
Ergodicity and local closedness of algebraic orbits show that the essential image of $\phi$ lies in a single $\bfI(k)$-orbit.
We therefore may identify it with the locally compact abelian group $Q=\bfI(k)/\bfJ(k)$.
We regard $\phi$ as taking values in $Q$ and continue to use $\tau$ for the induced homomorphism $\bbR^d\to Q$.

For every continuous character $\chi:Q\to S^1$, there is a real linear functional $\alpha$ such that $\chi(\tau(v))=e^{i\alpha(v)}$.
If $\alpha\ne0$, choose a codimension-$c$ linear subgroup $V\subseteq\ker\alpha$.
Then $\chi\circ\phi$ is $V$-invariant and hence essentially constant.
Equivariance would then give $e^{i\alpha(v)}=1$ for every $v$, contradicting that $\alpha\ne0$.
Therefore every character is trivial on $\tau(\bbR^d)$.
Since characters separate points of $Q$, the homomorphism $\tau$ is trivial, and ergodicity makes $\phi$ essentially constant.
\end{proof}

\begin{prop}\label{prop:BRgates}
The $N$-space $\Gamma\backslash X_+$ has only trivial algebraic representations.
In particular, each of the categories $\AR_U(X_+,[\widetilde{\mu}_{\BR}])$, $\AR_N(X_+,[\widetilde{\mu}_{\BR}])$, and $\AR_P(X_+,[\widetilde{\mu}_{\BR}])$ has gates.
Moreover, any $U$-gate can be lifted to a $P$-gate with the same map.
\end{prop}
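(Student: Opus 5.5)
The plan follows the proof of Proposition~\ref{prop:BMSgates}, with Lemma~\ref{lem:vectoralgebraictriviality} taking the place of the mixing argument, which is unavailable for the infinite BR measure. First, identify $\Gamma\backslash X_+$ with $\calR_+$ carrying the BR measure class. This class is right $MN$-invariant and $P$-quasi-invariant by Section~\ref{sec:PSBMSBR}, so $N\cong\bbR^{n-1}$ acts nonsingularly and $(\Gamma\backslash X_+,[\mu_{\BR}])$ is a nonsingular $\bbR^{n-1}$-Lebesgue space. Next, apply Lemma~\ref{lem:vectoralgebraictriviality} with $d=n-1$ and $c=n-2$. A codimension-$(n-2)$ linear subgroup of $N$ is a one-parameter subgroup, hence a nontrivial connected subgroup. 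Theorem~\ref{thm:BRsubgroupergodicity} says every such subgroup acts ergodically, since $\delta_\Gamma>n-2$. The lemma then gives that the $N$-space $\Gamma\backslash X_+$ has only trivial algebraic representations.

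To get trivial algebraic representations for $U$ as well, apply the same lemma to $U\cong\bbR^{m-1}$ with $c=m-2$ when $m\ge3$, so that the subgroups are lines in $U$, and with $c=1$ when $m=2$, where the only such subgroup is the trivial group. The case $m=2$ needs separate treatment because $U$ itself is one-dimensional. There I would argue directly: $U$ is ergodic by Theorem~\ref{thm:BRsubgroupergodicity}, and algebraic representations of $\bbR$ are handled by the character argument of the lemma applied to $U$ itself. The point is that any $\alpha\ne0$ on $\bbR$ has kernel $\{0\}$, so an adjustment is required. For a character $\chi$ with $\chi\circ\tau(u)=e^{i\alpha(u)}$, the function $\chi\circ\phi$ is an eigenfunction of the ergodic $U$-action. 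Instead I plan to use that $A$ normalizes $U$ and dilates it. Quasi-invariance of $[\mu_{\BR}]$ under $A$ makes the conjugated eigenfunctions eigenfunctions with eigenvalues $\alpha(e^{t}\cdot)$, so the $L^\infty$ eigenvalue group is invariant under dilation. That group is countable by separability, hence trivial. The same argument works uniformly for $U$ with any $m$, so I would use it as the main route, or else cite the Maucourant--Schapira ergodicity together with the fact that the ergodic eigenvalue group is countable. In all cases this gives trivial algebraic representations for $U$, and since $U\le N\le P$, for $N$ and $P$ as well.

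With triviality in hand, Theorem~\ref{thm:BFinitial} produces gates in $\AR_U$, $\AR_N$ and $\AR_P$; for $P$ the input is triviality under the subgroup $N$, passed to $P$ as in Bader--Furman. For the lifting statement, note that $P=N_G(N)$, and that $N_G(U)\supseteq MAU$ contains $A$ and the part of $M$ normalizing $U$, but not all of $N$. I would therefore lift in two stages. Theorem~\ref{thm:BFnormal} with $S=N$, which is allowed because $P$ acts nonsingularly, lifts an $N$-gate to a $P$-gate with the same map. It remains to pass from a $U$-gate to an $N$-gate with the same map. Here I would use commutativity: $N$ centralizes $U$, so $N\le N_G(U)$. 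Theorem~\ref{thm:BFnormal} with $S=U$ then lifts the $U$-gate to an $N_G(U)$-gate, and restricting it to $N$ gives an $N$-gate, since restriction preserves gates by the same theorem because $U$ is normal in $N$. I expect this two-step lifting to be the main obstacle, specifically the verification that the restricted object is the $N$-gate and that the subsequent $P$-lift has the same map.
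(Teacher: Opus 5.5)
The proposal has a genuine gap for $U$ when $m=2$; the treatment of $N$, of $U$ for $m\ge3$, and the lifting are sound. For $N$ you take $c=n-2$ in Lemma~\ref{lem:vectoralgebraictriviality} where the paper takes $c=1$, and both are covered by Theorem~\ref{thm:BRsubgroupergodicity}. The case $m=2$ is needed for maximal surfaces, and the same gap affects the dilation route you propose to make primary for all $m$.

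That route rests on the claim that the $L^\infty$-eigenvalue group of the ergodic $U$-action on $(\Gamma\backslash X_+,[\mu_{\BR}])$ is countable ``by separability''. Separability gives this only for a finite invariant measure, where unimodular eigenfunctions with distinct eigenvalues are orthogonal in $L^2$. Here $\mu_{\BR}$ is infinite, so these eigenfunctions are not in $L^2$ and there is no orthogonality to use. For ergodic infinite-measure-preserving or nonsingular actions, the $L^\infty$-spectrum can in fact be uncountable. Dilation invariance under $A$ therefore only shows that a nontrivial eigenvalue group would have to be all of $\bbR$, which you have not excluded. So triviality of algebraic representations for $U$ is not established when $m=2$. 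You need it both to get $U$-gates from Theorem~\ref{thm:BFinitial} and to apply Theorem~\ref{thm:BFnormal} with $S=U$.

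The missing idea is to use the group $N$ that centralizes $U$, not the group $A$ that dilates it; the paper does this uniformly in $m$.
\begin{itemize}
\item Given a $U$-algebraic representation $\phi$, ergodicity of $U$ puts its essential image in a single abelian orbit $Q=\bfI(k)/\bfJ(k)$, as in the proof of Lemma~\ref{lem:vectoralgebraictriviality}.
\item For $n\in N$, the map $x\mapsto\phi(xn)\phi(x)^{-1}$ is $U$-invariant, since $N$ commutes with $U$ and $Q$ is abelian. It is therefore essentially constant, with value $\tau_N(n)$.
\item This $\tau_N$ is a measurable, hence continuous, homomorphism. So $\phi$ is an $N$-algebraic representation, and it is constant by your first step.
\end{itemize}
In terms of a character $\chi$ of $Q$: the $U$-eigenfunction $\chi\circ\phi$ is automatically an $N$-eigenfunction for some character $\beta$ of $N$ extending $\alpha$. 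Since $\dim N=n-1\ge2$, the kernel of $\beta$ contains a nontrivial connected subgroup, and its ergodicity forces $\beta=0$.

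Your two-stage lifting is right in substance, with one correction. Theorem~\ref{thm:BFnormal} only says that restriction from $N(S)$ to $S$ preserves gates. To see that restricting the $N_G(U)$-lift to the intermediate group $N$ gives an $N$-gate, you need the normalizer argument itself. Because $N$ normalizes $U$, uniqueness forces the $U$-morphism from the $U$-gate to any $N$-object to be $N$-equivariant. This is how the paper applies the argument directly to $U\trieq N$ and then to $N\trieq P$.
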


\begin{proof}
Theorem~\ref{thm:BRsubgroupergodicity} gives ergodicity of every hyperplane subgroup of $N$ on $\Gamma\backslash X_+$.
The case $c=1$ of Lemma~\ref{lem:vectoralgebraictriviality} therefore gives algebraic triviality for the $N$-action.

To obtain the same conclusion for $U$, let $\phi:\Gamma\backslash X_+\to\bfV(k)$ be a $U$-algebraic representation.
The group $U$ is abelian and acts ergodically by Theorem~\ref{thm:BRsubgroupergodicity} so its essential image lies in an abelian orbit $Q=\bfI(k)/\bfJ(k)$, as in the preceding proof.
For each $n\in N$, commutativity makes the map $$x\mapsto\phi(xn)\phi(x)^{-1},$$
$U$-invariant and hence essentially constant.
Denoting its value by $\tau_N(n)$, nonsingularity and joint measurability show that $\tau_N:N\to Q$ is a measurable homomorphism and therefore continuous.
Thus $\phi$ is an $N$-algebraic representation and hence essentially constant.
Consequently, Theorem~\ref{thm:BFinitial} gives $U$- and $N$-gates.

Since $U\trieq N\trieq P$ and $P$ acts nonsingularly, the normalizer argument in Theorem~\ref{thm:BFnormal}, applied successively to $U\le N$ and $N\le P$, extends each $U$-gate to a $P$-gate with the same map.
Uniqueness of gates also shows that restricting any $P$-gate gives $N$- and $U$-gates.
\end{proof}

\subsection{The insufficiency of the category \texorpdfstring{$\AR_U(G,[\widetilde{\lambda}])$}{AlgRep for U (G, [lambda tilde])} and a brief description of the proof of Theorem~\ref{thm:superrigid}}\label{sec:insufficient}

We now explain why the argument of \cite{BFMS} needs to be modified and sketch the proof of Theorem~\ref{thm:superrigid}.
We begin with the step in \cite[Thm~1.6]{BFMS} that deduces superrigidity from a $\Gamma$-equivariant map
$$G/W\longrightarrow\Prob(\bbP(V)(k)),$$
which is measurable with respect to Haar measure.

\subsubsection{AREA as applied by Bader--Fisher--Miller--Stover}

For this subsection only, assume that $\Gamma$ is a lattice.
The starting point is the confluence of tameness of the action $\bfH(k)\curvearrowright\Prob(\bbP(V)(k))$ \cite[Cor~1.8]{BDL}, work of Furstenberg on stabilizers of measures in $\Prob(\bbP(V)(k))$ (see \cite[Ch~3]{Zimmer}), and work of Bader and Gelander on metric ergodicity \cite[Cor~6.7]{BG}.
Together, these results convert the above map to a measurable $\Gamma$-equivariant map
$$G/W\longrightarrow\bfH/\bfL(k),$$
for a proper closed algebraic subgroup $\bfL<\bfH$.
Pulling back along $\pi_W$ gives a $\Gamma\times W$-map
$$\phi:G\longrightarrow\bfH/\bfL(k),$$
which is $W$-invariant.
In the language of AREA, $\phi$ is an object in $\AR_W(G,[\mu_G])$ which is $W$-equivariant for the trivial homomorphism $\tau$.

As $U<W$, we obtain the restricted algebraic representation $\phi_U=\Res_U(\phi)$ which has the same map as $\phi$.
By Howe--Moore \cite{HoweMoore}, any noncompact subgroup $S\le G$ is mixing on $\Gamma\backslash G$ and hence all such categories $\AR_S(G,[\mu_G])$ admit $S$-gates.
In particular, there is a $U$-gate $\phi_0$ in $\AR_U(G,[\mu_G])$, which admits a unique morphism to $\phi_U$
\begin{equation}\label{eqn:initialdomination}
\xymatrix{
G\ar[r]^-{\phi_0}\ar[dr]_-{\phi_U}&\bfH/\bfL_0(k)\ar[d]^-\alpha\\
&\bfH/\bfL(k)},
\end{equation}
and moreover Theorem~\ref{thm:BFnormal} shows that $\phi_0$ is also a $P$-gate.
Writing $\tau_0:P\to\bfN_{\bfH}(\bfL_0)/\bfL_0(k)$ for the homomorphism associated to $\phi_0$, the $U$-invariance of $\phi$ and the morphism in Equation~\eqref{eqn:initialdomination} give
$$\alpha(h\tau_0(u))=\alpha(h),$$
for all $h\in\bfH/\bfL_0(k)$ and $u\in U$.
We distinguish the cases according to whether $\bfL_0$ is trivial.

In the former case, $\phi_0:G\to \bfH(k)$ is $\Gamma\times P$-equivariant and therefore $\Res_A(\phi_0)$ is an element of $\AR_A(G,[\mu_G])$.
However, since $\bfL_0$ is trivial, $\Res_A(\phi_0)$ is an $A$-gate and lifts to an $N(A)$-gate with the same map.
In particular, an $N(A)$-gate and a $P$-gate are algebraic representations with the same map.
One then uses \cite[Lem~5.1]{BF2} to show that the same map $\phi_0$ is in fact $G$-equivariant from which one easily extends the representation (see the argument in \cite[proof of Thm~1.3]{BF2}).

To rule out the case in which $\bfL_0$ is nontrivial, we use the compatibility condition of \cite{BFMS}.

\begin{defn}
We say that the triple $(G,\bfH,k)$ is compatible if, for every nontrivial $k$-algebraic subgroup $\bfL\le\bfH$ and every continuous homomorphism $\tau:P\to\bfN_{\bfH}(\bfL)/\bfL(k)$, the Zariski closures of $\tau(U)$ and $\tau(N)$ agree.
\end{defn}

The natural representations used in the arithmeticity argument satisfy this condition by \cite[Lem~3.5]{BFMS}.
When $\bfL_0$ is nontrivial, compatibility gives equality of the Zariski closures of $\tau_0(U)$ and $\tau_0(N)$.
Since $\alpha$ is algebraic, its invariance under $\tau_0(U)$ therefore gives
$$\alpha(h\tau_0(n))=\alpha(h),$$
for all $h\in\bfH/\bfL_0(k)$ and $n\in N$.
The $N$-equivariance of $\phi_0$ and the diagram in Equation~\eqref{eqn:initialdomination} now give $N$-invariance of $\phi_U$.
Since $\phi_U$ has the same map as $\phi$, the latter is invariant under $N$ and $W$, which generate $G$.
This shows that the image of $\rho$ is contained in a conjugate of $\bfL$, contradicting Zariski density as $\bfL$ is a proper subgroup of $\bfH$.

\subsubsection{The inability to apply these ideas in our setting}

In the preceding sketch, every category used Haar measure $\mu_G$.
The lattice assumption and the Howe--Moore theorem allowed us to compare gates for the noncompact subgroups in this common measure class.
In our setting, let $\widetilde{\lambda}$ be the Haar lift of $\lambda$ along the right $W$-fibers of $\pi_W:G\to G/W$, extended by zero outside $\pi_W^{-1}(\calC^*)$.
When $\Gamma$ is the fundamental group of a manifold as in Theorem~\ref{thm:measurerigid}, this part of the argument no longer applies.
More precisely, we will begin with a measurable $\Gamma$-equivariant map
$$\calC^*\longrightarrow\Prob(\bbP(V)(k)),$$
which is measurable with respect to the measure $\lambda$.
Since $\lambda$ is extended by zero outside $\calC^*$, we may regard this as a measurable $\Gamma$-map
$$G/W\longrightarrow\Prob(\bbP(V)(k)),$$
and as before we obtain a $\Gamma$-map
$$G/W\longrightarrow\bfH/\bfL(k),$$
where we may still show that $\bfL$ is a proper subgroup of $\bfH$.
Consequently we get an object in $\AR_W(G,[\widetilde{\lambda}])$ via
$$\phi:G\longrightarrow\bfH/\bfL(k).$$
The right $W$-action on its target is trivial.
To repeat the finite-volume argument, we would need to compare its restriction to $U$ with a $U$-gate.
The obstruction comes from the lack of ergodicity of the right $U$-action on $\Gamma\backslash G$ in the quotient measure class induced by $[\widetilde{\lambda}]$.
In fact, the next proposition shows that the relevant category has no initial objects.

\begin{prop}\label{prop:noinitial}
The category $\AR_U(G,[\widetilde{\lambda}])$ does not have initial objects.
\end{prop}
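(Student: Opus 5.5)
The plan is to show that the right $U$-action on $\Gamma\backslash G$ is far from ergodic in the class $[\widetilde{\lambda}]$. We then exploit the resulting $U$-invariant functions by building objects of $\AR_U(G,[\widetilde{\lambda}])$ with \emph{trivial} $\bfH$- and $\bfI$-actions that no single object can dominate.

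\textbf{Step 1: a non-constant invariant.} For $\widetilde{\lambda}$-almost every $g$ we have $g^+\notin\Lambda_\Gamma$. This holds because the $\widetilde{\lambda}$-class is smooth along the $P$-fibers over $G/P$, and $\omega(\Lambda_\Gamma)=0$ since $\delta_\Gamma<n-1$. For such $g$, the point $g^+$ lies in the round disk of $\partial\bbH^n\ssm\Lambda_\Gamma$ cut out by $\partial Z(g^+)$ for a unique boundary lift $Z(g^+)$. Define
$$F(g)=\sup_{z\in Z(g^+)}\beta_{g^+}(go,z).$$
The supremum is finite and attained, because $\beta_{g^+}(go,z)\to-\infty$ as $z$ tends to $\partial Z(g^+)$ and $g^+\notin\overline{\partial Z(g^+)}$. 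Since $Z(\gamma\xi)=\gamma Z(\xi)$ and the Busemann cocycle is $G$-invariant, $F(\gamma g)=F(g)$. Since $N$ fixes $g^+$ and preserves the horosphere through $go$, $F(gn)=F(g)$, so in particular $F$ is $U$-invariant. Finally, $F(ga_t)=F(g)+t$, and $A<W$ preserves $\pi_W^{-1}(\calC^*)$ because $\calC^*$ depends only on the plane. Hence the distribution of $F$ under any finite measure equivalent to $\widetilde{\lambda}$ is equivalent to Lebesgue measure on an $A$-invariant interval, hence on all of $\bbR$, and $F$ is not essentially constant. We therefore get a $\Gamma\times U$-invariant measurable map $F:G\to\bbR$ whose pushforward class is Lebesgue.

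\textbf{Step 2: many objects with trivial actions.} Fix $N\ge1$ and a Borel map $c=(c_1,\dots,c_N):\bbR\to k^N$ with the following property: the image of every Lebesgue-positive subset of $\bbR$ is Zariski dense in $k^N$. For $k$ archimedean, one can take $c$ to be a real-analytic curve $s\mapsto(e^{\lambda_1 s},\dots,e^{\lambda_N s})$ with $\lambda_i$ linearly independent over $\bbQ$. For $k$ non-archimedean, one can take a Borel injection $\bbR\to k^N$ that avoids concentration on proper subvarieties; this choice is the technical point to be checked. Let $\bfV'=\bbA^N$ with trivial $\bfH$-action, let $\bfI'$ be trivial and $\tau'$ trivial, and let $\phi'=c\circ F$. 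Equation~\eqref{eq:object} holds because $F$ is $\Gamma\times U$-invariant and $\rho(\gamma)$ acts trivially. So $\phi'$ is an object of $\AR_U(G,[\widetilde{\lambda}])$.

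\textbf{Step 3: no initial object.} Suppose $\phi_0:G\to\bfV_0(k)$ were initial, and put $N>\dim\bfV_0$. The unique morphism $\alpha:\bfV_0\to\bbA^N$ is a $k$-algebraic map with $\alpha\circ\phi_0=c\circ F$ almost everywhere. The image $\alpha(\bfV_0)$ is constructible of dimension at most $\dim\bfV_0<N$, so its Zariski closure is a proper subvariety of $\bbA^N$. The essential range of $c\circ F$, however, is the $c$-image of a Lebesgue-conull set, which is Zariski dense in $k^N$ by Step 2. This is a contradiction.

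The main obstacle is Step 1: one must verify carefully that $\widetilde{\lambda}$-almost every $g$ has $g^+\notin\Lambda_\Gamma$, so that $F$ is defined almost everywhere, and that $F$ is measurable and genuinely non-constant. This is where the infinite-volume geometry, namely the Fuchsian ends and $\omega(\Lambda_\Gamma)=0$, enters, in contrast with the Howe--Moore situation. The choice of $c$ in the non-archimedean case is a secondary point. If it proves awkward, one can instead take $N$ objects $c_i\circ F$ separately and argue via algebraic independence.
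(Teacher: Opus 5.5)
Your argument is correct, but it takes a different route from the paper's.

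\textbf{The paper's route.} It uses the invariant $g\mapsto[g^+]\in\Gamma\backslash\Omega_\Gamma$ on the open set where $g^+\notin\Lambda_\Gamma$. From it, it extracts infinitely many distinct $\Gamma\times U$-invariant indicator functions, each an object with target $\bbA^1$. If $\phi_0$ were initial, each factoring map $\alpha_E$ would satisfy $\alpha_E^2=\alpha_E$ on the essential Zariski closure of $\phi_0$. Each $\alpha_E$ is then constant on the finitely many connected components of that closure, so only finitely many indicators can be dominated.

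\textbf{Your route.} You use a real-valued invariant instead: the horospherical height $F$ above the nearest boundary lift. It is indeed $\Gamma\times MN$-invariant and $A$-equivariant by translation, so $F_*[\widetilde{\lambda}]$ is the Lebesgue class. You compose $F$ with Zariski-generic maps $c:\bbR\to k^N$ and obstruct by dimension. An initial $\bfV_0$ would have to dominate objects whose essential image is Zariski dense in $\bbA^N$ for $N>\dim\bfV_0$, which is impossible.

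\textbf{Comparison.} The paper's route is more economical. It needs only a non-atomic invariant, involves no genericity construction, and is uniform in $k$. Your route gives a slightly stronger obstruction, namely that no finite-dimensional target can serve as an initial object. The cost is constructing $c$.

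\textbf{The open point.} The step you flagged for non-archimedean $k$ is easily settled, and the same fix works for every local field. Take $c$ to be a Borel isomorphism, modulo null sets, carrying a probability measure equivalent to Lebesgue measure onto normalized Haar measure on $\calO_k^N$. The $k$-points of a proper subvariety of $\bbA^N$ are Haar-null, by induction on $N$ and Fubini. Hence their preimage under $c$ is Lebesgue-null, which is exactly what your Step~3 uses.
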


\begin{proof}
We first note that infinitely many distinct $\Gamma\times U$-invariant measurable indicators obstruct the existence of an initial object.
Indeed, suppose that $\phi_0:G\to\bfV_0(k)$ was such an initial object.
For each invariant measurable set $E$, its indicator $\mathbbm{1}_E$ is an object with target $\bbA^1$ and trivial $\bfH$-action.
Initiality therefore supplies a regular map $\alpha_E:\bfV_0\to\bbA^1$ such that $\mathbbm{1}_E=\alpha_E\circ\phi_0$ for $[\widetilde{\lambda}]$-almost every point.
Let $\bfZ$ be the essential Zariski closure of the image of $\phi_0$.
Then the identity $\alpha_E^2-\alpha_E=0$ holds on $\bfZ$, so $\alpha_E$ is constant, with value $0$ or $1$, on each connected component of $\bfZ$.
Since $\bfZ$ has only finitely many connected components, only finitely many distinct indicators can arise this way.

To obtain the required indicators in our setting, write $\Omega_\Gamma=\partial\bbH^n\ssm\Lambda_\Gamma$ and let $g^+$ denote the endpoint fixed by right translation by $N$.
On the set
$$D=\{g\in\pi_W^{-1}(\calC^*)\mid g^+\in\Omega_\Gamma\},$$
the map $g\mapsto[g^+]\in\Gamma\backslash\Omega_\Gamma$ is left $\Gamma$-invariant and right $U$-invariant.
The set $D$ is nonempty and open, and $\widetilde{\lambda}$ is in the Haar measure class on $\pi_W^{-1}(\calC^*)$.
Since the endpoint map is a submersion and $\Gamma$ acts properly discontinuously on $\Omega_\Gamma$, its pushforward class in a local chart of $\Gamma\backslash\Omega_\Gamma$ is nonzero, smooth, and non-atomic.
Pulling back indicators of infinitely many distinct measurable subsets of this chart and extending them by zero outside $D$ gives the required invariant indicators.
The preceding paragraph therefore rules out an initial object.
\end{proof}

Thus the initial-object argument of \cite{BFMS} cannot be carried out in this measure class.
We instead compare algebraic representations in the lifted BMS and BR measure classes, as described next.

\subsubsection{The sketch of the proof of Theorem~\ref{thm:superrigid}}\label{sec:sketchsuperrigid}

We now sketch the proof of Theorem~\ref{thm:superrigid}, leaving the details to the remainder of the section.
Neither the BMS nor the BR measure class is preserved by all the group actions needed in the proof, so we will need to transfer algebraic representations between them.

The first step is to realize the map on $G/W$ using both the lifted BMS and BR measure classes.
The comparison of their pushforwards to $G/W$ and the passage to a proper coset space give, by Corollary~\ref{cor:Winitial}, a proper subgroup $\bfL<\bfH$ and two algebraic representations
$$\phi_{\BMS}:X_0\longrightarrow\bfH/\bfL(k),\qquad
\phi_{\BR}:X_+\longrightarrow\bfH/\bfL(k).$$
These come from the same $W$-invariant map, with $\phi_{\BMS}$ invariant under $\langle A,w\rangle$ and $\phi_{\BR}$ invariant under $AU$.
The device for comparing them will be the notion of $T$-linked spaces.
In particular, $U$-linking will give a transfer functor satisfying
$$\Tr_U(\phi_{\BR})=\Res_A(\phi_{\BMS}),$$
where we recall that the right-hand side is $\phi_{\BMS}$ considered as an $A$-algebraic representation.
As in the previous sketch, choose a $P$-gate
$$\phi_0:X_+\longrightarrow\bfH/\bfL_0(k),$$
with corresponding homomorphism $\tau_0:P\to\bfN_{\bfH}(\bfL_0)/\bfL_0(k)$.
Restricting the actions yields $\phi_{0,U}=\Res_U(\phi_0)$ and $\phi_{0,AU}=\Res_{AU}(\phi_0)$, which are gates in the categories of $U$- and $AU$-algebraic representations, respectively.
Initiality in the $AU$-category therefore gives a morphism $\alpha$ with
$$\xymatrix{
X_+\ar[r]^-{\phi_{0,AU}}\ar[dr]_-{\phi_{\BR}}&\bfH/\bfL_0(k)\ar[d]^-\alpha\\
&\bfH/\bfL(k),}$$
commuting in the BR measure class.
We again distinguish the cases according to whether $\bfL_0$ is trivial.

Suppose first that $\bfL_0$ is nontrivial.
The trivial right action on the target of $\phi_{\BR}$ gives $\alpha(h\tau_0(u))=\alpha(h)$ for $u\in U$.
Compatibility extends this invariance to $\tau_0(N)$, so $\phi_{\BR}=\alpha\circ\phi_{0,AU}$ is $AN$-invariant.
Its transfer $\Res_A(\phi_{\BMS})$ is also $w$-invariant.
The constancy statement in Lemma~\ref{lem:linkedconstancy} will then show that $\phi_{\BR}$ is essentially constant, contradicting Zariski density of $\rho(\Gamma)$ since $\bfL$ is proper.
Here the $N$-invariance and $w$-invariance occur on different measured spaces, so we must compare them through transfer.

In the remaining case $\bfL_0$ is trivial, so $\phi_0$ takes values in $\bfH(k)$.
Write
$$\phi_0^{\BMS}=\Tr_U(\phi_{0,AU}):X_0\longrightarrow\bfH(k).$$
This is an $A$-algebraic representation for $\tau_0\vert_A$ and is itself an $A$-gate by the triviality assumption.
The normalizer argument extends its equivariance to $N(A)$, giving a homomorphism $\tau_0^{\BMS}:N(A)\to\bfH(k)$ extending $\tau_0\vert_A$.
Thus we have a $P$-equivariant map $\phi_0$ on the BR space and an $N(A)$-equivariant map $\phi_0^{\BMS}$ on the BMS space, related by transfer.
The final step uses $P$-linking to compare these maps on a Bruhat cell and extend $\rho$ to $G$.

\subsection{Converting the \texorpdfstring{$W$}{W}-invariant map to algebraic representations with measures \texorpdfstring{$\widetilde{\mu}_{\BMS}$}{mu tilde BMS}, \texorpdfstring{$\widetilde{\mu}_{\BR}$}{mu tilde BR}}\label{sec:measureclasscomparison}

In this section, we show how to convert the $\lambda$-measurable $\Gamma$-map
\begin{equation}\label{eqn:psieqn}
\psi:G/W\longrightarrow\Prob(\bbP(V)(k)),
\end{equation}
into coset algebraic representations in $\AR_{\langle A,w\rangle}(X_0,[\widetilde{\mu}_{\BMS}])$ and $\AR_{AU}(X_+,[\widetilde{\mu}_{\BR}])$.
Pullback along $\pi_W$ gives a $[\widetilde{\lambda}]$-measurable, $\Gamma$-equivariant, right $W$-invariant map on $G$.
To use the BMS and BR classes instead, we first show that both project to $[\lambda]$ on $\calC^*$.

Applications of classical projection theory to Patterson--Sullivan measures appear in Maucourant--Schapira \cite[\S4.3]{MS} and Mohammadi--Oh \cite[Thm~5.3, Cor~5.5]{MO}.
For related absolute-continuity statements for horospherical transverse measures, see Dufloux \cite[Thm~4.1]{Dufloux}.
Here we regard the ideal boundary of a hyperbolic $m$-plane as the intersection of $S^{n-1}\subset\bbR^n$ with an affine $m$-plane.
For each boundary point, we consider the affine $m$-planes through it and apply the projection theorem to control their distribution in the transverse directions.

\begin{prop}\label{prop:BRprojection}
Let $\pi_W$ be the natural projection from $G$ to $G/W$.
Then
$$(\pi_W)_*[\widetilde{\mu}_{\BMS}]=(\pi_W)_*[\widetilde{\mu}_{\BR}]=[\lambda],$$
on $\calC^*$.
\end{prop}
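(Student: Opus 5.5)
We need to show that the pushforwards of the BMS and BR classes to $G/W$, restricted to $\calC^*$, are both the smooth class $[\lambda]$. There are two inclusions for each. First, absolute continuity $(\pi_W)_*[\widetilde{\mu}]\ll[\lambda]$. Second, the converse: every $\lambda$-positive subset of $\calC^*$ has preimage of positive measure.

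The plan is to reduce both to statements about Patterson--Sullivan measure along the boundary spheres of planes. A normally framed plane $C=gW$ is determined, up to the compact normal-frame data, by its boundary sphere $\partial C\cong S^{m-1}$. The fiber $\pi_W^{-1}(C)=gW$ projects under $h\mapsto(h^+,h^-)$ onto the pairs of distinct points of $\partial C$. In Hopf coordinates, $\widetilde{\mu}_{\BR}$ has class $[\nu\otimes\omega\otimes dt\otimes dm_M]$ and $\widetilde{\mu}_{\BMS}$ has class $[\nu\otimes\nu\otimes dt\otimes dm_M]$. So the question becomes the following: for a $\nu$-typical point $x$ (and $\omega$- or $\nu$-typical $y$), how is the family of $m$-planes through the geodesic $(x,y)$ distributed?

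For the inclusion $\ll[\lambda]$, I would parametrize $G$ locally near a point of $\pi_W^{-1}(\calC^*)$ as $(x,y,\text{plane through }x,y,\text{position in }W)$. Fixing $x,y$, the set of $m$-planes containing the geodesic $xy$ is a Grassmannian $\Gr_{m-1}(\bbR^{n-1})$. The $m_M$-coordinate, together with $M\cap W$, sweeps this Grassmannian with its smooth class. Hence $(\pi_W)_*$ of $\delta_x\otimes\delta_y\otimes dt\otimes dm_M$ is already a smooth measure on the submanifold of planes through $x,y$. To get absolute continuity on $\calC$, we must integrate over $(x,y)$ against $\nu\otimes\omega$ or $\nu\otimes\nu$. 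The key point is that $\pi_W$ composed with the remaining $W$-translation lets $(x,y)$ move inside $\partial C$. Disintegrating instead along $\partial C$, one gets
\[(\pi_W)_*\widetilde{\mu}\;\approx\;\int \nu\text{-mass of pairs in }\partial C \;d(\text{planes}).\]
This is the projection-theory step. Consider the affine $m$-planes through a fixed boundary point $x\in\Lambda_\Gamma$. Projecting $\nu$ (or $\nu\otimes\nu$) radially from $x$ onto the transverse Grassmannian of directions is a family of projections of a measure of dimension $\delta_\Gamma$ onto spaces of dimension $n-m$ (the codimension). By Marstrand--Mattila, since $\delta_\Gamma>n-2\ge n-m$ (this is where $\delta_\Gamma+m-1>n-1$ enters), almost every such projection is absolutely continuous with respect to Lebesgue measure. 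Averaging over the extra parameters then gives $\ll$ smooth. For BR one coordinate is already $\omega$, so only the single factor $\nu$ needs projecting. For BMS the second $\nu$ factor is handled by fixing $y$ and projecting in $x$.

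For the converse inclusion, I would show that the density is positive on $\calC^*$. Every plane in $\calC^*$ meets $\interior(D_\Gamma)$, so its boundary sphere meets $\Lambda_\Gamma$ in a set carrying positive $\nu$-mass in the projected sense. Concretely, using the Mattila $L^2$ (energy) form of the projection theorem, the projected density is positive on an open set of directions around any sphere that separates $\Lambda_\Gamma$. Combining this with $\Gamma$-invariance and ergodicity of $\lambda$ (Proposition~\ref{prop:lambdaergodicity}) then gives the result. The $\Gamma$-invariant set where the pushforward class is null is either $\lambda$-null or $\lambda$-conull, and the conull case is excluded because the pushforward is nonzero on $\calC^*$. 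This is where $\calC^*$, rather than all of $\calC$, is essential.

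The main obstacle is the projection step. Marstrand-type theorems give absolute continuity only for almost every projection direction, and we need it uniformly in the parameters, or at least in an integrated form compatible with the Hopf parametrization and conformal densities. I would try first the energy argument: show that the integral over directions of the $L^2$ norm of the projected measures is bounded by the $(n-m)$-energy of $\nu$. That energy is finite because $\delta_\Gamma>n-m$, and this yields absolute continuity of the integrated pushforward directly.
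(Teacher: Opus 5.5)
Your proposal is correct in substance. It rests on the same two ingredients as the paper. For absolute continuity you use the Marstrand--Mattila projection theorem for $\nu$, with finite $(n-m)$-energy coming from Sullivan's bound $\nu(B(\xi,r))\le Cr^{\delta_\Gamma}$ and $\delta_\Gamma>n-2\ge n-m$. For the reverse inclusion you use ergodicity of $\Gamma$ on $(\calC^*,\lambda)$ from Proposition~\ref{prop:lambdaergodicity}.

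The routes differ in two places.

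\emph{Treatment of BMS.} The paper never projects the BMS measure. It invokes \cite[Prop~4.10]{MS}, which uses $\delta_\Gamma+\dim U>n-1$ and finiteness of BMS to identify the BMS and BR classes on $U$-invariant sets. Since $\Gamma$-saturations of sets $\pi_W^{-1}(E)$ are $U$-invariant, the two pushforward classes coincide, and only BR needs to be treated. Your direct route also works. You fix one endpoint $y$ and project the other, $\nu$-distributed, endpoint radially from $y$; this map is locally bi-Lipschitz away from $y$, and $\nu$ is non-atomic, so localized energy is preserved. That gives absolute continuity with respect to the smooth class on planes through $y$. Since $\alpha_y\ll\beta_y$ for a.e.\ $y$ implies $\int\alpha_y\,d\nu(y)\ll\int\beta_y\,d\nu(y)$, this reduces BMS to the BR pushforward. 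Your route is self-contained but needs a second projection step. The paper's reduction is shorter, and the same input from \cite{MS} is reused for $U$-linking in Proposition~\ref{prop:BMSBRlink}.

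\emph{Choice of coordinates for BR.} The paper picks coordinates in which your ``main obstacle'' disappears. It identifies planes with affine $m$-planes $a+L$ with $a\in L^\perp$ meeting the unit ball. The plane through $\xi$ with direction $L$ then has $a=P_{L^\perp}\xi$, so the projected BR class is literally $dL\,d((P_{L^\perp})_*\nu)(a)$, compared against the smooth class $[dL\,da]$. Absolute continuity for $dL$-almost every $L$ is exactly what Fubini requires, so no uniformity in parameters is needed. Your energy fallback is simply the proof of \cite[Thm~9.7]{MattilaGSMES}.

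Your phrase ``projecting $\nu$ radially from a fixed point $x\in\Lambda_\Gamma$'' fits the inner BMS step, not BR. In BR, with $x$ the $\nu$-point fixed, the other endpoint is $\omega$-distributed and there is nothing to project. What must be projected is the $\nu$-distributed endpoint itself, either radially from the $\omega$-distributed endpoint or orthogonally onto $L^\perp$ as in the paper.

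Two smaller points.
\begin{itemize}
\item The heuristic ``$\nu$-mass of pairs in $\partial C$'' cannot be taken literally. Since $m-1<\delta_\Gamma$, $\nu$ gives every round $(m-1)$-sphere measure zero.
\item The claim that the projected density is positive on an open set is not something $L^2$ projection estimates provide. It is also unnecessary: the ergodicity argument only needs the projected class to be nonzero on $\calC^*$. That follows, as in the paper, by choosing a plane through a limit point that meets $\interior(D_\Gamma)$, and noting this is an open condition in $(\xi,L)$.
\end{itemize}
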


\begin{proof}
Since $\delta_\Gamma+\dim U>n-1$ and the quotient BMS measure is finite, \cite[Prop~4.10]{MS} gives equality of the BMS and BR measure classes on $U$-invariant Borel sets in $\Gamma\backslash G$.
To apply this upstairs, take a Borel set $E\subset G/W$ and form the countable left $\Gamma$-saturation of $\pi_W^{-1}(E)$.
This saturation is right $U$-invariant because $U<W$ and it is null for either lifted measure exactly when $\pi_W^{-1}(E)$ is null.
Passing to $\Gamma\backslash G$ therefore gives equality of the two pushforward classes so it suffices to prove the assertion for $\widetilde{\mu}_{\BR}$.

We first forget the normal frame and work on $\overline{\calC}$.
Indeed, the compact fiber $S_m/W$ has representatives in $M\cap S_m$ and both the BR measure and Haar measure on $G$ are right $M$-invariant.
Their projected classes on $\calC$ are consequently obtained from those on $\overline{\calC}$ by lifting with Haar probability measure along these fibers.
It will therefore suffice to prove absolute continuity on $\overline{\calC}$ and then use ergodicity on $\calC^*$ to obtain equality of the classes.

Put $\calG=\Gr_m(\bbR^n)$ and identify the boundary with the unit sphere $S^{n-1}\subset\bbR^n$.
Every affine $m$-plane meeting the open unit ball intersects $S^{n-1}$ in a round $(m-1)$-sphere and every such sphere arises uniquely in this way.
We may therefore identify $\overline{\calC}$ with this open subset of the affine Grassmannian by taking the affine spans of the ideal boundaries.

For a fixed boundary point $\xi$, the affine $m$-planes through $\xi$ have the form $\xi+L$ with $L\in\calG$.
Their intersections with $S^{n-1}$ give the spheres through $\xi$, except when $L\subseteq\xi^\perp$ and the affine plane is tangent to the sphere.
Writing $dL$ for invariant probability measure on $\calG$, these excluded directions form a lower-dimensional Grassmannian of $dL$-measure zero.
Thus, identifying a hyperbolic plane with its ideal boundary, the map
$$F(\xi,L)=S^{n-1}\cap(\xi+L),$$
is a map to $\overline{\calC}$ which is defined $d\nu\,dL$-almost everywhere.

We can now express the projected BR class using this family of intersections.
A local trivialization over the positive endpoint gives
$$[d\widetilde{\mu}_{\BR}]=[d\nu(\xi)\,dm_P],$$
where $dm_P$ is Haar measure on $P$.
For each fixed $\xi$, the $P$-fiber projects onto the family of spheres through $\xi$ with smooth measure class.
In the parameters above, this conditional class is $[dL]$, so the projected BR class is represented by
\begin{equation}\label{eqn:sphericalincidence}
\overline{\kappa}=F_*(d\nu\,dL).
\end{equation}
The remaining fiber coordinates and the positive smooth densities do not affect the measure class.
In particular, $\overline{\kappa}$ is a finite representative of the projected class, as required by our convention for pushforwards.

To compare this measure with the smooth class, we now record the transverse position of an affine plane with direction $L$.
Such a plane has a unique expression $a+L$ with $a\in L^\perp$ and it meets the open unit ball exactly when $|a|<1$.
These parameters give smooth coordinates on $\overline{\calC}$ in which the smooth class is $[dL\,da]$, where $da$ is Lebesgue measure on $L^\perp$.
If $P_{L^\perp}:\bbR^n\to L^\perp$ is orthogonal projection then $\xi+L=P_{L^\perp}\xi+L$, so the transverse position of the plane through $\xi$ is $a=P_{L^\perp}\xi$.
Disintegrating Equation~\eqref{eqn:sphericalincidence} over $L$ therefore gives
$$d\overline{\kappa}(L,a)=dL\,d((P_{L^\perp})_*\nu)(a),$$
so it remains to show that $(P_{L^\perp})_*\nu$ is absolutely continuous for almost every $L$.
This is precisely the conclusion of the finite-energy projection theorem, applied to $\nu$ as a measure on $\bbR^n$ supported on $S^{n-1}$.
Indeed, Sullivan's estimate \cite[Thms~7,~8]{Sullivan79} gives $\nu(B(\xi,r))\le Cr^{\delta_\Gamma}$ for $0<r<1$ and $\delta_\Gamma>n-2\ge n-m$.
Consequently, we see that
$$I_{n-m}(\nu)=\iint\frac{d\nu(\xi)\,d\nu(\eta)}{|\xi-\eta|^{n-m}}
\le C'\left(1+\int_0^1r^{\delta_\Gamma-(n-m)-1}\,dr\right)<\infty,$$
and the same estimate gives finite $t$-energy for $n-m<t<\delta_\Gamma$.
As $L$ varies with measure $dL$, its orthogonal complement is distributed according to the invariant probability on the Grassmannian of $(n-m)$-dimensional subspaces.
The projection theorem \cite[Thm~9.7]{MattilaGSMES}, in the form recalled in \cite[Thm~4.5]{MS}, now gives $(P_{L^\perp})_*\nu\ll da$ for $dL$-almost every $L$.
It follows that $\overline{\kappa}\ll dL\,da$, proving absolute continuity of the projected BR class with respect to the smooth class on $\overline{\calC}$.

Finally, lift back to $\calC$ using Haar probability measure on the normal-frame fibers and let $\kappa$ be a finite representative of the projected BR class.
We have proved that $\kappa\ll\lambda$ on $\calC^*$, so the usual ergodicity argument will give equality of the classes.
First note that this restriction is nonzero.
For $\xi\in\supp\nu=\Lambda_\Gamma$, we can choose an $m$-plane whose closure contains $\xi$ and which passes through the interior of the convex hull.
Meeting this interior is an open condition so a neighborhood of the corresponding pair $(\xi,L)$ has positive $d\nu\,dL$-measure and maps into $\overline{\calC}^*$.
Since $[\kappa]$ is $\Gamma$-invariant, the positive-density set $\{d\kappa/d\lambda>0\}$ is invariant modulo $\lambda$ and is therefore conull by Proposition~\ref{prop:lambdaergodicity}.
This gives $[\kappa]=[\lambda]$ on $\calC^*$, as required.
Moreover, $\pi_W^{-1}(\calC^*)$ is left $\Gamma$-invariant and right $U$-invariant.
Since it has positive BR measure, ergodicity of the $U$-action on $(\Gamma\backslash X_+,\mu_{\BR})$ shows that it is BR-conull.
The equality of the projected BMS and BR classes proved at the start of the argument then makes it BMS-conull as well.
\end{proof}

We can now apply the orbit reduction from \cite[Prop~4.1]{BFMS} in both measure classes.

\begin{cor}\label{cor:Winitial}
Let $\psi$ be as in Equation~\eqref{eqn:psieqn} and let
$$\Phi=\psi\circ\pi_W:G\longrightarrow\Prob(\bbP(V)(k)),$$
be its pullback, considered as a $\Gamma\times W$-map with trivial right action.
Then there is a proper closed $k$-algebraic subgroup $\bfL<\bfH$ such that $\Phi$ induces coset algebraic representations with target $\bfH/\bfL(k)$ in the two categories $\AR_{\langle A,w\rangle}(X_0,[\widetilde{\mu}_{\BMS}])$ and $\AR_{AU}(X_+,[\widetilde{\mu}_{\BR}])$.
The right actions are trivial, and both maps are obtained by pulling back the same $\Gamma$-equivariant map from $G/W$.
\end{cor}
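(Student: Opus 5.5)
The plan is to follow the orbit reduction of \cite[Prop~4.1]{BFMS}, carried out once on the space of planes with respect to $[\lambda]$ and then pulled back to the two measure classes. By Proposition~\ref{prop:BRprojection}, $\Phi$ is measurable and well defined, modulo null sets, for both $[\widetilde{\mu}_{\BMS}]$ and $[\widetilde{\mu}_{\BR}]$. Moreover, $\pi_W^{-1}(\calC^*)$ is conull for both classes. Since both pushforward classes equal $[\lambda]$, any $[\lambda]$-a.e. modification of $\psi$ on $\calC^*$ pulls back to a modification on a null set in each class.

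First I will work directly with $\psi:\calC^*\to\Prob(\bbP(V)(k))$.
\begin{enumerate}
\item By tameness of the $\bfH(k)$-action on $\Prob(\bbP(V)(k))$ \cite[Cor~1.8]{BDL}, the orbit space is countably separated.
\item By metric ergodicity of $\Gamma$ on $(\calC^*,\lambda)$ (Remark~\ref{rem:metricergodicity}, or just ergodicity from Proposition~\ref{prop:lambdaergodicity} combined with countable separation), $\psi$ takes values a.e. in a single $\bfH(k)$-orbit $\bfH(k)\cdot\nu_0\cong\bfH(k)/\Stab(\nu_0)$.
\item By Furstenberg's lemma, $\Stab(\nu_0)$ is contained, up to finite index, in a cocompact extension of the $k$-points of an algebraic subgroup. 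Composing with the resulting equivariant map and applying the argument of \cite[Prop~4.1]{BFMS}, which uses \cite[Cor~6.7]{BG} to remove the compact part, gives a $\Gamma$-equivariant measurable map $\calC^*\to\bfH/\bfL(k)$ for a closed $k$-subgroup $\bfL$.
\end{enumerate}

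Next I check that $\bfL$ is proper. Suppose $\bfL=\bfH$. Then $\Stab(\nu_0)$ would contain a finite-index subgroup of $\bfH(k)$. Since $\bfH$ is connected semisimple, $\bfH(k)$ is unbounded, $V$ is faithful and irreducible of dimension at least two, and $\rho(\Gamma)$ is Zariski dense and unbounded, Furstenberg's lemma forbids such an invariant measure on $\bbP(V)(k)$. This step is exactly as in \cite{BFMS}.

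Finally, I pull the map back to $G$ via $\pi_W$ and restrict it to $X_0$ and $X_+$. It is $\Gamma$-equivariant and right $W$-invariant, hence invariant under $\langle A,w\rangle<W$ and $AU<W$. These groups act nonsingularly on the respective classes: $N(A)$ preserves the BMS class, and $P$ preserves the BR class. We take $\bfI$ trivial and $\tau$ trivial, which is Zariski dense in the trivial group. This gives objects of the two categories with the same underlying map from $G/W$.

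The main obstacle is the second and third steps in our non-lattice setting. The ergodicity inputs must be taken on $(\calC^*,\lambda)$, via Proposition~\ref{prop:lambdaergodicity} and the metric ergodicity of Remark~\ref{rem:metricergodicity}, rather than from Howe--Moore. The delicate point is transporting these a.e. statements across the two measure classes. This is handled by Proposition~\ref{prop:BRprojection}: a null set for $[\lambda]$ pulls back to null sets for both $[\widetilde{\mu}_{\BMS}]$ and $[\widetilde{\mu}_{\BR}]$.
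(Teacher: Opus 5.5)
Your reduction to a single $\bfH(k)$-orbit $\bfH(k)/C$, $C=\Stab(\nu_0)$, is fine. So are the transport of null sets via Proposition~\ref{prop:BRprojection} and the final pullback with trivial $\tau$. For the single orbit you use ergodicity of $\Gamma$ on $(\calC^*,\lambda)$ from Proposition~\ref{prop:lambdaergodicity}, where the paper uses ergodicity of $U$ on the BR space; either works.

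The gap is in your properness step. The implication ``$\bfL=\bfH$ implies $\Stab(\nu_0)$ contains a finite-index subgroup of $\bfH(k)$'' is false. Nothing in your argument excludes that $C$ is a compact Zariski dense subgroup of $\bfH(k)$. Such subgroups exist, for example open compact subgroups when $k$ is non-archimedean, or compact real forms when $k=\mathbb{C}$. In that case the Zariski closure of $C$ is all of $\bfH$, and Furstenberg's lemma gives no contradiction, since compact groups do preserve probability measures on $\bbP(V)(k)$. Applied correctly to $C$ itself, Furstenberg's lemma only gives a dichotomy. If $C$ is Zariski dense, then, as $\bfH$ is connected and $V$ irreducible, $C$ is bounded. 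So properness of $\bfL$ reduces to showing that $C$ is not compact. This is the one place unboundedness of $\rho(\Gamma)$ must be used, and your argument never uses it; you list it as a hypothesis but only apply Furstenberg to $\bfH(k)$.

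The paper handles this with a separate paragraph. If $C$ were compact, $\bfH(k)/C$ would carry a proper invariant metric. Averaging over the $M$-fibers gives a $\Gamma$-map $\Lambda_\Gamma^{(2)}\to\Prob(\bfH(k)/C)$. Bader--Furman's metric ergodicity of the boundary-pair action makes this map essentially constant, so $\rho(\Gamma)$ preserves a probability measure on a space with proper $\bfH(k)$-action, contradicting unboundedness. Your appeal to \cite[Cor~6.7]{BG} ``to remove the compact part'' does not accomplish this as stated, and that corollary is for lattices anyway.

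You can close the gap within your own framework. Use the metric ergodicity of $\Gamma$ on $(\calC^*,\lambda)$ from Remark~\ref{rem:metricergodicity}: if $C$ is compact, the $\Gamma$-map $\calC^*\to\bfH(k)/C$ is essentially constant, so $\rho(\Gamma)$ lies in a conjugate of $C$, contradicting unboundedness. Then take $\bfL$ to be the Zariski closure of $C$, which also gives the map $\bfH(k)/C\to\bfH/\bfL(k)$ you need. Applying Furstenberg to the noncompact group $C$ then yields $\bfL\ne\bfH$.
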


\begin{proof}
By Proposition~\ref{prop:BRprojection}, we may consider $\Phi$ in both measure classes.
Tameness of the action on $\Prob(\bbP(V)(k))$, together with ergodicity of the $U$-action on $\Gamma\backslash X_+$, gives a single $\bfH(k)$-orbit $\bfH(k)/C$ containing its essential image in the $[\widetilde{\mu}_{\BR}]$ measure class.
As the two pushforward classes agree, this orbit contains $\psi$ for $\lambda$-almost every plane and consequently also contains the essential image of $\Phi$ in the $[\widetilde{\mu}_{\BMS}]$ measure class.

We first show that $C$ cannot be compact.
Indeed, if it were then $\bfH(k)/C$ admits a compatible $\bfH(k)$-invariant metric.
Write $\Lambda_\Gamma^{(2)}=\{(\xi,\xi')\in\Lambda_\Gamma^2\mid \xi\ne\xi'\}$ for the space of ordered pairs of distinct limit points.
As $\Phi$ is $A$-invariant, we may average its Dirac measures over the compact $M$-fiber to obtain a $\Gamma$-equivariant map
$$\Lambda_\Gamma^{(2)}\longrightarrow\Prob(\bfH(k)/C),$$
which is measurable for $\nu\otimes\nu$.
The target admits a separable invariant metric induced from a bounded invariant metric on $\bfH(k)/C$.
Since Bader and Furman proved metric ergodicity of the boundary-pair action \cite[Thm~1.6]{BFmetric}, this map is essentially constant and its essential image is a $\rho(\Gamma)$-invariant probability measure $\eta$.
When $C$ is compact, the $\bfH(k)$ action on $\bfH(k)/C$ is proper and therefore the set of elements carrying a compact subset $K_1$ to a set meeting $K_1$ is compact.
Choose a compact $K_1$ with $\eta(K_1)>1/2$.
Since $\eta$ is $\rho(\Gamma)$-invariant, every translate $\rho(\gamma)K_1$ meets $K_1$, contradicting the unboundedness of $\rho(\Gamma)$.

Now let $\bfL$ be the $k$-Zariski closure of $C$.
The Furstenberg stabilizer argument in \cite[Prop~4.1]{BFMS} shows that $\bfL$ is proper, and projection to $\bfH/\bfL(k)$ gives the required $\Gamma$-equivariant map on $G/W$.
Choosing a representative on a $\Gamma$-invariant conull set of planes and pulling back along $\pi_W$, we obtain a common $W$-invariant representative in both measure classes by Proposition~\ref{prop:BRprojection}.
The result now follows since $\langle A,w\rangle<W$ acts nonsingularly on $X_0$ and $AU<W$ acts nonsingularly on $X_+$, where the associated homomorphisms $\tau$ are the trivial ones.
\end{proof}

\subsection{$T$-linked categories}\label{sec:Tlinked}

To carry out the sketch of Section~\ref{sec:sketchsuperrigid}, we need to transfer algebraic representations between the measured spaces $(X_0,\widetilde{\mu}_{\BMS})$ and $(X_+,\widetilde{\mu}_{\BR})$.
The following definition records the measure-class relation needed for this transfer.

\begin{defn}
Let $(Z,\eta)$ and $(X,\mu)$ be two measure spaces with $Z\subseteq X$.
Suppose that $X$ has a nonsingular left $\Gamma$-action preserving $Z$ and acting nonsingularly on $(Z,\eta)$, and that $T$ is an lcsc group acting nonsingularly on the right on $(X,\mu)$ and commuting with the $\Gamma$-action.
Define
\begin{align*}
\alpha_T:Z\times T&\longrightarrow X,\\
(z,t)&\longmapsto z\cdot t.
\end{align*}
We say that $(Z,\eta)$ and $(X,\mu)$ are \emph{$T$-linked} if $(\alpha_T)_*(\eta\otimes m_T)$ is absolutely continuous with respect to $\mu$, where $m_T$ is left Haar measure on $T$.
\end{defn}

The definition does not require $T$ to preserve $Z$, so $T$ need not act on $Z$ itself.
This distinction is needed in our application.
Under the following hypotheses, $T$-linking nevertheless allows us to transfer algebraic representations from $X$ to $Z$.

\begin{prop}\label{prop:Tlinker}
Let $ST$ be an lcsc group with closed subgroups $S$ and $T$ such that $S$ normalizes $T$.
Suppose that $ST$ acts nonsingularly on the right on $(X,\mu)$ and commutes with the nonsingular left $\Gamma$-action.
Let $Z\subseteq X$ be a $\Gamma$- and $S$-invariant measurable subset equipped with a measure $\eta$ for which the restricted actions are nonsingular.
Assume that $(Z,\eta)$ and $(X,\mu)$ are $T$-linked.
Then there is a functor
$$\Tr_T:\AR_{ST}(X,[\mu])\to \AR_S(Z,[\eta]),$$
which takes an $ST$-algebraic representation $\phi$ with corresponding homomorphism $\tau$ to the unique object $\phi_0$ in $\AR_S(Z,[\eta])$ with the property that
$$\phi(zt)=\phi_0(z)\tau(t),$$
for $\eta\otimes m_T$-almost every $(z,t)$.
On morphisms, $\Tr_T$ retains the underlying $k$-algebraic map.
\end{prop}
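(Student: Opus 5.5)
The idea is to construct $\phi_0$ by Fubini on the product $Z\times T$ and then read off its $S$-equivariance from the twisted action $(zs,s^{-1}ts)$. Let $\phi:X\to\bfV(k)$ be an $ST$-algebraic representation with homomorphism $\tau:ST\to\bfI(k)$.

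\emph{Step 1: defining $\phi_0$.} Consider the measurable map
$$\Phi(z,t)=\phi(zt)\tau(t)^{-1},\qquad (z,t)\in Z\times T.$$
This is defined $\eta\otimes m_T$-almost everywhere, because $\phi$ is defined off a $\mu$-null set and, by $T$-linking, the preimage under $\alpha_T$ of a $\mu$-null set is $\eta\otimes m_T$-null. Next we show that $\Phi$ does not depend on $t$. For fixed $t'\in T$, equivariance gives $\phi(xt')=\phi(x)\tau(t')$ for $\mu$-almost every $x$. Applying this at $x=zt$ and pulling back through $\alpha_T$ again, we get
$$\Phi(z,tt')=\Phi(z,t)$$
for $\eta\otimes m_T$-almost every $(z,t)$. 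A standard Fubini argument over $(z,t,t')$ with respect to $\eta\otimes m_T\otimes m_T$ then shows that, for $\eta$-almost every $z$, the function $t\mapsto\Phi(z,t)$ is $m_T$-essentially invariant under right translation by almost every $t'$, hence essentially constant. We define $\phi_0(z)$ to be this essential value; concretely, one can take the essential value of $\Phi(z,\cdot)$ on a fixed positive-measure compact set. By construction $\phi(zt)=\phi_0(z)\tau(t)$ almost everywhere, and uniqueness is immediate since any two such $\phi_0$ agree $\eta$-almost everywhere by Fubini.

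\emph{Step 2: equivariance.} For $\Gamma$: left translation by $\gamma$ preserves $Z$, commutes with the right action, and is nonsingular on both spaces, so $\Phi(\gamma z,t)=\rho(\gamma)\Phi(z,t)$ almost everywhere, and therefore $\phi_0(\gamma z)=\rho(\gamma)\phi_0(z)$.

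For $s\in S$: since $S$ normalizes $T$, write $zst=z(sts^{-1})s$. Then
$$\phi(zst)=\phi(zs\cdot s^{-1}ts)=\phi_0(zs)\tau(s^{-1}ts)$$
on one side, and
$$\phi(zst)=\phi\bigl(z\cdot sts^{-1}\bigr)\tau(s)=\phi_0(z)\tau(sts^{-1})\tau(s)=\phi_0(z)\tau(s)\tau(t)$$
on the other. Comparing the two gives $\phi_0(zs)\tau(s^{-1})\tau(t)=\phi_0(z)\tau(t)$, that is,
$$\phi_0(zs)=\phi_0(z)\tau(s).$$
The only subtlety is that these identities hold almost everywhere in $(z,t)$. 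The substitution $t\mapsto s^{-1}ts$ preserves the class of $m_T$ (it changes Haar measure by the modular factor), and $z\mapsto zs$ is nonsingular on $(Z,\eta)$. So both computations are valid on conull sets, and Fubini in $t$ yields the identity for $\eta$-almost every $z$, for each fixed $s$.

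\emph{Step 3: the algebraic data and functoriality.} We take $\bfI_0$ to be the Zariski closure of $\tau(S)$, the homomorphism $\tau\vert_S$, and the same variety $\bfV$. Faithfulness of the $\bfI_0$-action holds since it is a subgroup of $\bfI$. The restriction $\tau\vert_S$ is Zariski dense in $\bfI_0$ by definition, exactly as in $\Res_S$.

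On morphisms, suppose $\beta:\bfV\to\bfV'$ is a morphism, so $\beta\circ\phi=\phi'$ almost everywhere. Pulling back along $\alpha_T$ and using the equivariance of $\beta$ for $\bfI_{\bfV,\bfV'}$, we get
$$\beta(\phi_0(z)\tau(t))=\phi_0'(z)\tau'(t),$$
so $\beta\circ\phi_0=\phi_0'$, with equivariance for the Zariski closure of $(\tau\times\tau')(S)$, which is contained in $\bfI_{\bfV,\bfV'}$. Functoriality (preservation of identities and compositions) is then immediate.

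The step I expect to be the main obstacle is the almost-everywhere bookkeeping in Steps 1 and 2. $T$ need not preserve $Z$, so each identity must be transported through $\alpha_T$ using only absolute continuity, rather than through any equivalence of measures on $Z$. The remaining points are checking joint measurability of $(z,t,t')\mapsto\Phi$ and that conjugation by $s$ is $m_T$-nonsingular.
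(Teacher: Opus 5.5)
Your proposal is correct and follows the paper's proof essentially step for step. The paper also defines $\phi_0(z)$ as the $m_T$-essential image of $t\mapsto\phi(zt)\tau(t)^{-1}$ and uses equivariance, $T$-linking and Fubini to show this image is a single point. It obtains $S$-equivariance from $zst=z(sts^{-1})s$, using that conjugation by $s$ preserves $m_T$-null sets, and it restricts $\tau$ to $S$ with $\bfI$ replaced by the Zariski closure of $\tau(S)$.

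There is one small slip in Step~2 that needs fixing. You wrote $\phi(zst)=\phi(zs\cdot s^{-1}ts)$, but $zs\cdot s^{-1}ts=zts$, not $zst$. As a result, your ``comparing'' line does not follow from your two displays. The intended first computation is simply $\phi(zs\cdot t)=\phi_0(zs)\tau(t)$. Setting this against your (correct) second display $\phi(zst)=\phi_0(z)\tau(s)\tau(t)$ gives $\phi_0(zs)=\phi_0(z)\tau(s)$ at once.
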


\begin{proof}
Choose a measurable representative of $\phi$ and define
$$\phi_0(z)=\mathrm{ess~img}_{t\in T}\phi(zt)\tau(t)^{-1},$$
where the essential image is taken with respect to $m_T$.
Equivariance, $T$-linking, and Fubini's theorem show that this image is a single point for $\eta$-almost every $z$.
The linking condition also makes it independent of the representative, since changing $\phi$ on a $\mu$-null set changes $\phi_0$ only on an $\eta$-null set.
Measurability follows from Fubini's theorem by integrating $\mathbbm{1}_E(\phi(zt)\tau(t)^{-1})$ over a fixed Borel subset of $T$ of positive finite Haar measure, for Borel $E\subseteq\bfV(k)$.
Since $S$ normalizes $T$, conjugation preserves Haar-null sets, and the identity $zst=z(sts^{-1})s$ gives $\phi_0(zs)=\phi_0(z)\tau(s)$ for every $s\in S$ and $\eta$-almost every $z$.
The left $\Gamma$-equivariance also passes from $\phi$ to $\phi_0$.
Thus $\phi_0$ is an $S$-algebraic representation, with $\tau$ restricted to $S$ and $\bfI$ replaced by the Zariski closure of $\tau(S)$.
The defining identity gives uniqueness and equivariance of morphisms makes this construction functorial.
\end{proof}

\begin{rem}
In our applications, $S$ and $T$ will lie in an ambient group $G$, although the statement itself does not require one.
The transfer changes the measure class and is defined even when $\mu(Z)=0$.
Further hypotheses would be needed to determine whether this functor preserves initial objects or gates.
\end{rem}

We will also need the compatibility of transfer with enlargement of the linking subgroup.

\begin{prop}\label{prop:enlargedlinking}
Retain the hypotheses of Proposition~\ref{prop:Tlinker}.
Let $\widehat{T}$ be an lcsc group containing $T$ as a closed subgroup and acting nonsingularly on $(X,[\mu])$, extending the given $T$-action and commuting with $\Gamma$.
Then $(Z,\eta)$ and $(X,\mu)$ are $\widehat{T}$-linked.
If the given actions of $S$ and $\widehat{T}$ arise from a nonsingular right action of an lcsc group $S\widehat{T}$, with $S$ normalizing $\widehat{T}$ and both embedded as closed subgroups, then
$$\Tr_T(\Res_{ST}(\phi))=\Tr_{\widehat{T}}(\phi),$$
for every $\phi\in \AR_{S\widehat{T}}(X,[\mu])$.
\end{prop}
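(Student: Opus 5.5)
The proof has two parts. First, $\widehat{T}$-linking follows from $T$-linking by a Fubini argument. Second, the two transfers are compared through the uniqueness clause of Proposition~\ref{prop:Tlinker}.

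\textbf{Linking.} Let $E\subseteq X$ be $\mu$-null; we must show $\{(z,\hat t)\mid z\hat t\in E\}$ is $\eta\otimes m_{\widehat{T}}$-null. Since $\widehat{T}$ acts nonsingularly on $(X,[\mu])$, the set $E\hat t^{-1}$ is $\mu$-null for each $\hat t\in\widehat{T}$. Because $T$ is closed in $\widehat{T}$, we can choose a measurable section of $\widehat{T}\to T\backslash\widehat{T}$ and disintegrate Haar measure on $\widehat{T}$ as
\[
dm_{\widehat{T}}(t\hat s)\approx dm_T(t)\,d\bar m(T\hat s),
\]
up to a positive continuous density given by the modular functions. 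Fix a coset representative $\hat s$. Applying $T$-linking to the null set $E\hat s^{-1}$ shows that $\{(z,t)\mid zt\hat s\in E\}$ is $\eta\otimes m_T$-null. Integrating over $T\backslash\widehat{T}$ with Fubini gives the claim. The main care needed here is that the whole family of sets is jointly measurable; this holds because the action map $X\times\widehat{T}\to X$ is measurable.

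\textbf{Equality of transfers.} Let $\phi\in\AR_{S\widehat{T}}(X,[\mu])$ with homomorphism $\hat\tau$, and write $\phi_1=\Tr_{\widehat{T}}(\phi)$. By Proposition~\ref{prop:Tlinker} applied with $\widehat{T}$ in place of $T$,
\[
\phi(z\hat t)=\phi_1(z)\hat\tau(\hat t)
\]
for $\eta\otimes m_{\widehat{T}}$-almost every $(z,\hat t)$. Now $\Res_{ST}(\phi)$ has the same map, with homomorphism $\hat\tau|_{ST}$. So $\Tr_T(\Res_{ST}(\phi))$ is the unique $\phi_0$ satisfying $\phi(zt)=\phi_0(z)\hat\tau(t)$ for $\eta\otimes m_T$-almost every $(z,t)$. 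It therefore suffices to show that $\phi_1$ satisfies this $T$-identity.

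The delicate point is that $T$ is $m_{\widehat{T}}$-null, so the identity for $\phi_1$ cannot simply be restricted to $T$. To get around this, use the fact that for almost every $z$ the identity holds for almost every $\hat t$. Fix such a $z$ and such a $\hat t$, and write $\hat t=t\hat s$ in the section coordinates. By Fubini in the disintegration of $m_{\widehat{T}}$, for almost every coset $T\hat s$ and almost every $t\in T$ we get
\[
\phi(zt\hat s)=\phi_1(z)\hat\tau(t)\hat\tau(\hat s).
\]

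Next apply the equivariance of $\phi$ under $\widehat{T}$. For each fixed $\hat s$ it gives
\[
\phi(x\hat s)=\phi(x)\hat\tau(\hat s)\quad\text{for $\mu$-almost every $x$.}
\]
Apply this with $x=zt$. This is legitimate by $T$-linking: the exceptional $\mu$-null set pulls back to an $\eta\otimes m_T$-null set of pairs $(z,t)$. Combining the two displays and cancelling $\hat\tau(\hat s)$ yields $\phi(zt)=\phi_1(z)\hat\tau(t)$ for $\eta\otimes m_T$-almost every $(z,t)$, as required. It only has to hold for one good $\hat s$, and almost every coset is good.

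Uniqueness in Proposition~\ref{prop:Tlinker} then gives $\phi_0=\phi_1$ almost everywhere. The target varieties agree, and the homomorphisms agree after restriction to $S$, with $\bfI$ replaced by the same Zariski closure of $\hat\tau(S)$. Hence $\Tr_T(\Res_{ST}(\phi))=\Tr_{\widehat{T}}(\phi)$ as objects of $\AR_S(Z,[\eta])$. Since both functors retain the underlying algebraic maps on morphisms, they also agree on morphisms.
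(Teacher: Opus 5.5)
Your proof is correct, but it uses a different measure-theoretic device from the paper's.

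\textbf{Your route.} You disintegrate $m_{\widehat{T}}$ over $T\backslash\widehat{T}$ using a Borel cross-section and a quasi-invariant measure $\bar m$ on the coset space. This makes $\eta\otimes m_{\widehat{T}}$ equivalent to $\eta\otimes m_T\otimes\bar m$, and you treat each coset $T\hat s$ by applying $T$-linking to $E\hat s^{-1}$.

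\textbf{The paper's route.} It never decomposes $\widehat{T}$. It works on $Z\times T\times\widehat{T}$, where nonsingularity and $T$-linking give $\mathbbm{1}_E(zt\hat t)=0$ almost everywhere. It then absorbs the auxiliary variable $t$ into $\hat t$ by left invariance of $m_{\widehat{T}}$. The same trick handles the second part, where the paper runs uniqueness in the opposite direction from you. It shows that $\phi_0=\Tr_T(\Res_{ST}(\phi))$ satisfies $\phi(zt\hat t)=\phi_0(z)\tau(t\hat t)$ almost everywhere on the triple product. Absorbing $t$ by left invariance then yields the defining identity of $\Tr_{\widehat{T}}(\phi)$. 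You instead show that $\Tr_{\widehat{T}}(\phi)$ satisfies the defining identity of $\Tr_T$, by fixing one good coset representative and cancelling $\hat\tau(\hat s)$.

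\textbf{What each buys.} The paper's argument is shorter and needs no structure on the coset space. Yours needs Mackey's cross-section lemma and the Weil disintegration. (The density relating $m_{\widehat{T}}$ to $m_T\otimes\bar m$ is a $\rho$-function rather than simply a modular function, though only the measure class matters.) In return, your argument makes explicit that the $\widehat{T}$-pushforward is an average of translated $T$-pushforwards.

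\textbf{A wording issue.} In your second part the quantifiers are stated loosely (``fix such a $z$ and such a $\hat t$''). The clean order is as follows. Both identities hold on a conull subset of $Z\times T\times(T\backslash\widehat{T})$. Fubini then lets you choose a single coset for which both hold for almost every $(z,t)$. Your final sentence indicates that this is what you intend.
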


\begin{proof}
Write $m_{\widehat{T}}$ for left Haar measure on $\widehat{T}$.
If $E\subseteq X$ is $\mu$-null, nonsingularity of the $\widehat{T}$-action and $T$-linking give $\mathbbm{1}_E(zt\widehat{t})=0$ for $\eta\otimes m_T\otimes m_{\widehat{T}}$-almost every $(z,t,\widehat{t})$.
By Fubini and left invariance, for $\eta$-almost every $z$ and $m_T$-almost every $t$ we have
$$0=\int_{\widehat{T}}\mathbbm{1}_E(zt\widehat{t})\,dm_{\widehat{T}}(\widehat{t})
=\int_{\widehat{T}}\mathbbm{1}_E(z\widehat{t})\,dm_{\widehat{T}}(\widehat{t}).$$
Hence $(\alpha_{\widehat{T}})_*(\eta\otimes m_{\widehat{T}})(E)=0$, proving $\widehat{T}$-linking.

Now let $\phi$ have corresponding homomorphism $\tau$, and put $\phi_0=\Tr_T(\Res_{ST}(\phi))$.
Equivariance, $T$-linking, and Fubini give
$$\phi(zt\widehat{t})=\phi_0(z)\tau(t\widehat{t}),$$
for $\eta\otimes m_T\otimes m_{\widehat{T}}$-almost every $(z,t,\widehat{t})$.
By left invariance of $m_{\widehat{T}}$, this gives the defining identity for $\Tr_{\widehat{T}}(\phi)$, so the transferred maps agree $\eta$-almost everywhere.
Their $S$-actions are both given by $\tau\vert_S$ and the two constructions send a morphism to the same underlying algebraic map.
\end{proof}

We now apply this to the measure spaces of interest in our application.

\begin{prop}\label{prop:BMSBRlink}
The spaces $(X_0,\widetilde{\mu}_{\BMS})$ and $(X_+,\widetilde{\mu}_{\BR})$ are $U$- and $P$-linked.
\end{prop}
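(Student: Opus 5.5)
We need to show that $(\alpha_T)_*(\widetilde{\mu}_{\BMS}\otimes m_T)\ll\widetilde{\mu}_{\BR}$ for $T=U$ and $T=P$, with $Z=X_0$ and $X=X_+$. First note that $X_0\subseteq X_+$. Both sets are left $\Gamma$-invariant. Right translation by $P$ fixes $g^+$, so $X_+$ is right $P$-invariant, and $X_0$ is right $MA$-invariant. Since $U\le P$, Proposition~\ref{prop:enlargedlinking} shows that $U$-linking implies $P$-linking. Indeed, $P$ acts nonsingularly on $[\widetilde{\mu}_{\BR}]$, as recorded in Section~\ref{sec:PSBMSBR}, and this extends the $U$-action. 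So the main task is $U$-linking, with $P$-linking as a consequence.

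For $U$-linking I would work in a local trivialization over the positive endpoint $G\to G/P$. There the BR class is $[d\nu(x)\,dm_P]$. In Hopf coordinates the BMS class is $[d\nu(x)\,d\nu(y)\,dt\,dm_M]$. The map $\alpha_U(g,u)=gu$ preserves $x=g^+$. So it suffices to work fiberwise over $\nu$-almost every $x$, fixing the $P$-coset $g_xP$. On that coset the BMS conditional is the measure class $[d\nu(y)\,dt\,dm_M]$ on $MAN$-coordinates. Here $y$ ranges over $\partial\bbH^n\ssm\{x\}$, identified with $N\cong\bbR^{n-1}$ via $n\mapsto g_x n\xi^-$. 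The claim reduces to showing that the convolution of the measure class of $\nu$ (in $N$-coordinates) with Haar measure on $U$ is absolutely continuous with respect to Lebesgue measure on $N$, after we adjoin the smooth $t$ and $M$ directions.

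One can do this directly. Write $N=U\oplus U^\perp$ with $\dim U^\perp=n-m$. Pushing $\nu\otimes m_U$ forward by $(v,u)\mapsto v+u$ gives a measure whose $U^\perp$-marginal is $(P_{U^\perp})_*\nu$, and it is smooth along $U$. So we need $(P_{U^\perp})_*\nu\ll\mathrm{Leb}$, but only after averaging over the $M$-coordinate. Conjugating by $M$ rotates $U$ inside $N$ through the Grassmannian $\Gr_{m-1}(\bbR^{n-1})$, and $M\cong\SO(n-1)$ acts transitively on it. There is a subtlety: the right $U$-action at frame $gm$ is by $mUm^{-1}$ in $g$'s coordinates. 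So integrating over $dm_M$ integrates over the invariant measure on directions, which is exactly the setting of the projection theorem.

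Now I would invoke Mattila's projection theorem as in the proof of Proposition~\ref{prop:BRprojection}. The stereographic image of $\nu$ in $N\cong\bbR^{n-1}$ has finite $s$-energy for some $s>n-m$, which follows from Sullivan's estimate together with $\delta_\Gamma>n-2\ge n-m$. Hence for almost every direction $L\in\Gr_{m-1}(\bbR^{n-1})$ the projection to $L^\perp$ is absolutely continuous. By Fubini over $dm_M$, the pushforward of $\nu\,dm_M\,dt\otimes m_U$ is absolutely continuous with respect to $d\mathrm{Leb}_N\,dm_M\,dt$, which is the $P$-Haar class on the fiber. Integrating over $d\nu(x)$ then gives $(\alpha_U)_*(\widetilde{\mu}_{\BMS}\otimes m_U)\ll\widetilde{\mu}_{\BR}$.

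The main obstacle is the bookkeeping in the middle step. We must check that the $M$-coordinate genuinely rotates the $U$-direction relative to the $y$-coordinate, rather than being an inert factor. We must also check that the smooth Busemann densities and the local charts, which are bounded on compact sets, do not affect null sets. Finally, local energy finiteness must be verified in a chart away from $x$, using the locally bi-Lipschitz identification of $\partial\bbH^n\ssm\{x\}$ with $N$. An alternative I would keep in reserve is to cite \cite[Prop~4.10]{MS} directly, but I expect the Fubini-plus-projection argument above to suffice.
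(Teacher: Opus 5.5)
Your proof is correct. Its second half is exactly the paper's: $P$ acts nonsingularly on $[\widetilde{\mu}_{\BR}]$, so Proposition~\ref{prop:enlargedlinking} upgrades $U$-linking to $P$-linking.

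The difference is in the $U$-linking step. The paper gets it in one line from \cite[Prop~4.10]{MS}, using $\delta_\Gamma+\dim U>n-1$. You prove it directly instead:
\begin{enumerate}
\item disintegrate both classes over $x=g^+$;
\item write $g=g_x\,n\,a_t\,m$ on the $P$-fiber, so that $gu=g_x\bigl(n\cdot\mathrm{Ad}(a_tm)u\bigr)a_tm$;
\item reduce to absolute continuity of $(P_{L^\perp})_*\nu$ for $L=\mathrm{Ad}(m)U$;
\item obtain this for almost every $m$ from Mattila's projection theorem, using the finite $(n-m)$-energy of $\nu$ on compact pieces of the chart.
\end{enumerate}
Your bookkeeping worries resolve as you expect. $\mathrm{Ad}(a_t)$ acts on $N$ by a homothety, so the flow coordinate never moves the subspace $\mathrm{Ad}(m)U$. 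It is the $M$-coordinate alone that sweeps $\Gr_{m-1}(\bbR^{n-1})$ with its invariant measure, and this is what lets an almost-every-direction theorem apply.

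Your argument is essentially the mechanism behind \cite[Prop~4.10]{MS}, which rests on the projection theorem recalled in \cite[Thm~4.5]{MS}, and it mirrors the paper's own Proposition~\ref{prop:BRprojection}.

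The citation is shorter, but as the paper uses it (see the proof of Proposition~\ref{prop:BRprojection}) it compares the two classes on $U$-invariant sets in $\Gamma\backslash G$. So the paper tacitly needs a translation. For a $\widetilde{\mu}_{\BR}$-null set $E\subseteq G$, the set $\{g\mid\int_U\mathbbm{1}_E(gu)\,dm_U(u)>0\}$ is right $U$-invariant and $\widetilde{\mu}_{\BR}$-null. After $\Gamma$-saturation it is therefore $\widetilde{\mu}_{\BMS}$-null, and that is exactly $U$-linking.

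Your route instead proves the linking statement upstairs in exactly the form required. It is entirely local in $G$, and its only quantitative input is Sullivan's bound $\nu(B(\xi,r))\le Cr^{\delta_\Gamma}$ together with $\delta_\Gamma>n-2\ge n-m$.
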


\begin{proof}
The $U$-linking assertion follows from \cite[Prop~4.10]{MS}, since $\delta_\Gamma+\dim U>n-1$.
The $P$-linking assertion follows by enlargement of the linking subgroup in Proposition \ref{prop:enlargedlinking}.
\end{proof}

\subsection{Reducing the stabilizer using compatibility}\label{sec:stabilizerreduction}

We now use compatibility to show that the $P$-gate in the sketch of Section~\ref{sec:sketchsuperrigid} has trivial stabilizer.

\begin{restatable}{thm}{stabilizertrivial}\label{thm:stabtrivial}
Let $\Phi$ be as in Corollary~\ref{cor:Winitial} and let
$$\phi_0:X_+\to\bfH/\bfL_0(k),$$
be a $P$-gate in $\AR_P(X_+,[\widetilde{\mu}_{\BR}])$.
Then $\bfL_0$ is trivial.
\end{restatable}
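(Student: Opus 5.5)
We argue by contradiction, following the sketch in Section~\ref{sec:sketchsuperrigid}: suppose $\bfL_0$ is nontrivial. Let $\phi_{\BMS}$ and $\phi_{\BR}$ be the two coset algebraic representations with target $\bfH/\bfL(k)$ from Corollary~\ref{cor:Winitial}, with $\bfL<\bfH$ proper, right actions trivial, and both pulled back from the same $\Gamma$-map on $G/W$.

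\textbf{Step 1: factor $\phi_{\BR}$ through the gate.} By Proposition~\ref{prop:BRgates} and uniqueness of gates, $\phi_{0,AU}=\Res_{AU}(\phi_0)$ is an $AU$-gate in $\AR_{AU}(X_+,[\widetilde{\mu}_{\BR}])$. Since $\phi_{\BR}$ is an object there, we get a unique morphism $\alpha:\bfH/\bfL_0\to\bfH/\bfL$ with $\phi_{\BR}=\alpha\circ\phi_{0,AU}$ almost everywhere. Equivariance of $\alpha$ with respect to the Zariski closure of $(\tau_0\times\mathrm{triv})(AU)$, combined with triviality of the right action on $\bfH/\bfL$, gives
$$\alpha(h\tau_0(u))=\alpha(h)$$
for all $u\in U$ and $h$. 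As $\alpha$ is algebraic, this extends to the Zariski closure of $\tau_0(U)$. Since $\bfL_0$ is nontrivial, compatibility identifies this closure with the Zariski closure of $\tau_0(N)$. Hence $\alpha$ is $\tau_0(N)$-invariant, and $N$-equivariance of $\phi_0$ shows that $\phi_{\BR}$ is right $AN=\langle A,N\rangle$-invariant in the BR class.

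\textbf{Step 2: transfer and combine with $w$-invariance.} By Proposition~\ref{prop:BMSBRlink} the spaces are $U$-linked, so Proposition~\ref{prop:Tlinker} with $S=A$, $T=U$ gives $\Tr_U(\phi_{\BR})$. Because both maps come from the same $W$-invariant map on $G/W$ and $U\le W$, this transfer equals $\Res_A(\phi_{\BMS})$. Using $P$-linking and Proposition~\ref{prop:enlargedlinking}, we also get $\Tr_N(\phi_{\BR})=\Tr_U(\phi_{\BR})$ after restriction. The aim is then a constancy lemma (Lemma~\ref{lem:linkedconstancy}): a BMS-class map $\psi_0$ with $\psi_0(zn)=\psi_0(z)$ for $m_N$-a.e.\ $n$ and $\widetilde{\mu}_{\BMS}$-a.e.\ $z$ (via linking), which is also $w$- and $A$-invariant, is essentially constant.

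\textbf{Step 3: prove the constancy lemma (main obstacle).} In Hopf coordinates, $N$-invariance through linking makes the map, lifted to $X_+$, depend only on the forward endpoint $g^+$ up to $M$. This is because the $N$-orbit of a BMS point sweeps the backward endpoints $\omega$-a.e., while $A$ fixes the endpoints. So on $X_0$ the map is a function $F(g^+)$ of the forward endpoint, $\nu$-a.e. Invariance under $w$, which swaps $g^+$ and $g^-$, gives $F(g^+)=F'(g^-)$ for $\nu\otimes\nu$-a.e.\ pair of distinct limit points. By Fubini, both sides are then essentially constant. The delicate point is the measure-class bookkeeping: the $N$-invariance lives in the BR class, which gives $X_0$ measure zero, so every identity has to be routed through the linking maps $\alpha_U,\alpha_P$ and Fubini rather than through pointwise restriction. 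The compact $M$-coordinate also has to be handled, using $M\subset N(A)$ and averaging if needed.

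\textbf{Step 4: conclude.} Once $\Res_A(\phi_{\BMS})$, and hence the original $\Gamma$-map on $G/W$, is essentially constant with value $h\bfL$, $\Gamma$-equivariance gives $\rho(\Gamma)\subseteq h\bfL(k)h^{-1}$. This contradicts Zariski density of $\rho(\Gamma)$, since $\bfL$ is proper. Therefore $\bfL_0$ is trivial.
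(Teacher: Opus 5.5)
Your Steps 1, 2 and 4 match the paper's proof. You restrict the $P$-gate to an $AU$-gate, factor $\phi_{\BR}=\alpha\circ\phi_{0,AU}$, and use compatibility to upgrade $\tau_0(U)$-invariance of $\alpha$ to $\tau_0(N)$-invariance. You then identify $\Tr_U(\phi_{\BR})=\Res_A(\phi_{\BMS})$ and conclude from constancy via Zariski density.

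The gap is in Step 3, your proof of the constancy statement (Lemma~\ref{lem:linkedconstancy}). The claim that ``on $X_0$ the map is a function $F(g^+)$ of the forward endpoint'' is not justified. $AN$-invariance only makes $\phi_{\BR}$ a function $f$ on $G/AN\cong K=\SO(n)$, that is, a function of $g^+$ together with a point of the fiber $\pi_\partial^{-1}(g^+)$, which is an $M$-coset. Nothing lets you discard this coordinate. Both $\phi_{\BR}$ and $\phi_{\BMS}$ are pulled back from $G/W$, so they are only $(M\cap W)$-invariant. The inclusion $M\subset N(A)$ does not help either, since $\phi_{\BMS}$ is not a gate and is only known to be $\langle A,w\rangle$-invariant.

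Moreover, $w$ does not simply swap endpoints at the level of $K$. With $\xi=x^+$ and $\xi'=x^-$ one has $\pi_K(xw)=r_{\xi-\xi'}\pi_K(x)r_{e_{n-1}}$. So $w$-invariance only gives the twisted transport relation $f(q)=f(r_{\xi-\xi'}qr_{e_{n-1}})$ between the fiber over $\xi$ and the fiber over $\xi'$. Your Fubini swap can only be run once $f$ is known to be essentially constant on almost every fiber, and that is the real content of the lemma.

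The idea you are missing is the paper's holonomy argument.
\begin{enumerate}[label=\textup{(\roman*)}]
\item Compose four transports around a loop $\xi\to\xi'\to\xi''\to\xi'''\to\xi$; the right factors $r_{e_{n-1}}$ cancel. This shows $f$ on the fiber over $\xi$ is invariant under left multiplication by $r_{\xi'''-\xi}r_{\xi''-\xi'''}r_{\xi'-\xi''}r_{\xi-\xi'}\in\Stab_K(\xi)$ for $\nu^{\otimes3}$-almost every triple.
\item The group $L_\xi$ of such invariances is a closed algebraic subgroup, and this product is rational in the triple. Zariski density of $\Lambda_\Gamma$ therefore extends membership in $L_\xi$ to all pairwise distinct triples.
\item Taking $\xi''=-\xi$ and $\xi',\xi'''\in\xi^\perp$ produces products of two reflections of $\xi^\perp$. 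These generate $\Stab_K(\xi)$, which acts transitively on the fiber.
\end{enumerate}
Only after this does your endpoint-swap argument apply.

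Averaging over $M$ is not a substitute. It replaces $\phi$ by a $\Prob(\bfH/\bfL(k))$-valued map, and the swap argument then only shows that almost every fiber distribution equals a single $\rho(\Gamma)$-invariant probability measure on $\bfH/\bfL(k)$. That is not a $\rho(\Gamma)$-fixed point. The contradiction in Step 4 would then need a further stabilizer-of-measures argument exploiting unboundedness of $\rho(\Gamma)$, which your plan does not supply.
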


\begin{proof}
Let $\phi_{\BMS}$ and $\phi_{\BR}$ be the objects supplied by Corollary~\ref{cor:Winitial}.
They come from the same $W$-invariant map, and Proposition~\ref{prop:BMSBRlink} gives
$$\Tr_U(\phi_{\BR})=\Res_A(\phi_{\BMS}).$$
By Proposition~\ref{prop:BRgates}, the restriction $\phi_{0,U}=\Res_U(\phi_0)$ is a $U$-gate.
Since $U\trieq AU$, the normalizer argument also makes $\phi_{0,AU}=\Res_{AU}(\phi_0)$ an $AU$-gate.

Write $\tau_0:P\to\bfN_{\bfH}(\bfL_0)/\bfL_0(k)$ for the homomorphism associated to $\phi_0$.
By initiality in the $AU$-category, there is a unique morphism
$$\alpha:\bfH/\bfL_0\longrightarrow\bfH/\bfL,$$
to the target of $\phi_{\BR}$, with $\phi_{\BR}=\alpha\circ\phi_{0,AU}$.
Since $\phi_{\BR}$ comes from the $W$-invariant map, its right $AU$-action is trivial, and the morphism identity gives
$$\alpha(h\tau_0(u))=\alpha(h),$$
for all $h\in\bfH/\bfL_0(k)$ and $u\in U$.

Suppose that $\bfL_0$ is nontrivial.
The invariance of this algebraic morphism extends to the Zariski closure of $\tau_0(U)$, and compatibility therefore gives
$$\alpha(h\tau_0(n))=\alpha(h),$$
for all $h\in\bfH/\bfL_0(k)$ and $n\in N$.
Consequently
$$\phi_{\BR}(xn)=\alpha(\phi_0(x)\tau_0(n))=\alpha(\phi_0(x))=\phi_{\BR}(x),$$
for each $n\in N$ and $[\widetilde{\mu}_{\BR}]$-almost every $x$.
Thus $\phi_{\BR}$ is $AN$-invariant.
Functoriality of transfer also gives
$$\Res_A(\phi_{\BMS})=\alpha\circ\Tr_U(\phi_{0,AU}).$$
Moreover, $\phi_{\BMS}$ is $w$-invariant, and its restriction $\Res_A(\phi_{\BMS})$ is the $U$-transfer of $\phi_{\BR}$.
If $\phi_{\BR}$ is essentially constant, its $\Gamma$-equivariance gives a fixed point of $\rho(\Gamma)$ on $\bfH/\bfL(k)$, contradicting Zariski density since $\bfL$ is proper.
The following lemma gives this constancy and completes the proof.
\end{proof}

\begin{lemma}\label{lem:linkedconstancy}
Suppose that $\phi$ is an object in $\AR_{AN}(X_+,[\widetilde{\mu}_{\BR}])$ with trivial right action.
Let
$$\phi_A=\Tr_U(\Res_{AU}(\phi)),$$
be its transfer to $\AR_A(X_0,[\widetilde{\mu}_{\BMS}])$.
If $\phi_A(xw)=\phi_A(x)$ for $[\widetilde{\mu}_{\BMS}]$-almost every $x$, then both $\phi$ and $\phi_A$ are essentially constant with the same value.
\end{lemma}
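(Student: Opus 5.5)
Since $\phi$ has trivial right $AN$-action, it is left $\Gamma$-equivariant and right $AN$-invariant. Because $AN$ acts simply transitively on the fibres of the positive endpoint map $g\mapsto g^+$ modulo $M$, I would first use this to show that $\phi$ factors through $G/MAN=\partial\bbH^n$ up to the compact $M$-coordinate. Concretely, write $g\mapsto(g^+,m)$ in a local trivialization of $G\to G/AN\cong K$. Then $\phi$ descends to a $\Gamma$-equivariant map $\bar\phi:K\to\bfH/\bfL(k)$, measurable for the class $[\nu\otimes dm_M]$. This uses the description $[\widetilde{\mu}_{\BR}]=[d\nu(x)\,dm_P]$ of Section~\ref{sec:PSBMSBR}.

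Next I would compute $\phi_A$ in Hopf coordinates. The defining identity in Proposition~\ref{prop:Tlinker} reads $\phi(zu)=\phi_A(z)$ for $\widetilde{\mu}_{\BMS}\otimes m_U$-almost every $(z,u)$, since $\tau$ is trivial. Right translation by $U$ fixes $z^+$, so $\phi_A(z)=\phi(z)$ depends only on $(z^+,m(z))$, through the same function $\bar\phi$. Here $U$-linking ensures that $\bar\phi$ is well defined on a $\nu$-conull set compatibly with the BMS class, whose $x$-marginal is also $[\nu]$. Thus, for $[\widetilde{\mu}_{\BMS}]$-almost every $z$,
$$\phi_A(z)=\bar\phi(z^+,m(z)),$$
so $\phi_A$ is a function of the forward endpoint and the frame coordinate only.

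I would then bring in the Weyl element. Right translation by $w$ exchanges $z^+$ and $z^-$, and it acts on the $M$-coordinate by a fixed automorphism, namely conjugation by $w$. The hypothesis $\phi_A(zw)=\phi_A(z)$ therefore says that $\phi_A$ is also a function of $(z^-,m')$. In the BMS class, the endpoints $(z^+,z^-)$ are distributed as $\nu\otimes\nu$ on $\Lambda_\Gamma^{(2)}$, i.e.\ as a product, with $dm_M$ independent of both. A function of $(z^+,m)$ that almost everywhere equals a function of $(z^-,m')$ must therefore be essentially independent of $z^+$ and $z^-$, by Fubini. This step is the main subtle point: I must check that the $M$-coordinates match up correctly under $w$. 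If they do not, I would first average over $M$, or note that the common function depends only on the $M$-coordinate, which $A$ preserves. The function is then essentially a function of $m$ alone, and right $M$-equivariance is trivial. Since $\phi$ is right $A$-invariant and $M$ commutes with $A$, it remains to show that the residual dependence on $m$ is constant.

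To finish, I would avoid fighting the $M$-coordinate directly. The transferred map $\phi_A$ is an $A$-algebraic representation of $(X_0,[\widetilde{\mu}_{\BMS}])$ with trivial $\tau$, so $\Gamma$-equivariance gives $\phi_A(\gamma z)=\rho(\gamma)\phi_A(z)$. If $\phi_A$ depends only on $m$, then almost every $\Gamma$-translate preserves its values. Alternatively, $\phi_A$ is $A$-invariant and $\Gamma$-equivariant, and by Proposition~\ref{prop:BMSgates} the $A$-space $\Gamma\backslash X_0$ has only trivial algebraic representations, via Theorem~\ref{thm:BMSmixing} and mixing. Either way, $\phi_A$ should be essentially constant, with value $h_0$.

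Finally I would transfer constancy back to $\phi$. The identity $\phi(zu)=\phi_A(z)=h_0$ holds for $\widetilde{\mu}_{\BMS}\otimes m_U$-almost every $(z,u)$. Then $U$-linking and the fact that $\phi$ factors through $\bar\phi$ on $(x,m)$ show that $\bar\phi=h_0$ on a $\nu\otimes dm_M$-conull set. Hence $\phi=h_0$ in $[\widetilde{\mu}_{\BR}]$, with the same value as $\phi_A$.
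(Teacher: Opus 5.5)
There is a genuine gap at the Weyl step, the one you flagged yourself, and none of your fallbacks closes it. In $\phi_A(z)=\bar\phi(z^+,m(z))$, your $M$-coordinate is the fiber coordinate of $\pi_K(z)$ over $z^+$ in $K\to K/M$. In that coordinate, right translation by $w$ is \emph{not} a fixed automorphism. With $\xi=z^+$ and $\xi'=z^-$, one has $\pi_K(zw)=r_{\xi-\xi'}\,\pi_K(z)\,r_{e_{n-1}}$. So if $\pi_K(z)=\sigma(\xi)m$ for a local section $\sigma$, the coordinate of $\pi_K(zw)$ over $\xi'$ is $\theta(\xi,\xi')\,r_{e_{n-1}}mr_{e_{n-1}}$, where $\theta(\xi,\xi')=\sigma(\xi')^{-1}r_{\xi-\xi'}\sigma(\xi)r_{e_{n-1}}\in M$ depends on the pair. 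A $w$-equivariant Hopf section would make $w$ act by conjugation, but then $\phi_A$ is no longer a function of $(z^+,m)$ alone. Writing $F_\xi=\bar\phi(\sigma(\xi)\,\cdot\,)$, the hypothesis gives only
$$F_\xi(m)=F_{\xi'}\bigl(\theta(\xi,\xi')\,r_{e_{n-1}}mr_{e_{n-1}}\bigr)$$
almost everywhere. That is, fiber functions over different endpoints agree up to a pair-dependent left translation in $M$. Fubini then yields neither independence from the endpoints nor constancy along the fibers. Averaging over $M$ only shows that the distributions $(F_\xi)_*m_M$ are essentially one $\rho(\Gamma)$-invariant probability measure, which discards exactly the information you need.

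The missing idea is to exploit this pair dependence, as the paper does. Compose the transport identities $\phi(q)=\phi(r_{\xi-\xi'}qr_{e_{n-1}})$ around a loop $\xi\to\xi'\to\xi''\to\xi'''\to\xi$ of typical limit points. The four right factors cancel, so $\phi$ on $\pi_\partial^{-1}(\xi)$ is invariant under left multiplication by $r_{\xi'''-\xi}r_{\xi''-\xi'''}r_{\xi'-\xi''}r_{\xi-\xi'}\in\Stab_K(\xi)$. The invariance group is a closed algebraic subgroup and these products are rational in $(\xi',\xi'',\xi''')$, so Zariski density of $\Lambda_\Gamma$ extends the invariance to all triples. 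Taking $\xi''=-\xi$ and $\xi',\xi'''\in\xi^\perp$ gives products of two reflections of $\xi^\perp$, which generate $\Stab_K(\xi)$, and this group acts transitively on the fiber. Only after this fiberwise constancy, $\phi(g)=c(g^+)$, does your Fubini argument apply: $c(x^-)=c(x^+)$ for $\nu\otimes\nu$-almost every pair, so $c$ is constant. In other words, endpoint dependence can be removed only after fiber dependence, not before.

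Your closing step also misuses Proposition~\ref{prop:BMSgates}. Algebraic triviality of the $A$-space $\Gamma\backslash X_0$ concerns maps equivariant for $\rho_{\mathrm{triv}}$, i.e.\ $\Gamma$-invariant maps, whereas $\phi_A$ is $\rho$-equivariant. An $A$-invariant $\rho$-equivariant map need not be constant; think of $g\mapsto\beta(g^+)$ for an equivariant boundary map $\beta$. Similarly, ``a function of $m$ alone'' is not a $\Gamma$-invariant notion: the fiber coordinate comes from a local section, and left translation by $\gamma$ shifts it by a point-dependent element of $M$.
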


The proof is related in spirit to \cite[\S4]{Winter}, however the objects and equivariances considered here are different enough that a new proof is needed.

\begin{proof}
Since $\phi$ is $AN$-invariant, it descends to a measurable map
$$\phi:X_+/AN\longrightarrow\bfV(k),$$
which we denote by the same symbol.
The Iwasawa decomposition identifies $G/AN$ with $K=\SO(n)$, and the boundary projection becomes $\pi_{\partial}:K\to K/M=\partial\bbH^n$.
Under this identification, $X_+/AN=\pi_{\partial}^{-1}(\Lambda_\Gamma)$.

The quotient of $[\widetilde{\mu}_{\BR}]$ on $K$ is represented by the probability measure $\overline{\mu}_{\BR}$ whose disintegration over $\pi_{\partial}$ is
$$\overline{\mu}_{\BR}=\int_{\partial\bbH^n}m_\xi\,d\nu(\xi),$$
where $\nu$ is our Patterson--Sullivan probability measure and $m_\xi$ is Haar probability on the fiber $\pi_{\partial}^{-1}(\xi)$.
We first show that, for $\nu$-almost every $\xi$, the map $\phi$ is $m_\xi$-almost everywhere constant on $\pi_{\partial}^{-1}(\xi)$.
We then use Weyl invariance of $\phi_A$ to show that these fiber values agree $\nu$-almost everywhere, completing the proof.

Take $e=e_n$, so that $M=\Stab_K(e)$ and $\pi_{\partial}(q)=qe$.
Then for $v\ne0$, the orthogonal reflection in $v^\perp$ has matrix representative
$$r_v=I-2\frac{vv^{\sfT}}{v\cdot v}.$$
For the fixed Weyl representative, we have $w=r_er_{e_{n-1}}$.
Let $\pi_K:G\to K$ denote the Iwasawa projection.
If $\xi=g^+$, $\xi'=g^-$, and $q=\pi_K(g)$, then right translation by $w$ gives $\pi_K(gw)=r_{\xi-\xi'}qr_{e_{n-1}}$.
Here $r_{\xi-\xi'}\xi=\xi'$, and the two reflections ensure that the right-hand side has determinant $1$ and hence belongs to $K$.

For distinct $\xi,\xi'\in\partial\bbH^n$, we therefore have a transport map between their corresponding fibers over $\pi_{\partial}$ given by
\begin{align*}
T_{\xi,\xi'}:\pi_{\partial}^{-1}(\xi)&\longrightarrow\pi_{\partial}^{-1}(\xi'),\\
q&\longmapsto r_{\xi-\xi'}qr_{e_{n-1}}.
\end{align*}
The pushforward along this map takes $m_\xi$ to $m_{\xi'}$.
Since $\phi$ has trivial right action, the $U$-linking identity defining its transfer is $\phi(xu)=\phi_A(x)$ for $\widetilde{\mu}_{\BMS}\otimes m_U$-almost every $(x,u)$, where $m_U$ is Haar measure on $U$.
The $U$-linking condition allows us to substitute the identity $\phi(g)=\phi(\pi_K(g))$, which holds $[\widetilde{\mu}_{\BR}]$-almost everywhere, into this equality.
Since $U<AN$, we have $\pi_K(xu)=\pi_K(x)$, and hence
$$\phi_A(x)=\phi(xu)=\phi(\pi_K(xu))=\phi(\pi_K(x)),$$
for $\widetilde{\mu}_{\BMS}\otimes m_U$-almost every $(x,u)$.
Thus $\phi_A(x)=\phi(\pi_K(x))$ for $[\widetilde{\mu}_{\BMS}]$-almost every $x$.
Since the measure class under consideration is $[\nu\otimes\nu]$, Weyl invariance of $\phi_A$ and nonsingularity of the Weyl action now give the transport invariance
\begin{equation}\label{eqn:weyltransport}
\phi(q)=\phi(T_{\xi,\xi'}(q)),
\end{equation}
for $\nu\otimes\nu$-almost every $(\xi,\xi')$ and $m_\xi$-almost every $q$.

Compose these identities around a loop $\xi\to \xi'\to \xi''\to \xi'''\to \xi$ of typical points to conclude that
$$\phi(q)=\phi(r_{\xi'''-\xi}r_{\xi''-\xi'''}r_{\xi'-\xi''}r_{\xi-\xi'}q),$$
for $\nu^{\otimes4}$-almost every tuple $(\xi,\xi',\xi'',\xi''')$ and $m_\xi$-almost every $q$.
Consequently
$$r_{\xi'''-\xi}r_{\xi''-\xi'''}r_{\xi'-\xi''}r_{\xi-\xi'}\in\Stab_K(\xi),$$
for $\nu^{\otimes4}$-almost every tuple $(\xi,\xi',\xi'',\xi''')$.
In particular
$$L_\xi=\left\{h\in\Stab_K(\xi)\mid \phi(hq)=\phi(q)\text{ for }m_\xi\text{-a.e. }q\right\},$$
is a closed real algebraic subgroup of $\Stab_K(\xi)$, for which $r_{\xi'''-\xi}r_{\xi''-\xi'''}r_{\xi'-\xi''}r_{\xi-\xi'}\in L_\xi$ for $\nu^{\otimes3}$-almost every triple $(\xi',\xi'',\xi''')$.
Note that for fixed $\xi$ the entries of the matrix $r_{\xi'''-\xi}r_{\xi''-\xi'''}r_{\xi'-\xi''}r_{\xi-\xi'}$ are rational functions in $(\xi',\xi'',\xi''')$.
In particular, substituting them into any polynomial equation defining $L_\xi$ and clearing denominators gives a polynomial vanishing $\nu^{\otimes3}$-almost everywhere.
Therefore the polynomial vanishes on $(\supp\nu)^3$.
As $\supp\nu=\Lambda_\Gamma$ is Zariski dense in $\partial\bbH^n$, this polynomial therefore vanishes identically and hence $r_{\xi'''-\xi}r_{\xi''-\xi'''}r_{\xi'-\xi''}r_{\xi-\xi'}\in L_\xi$ for all pairwise distinct triples $(\xi',\xi'',\xi''')$.
Take $\xi''=-\xi$ and choose $\xi',\xi'''$ in the unit sphere of $\xi^\perp$.
The resulting products restrict to products of two orthogonal reflections on $\xi^\perp$, which generate $\SO(\xi^\perp)=\Stab_K(\xi)$.
Thus $L_\xi=\Stab_K(\xi)$, and $\phi$ is $m_\xi$-almost everywhere constant on a typical fiber.

Finally, it remains to show that the values of $\phi$ on these fibers agree.
By the preceding fiberwise constancy, there is a measurable map $c:\Lambda_\Gamma\to\bfV(k)$ such that $\phi(g)=c(g^+)$ for $[\widetilde{\mu}_{\BR}]$-almost every $g$.
The defining transfer identity and $U$-linking then give
$$\phi_A(x)=\phi(xu)=c((xu)^+)=c(x^+),$$
for $\widetilde{\mu}_{\BMS}\otimes m_U$-almost every $(x,u)$, since $U<N$ preserves the positive endpoint.
Weyl invariance of $\phi_A$ and nonsingularity of the Weyl action on $X_0$ now give
$$c(x^-)=c((xw)^+)=\phi_A(xw)=\phi_A(x)=c(x^+).$$
By the Hopf-coordinate description of $[\widetilde{\mu}_{\BMS}]$, it follows that $c(\xi)=c(\xi')$ for $\nu\otimes\nu$-almost every $(\xi,\xi')$.
Fubini's theorem therefore makes $c$ essentially constant and the identities above then show that $\phi$ and $\phi_A$ are essentially constant with the same value.
\end{proof}

\subsection{The proof of Theorems~\ref{thm:superrigid} and \ref{thm:arithmetic}}\label{sec:superrigidityproof}

We now prove Theorem~\ref{thm:superrigid}, which we restate for the reader's convenience.

\superrigidity*

\begin{proof}[Proof of Theorem~\ref{thm:superrigid}]
By the hypotheses, Corollary~\ref{cor:Winitial}, and Theorem~\ref{thm:stabtrivial}, the category $\AR_P(X_+,[\widetilde{\mu}_{\BR}])$ has a $P$-gate with trivial stabilizer.
Explicitly, we have a $P$-algebraic representation
$$\phi_0:X_+\longrightarrow\bfH(k),$$
which is a $[\widetilde{\mu}_{\BR}]$-measurable $\Gamma\times P$-map with associated continuous homomorphism $\tau_0:P\to\bfH(k)$.

Suppose first that $k$ is non-archimedean.
Since $P$ is connected and $\bfH(k)$ is totally disconnected, $\tau_0$ is trivial.
The restricted map $\phi_{0,AU}=\Res_{AU}(\phi_0)$ gives rise to a morphism $\alpha$ with $\phi_{\BR}=\alpha\circ\phi_{0,AU}$.
Therefore $\phi_{\BR}$ is $AN$-invariant.
Its $U$-transfer is $\phi_A=\Res_A(\phi_{\BMS})$, which is $w$-invariant, and therefore Lemma~\ref{lem:linkedconstancy} shows that $\phi_{\BR}$ is essentially constant.
Since its target is $\bfH/\bfL(k)$ with $\bfL$ proper, this contradicts Zariski density of $\rho(\Gamma)$ as it implies that $\rho(\Gamma)$ is contained in a conjugate of $\bfL$.
We may therefore assume that $k$ is archimedean.

Put $\phi_{0,AU}=\Res_{AU}(\phi_0)$, which is an $AU$-gate.
Proposition~\ref{prop:BMSBRlink} and compatibility of transfer give
$$\phi_0^{\BMS}=\Tr_{MN}(\phi_0)
=\Tr_U(\phi_{0,AU}),$$
which is an $A$-algebraic representation in $\AR_A(X_0,[\widetilde{\mu}_{\BMS}])$ with associated continuous homomorphism $\tau_0^{\BMS}=\tau_0\vert_A$.
Moreover, $\phi_0^{\BMS}$ is an $A$-gate.
Indeed, the morphism from an $A$-gate to this object forces the gate's stabilizer to be trivial and is therefore an isomorphism.
Hence Theorem~\ref{thm:BFnormal} extends it to an $N(A)$-gate with the same map and associated extended homomorphism which we continue to denote by $\tau_0^{\BMS}$.
In particular,
$$\phi_0^{\BMS}(xw)=\phi_0^{\BMS}(x)\tau_0^{\BMS}(w),$$
where $w$ is the fixed Weyl element.
The restriction of $\tau_0^{\BMS}$ to $MA$ agrees with $\tau_0\vert_{MA}$ by cancellation.
Keeping $S=A$, compatibility of transfer gives $\Tr_{MN}(\phi_0)=\Tr_P(\phi_0)$ in $\AR_A(X_0,[\widetilde{\mu}_{\BMS}])$.
We may therefore use the construction of $\Tr_P$ to choose a common representative of $\phi_0$ and $\phi_0^{\BMS}$ which is exactly $P$-equivariant on a $P$-invariant domain conull for both $[\widetilde{\mu}_{\BMS}]$ and $[\widetilde{\mu}_{\BR}]$.

Fix a typical $P$-orbit $g_0P$, with $g_0\in G$.
By Fubini, we may choose this orbit so that the Weyl identity holds for Patterson--Sullivan-almost every opposite endpoint and almost every $MA$-parameter.
$P$-equivariance and continuity then extend it to every $MA$-parameter.
Since $G$ has rank $1$, its open Bruhat cell is $NwP$.
We work on its translate $g_0NwP$.
Define the map $\Phi_0:g_0NwP\to\bfH(k)$ via
\begin{equation}\label{eqn:BRextension}
\Phi_0(g_0nwp)=\phi_0(g_0)\tau_0(n)\tau_0^{\BMS}(w)\tau_0(p).
\end{equation}
Then the preceding identity and $P$-equivariance show that this map agrees with $\phi_0$ $[\widetilde{\mu}_{\BR}]$-almost everywhere since
\begin{align*}
\phi_0(g_0nwp)&=\phi_0^{\BMS}(g_0nw)\tau_0(p),\\
&=\phi_0^{\BMS}(g_0n)\tau_0^{\BMS}(w)\tau_0(p),\\
&=\phi_0(g_0)\tau_0(n)\tau_0^{\BMS}(w)\tau_0(p).
\end{align*}
Since $k$ is archimedean, the continuous homomorphism $\tau_0$ is real analytic.
Equation~\eqref{eqn:BRextension} therefore defines a real-analytic map on $g_0NwP$.
The $P$-transfer is precisely $\phi_0^{\BMS}$, so
$$\Phi_0(xw)=\Phi_0(x)\tau_0^{\BMS}(w),$$
holds BMS-almost everywhere.

We now apply real-analytic uniqueness successively in the two endpoint variables.
Indeed, a nonzero real-analytic function on a boundary chart has zero set of dimension at most $n-2$, while the Patterson--Sullivan measure has dimension $\delta_\Gamma>n-2$.
The Weyl identity therefore holds wherever both sides are defined, and in particular Haar-almost everywhere.

It remains to check that $\tau_0$ and $\tau_0^{\BMS}(w)$ respect the relations among $P$ and $w$.
For each such relation, we evaluate the Weyl identity together with right $P$-equivariance away from the finitely many translates of the complement of $g_0NwP$.
Canceling the common factor $\Phi_0(x)$ then gives the corresponding relation in $\bfH(k)$.
Since $G=\langle P,w\rangle$, they therefore define a homomorphism $\tau:G\to\bfH(k)$ extending $\tau_0$ and satisfying $\tau(w)=\tau_0^{\BMS}(w)$.
The Bruhat formula $\tau(nwp)=\tau_0(n)\tau_0^{\BMS}(w)\tau_0(p)$ makes $\tau$ continuous on $NwP$, hence everywhere.
Writing $b=\phi_0(g_0)\tau(g_0)^{-1}$, Equation~\eqref{eqn:BRextension} gives
$$\Phi_0(x)=\phi_0(g_0)\tau(g_0^{-1}x)=b\tau(x),$$
on the translated Bruhat cell.
Finally, for each $\gamma\in\Gamma$, choose a $[\widetilde{\mu}_{\BR}]$-typical $x$ so that $\Phi_0$ agrees with $\phi_0$ at both $x$ and $\gamma x$.
The $\Gamma$-equivariance of $\phi_0$ then gives
$$b\tau(\gamma)\tau(x)=\Phi_0(\gamma x)=\rho(\gamma)\Phi_0(x)=\rho(\gamma)b\tau(x).$$
Thus $\widetilde{\rho}(g)=b\tau(g)b^{-1}$ is a continuous extension of $\rho$.
\end{proof}

We now complete the proof of arithmeticity, restating the theorem for the reader's convenience.

\arithmeticity*

\begin{proof}[Proof of Theorem~\ref{thm:arithmetic}]
After passing to an infinite subsequence, we may assume that the maximal submanifolds have constant dimension $m$.
The trace field is a number field by Section~\ref{sec:arithmeticitycriterion}.
For each unbounded local representation in the arithmeticity criterion, the suspension construction of Section~\ref{sec:suspension} and Theorem~\ref{thm:measurerigid} give the equivariant map required by Theorem~\ref{thm:superrigid}.
The natural target groups satisfy the compatibility condition by \cite[Lem~3.5]{BFMS}, so Theorem~\ref{thm:superrigid}, applied with $W=W_m$, shows that each such representation is superrigid.
The required measure rigidity is supplied by Theorem~\ref{thm:higherdimensionhaar} for $m\ge3$ and Theorem~\ref{thm:surfacehaar} for $m=2$.
Margulis' criterion, Theorem~\ref{thm:marguliscriterion}, then implies that $\Gamma$ is contained in an arithmetic lattice $\Lambda$.

It remains to find a torsion-free finite-index subgroup of $\Lambda$ containing $\Gamma$.
The compact totally geodesic boundary of $\core(Y_\infty)$ implies that $\Lambda$ is of simplest type by \cite[Rem~2.2]{FLMS}.
Since $\Gamma$ is convex cocompact, the virtual-retraction theorem of Bergeron, Haglund, and Wise \cite{BHW}, together with work of Kolpakov, Reid, and Slavich \cite[\S6]{KRS} shows that its closure in the profinite completion $\widehat{\Lambda}$ is torsion-free.
Compactness then separates this closure from the finitely many profinite conjugacy classes represented by nontrivial torsion elements of $\Lambda$.
As in \cite[\S7.1]{KRS}, this gives a torsion-free finite-index subgroup $\Lambda'<\Lambda$ containing $\Gamma$.
Thus $Y_\infty$ covers the arithmetic manifold $\Lambda'\backslash\bbH^n$.
\end{proof}

\section{Examples of Theorem~\ref{thm:arithmetic} and counterexamples to its weakened form}

In this section, we give some well-known constructions of infinitely many commensurability classes of hyperbolic $n$-manifolds satisfying Theorem~\ref{thm:arithmetic}.
We then give infinitely many commensurability classes of convex cocompact manifolds without Fuchsian ends that satisfy all the hypotheses of Theorem~\ref{thm:arithmetic} except the Fuchsian ends hypothesis, but not its conclusion.
In particular, this proves Theorem~\ref{thm:counterexamples} and gives examples of manifolds showing that the conclusion can fail when the Fuchsian ends hypothesis is omitted.

\begin{ex}[Examples of manifolds satisfying Theorem~\ref{thm:arithmetic}]\label{ex:arithmeticpieces}
Fix $n\ge 3$ and let $Y=\Lambda\backslash\bbH^n$ be a closed arithmetic hyperbolic manifold containing a compact embedded separating geodesic hypersurface $\Sigma$.
Cutting $Y$ along $\Sigma$ gives two compact hyperbolic manifolds $Q_1$ and $Q_2$ with totally geodesic boundary and a splitting $Y=Q_1\cup_\Sigma Q_2$ whose fundamental group has $\Lambda=\Gamma_1*_{\Delta}\Gamma_2$, where $\Gamma_i=\pi_1(Q_i)$ and $\Delta=\pi_1(\Sigma)$.
Each $\Gamma_i$ is Zariski dense in $G$ by Borel density \cite[\S0.2,~\S1.7]{GPS}.
Consequently, the infinite-volume manifold $Y_{\infty,i}=\Gamma_i\backslash\bbH^n$ is convex cocompact with Fuchsian ends, its convex core is naturally isometric to $Q_i$, and it covers the arithmetic manifold $Y$.
Moreover, the arithmeticity of $Y$ implies that $\core(Y_{\infty,i})$ meets infinitely many distinct maximal infinite-volume geodesic submanifolds by the arithmetic geodesic submanifold dichotomy \cite[p.~3592]{FLMS} and the finiteness result \cite[Thm~1.7(3)]{LO}.
In dimension three, the existence of infinitely many properly immersed geodesic planes in such arithmetic covers is also recorded in \cite[Thm~1.11]{MOIsolation}.

These pieces are the basic building blocks in two standard constructions of nonarithmetic manifolds.
For the Gromov--Piatetski--Shapiro construction, one begins with noncommensurable arithmetic manifolds $Y^{(1)}$ and $Y^{(2)}$ containing pieces $Q_1$ and $Q_2$ whose boundaries are isometric.
If $f:\partial Q_1\to\partial Q_2$ is an isometry, then $Y_{\mathrm{GPS}}=Q_1\cup_fQ_2$ is a closed nonarithmetic hyperbolic manifold \cite{GPS}.
Thus $\pi_1(Q_i)\backslash\bbH^n$ covers both its original arithmetic manifold $Y^{(i)}$ and the nonarithmetic manifold $Y_{\mathrm{GPS}}$.
The same observation applies to the pieces with two isometric boundary components in \cite[\S2.9]{GPS}.
By cyclically gluing varying numbers of copies of two such pieces from noncommensurable arithmetic manifolds, we obtain nonarithmetic manifolds of unbounded volume.
Keeping one piece fixed shows that its associated convex cocompact manifold covers infinitely many nonisometric nonarithmetic manifolds.

The construction of Agol and Belolipetsky--Thomson instead starts with an appropriate compact piece $Q$ cut from a single arithmetic manifold and forms its double $Y_{\mathrm{BT}}=DQ$.
The piece may be chosen so that $Y_{\mathrm{BT}}$ is nonarithmetic \cite{Agol3,BelThom}.
Once again, if $\Gamma=\pi_1(Q)$, then $Y_\infty=\Gamma\backslash\bbH^n$ covers both the original arithmetic manifold and the resulting nonarithmetic double.

Although these examples show that $Y_\infty$ can cover finite-volume manifolds from distinct commensurability classes, at most one of these classes can be arithmetic.
Indeed, suppose that $Y_\infty=\Gamma\backslash\bbH^n$ covers arithmetic manifolds $\Lambda_1\backslash\bbH^n$ and $\Lambda_2\backslash\bbH^n$.
After conjugating, we have $\Gamma<\Lambda_1\cap\Lambda_2$.
Since $\Gamma$ is Zariski dense, the intersection $\Lambda_1\cap\Lambda_2$ is Zariski dense.
The commensurability criterion of Gromov and Piatetski--Shapiro \cite[\S1.6]{GPS} then implies that $\Lambda_1\cap\Lambda_2$ has finite index in both $\Lambda_1$ and $\Lambda_2$.
Hence the two arithmetic manifolds are commensurable.

Finally, these constructions give infinitely many commensurability classes in every dimension $n\ge 3$, as there are infinitely many commensurability classes of arithmetic manifolds satisfying the initial hypothesis.
Indeed, if two of the resulting convex cocompact manifolds were commensurable, a common finite-index subgroup of their fundamental groups would be Zariski dense and contained in both arithmetic lattices.
The same commensurability criterion would force the arithmetic lattices to be commensurable, a contradiction.
\end{ex}

\begin{ex}[Examples of manifolds in Theorem~\ref{thm:counterexamples}]\label{ex:counterexample}
For every $n\ge 3$, we now construct infinitely many commensurability classes of infinite-volume convex cocompact hyperbolic $n$-manifolds with infinitely many hypersurfaces meeting the convex core which do not cover any arithmetic manifold.
In particular, these examples cannot have Fuchsian ends by Theorem~\ref{thm:arithmetic}.

Begin with the splitting $\Lambda=\Gamma_1*_{\Delta}\Gamma_2$ from Example~\ref{ex:arithmeticpieces} and choose a nontrivial element $\gamma\in\Gamma_2$ which is not conjugate into $\Delta$.
A combination theorem of Gitik \cite{Gitik} shows that for all sufficiently large $r$, the subgroup $\langle\Gamma_1,\gamma^r\rangle\cong\Gamma_1*\langle\gamma^r\rangle$ is quasiconvex in $\Lambda$.
The fixed points $\gamma^\pm$ lie outside $\Lambda_{\Gamma_1}$ and in distinct $\Gamma_1$-orbits.
Choose round neighborhoods $B^\pm$ of these points whose closures lie in $S^{n-1}\ssm\Lambda_{\Gamma_1}$ and whose $\Gamma_1$-translates have pairwise disjoint closures.
Increasing $r$ if necessary, we may arrange
$$\gamma^r(S^{n-1}\ssm B^-)\Subset B^+,
\qquad \gamma^{-r}(S^{n-1}\ssm B^+)\Subset B^-.$$
Convex cocompactness is open so there is a neighborhood $V$ of $\gamma^r$ such that $\langle\Gamma_1,g\rangle\cong\Gamma_1*\langle g\rangle$ is discrete, torsion-free, and convex cocompact for every $g\in V$.
Shrinking $V$ if necessary, we may also assume that the two strict inclusions above hold with $g$ in place of $\gamma^r$.
The commensurator $\Comm_G(\Lambda)$ is countable so we may choose $g\in V\ssm\Comm_G(\Lambda)$ and put $\Gamma_g=\langle\Gamma_1,g\rangle$.
Then $Y_g=\Gamma_g\backslash\bbH^n$ is an infinite-volume convex cocompact hyperbolic $n$-manifold.
The group $\Gamma_g$ is Zariski dense since it contains $\Gamma_1$ and moreover $\delta_{\Gamma_g}\ge\delta_{\Gamma_1}>n-2$ by work of Dal'bo--Otal--Peign\'e \cite[Prop~2]{DOP}.

The strict ping-pong inclusions and the reduced-word description of $\Lambda_{\Gamma_g}$ give a neighborhood of $\partial B^+$ disjoint from $\Lambda_{\Gamma_g}$.
Moreover, $\partial B^+$ separates $g^+$ from $\Lambda_{\Gamma_1}$, where $g^+$ is the attracting fixed point of $g$.
Thus the hyperplane bounded by $\partial B^+$ meets $D_{\Gamma_g}$.
The same is true for every round sphere in a sufficiently small open family about $\partial B^+$ and all these spheres remain disjoint from $\Lambda_{\Gamma_g}$.
If $C$ is one of the corresponding hyperplanes then $C\to Y_g$ is proper.
Indeed, otherwise there are points $p_j\in C$ escaping to infinity and elements $\alpha_j\in\Gamma_g$ such that $\alpha_jp_j$ remain in a compact set.
For a fixed basepoint $o$, the distances $d(p_j,\alpha_j^{-1}o)$ are bounded, giving a point of $\partial_\infty C\cap\Lambda_{\Gamma_g}$, a contradiction.
The stabilizer of $C$ is trivial since every nontrivial element of $\Gamma_g$ is loxodromic.
These hyperplanes therefore give properly immersed infinite-volume hypersurfaces meeting $\core(Y_g)$.
Since the family is uncountable and $\Gamma_g$ is countable, infinitely many of the resulting hypersurfaces are distinct.

Suppose that $\Gamma_g$ is contained in an arithmetic lattice $\Lambda'$.
Since $\Gamma_1<\Lambda\cap\Lambda'$ is Zariski dense, the commensurability criterion \cite[\S1.6]{GPS} implies that $\Lambda$ and $\Lambda'$ are commensurable.
Every element of $\Lambda'$ therefore belongs to $\Comm_G(\Lambda)$, contradicting the choice of $g$.
Thus $Y_g$ does not cover an arithmetic manifold and Theorem~\ref{thm:arithmetic} therefore implies that it does not have Fuchsian ends.

To obtain infinitely many commensurability classes, vary the initial arithmetic manifold over pairwise noncommensurable choices as in Example~\ref{ex:arithmeticpieces}.
The group $\Gamma_1$ is one-ended since its limit set is the connected complement in $S^{n-1}$ of the disjoint open balls corresponding to the boundary lifts of $Q_1$.
If two resulting groups $\Gamma_g$ and $\Gamma_{g'}$ were commensurable, then, after conjugating, a finite-index subgroup of $\Gamma_1$ would be contained in $\Gamma_{g'}=\Gamma'_1*\langle g'\rangle$.
This subgroup is still one-ended, so the Kurosh subgroup theorem implies that it is contained in a conjugate of $\Gamma'_1$.
The initial arithmetic lattices would then have a Zariski dense intersection after conjugation and hence would be commensurable, again by \cite[\S1.6]{GPS}, giving the requisite contradiction.
\end{ex}

\appendix

\section{The simplifications in codimension one when \texorpdfstring{$n\ge 4$}{n is at least 4}}\label{sec:hypersurfaceappendix}

In this appendix, we explain how the proof in Section~\ref{sec:higherdimensionabsolute} simplifies when the original submanifolds are hypersurfaces.
We retain the notation of the main text, set $m=n-1$, and assume $n\ge4$ throughout.
The limiting current is non-atomic by Section~\ref{sec:limitingmeasure}, and Corollary~\ref{cor:twocrossings} shows that it is nonzero and almost every hyperplane crosses two boundary lifts.
For hypersurfaces, we can see the latter assertion directly.
The hyperplanes crossing at most one boundary lift form a proper closed $\Gamma$-invariant subset of $\calC^*$, so the closed-or-dense dichotomy in Corollary~\ref{cor:hyperplaneclosures} shows that all their orbits are closed.
There are only countably many closed orbits, each of which is countable, so non-atomicity gives $\eta_\infty(\calC^*\ssm\calU_\partial)=0$.

The exceptional sets from the boundary disintegration are now countable unions of curves.
We will show that the exceptional sets associated with two distinct boundary lifts have countable intersection, so non-atomicity suffices to show that they have measure zero.

\subsection{The scalar angle parameter}

Here $W=W_{n-1}=\Stab_G(e_1)$ and $\calC=G/W$ is the space of oriented hyperplanes.
For a boundary lift $Z$ with unit normal $n_Z$, set
$$H_Z=\SO_0(V_Z),\qquad \Delta_Z=\Gamma\cap H_Z.$$
Choose a unit vector $u_Z\in V_Z$ with $\langle u_Z,u_Z\rangle >0$ and put
$$p_a=a n_Z+\sqrt{1-a^2}u_Z,\qquad C_a=p_a^\perp\cap\bbH^n,$$
for $a\in\bbB^1=(-1,1)$.
Then
\begin{align*}
H_Z/J\times(-1,1)&\longrightarrow U_Z,\\
(hJ,a)&\longmapsto hC_a,
\end{align*}
is the identification from Equation~\eqref{eqn:equivariant}, with $J\cong\SO_0(n-2,1)$.
If $dm_Z$ is invariant measure on $H_Z/J$, then, up to normalization,
$$d\lambda=(1-a^2)^{(n-2)/2}\,dm_Z\,da.$$
In particular, the smooth measure class on $U_Z$ is the product of the Haar measure class on $H_Z/J$ and Lebesgue measure on $(-1,1)$.

For a compact interval $I\subset(-1,1)$, Lemma~\ref{lem:banana} gives a finite measure on $(\Delta_Z\backslash H_Z)\times I$ by lifting along the $J$-fibers and then passing to the quotient by $\Delta_Z$.
Disintegrating over $I$ gives $J$-invariant probability measures.
Since $n\ge4$, the group $J$ is generated by unipotents, so Ratner's theorem applies to their ergodic components.
However, the only possible noncompact factors are now $J$ and $H_Z$.
In particular, there are no intermediate orbit closures.
Consequently each component is either Haar on the full frame bundle over the boundary or projects to a closed totally geodesic hypersurface of the boundary component.

\subsection{Eliminating the exceptional families}

Let $B\subset Z$ be a lift of one of these closed boundary hypersurfaces, whose linear span $V_B$ is indefinite of dimension $n-1$.
The corresponding exceptional planes lie in the family
$$\calP_{Z,B}=\{C\in U_Z\mid C\cap Z=B\}.$$
For each choice of orientation, this is a one-dimensional family parametrized by the angle.
We write $\calS_Z$ for the union of all these exceptional families.

Let $Z'$ be a second distinct boundary lift, and let $B'\subset Z'$ be one of its exceptional subspaces.
The linear span of a hyperplane belonging to both families must contain $V_B+V_{B'}$, which has dimension at least $n$.
Consequently there is at most one underlying hyperplane containing them both, with two choices of orientation.
Taking the union over the countably many pairs $(B,B')$ therefore shows that $\calS_Z\cap\calS_{Z'}$ is countable.
Since $\eta_\infty$ is non-atomic, this intersection has measure zero.

We now compute the measure of $\calS_Z\cap U_{Z'}$ using the boundary disintegration at $Z'$.
The Haar part gives this set measure zero.
Indeed, at each fixed angle with $Z'$, the homogeneous orbit has dimension $n-1\ge3$, whereas each family in $\calS_Z$ has dimension one.
The exceptional part is supported on $\calS_{Z'}$, and we have just shown that $\calS_Z\cap\calS_{Z'}$ has measure zero.
It follows that
$$\eta_\infty(\calS_Z\cap U_{Z'})=0.$$
However, $\eta_\infty$-almost every hyperplane in $\calS_Z$ crosses another boundary lift and there are only countably many such lifts, so the preceding equality gives $\eta_\infty(\calS_Z)=0$.
Repeating the argument for every $Z$ shows that all exceptional components have measure zero.

The boundary conditional measures are therefore Haar, so $\eta_\infty$ is $H_Z$-invariant on each $U_Z$.
The groups associated with two distinct boundary lifts generate $G$ and the local invariance argument in Section~\ref{sec:absolutecontinuityhigher} gives $\eta_\infty=b\lambda$ for some $b>0$.
With $c=b^{-1}$, this proves Theorem~\ref{thm:higherdimensionhaar} in codimension one.
The shorter proof is possible because two exceptional families have finite intersection, to which a non-atomic current gives measure zero.
In higher codimension these intersections need not be discrete, which is why we used the algebraic sets and the properness argument in Section~\ref{sec:intermediateclosures}.

\bibliographystyle{abbrv}
\bibliography{infinitevol}

@misc{BF,
  author        = {Bader, Uri and Furman, Alex},
  title         = {{Margulis}' super-rigidity theorem for non-lattices},
  year          = {2018},
  eprint        = {1810.01608},
  archivePrefix = {arXiv},
  primaryClass  = {math.GR},
  doi           = {10.48550/arXiv.1810.01608},
  url           = {https://arxiv.org/abs/1810.01608v1},
  note          = {Preprint, \href{https://arxiv.org/abs/1810.01608v1}{arXiv:1810.01608}}
}

@incollection{BF2,
  author        = {Bader, Uri and Furman, Alex},
  title         = {An extension of {Margulis}'s superrigidity theorem},
  booktitle     = {Dynamics, Geometry, Number Theory: The Impact of {Margulis} on Modern Mathematics},
  editor        = {Fisher, David and Kleinbock, Dmitry and Soifer, Gregory},
  publisher     = {University of Chicago Press},
  address       = {Chicago},
  year          = {2022},
  pages         = {47--65},
  doi           = {10.7208/chicago/9780226804163-003},
  url           = {https://doi.org/10.7208/chicago/9780226804163-003}
}

@book{MattilaGSMES,
  author = {Mattila, Pertti},
  title = {Geometry of Sets and Measures in {Euclidean} Spaces: Fractals and Rectifiability},
  series = {Cambridge Studies in Advanced Mathematics},
  volume = {44},
  publisher = {Cambridge University Press},
  address = {Cambridge},
  year = {1995}
}

@article{MM22,
  author  = {Margulis, Gregori and Mohammadi, Amir},
  title   = {Arithmeticity of hyperbolic $3$-manifolds containing infinitely many totally geodesic surfaces},
  journal = {Ergodic Theory Dynam. Systems},
  volume  = {42},
  number  = {3},
  pages   = {1188--1219},
  year    = {2022},
  doi     = {10.1017/etds.2021.21}
}

@article{BaldiUllmo,
  author  = {Baldi, Gregorio and Ullmo, Emmanuel},
  title   = {Special subvarieties of non-arithmetic ball quotients and {H}odge theory},
  journal = {Ann. of Math. (2)},
  volume  = {197},
  number  = {1},
  pages   = {159--220},
  year    = {2023},
  doi     = {10.4007/annals.2023.197.1.3}
}

@article{BFMS,
  author        = {Bader, Uri and Fisher, David and Miller, Nicholas and Stover, Matthew},
  title         = {Arithmeticity, superrigidity, and totally geodesic submanifolds},
  journal       = {Ann. of Math. (2)},
  volume        = {193},
  number        = {3},
  year          = {2021},
  pages         = {837--861},
  doi           = {10.4007/annals.2021.193.3.4},
  url           = {https://annals.math.princeton.edu/2021/193-3/p04}
}

@misc{BFmetric,
  author        = {Bader, Uri and Furman, Alex},
  title         = {Some ergodic properties of metrics on hyperbolic groups},
  year          = {2017},
  eprint        = {1707.02020},
  archivePrefix = {arXiv},
  primaryClass  = {math.DS},
  doi           = {10.48550/arXiv.1707.02020},
  url           = {https://arxiv.org/abs/1707.02020v3},
  note          = {Preprint, \href{https://arxiv.org/abs/1707.02020v3}{arXiv:1707.02020v3}}
}

@article{MS,
  author        = {Maucourant, Fran{\c{c}}ois and Schapira, Barbara},
  title         = {On topological and measurable dynamics of unipotent frame flows for hyperbolic manifolds},
  journal       = {Duke Math. J.},
  volume        = {168},
  number        = {4},
  year          = {2019},
  pages         = {697--747},
  doi           = {10.1215/00127094-2018-0050},
  url           = {https://doi.org/10.1215/00127094-2018-0050}
}

@article{Winter,
  author        = {Winter, Dale},
  title         = {Mixing of frame flow for rank one locally symmetric spaces and measure classification},
  journal       = {Israel J. Math.},
  volume        = {210},
  number        = {1},
  year          = {2015},
  pages         = {467--507},
  doi           = {10.1007/s11856-015-1258-5},
  url           = {https://doi.org/10.1007/s11856-015-1258-5}
}

@article{BDL,
  author        = {Bader, Uri and Duchesne, Bruno and L{\'e}cureux, Jean},
  title         = {Almost algebraic actions of algebraic groups and applications to algebraic representations},
  journal       = {Groups Geom. Dyn.},
  volume        = {11},
  number        = {2},
  year          = {2017},
  pages         = {705--738},
  doi           = {10.4171/GGD/413},
  url           = {https://doi.org/10.4171/GGD/413}
}

@article{BG,
  author        = {Bader, Uri and Gelander, Tsachik},
  title         = {Equicontinuous actions of semisimple groups},
  journal       = {Groups Geom. Dyn.},
  volume        = {11},
  number        = {3},
  year          = {2017},
  pages         = {1003--1039},
  doi           = {10.4171/GGD/420},
  url           = {https://doi.org/10.4171/GGD/420}
}

@article{HoweMoore,
  author        = {Howe, Roger E. and Moore, Calvin C.},
  title         = {Asymptotic properties of unitary representations},
  journal       = {J. Funct. Anal.},
  volume        = {32},
  number        = {1},
  year          = {1979},
  pages         = {72--96},
  doi           = {10.1016/0022-1236(79)90078-8},
  url           = {https://doi.org/10.1016/0022-1236(79)90078-8}
}

@book{Zimmer,
  author        = {Zimmer, Robert J.},
  title         = {Ergodic Theory and Semisimple Groups},
  series        = {Monographs in Mathematics},
  volume        = {81},
  publisher     = {Birkh{\"a}user},
  address       = {Boston, MA},
  year          = {1984},
  doi           = {10.1007/978-1-4684-9488-4},
  url           = {https://doi.org/10.1007/978-1-4684-9488-4}
}

@article{Sullivan79,
  author        = {Sullivan, Dennis},
  title         = {The density at infinity of a discrete group of hyperbolic motions},
  journal       = {Publ. Math. Inst. Hautes \'Etudes Sci.},
  volume        = {50},
  year          = {1979},
  pages         = {171--202},
  doi           = {10.1007/BF02684773},
  url           = {https://www.numdam.org/item/PMIHES_1979__50__171_0/}
}

@article{Patterson76,
  author        = {Patterson, S. J.},
  title         = {The limit set of a {Fuchsian} group},
  journal       = {Acta Math.},
  volume        = {136},
  year          = {1976},
  pages         = {241--273},
  doi           = {10.1007/BF02392046},
  url           = {https://doi.org/10.1007/BF02392046}
}

@book{Roblin03,
  author        = {Roblin, Thomas},
  title         = {Ergodicit\'e et \'equidistribution en courbure n\'egative},
  series        = {M\'emoires de la Soci\'et\'e Math\'ematique de France, Nouvelle S\'erie},
  number        = {95},
  publisher     = {Soci\'et\'e Math\'ematique de France},
  year          = {2003},
  doi           = {10.24033/msmf.408},
  url           = {https://www.numdam.org/item/MSMF_2003_2_95__1_0/}
}

@article{MMO,
  author = {McMullen, Curtis T. and Mohammadi, Amir and Oh, Hee},
  title = {Geodesic planes in hyperbolic {$3$}-manifolds},
  journal = {Invent. Math.},
  volume = {209},
  year = {2017},
  pages = {425--461},
  doi = {10.1007/s00222-016-0711-3},
  url = {https://doi.org/10.1007/s00222-016-0711-3}
}

@article{MMOhoro,
  author = {McMullen, Curtis T. and Mohammadi, Amir and Oh, Hee},
  title = {Horocycles in hyperbolic {$3$}-manifolds},
  journal = {Geom. Funct. Anal.},
  volume = {26},
  year = {2016},
  pages = {961--973},
  doi = {10.1007/s00039-016-0373-8},
  url = {https://doi.org/10.1007/s00039-016-0373-8}
}

@article{MMOcore,
  author = {McMullen, Curtis T. and Mohammadi, Amir and Oh, Hee},
  title = {Geodesic planes in the convex core of an acylindrical {$3$}-manifold},
  journal = {Duke Math. J.},
  volume = {171},
  number = {5},
  year = {2022},
  pages = {1029--1060},
  doi = {10.1215/00127094-2021-0030},
  url = {https://doi.org/10.1215/00127094-2021-0030}
}

@article{BOPlanes,
  author = {Benoist, Yves and Oh, Hee},
  title = {Geodesic planes in geometrically finite acylindrical {$3$}-manifolds},
  journal = {Ergodic Theory Dynam. Systems},
  volume = {42},
  number = {2},
  year = {2022},
  pages = {514--553},
  doi = {10.1017/etds.2021.19},
  url = {https://doi.org/10.1017/etds.2021.19}
}

@article{LO,
  author = {Lee, Minju and Oh, Hee},
  title = {Orbit closures of unipotent flows for hyperbolic manifolds with {Fuchsian} ends},
  journal = {Geom. Topol.},
  volume = {28},
  year = {2024},
  pages = {3373--3473},
  doi = {10.2140/gt.2024.28.3373},
  url = {https://doi.org/10.2140/gt.2024.28.3373}
}

@article{LOcore,
  author = {Lee, Minju and Oh, Hee},
  title = {Properness and finiteness of totally geodesic submanifolds in the convex core},
  journal = {Geom. Topol.},
  year = {to appear},
  eprint = {2511.09377},
  archivePrefix = {arXiv},
  primaryClass = {math.GT},
  doi = {10.48550/arXiv.2511.09377},
  url = {https://arxiv.org/abs/2511.09377},
  note = {\href{https://arxiv.org/abs/2511.09377}{arXiv:2511.09377}}
}

@article{BHC,
  author = {Borel, Armand and Harish-Chandra},
  title = {Arithmetic subgroups of algebraic groups},
  journal = {Ann. of Math. (2)},
  volume = {75},
  year = {1962},
  pages = {485--535},
  doi = {10.2307/1970210},
  url = {https://doi.org/10.2307/1970210}
}

@book{MargulisBook,
  author = {Margulis, G. A.},
  title = {Discrete Subgroups of Semisimple Lie Groups},
  series = {Ergebnisse der Mathematik und ihrer Grenzgebiete (3)},
  volume = {17},
  publisher = {Springer-Verlag},
  address = {Berlin},
  year = {1991},
  doi = {10.1007/978-3-642-51445-6},
  url = {https://doi.org/10.1007/978-3-642-51445-6}
}

@article{Vinberg,
  author = {Vinberg, {\`E}. B.},
  title = {Rings of definition of dense subgroups of semisimple linear groups},
  journal = {Math. USSR-Izv.},
  volume = {5},
  number = {1},
  year = {1971},
  pages = {45--55},
  doi = {10.1070/IM1971v005n01ABEH001006},
  url = {https://doi.org/10.1070/IM1971v005n01ABEH001006}
}

@article{GPS,
  author = {Gromov, M. and Piatetski-Shapiro, I.},
  title = {Non-arithmetic groups in {Lobachevsky} spaces},
  journal = {Publ. Math. Inst. Hautes \'Etudes Sci.},
  volume = {66},
  year = {1987},
  pages = {93--103},
  doi = {10.1007/BF02698928},
  url = {https://doi.org/10.1007/BF02698928}
}

@article{FLMS,
  author = {Fisher, David and Lafont, Jean-Fran\c{c}ois and Miller, Nicholas and Stover, Matthew},
  title = {Finiteness of maximal geodesic submanifolds in hyperbolic hybrids},
  journal = {J. Eur. Math. Soc. (JEMS)},
  volume = {23},
  number = {11},
  year = {2021},
  pages = {3591--3623},
  doi = {10.4171/JEMS/1077},
  url = {https://doi.org/10.4171/JEMS/1077}
}

@unpublished{Agol3,
  author = {Agol, Ian},
  title = {Systoles of hyperbolic 4-manifolds},
  year = {2006},
  note = {Preprint, arXiv:math/0612290},
  eprint = {math/0612290},
  archivePrefix = {arXiv},
  url = {https://arxiv.org/abs/math/0612290}
}

@article{BelThom,
  author = {Belolipetsky, Mikhail V. and Thomson, Scott A.},
  title = {Systoles of hyperbolic manifolds},
  journal = {Algebr. Geom. Topol.},
  volume = {11},
  number = {3},
  year = {2011},
  pages = {1455--1469},
  doi = {10.2140/agt.2011.11.1455},
  url = {https://doi.org/10.2140/agt.2011.11.1455}
}

@article{Furstenberg,
  author = {Furstenberg, Harry},
  title = {A note on {Borel}'s density theorem},
  journal = {Proc. Amer. Math. Soc.},
  volume = {55},
  number = {1},
  year = {1976},
  pages = {209--212},
  doi = {10.1090/S0002-9939-1976-0422497-X},
  url = {https://doi.org/10.1090/S0002-9939-1976-0422497-X}
}

@incollection{PrasadRapinchuk,
  author = {Prasad, Gopal and Rapinchuk, Andrei S.},
  title = {Developments on the congruence subgroup problem after the work of {Bass}, {Milnor}, and {Serre}},
  booktitle = {Collected Papers of John Milnor. V. Algebra},
  publisher = {American Mathematical Society},
  address = {Providence, RI},
  year = {2010},
  pages = {307--325}
}

@article{Zhang,
  author = {Zhang, Yongquan},
  title = {Existence of an exotic plane in an acylindrical {$3$}-manifold},
  journal = {Math. Res. Lett.},
  volume = {30},
  number = {2},
  year = {2023},
  pages = {611--631},
  doi = {10.4310/MRL.2023.v30.n2.a11},
  url = {https://doi.org/10.4310/MRL.2023.v30.n2.a11}
}

@article{ZhangElementary,
  author = {Zhang, Yongquan},
  title = {Elementary planes in the {Apollonian} orbifold},
  journal = {Trans. Amer. Math. Soc.},
  volume = {376},
  number = {1},
  year = {2023},
  pages = {453--506},
  doi = {10.1090/tran/8751},
  url = {https://doi.org/10.1090/tran/8751}
}

@article{Ratner90,
  author = {Ratner, Marina},
  title = {On measure rigidity of unipotent subgroups of semisimple groups},
  journal = {Acta Math.},
  volume = {165},
  number = {3--4},
  year = {1990},
  pages = {229--309},
  doi = {10.1007/BF02391906},
  url = {https://doi.org/10.1007/BF02391906}
}

@article{Ratner2,
  author = {Ratner, Marina},
  title = {On {Raghunathan}'s measure conjecture},
  journal = {Ann. of Math. (2)},
  volume = {134},
  number = {3},
  year = {1991},
  pages = {545--607},
  doi = {10.2307/2944357},
  url = {https://doi.org/10.2307/2944357}
}

@article{RatnerOrbit91,
  author = {Ratner, Marina},
  title = {{Raghunathan}'s topological conjecture and distributions of unipotent flows},
  journal = {Duke Math. J.},
  volume = {63},
  number = {1},
  year = {1991},
  pages = {235--280},
  doi = {10.1215/S0012-7094-91-06311-8},
  url = {https://doi.org/10.1215/S0012-7094-91-06311-8}
}

@article{Gitik,
  author = {Gitik, Rita},
  title = {Ping-pong on negatively curved groups},
  journal = {J. Algebra},
  volume = {217},
  number = {1},
  year = {1999},
  pages = {65--72},
  doi = {10.1006/jabr.1998.7789},
  url = {https://doi.org/10.1006/jabr.1998.7789}
}

@article{DOP,
  author = {Dal'bo, Fran\c{c}oise and Otal, Jean-Pierre and Peign\'e, Marc},
  title = {S\'eries de {P}oincar\'e des groupes g\'eom\'etriquement finis},
  journal = {Israel J. Math.},
  volume = {118},
  year = {2000},
  pages = {109--124},
  doi = {10.1007/BF02803518},
  url = {https://doi.org/10.1007/BF02803518}
}

@article{BFMS2,
  author = {Bader, Uri and Fisher, David and Miller, Nicholas and Stover, Matthew},
  title = {Arithmeticity, superrigidity and totally geodesic submanifolds of complex hyperbolic manifolds},
  journal = {Invent. Math.},
  volume = {233},
  number = {1},
  year = {2023},
  pages = {169--222},
  doi = {10.1007/s00222-023-01186-5},
  url = {https://doi.org/10.1007/s00222-023-01186-5}
}

@misc{FFL,
  author = {Filip, Simion and Fisher, David and Lowe, Ben},
  title = {Finiteness of totally geodesic hypersurfaces},
  year = {2024},
  eprint = {2408.03430},
  archivePrefix = {arXiv},
  primaryClass = {math.DG},
  url = {https://arxiv.org/abs/2408.03430},
  note = {Preprint, \href{https://arxiv.org/abs/2408.03430}{arXiv:2408.03430}},
  doi = {10.48550/arXiv.2408.03430}
}

@incollection{FilipSurvey,
  author = {Filip, Simion},
  title = {Measure rigidity beyond homogeneous dynamics},
  booktitle = {Proceedings of the International Congress of Mathematicians 2026, Volume 5: Invited Lectures (Sections 9--11)},
  editor = {Friedlander, Susan and Tschinkel, Yuri},
  publisher = {Society for Industrial and Applied Mathematics},
  address = {Philadelphia, PA},
  year = {2026},
  pages = {136--155},
  doi = {10.1137/25M1811756},
  url = {https://doi.org/10.1137/25M1811756}
}

@incollection{OhSurvey,
  author = {Oh, Hee},
  title = {Dynamics for discrete subgroups of {$\mathrm{SL}_2(\mathbb C)$}},
  booktitle = {Dynamics, Geometry, Number Theory: The Impact of {Margulis} on Modern Mathematics},
  editor = {Fisher, David and Kleinbock, Dmitry and Soifer, Gregory},
  publisher = {University of Chicago Press},
  address = {Chicago},
  year = {2022},
  pages = {506--565},
  doi = {10.7208/chicago/9780226804163.003.0014},
  url = {https://doi.org/10.7208/chicago/9780226804163.003.0014}
}

@article{MozesShah,
  author = {Mozes, Shahar and Shah, Nimish},
  title = {On the space of ergodic invariant measures of unipotent flows},
  journal = {Ergodic Theory Dynam. Systems},
  volume = {15},
  number = {1},
  year = {1995},
  pages = {149--159},
  doi = {10.1017/S0143385700008282},
  url = {https://doi.org/10.1017/S0143385700008282}
}

@incollection{ELClay,
  author = {Einsiedler, Manfred and Lindenstrauss, Elon},
  title = {Diagonal actions on locally homogeneous spaces},
  booktitle = {Homogeneous Flows, Moduli Spaces and Arithmetic},
  series = {Clay Mathematics Proceedings},
  volume = {10},
  pages = {155--241},
  publisher = {American Mathematical Society},
  year = {2010},
  url = {https://www.claymath.org/library/proceedings/cmip010c.pdf}
}

@article{ELRankOne,
  author = {Einsiedler, Manfred and Lindenstrauss, Elon},
  title = {On measures invariant under diagonalizable actions---the rank-one case and the general low-entropy method},
  journal = {J. Mod. Dyn.},
  volume = {2},
  number = {1},
  pages = {83--128},
  year = {2008},
  doi = {10.3934/jmd.2008.2.83},
  url = {https://doi.org/10.3934/jmd.2008.2.83}
}

@article{LedrappierYoungII,
  author = {Ledrappier, Fran{\c{c}}ois and Young, Lai-Sang},
  title = {The metric entropy of diffeomorphisms. {P}art {II}: relations between entropy, exponents and dimension},
  journal = {Ann. of Math. (2)},
  volume = {122},
  number = {3},
  pages = {540--574},
  year = {1985},
  doi = {10.2307/1971329},
  url = {https://annals.math.princeton.edu/1985/122-3/p03}
}

@article{LedrappierWalters,
  author = {Ledrappier, Fran{\c{c}}ois and Walters, Peter},
  title = {A relativised variational principle for continuous transformations},
  journal = {J. London Math. Soc. (2)},
  volume = {16},
  number = {3},
  pages = {568--576},
  year = {1977},
  doi = {10.1112/jlms/s2-16.3.568},
  url = {https://doi.org/10.1112/jlms/s2-16.3.568}
}

@article{KaimanovichEntropy,
  author = {Kaimanovich, Vadim A.},
  title = {Invariant measures of the geodesic flow and measures at infinity on negatively curved manifolds},
  journal = {Ann. Inst. H. Poincar{\'e} Phys. Th{\'e}or.},
  volume = {53},
  number = {4},
  pages = {361--393},
  year = {1990},
  url = {https://www.numdam.org/item/AIHPA_1990__53_4_361_0/}
}

@incollection{BrinKatokEntropy,
  author = {Brin, Michael and Katok, Anatole},
  title = {On local entropy},
  booktitle = {Geometric Dynamics},
  series = {Lecture Notes in Mathematics},
  volume = {1007},
  pages = {30--38},
  publisher = {Springer},
  year = {1983},
  editor = {Palis, J.}
}

@article{HochmanShmerkin,
  author = {Hochman, Michael and Shmerkin, Pablo},
  title = {Local entropy averages and projections of fractal measures},
  journal = {Ann. of Math. (2)},
  volume = {175},
  number = {3},
  pages = {1001--1059},
  year = {2012},
  doi = {10.4007/annals.2012.175.3.1},
  url = {https://annals.math.princeton.edu/2012/175-3/p01}
}

@article{BQStationary,
  author = {Benoist, Yves and Quint, Jean-Fran{\c{c}}ois},
  title = {Mesures stationnaires et ferm{\'e}s invariants des espaces homog{\`e}nes},
  journal = {Ann. of Math. (2)},
  volume = {174},
  number = {2},
  pages = {1111--1162},
  year = {2011}
}

@article{EKLLittlewood,
  author = {Einsiedler, Manfred and Katok, Anatole and Lindenstrauss, Elon},
  title = {Invariant measures and the set of exceptions to {Littlewood}'s conjecture},
  journal = {Ann. of Math. (2)},
  volume = {164},
  number = {2},
  pages = {513--560},
  year = {2006}
}

@article{EskinMirzakhani,
  author = {Eskin, Alex and Mirzakhani, Maryam},
  title = {Invariant and stationary measures for the {$\mathrm{SL}(2,\mathbb R)$} action on moduli space},
  journal = {Publ. Math. Inst. Hautes {\'E}tudes Sci.},
  volume = {127},
  pages = {95--324},
  year = {2018}
}

@unpublished{ELZariskiWalks,
  author = {Eskin, Alex and Lindenstrauss, Elon},
  title = {Zariski dense random walks on homogeneous spaces},
  note = {Preprint}
}

@unpublished{ELLocalWalks,
  author = {Eskin, Alex and Lindenstrauss, Elon},
  title = {Random walks on locally homogeneous spaces},
  note = {Preprint}
}

@incollection{ShahUnipotentMeasures,
  author = {Shah, Nimish A.},
  title = {Invariant Measures and Orbit Closures on Homogeneous Spaces
           for Actions of Subgroups Generated by Unipotent Elements},
  booktitle = {Lie Groups and Ergodic Theory ({Mumbai}, 1996)},
  series = {Tata Inst. Fund. Res. Stud. Math.},
  volume = {14},
  publisher = {Tata Institute of Fundamental Research},
  address = {Bombay},
  year = {1998},
  pages = {229--271}
}

@article{KullbackLeibler,
  author = {Kullback, S. and Leibler, R. A.},
  title = {On information and sufficiency},
  journal = {Ann. Math. Statist.},
  volume = {22},
  number = {1},
  pages = {79--86},
  year = {1951},
  doi = {10.1214/aoms/1177729694},
  url = {https://doi.org/10.1214/aoms/1177729694}
}

@article{MOIsolation,
  author = {Mohammadi, Amir and Oh, Hee},
  title = {Isolations of geodesic planes in the frame bundle of a hyperbolic {$3$}-manifold},
  journal = {Compos. Math.},
  volume = {159},
  number = {3},
  year = {2023},
  pages = {488--529},
  doi = {10.1112/S0010437X22007928},
  url = {https://doi.org/10.1112/S0010437X22007928}
}

@article{MO,
  author        = {Mohammadi, Amir and Oh, Hee},
  title         = {Ergodicity of unipotent flows and {Kleinian} groups},
  journal       = {J. Amer. Math. Soc.},
  volume        = {28},
  number        = {2},
  year          = {2015},
  pages         = {531--577},
  doi           = {10.1090/S0894-0347-2014-00811-0},
  url           = {https://doi.org/10.1090/S0894-0347-2014-00811-0}
}

@article{Dufloux,
  author        = {Dufloux, Laurent},
  title         = {Projections of {Patterson--Sullivan} measures and the dichotomy of {Mohammadi--Oh}},
  journal       = {Israel J. Math.},
  volume        = {223},
  number        = {1},
  year          = {2018},
  pages         = {399--421},
  doi           = {10.1007/s11856-017-1622-8},
  url           = {https://doi.org/10.1007/s11856-017-1622-8}
}

@article{BHW,
  author = {Bergeron, Nicolas and Haglund, Fr{\'e}d{\'e}ric and Wise, Daniel T.},
  title = {Hyperplane sections in arithmetic hyperbolic manifolds},
  journal = {J. Lond. Math. Soc. (2)},
  volume = {83},
  number = {2},
  year = {2011},
  pages = {431--448},
  doi = {10.1112/jlms/jdq082}
}

@article{KRS,
  author = {Kolpakov, Alexander and Reid, Alan W. and Slavich, Leone},
  title = {Embedding arithmetic hyperbolic manifolds},
  journal = {Math. Res. Lett.},
  volume = {25},
  number = {4},
  year = {2018},
  pages = {1305--1328}
}

@article{Bur90,
  author = {Burger, Marc},
  title = {Horocycle flow on geometrically finite surfaces},
  journal = {Duke Math. J.},
  volume = {61},
  number = {3},
  year = {1990},
  pages = {779--803},
  doi = {10.1215/S0012-7094-90-06129-0},
  url = {https://doi.org/10.1215/S0012-7094-90-06129-0}
}

@article{Sch16,
  author = {Schapira, Barbara},
  title = {A short proof of unique ergodicity of horospherical foliations on infinite volume hyperbolic manifolds},
  journal = {Confluentes Math.},
  volume = {8},
  number = {1},
  year = {2016},
  pages = {165--174},
  doi = {10.5802/cml.29},
  url = {https://doi.org/10.5802/cml.29}
}

@article{MO16,
  author = {Mohammadi, Amir and Oh, Hee},
  title = {Classification of joinings for {Kleinian} groups},
  journal = {Duke Math. J.},
  volume = {165},
  year = {2016},
  pages = {2155--2223},
  doi = {10.1215/00127094-3476807},
  url = {https://doi.org/10.1215/00127094-3476807}
}

@incollection{BL98,
  author = {Babillot, Martine and Ledrappier, Fran{\c{c}}ois},
  title = {Geodesic paths and horocycle flow on abelian covers},
  booktitle = {Lie Groups and Ergodic Theory ({Mumbai}, 1996)},
  series = {Tata Inst. Fund. Res. Stud. Math.},
  volume = {14},
  publisher = {Tata Institute of Fundamental Research},
  address = {Bombay},
  year = {1998},
  pages = {1--32}
}

@article{Sar04,
  author = {Sarig, Omri},
  title = {Invariant {Radon} measures for horocycle flows on {Abelian} covers},
  journal = {Invent. Math.},
  volume = {157},
  year = {2004},
  pages = {519--551},
  doi = {10.1007/s00222-004-0357-4},
  url = {https://doi.org/10.1007/s00222-004-0357-4}
}

@article{LS07,
  author = {Ledrappier, Fran{\c{c}}ois and Sarig, Omri},
  title = {Invariant measures for the horocycle flow on periodic hyperbolic surfaces},
  journal = {Israel J. Math.},
  volume = {160},
  year = {2007},
  pages = {281--315},
  doi = {10.1007/s11856-007-0064-0},
  url = {https://doi.org/10.1007/s11856-007-0064-0}
}

@article{Led08,
  author = {Ledrappier, Fran{\c{c}}ois},
  title = {Invariant measures for the stable foliation on negatively curved periodic manifolds},
  journal = {Ann. Inst. Fourier (Grenoble)},
  volume = {58},
  number = {1},
  year = {2008},
  pages = {85--105},
  doi = {10.5802/aif.2345},
  url = {https://doi.org/10.5802/aif.2345}
}

@article{OP19,
  author = {Oh, Hee and Pan, Wenyu},
  title = {Local mixing and invariant measures for horospherical subgroups on abelian covers},
  journal = {Int. Math. Res. Not. IMRN},
  volume = {2019},
  number = {19},
  year = {2019},
  pages = {6036--6088},
  doi = {10.1093/imrn/rnx292},
  url = {https://doi.org/10.1093/imrn/rnx292}
}

@article{Sar10,
  author = {Sarig, Omri},
  title = {The horocycle flow and the {Laplacian} on hyperbolic surfaces of infinite genus},
  journal = {Geom. Funct. Anal.},
  volume = {19},
  number = {6},
  year = {2010},
  pages = {1757--1812},
  doi = {10.1007/s00039-010-0048-9},
  url = {https://doi.org/10.1007/s00039-010-0048-9}
}

@article{LL22,
  author = {Landesberg, Or and Lindenstrauss, Elon},
  title = {On {Radon} measures invariant under horospherical flows on geometrically infinite quotients},
  journal = {Int. Math. Res. Not. IMRN},
  volume = {2022},
  number = {15},
  year = {2022},
  pages = {11602--11641},
  doi = {10.1093/imrn/rnab024},
  url = {https://doi.org/10.1093/imrn/rnab024}
}

@article{Lan21,
  author = {Landesberg, Or},
  title = {Horospherically invariant measures and finitely generated {Kleinian} groups},
  journal = {J. Mod. Dyn.},
  volume = {17},
  year = {2021},
  pages = {337--352},
  doi = {10.3934/jmd.2021012},
  url = {https://doi.org/10.3934/jmd.2021012}
}

@misc{CK26,
  author = {Choi, Inhyeok and Kim, Dongryul M.},
  title = {Classification of horospherical invariant measures in higher rank: {The Full Story}},
  year = {2026},
  eprint = {2601.22668},
  archivePrefix = {arXiv},
  primaryClass = {math.DS},
  doi = {10.48550/arXiv.2601.22668},
  url = {https://arxiv.org/abs/2601.22668},
  note = {Preprint, \href{https://arxiv.org/abs/2601.22668}{arXiv:2601.22668}}
}

@incollection{OhICM26,
  author = {Oh, Hee},
  title = {Dynamics and rigidity through the lens of circles},
  booktitle = {Proceedings of the International Congress of Mathematicians 2026, Volume 2: Plenary Lectures},
  editor = {Friedlander, Susan and Tschinkel, Yuri},
  publisher = {Society for Industrial and Applied Mathematics},
  address = {Philadelphia, PA},
  year = {2026},
  pages = {240--271},
  doi = {10.1137/25M1810726},
  url = {https://doi.org/10.1137/25M1810726}
}

\end{document}